\newtheorem{thm}{Theorem}[subsection]
\newtheorem{cor}[thm]{Corollary}
\newtheorem{lem}[thm]{Lemma} 
\newtheorem{prop}[thm]{Proposition}
 \theoremstyle{definition}
 \theoremstyle{definition}
\newtheorem{defn}[thm]{Definition} \theoremstyle{remark}
\newtheorem{rem}[thm]{\bf Remark}
\newtheorem{assumption}[thm]{\bf Assumption}
\DeclareMathOperator{\Hom}{Hom}
\DeclareMathOperator{\End}{End}
\DeclareMathOperator{\Ext}{Ext}
\DeclareMathOperator{\Ker}{ker}
\DeclareMathOperator{\modd}{mod}
\DeclareMathOperator{\Spec}{Spec}
\DeclareMathOperator{\gr}{gr}
\DeclareMathOperator{\Gal}{Gal}
\DeclareMathOperator{\Frob}{Frob}
\DeclareMathOperator{\Sym}{Sym}
\DeclareMathOperator{\Aut}{Aut}
\DeclareMathOperator{\ad}{ad}
\def\cont{\mathrm{cont}}
\def\dR{\mathrm{dR}}
\def\Ind{\mathrm{Ind}}
\def\ps{\mathrm{ps}}
\def\loc{\mathrm{loc}}
\def\ord{\mathrm{ord}}
\def\Irr{\mathrm{Irr}}
\def\sm{\mathrm{sm}}
\def\Ban{\mathrm{Ban}}
\def\adm{\mathrm{adm}}
\def\Mod{\mathrm{Mod}}
\newcommand{\Z}{\mathbb{Z}}
\newcommand{\F}{\mathbb{F}}
\newcommand{\Q}{\mathbb{Q}}
\newcommand{\R}{\mathbb{R}}
\newcommand{\T}{\mathbb{T}}
\newcommand{\A}{\mathbb{A}}
\newcommand{\bC}{\mathbb{C}}
\newcommand{\tr}{\mathrm{tr}}
\newcommand{\GL}{\mathrm{GL}}
\newcommand{\Sp}{\mathrm{Sp}}
\newcommand{\cO}{\mathcal{O}}
\newcommand{\kq}{\mathfrak{q}}
\newcommand{\kp}{\mathfrak{p}}
\newcommand{\kkp}{k(\kp)}
\newcommand{\kQ}{\mathfrak{Q}}
\newcommand{\kP}{\mathfrak{P}}
\newcommand{\kC}{\mathfrak{C}}
\newcommand{\kB}{\mathfrak{B}}
\newcommand{\ka}{\mathfrak{a}}
\newcommand{\kb}{\mathfrak{b}}
\newcommand{\km}{\mathfrak{m}}
\newcommand{\mm}{\mathrm{m}}
\newcommand{\DAi}{(D\otimes_F \A_F^\infty)^\times}
\newcommand{\AFi}{(\A_F^\infty)^\times}
\newcommand{\RNum}[1]{\uppercase\expandafter{\romannumeral #1\relax}}
\newcommand{\overbar}[1]{\mkern 1.5mu\overline{\mkern-1.5mu#1\mkern-1.5mu}\mkern 1.5mu}
\begin{document}
	\title{On the Fontaine-Mazur conjecture for $p=3$}
	\author{Xinyao Zhang}
	\address{Graduate School of Mathematical Sciences, University of Tokyo, Komaba, Meguro, Tokyo 153-8914, Japan}
	\email{zhangxy96@g.ecc.u-tokyo.ac.jp}
	\begin{abstract}
		In this article, we prove the remaining open cases of the Fontaine-Mazur conjecture on two-dimensional regular Galois representations over $\Gal(\overline{\Q}/\Q)$ when $p=3$, hence concluding the conjecture in the regular case for all odd primes. Our result is a sequel to Pan's work, based on some recent progress on $p$-adic Langlands correspondence, Galois deformation theory and a potential pro-modularity result. 
	\end{abstract}
	
	\maketitle
	
	\tableofcontents
	
	\section{Introduction}
	 
	In a recent marvelous work \cite{pan2022fontaine}, Pan has proved the following theorem:
	
	\begin{thm}\cite[Theorem 1.0.4]{pan2022fontaine}\label{1.0.1}
		Let $p$ be an odd prime number and 
		\[\rho:\Gal(\bar{\Q}/\Q)\to\GL_2(\overbar{\Q_p})\]
		be a continuous, irreducible representation such that
		\begin{itemize}
			\item $\rho$ is only ramified at finitely many places,
			\item $\rho|_{G_{\Q_p}}$ is de Rham of distinct Hodge-Tate weights,
			\item $\rho$ is odd, i.e. $\det\rho(c)=-1$ for any complex conjugation $c\in \Gal(\bar{\Q}/\Q)$.
			\item If $p=3$, the semi-simplification of $\bar{\rho}|_{G_{\Q_p}}$ is not of the form $\eta\oplus\eta\omega$ for some character $\eta$, where $\omega$ denotes the $\mod p$ cyclotomic character.
		\end{itemize}
		Then $\rho$ arises from a cuspidal eigenform up to twist.
	\end{thm}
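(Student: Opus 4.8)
The plan is to deduce this from Emerton's local--global compatibility framework for the completed cohomology of modular curves, the $p$-adic local Langlands correspondence for $\GL_2(\Q_p)$, and the Taylor--Wiles--Kisin patching method, following the strategy that handles the residually irreducible case and extending it, after Pan, to the residually reducible case.

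First I would make the usual reductions: twist $\rho$ to normalize its determinant so that the Hodge--Tate weights and nebentypus match a space of classical modular forms, and conjugate $\rho$ so that it takes values in $\GL_2(\cO)$ for $\cO$ the ring of integers in a finite extension of $\Q_p$, with residual representation $\bar\rho$. If $\bar\rho$ is absolutely irreducible (with mild additional hypotheses when $p$ is small), the modularity of $\rho$ is already known from Emerton's theorem: one localizes completed cohomology at the maximal ideal of the Hecke algebra attached to $\bar\rho$, shows the eigensystem of $\rho$ occurs there, and invokes the classicality of the corresponding locally algebraic vector. So the real content is the case where $\bar\rho$ is reducible, an extension of one character by another.

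In that case I would work with Emerton's completed cohomology $\widetilde H^1$ of the tower of modular curves, localized at the maximal ideal $\km$ of the prime-to-$p$ Hecke algebra cut out by $\bar\rho$, the goal being that the Galois eigensystem of $\rho$ occurs in $\widetilde H^1_\km\otimes\overbar{\Q_p}$. The engine is a patched module $M_\infty$ over a patched deformation ring $R_\infty$, carrying a commuting $\GL_2(\Q_p)$-action; the heart of the matter is to prove that $M_\infty$, equivalently the $\GL_2(\Q_p)$-representation on $\widetilde H^1_\km$, realizes the $p$-adic local Langlands correspondence --- that it is built from Colmez's Montreal functor applied to the universal deformation of $\bar\rho|_{G_{\Q_p}}$ --- so that in particular its support over $R_\infty$ is as large as possible. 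Granting such a local--global compatibility statement, the de Rham point attached to $\rho$ lies in that support, its space of locally algebraic vectors is nonzero by a Breuil--M\'{e}zard type argument, and Emerton's classicality then produces the desired cuspidal eigenform.

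The main obstacle, and the source of the hypotheses, is precisely this structural input in the reducible case: when $\bar\rho$ is an extension of characters the patched module is no longer cyclic over the local deformation ring at $p$, and the Breuil--Pa\v{s}k\={u}nas analysis of $\GL_2(\Q_p)$-representations must be pushed through non-generic configurations. For small primes the local-at-$p$ deformation ring and the relevant $\GL_2(\F_p)$-representations degenerate further; concretely, when $p=3$ and $\bar\rho|_{G_{\Q_p}}^{\mathrm{ss}}=\eta\oplus\eta\omega$ the two residual characters differ by the cyclotomic character, $\GL_2(\F_3)$ is too small for the Taylor--Wiles prime argument in its usual form, and $\ad^0\bar\rho|_{G_{\Q_p}}$ acquires extra Galois cohomology, all of which disrupt the patching bookkeeping. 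I would therefore impose exactly the fourth bullet to sidestep this, leaving its removal to a separate and more delicate study of the $p=3$ local Langlands correspondence and deformation theory.
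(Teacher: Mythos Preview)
This theorem is a cited result (Pan, \cite[Theorem~1.0.4]{pan2022fontaine}); the present paper does not prove it but rather removes the fourth bullet. So there is no proof in this paper to compare against directly, though the paper does describe Pan's architecture in detail since it follows the same strategy.

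Your sketch names the right ingredients---completed cohomology, $p$-adic local Langlands via Pa\v{s}k\=unas theory, patching---but the mechanism you describe is not Pan's. Pan does not work with $\widetilde H^1$ of modular curves over $\Q$ and a single support argument for a patched module. Instead, after solvable base change to a totally real field $F$ in which $p$ splits completely, he works with completed cohomology of a \emph{definite} quaternion algebra over $F$, and the central object in the residually reducible case is the global \emph{pseudo}-deformation ring $R^{\ps}$ of $1+\bar\chi$. The key step is an $R=\mathbb T$ theorem (Theorem~\ref{R=T} here) proved not globally but after localizing at a \emph{nice prime}: a one-dimensional characteristic-$p$ prime of $R^{\ps}$ whose associated representation satisfies certain irreducibility and non-dihedrality conditions. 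Once a nice prime exists in an irreducible component of $\Spec R^{\ps}$, the whole component is pro-modular, and classicality (Corollary~\ref{classicality}) finishes. The substantive work is to manufacture nice primes in every component, which Pan does by passing to the ordinary locus $R^{\ps,\ord}$ and invoking his finiteness theorem over the Iwasawa algebra $\Lambda_F$.

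Your diagnosis of the $p=3$ exclusion is also off target. The problem is not that $\GL_2(\F_3)$ is too small for Taylor--Wiles primes. The actual obstructions, as the present paper explains, are: (i) when $\bar\chi|_{G_{\Q_p}}=\omega^{\pm1}$ one has $\dim_{\F}H^1(G_{\Q_p},\F(\bar\chi^{-1}))=2$ rather than $1$, so the local pseudo-deformation ring loses its explicit description and Pan's Lemmas~7.4.4 and~7.4.8 fail; (ii) $H^0(G_F,\ad^0\bar\rho_b(1))$ need not vanish when $\bar\chi=\omega^{-1}$, which drops the dimension bound on $R_b^{\ord}$ by one and can kill the supply of pro-modular primes in the ordinary locus; and (iii) since $\omega$ is quadratic when $p=3$, one cannot twist away the case $\bar\chi=\omega^{-1}$, and the dihedral condition in the definition of a nice prime (Lemma~\ref{nirred}) becomes genuinely obstructive. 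None of this is about $\ad^0\bar\rho$ cohomology disrupting patching bookkeeping in the way you suggest.
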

	In this paper, we will remove the last assumption. More precisely, combining the work \cite{skinnerwiles01}, \cite{hutan15}, \cite{kisin2009fontaine}, \cite{pan2022fontaine} and \cite{Khare_2009a}, we have
	
	\begin{thm}[Theorem \ref{mainthm}, Theorem \ref{mainthm2}]\label{1.0.2}
		Let $p$ be an odd prime number and 
		\[\rho:\Gal(\bar{\Q}/\Q)\to\GL_2(\overbar{\Q_p})\]
		be a continuous, irreducible representation such that
		\begin{itemize}
			\item $\rho$ is only ramified at finitely many places,
			\item $\rho|_{G_{\Q_p}}$ is de Rham of distinct Hodge-Tate weights,
			\item $\rho$ is odd, 
		\end{itemize}
		Then $\rho$ arises from a cuspidal eigenform up to twist.
	\end{thm}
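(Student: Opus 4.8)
The plan is to reduce the theorem to the single configuration left open by \cite[Theorem~1.0.4]{pan2022fontaine} and then to close that configuration by feeding into the existing modularity machinery the one missing local ingredient: the Breuil--M\'ezard conjecture for $\GL_2(\Q_3)$ in the non-scalar split residual case, due to Hu--Tan \cite{hutan15}. Since twisting $\rho$ by a continuous character $G_\Q\to\overbar{\Q_p}^{\times}$ changes neither the hypotheses nor the conclusion ``up to twist'', \cite[Theorem~1.0.4]{pan2022fontaine} reduces us to the case $p=3$ with $(\bar\rho|_{G_{\Q_3}})^{\mathrm{ss}}\cong\eta\oplus\eta\omega$; after a further twist we may take $\eta$ unramified at $3$, so that $\bar\rho|_{G_{\Q_3}}$ is an (a priori possibly non-split) extension whose semisimplification is, up to unramified twist, $\mathbf{1}\oplus\omega$. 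Note that $\bar\rho$ is odd, and that although $\bar\rho|_{G_{\Q_3}}$ is reducible its two Jordan--H\"older characters are distinct (they differ by $\omega\neq\mathbf{1}$); in particular, if $\bar\rho$ is globally reducible then its two residual characters differ. We treat $\bar\rho$ absolutely irreducible and $\bar\rho$ reducible separately.

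Suppose first that $\bar\rho$ is absolutely irreducible. Then $\bar\rho$ is modular by Serre's conjecture \cite{Khare_2009a}, and it remains to propagate modularity from $\bar\rho$ to $\rho$. This is exactly the step that Kisin \cite{kisin2009fontaine} reduces to the Breuil--M\'ezard conjecture, and that Pan \cite{pan2022fontaine} carries out in the completed-cohomology framework; in both treatments the argument is insensitive to the shape of $\bar\rho|_{G_{\Q_3}}$ except through the Breuil--M\'ezard statement (equivalently, the structure of the Colmez--Pa{\v s}k{\=u}nas functor) for the fixed-determinant deformation problem of $\bar\rho|_{G_{\Q_3}}$ and the relevant Serre weights and potentially semistable types. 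For $p\geq 5$, and for $p=3$ away from the $\eta\oplus\eta\omega$ locus, this input is known; in our case it is precisely the content of \cite{hutan15}. Inserting \cite{hutan15}, the relevant fixed-type framed local deformation rings at $3$ acquire the expected special fibres, so the patched module of the Taylor--Wiles--Kisin method is faithful over the global deformation ring, which gives $R=\T$ and hence the modularity of $\rho$. The one point needing care when $p=3$ is the possible failure of adequacy of $\bar\rho|_{G_{\Q(\sqrt{-3})}}$ (where $\Q(\sqrt{-3})=\Q(\zeta_3)$); in that case $\bar\rho$ is dihedral, or projectively exceptional with solvable image, and one argues by soluble base change together with the residually dihedral/soluble modularity lifting theorems --- or, when $\rho$ is itself induced from an algebraic Hecke character of $\Q(\sqrt{-3})$, identifies it directly with a CM eigenform up to twist.

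Suppose next that $\bar\rho$ is reducible, $\bar\rho^{\mathrm{ss}}\cong\chi_1\oplus\chi_2$ with $\chi_1\neq\chi_2$. If $\rho$ is ordinary at $3$ its modularity up to twist follows from Skinner--Wiles \cite{skinnerwiles01}, whose hypotheses --- in particular the distinctness of $\chi_1$ and $\chi_2$ --- are satisfied here with no constraint on the behaviour at $p$. In general one runs Pan's residually reducible argument \cite{pan2022fontaine}, where, as before, the excluded hypothesis intervened only through the local-at-$3$ deformation ring and the block structure of the category of smooth $\GL_2(\Q_3)$-representations, both of which are now available from \cite{hutan15} together with Pa{\v s}k{\=u}nas' description of the pertinent blocks. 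Combining the two cases gives Theorems~\ref{mainthm} and~\ref{mainthm2}. I expect the crux to be the verification, in the residually irreducible case, that the Breuil--M\'ezard cycles of \cite{hutan15} are exactly those needed for the patched module to be cyclic over the fixed-type deformation ring --- that is, that the local criterion at $p$ used by Kisin and Pan persists verbatim for the ``non-generic'' residual shape $\eta\oplus\eta\omega$, where the local deformation ring can acquire additional components --- after which the remainder is a routine importation of the arguments of \cite{kisin2009fontaine}, \cite{pan2022fontaine} and \cite{skinnerwiles01} with the offending hypothesis deleted.
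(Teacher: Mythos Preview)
Your proposal rests on a misidentification of where Pan's exclusion of the $p=3$, $\eta\oplus\eta\omega$ case is actually used, and on an incorrect appeal to \cite{hutan15}.

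First, on Hu--Tan: the results from \cite{hutan15} that Pan invokes (Corollaries 3.6 and 3.7, which describe the versal deformation ring in the $\eta\oplus\eta\omega$ case) themselves require $p>3$, as does the companion result \cite[Corollary B.5]{Pa_k_nas_2013}. So \cite{hutan15} does not supply the missing local input you claim; the paper instead replaces these by B\"ockle's recent structural results on local framed and pseudo-deformation rings \cite{B_ckle_2023,Bockle_2023}. Moreover, Pan's method is not a Kisin-style Breuil--M\'ezard argument at all: it runs through completed cohomology and Pa\v{s}k\=unas theory, and the $p=3$ restriction in the local-global compatibility step comes from \cite{Pa_k_nas_2013} (the block analysis and centre description for $\GL_2(\Q_3)$), which is lifted here using \cite{Pa_k_nas_2021}.

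Second, and more seriously for the residually reducible case, you do not address the genuinely global obstructions that arise when $p=3$ and $\bar\chi=\omega^{-1}$. Two concrete failures: (i) the kernel of the map from the local pseudo-deformation ring to its ordinary quotient is no longer well-understood via Kisin's surjection (since $\dim_\F H^1(G_{\Q_3},\F(\bar\chi^{-1}))=2$), so Pan's Lemmas 7.4.4 and 7.4.8 need replacement; (ii) the lower bound on $\dim R_b^{\ord}$ drops from $[F:\Q]+1$ to $[F:\Q]$ because $H^0(G_F,\ad^0\bar\rho_b(1))$ can be nonzero, so one can no longer directly invoke Pan's finiteness theorem to find pro-modular primes in the ordinary locus. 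The paper handles (i) via the new Propositions~\ref{two elements} and~\ref{principal}, and (ii) via a generalized Greenberg--Wiles formula (Proposition~\ref{Greenberg-Wiles}) applied over residue fields of characteristic $0$, together with a delicate argument (Section~5.3) moving along the Iwasawa algebra to a prime where $H^0(G_F,\ad^0\rho(1))$ vanishes. There is also a separate issue you do not mention: when $\bar\chi=\omega$ is quadratic, the ``dihedral'' condition~3) in the definition of a nice prime can no longer be disposed of by Lemma~\ref{nirred}(1), and the paper introduces a different mechanism (Lemma~\ref{promodular to nice}) to propagate pro-modularity without needing to exhibit a nice prime directly. None of these steps is a ``routine importation''; they are the substance of the paper.
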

	
	The main difficulty of the Fontaine-Mazur conjecture at $p=3$ is the residually reducible case. To outline the proof of Theorem \ref{1.0.2} in this case, we suppose the semisimplification of $\bar{\rho}$ is (up to twist) isomorphic to $1 \oplus \bar{\chi}$ for some continuous odd almost unramified character $\bar{\chi} : G_\Q \to \F^\times$, where $\F$ is a finite field of characteristic $p$.
	
	Our strategy is a refinement of Pan's. Precisely, we prove a potential pro-modularity argument, rather than just identify some irreducible component of the universal deformation ring as the in the big Hecke algebra, so that our strategy works for all odd primes.
	
	To illustrate the obstructions in the case $p=3$ and $\bar{\chi}|_{G_{\Q_p}}=\omega^{-1}$ (we call it ``exceptional" afterwards) in Pan's method, we first briefly recall the main steps of Theorem \ref{1.0.1} in the residually reducible case.
	
	  1) \textbf{Local-global compatibilty results}. There are two main purposes in this part. The first one is to justify whether the Galois representation defined by a map from the big Hecke algebra to $\overline{\Q_p}$ comes from a cuspidal automorphic representation. The second one is to collect some necessary results for the patching argument (see \cite[page 1033]{pan2022fontaine} for details). Most results in this part are applications of Pašk\=unas theory \cite{Pa_k_nas_2013}, which excludes our exceptional case.
	  
	  2) \textbf{Patching at nice primes}. In this part, Pan proves an $R_\kq =\T_\kq$ theorem following the strategy of Skinner-Wiles \cite{Skinner_1999}, where $\kq$ is a one-dimensional prime (called ``nice") of the big Hecke algebra satisfying some special properties. In other words, we can identify some irreducible components of the universal pseudo-deformation ring as in the big Hecke algebra, if those contain a ``nice" prime. 
	  
	  3) \textbf{Ordinary Fontaine-Mazur conjecture in the residually reducible case}. In this part, Pan generalizes the main theorem of \cite{Skinner_1999} and proves a finiteness result between the Iwasawa algebra and the universal ordinary pseudo-deformation ring. This allows us to find enough pro-modular primes, i.e. primes coming from the big Hecke algebra, in the ordinary locus of the universal pseudo-deformation ring.
	  
	  4) \textbf{Galois deformation theory for pseudo-deformation rings}. From the previous three steps, we only need to show that there exists an irreducible component of $\Spec R$ containing both a ``nice" prime and the one defined by our Galois representation $\rho$. To produce enough ``nice" primes in the ordinary locus of $\Spec R$ (write as $\Spec R^{\operatorname{ord}}$), we first need to know the kernel from the local pseudo-deformation at $p$ to its ordinary locus (write the map as $R_p \to R_p^{\operatorname{ord}}$), and obtain an exact bound of the Krull dimension of $\Spec R^{\operatorname{ord}}$. In this step, Pan only finds ``potentially nice" primes, which is enough for his purpose.
	  
	  Here we point out some differences and difficulties in the exceptional case and describe the improvements in this paper:
	  
	  In step 1), as we mentioned before, some results in the exceptional case are missing in \cite{Pa_k_nas_2013}. The reason is that the behavior of the block in this case is specialzed and more complicated (see the list in \ref{A.2}). Recently, this case has been studied in \cite{Pa_k_nas_2021} (even for $p=2$).  Based on Pašk\=unas theory for small primes, Tung studied the Fontaine-Mazur conjecture  for $p=2$ in \cite{Tung_2020} and $p=3$ in \cite{Tung_2021} under Talylor-Wiles hypothesis. In this article,  we also use these results to obtain the same local-global compatibility arguments (see Section \ref{section lgc}) and further prove some $R=\mathbb{T}$ theorems (see Theorem \ref{R=T} and Theorem \ref{big R=T in irr}) without Pan's assumption.
	
	  In step 2), some crucial properties of the local framed deformation ring at $p$, such as its normality, for proving the patching argument were missing at the exceptional case. Combining recent progress \cite{B_ckle_2023} on local framed deformation ring at $p$, we are able to fill in such gaps in Pan's strategy.
	  

	  In step 4), there are several obstructions, and here for instance, we explain the most subtle case, $p=3$ and $\bar{\chi}= \omega = \omega^{-1}$.
	  \begin{itemize}
	  	\item  When $p \ge 5$ and $\bar{\chi}|_{G_{\Q_p}} \neq \mathbf{1}, \omega^{-1}$ (after a twist, we may assume this as $\omega $ is not quadratic), the kernel of $R_p \to R_p^{\operatorname{ord}}$ has been studied in \cite[Appendix B]{Pa_k_nas_2013}. Note that in this case, we have $\dim_\F H^1(G_{\Q_p}, \F(\bar{\chi}^{-1})) =1$. Then by \cite[Corollary 1.4.4]{kisin2009fontaine}, there is a natural surjection (hence isomorphism) from the universal pseudo-deformation ring to the universal (unframed) deformation ring. Thus, we can calculate the pseudo-deformation ring explicitly. See \cite[Lemma 7.4.4 \& 7.4.8]{pan2022fontaine}. However, such a calculation is unknown when $p=3$ and $\bar{\chi}=\omega^{-1}$ as $\dim_\F H^1(G_{\Q_p}, \F(\bar{\chi}^{-1})) =2$.
	  	Despite the difficulties, combining some geometric properties of the local pseudo-deformation ring studied in the recent work \cite{B_ckle_2023} and \cite{Bockle_2023}, we are able to develop analogous results to tackle this issue without knowing the structure of the local pseudo-deformation ring.
	  	
	  	
	  	\item When $p \ge 5$ and $\bar{\chi} \neq \omega^{-1}$ (again, we may assume this as $\omega $ is not quadratic), then we have $ H^0 (G_\Q, \ad^0(\bar{\rho}_b(1)) )=0$. Then by the Greenberg-Wiles formula, the universal unframed ordinary deformation ring $R_b^{\ord}$ is of dimension at least $2$, and hence of the same dimension as the Iwasawa algebra. By Pan's finiteness result (in step 3)), we can find enough pro-modular primes in $\Spec R^{\operatorname{ord}}$, and also ``potentially nice" primes. In our case, the dimension of $H^0$ is $1$, leading the lower bound of $R_b^{\ord}$ to be $1$. Hence, the arguments above cannot work. To overcome this difficulty, we prove a generalized Greenberg-Wiles formula (Proposition \ref{Greenberg-Wiles}) to study the dimension of some irreducible component of $R_b^{\ord}$.
	  	
	    \item  In the definition of a ``nice" prime (see Definition \ref{nice}), we need to know the corresponding representation is not dihedral, i.e. induced from a quadratic character. When $p \ge 5$ and $\bar{\chi}= \omega$, this condition is trivially satisfied, whereas in our case, we must not ignore it. Note that by Leopoldt's conjecture for the abelian CM field $F(\mu_3)$, the upper bound of the dihedral locus of $\Spec R$ is $[F: \Q]+2$, where $F$ is an appropriate abelian base change of $\Q$. Thus, to find (potentially) ``nice" primes, we first need to find a pro-modular prime with larger dimension to avoid the dihedral locus. However, in the non-generic reducible case (see Section \ref{sec non-generic}), the lower bound of the dimenson of the pro-modular prime we can find there is $[F: \Q]+2$, which is not large enough to find a (potentially) ``nice" prime when we are in the exceptional case. Therefore, we need a new strategy of this part here. See Remark \ref{difficulty}.
	    
	    In this paper, we use the method developed in the author's previous work \cite{Zhang2024} so that we only need to find an irreducible one-dimensional pro-modular prime instead of a ``nice" prime in each irreducible component of $\Spec R^\ps$ of large dimension, which is much more convenient to find out. Furthermore, it allows us to prove a potential big $R=\mathbb{T}$ theorem in the residually reducible case in fact. See Remark \ref{obstruction}. 
	  
	\end{itemize}
	
	As we mentioned in the previous discussions, we actually prove the following potential pro-modularity result in the residually reducible case, which is the main innovation in this paper. To state it more clearly, we give the following settings.
	
	 Let $p$ be an odd prime and let $F$ be an abelian totally real field in which $p$ splits completely. Write $\Sigma_p$ as the set of places of $F$ above $p$ and let $\Sigma$ be a finite set of finite places of $F$ containing $\Sigma_p$. Let $\bar{\chi}: G_{F, \Sigma} \to \F^\times$ be a continuous, odd, almost unramified character which can be extended to a character of $G_\Q$. Then $T=1+\bar{\chi}$ defines a pseudo-representation. Further assume that for any $v|p$, $\bar{\chi}|_{G_{F_v}} \ne \mathbf{1}$. Let $\chi$ be a de Rham lifting of $\bar{\chi}$.
	
	\begin{thm}[Theorem \ref{potential R=T}, Proposition \ref{equi of T}]
		There exists an abelian totally real extension $F_1/F$ of even degree such that $p$ splits completely in $F_1$ and for any irreducible component of the universal pseudo-deformation ring  $R_{F_1}^\ps$ (parametrizing all pseudo-deformations of $T|_{G_{F_1}}$ unramified out places lying above $\Sigma$ with fixed determinant $\chi$) of dimension at least $1+ 2[F_1: \mathbb{Q}] $, it is pro-modular of dimension $1+2[F_1:\mathbb{Q}]$. The corresponding big Hecke algebra over $F_1$ is equidimensional of dimension $1+2[F_1:\mathbb{Q}]$.
	\end{thm}
	
	\begin{rem}
		1) Actually, by Proposition \ref{connect dim}, we know that for any irreducible component of $R_{F_1}^\ps$, it is either of dimension at least $1+ 2[F_1: \mathbb{Q}] $ or contained in its reducible locus, hence of dimension at most $2$ by Leopoldt's conjecture. Thus, we may view this result as a potential big $R=\T$ theorem, identifying $\T$ as the part of $R$ with large dimension.
		
		2) In a future work, we will discuss more about applications of this potential pro-modularity result.
	\end{rem}

	This paper is organized as follows. In Section 2, we recall some classical results in Galois deformation theory and prepare some results used for the proof of the main theorem. We refer readers to B\"ockle's paper in \cite{Berger_2013} for classical theory and \cite{B_ckle_2023} and \cite{Bockle_2023} for the recent progress. In Section 3, we give the local-global compatibility results without Pan's assumption. Most of the proof in this section is the same as in \cite[Section 3]{pan2022fontaine}. In Section 4, we give some $R=\mathbb{T}$ theorems in both residually reducible and irreducible cases. Also, we recall Pan's finiteness result in the ordinary case, which is devoted to finding pro-modular primes. In Section 5, we prove the main theorem in the residually reducible case, following the strategy in \cite[Section 7.4]{pan2022fontaine}. In Section 6, we prove the main theorem in the residually irreducible case. For convenience of readers, in Appendix, we present most of the results of Pašk\=unas theory in \cite{Pa_k_nas_2013} and \cite{Pa_k_nas_2021} used in Section 3.
	
	\subsection{Notations}
	Let $p$ be a prime. We always denote a $p$-adic local field by $K$. We also denote its uniformizer by $\varpi$, its ring of integers by $\cO$ and the residue field by $\F$. We fix an embedding of $K$ into $\overbar{\Q_p}$, some algebraic closure of $\Q_p$ and an isomorphism $\iota_p:\overbar{\Q_p}\simeq \bC$.
	
	We use $\mathfrak{U}_{\mathcal{O}}$ to denote the category of Artinian local $\cO$-algebras with residue field $\F$. For an $\cO$-module $M$, we use $M^\vee$ to denote its Pontryagin dual $\Hom_{\cO}^{\cont}(M, K/\cO)$, and we use $M^d$ to denote its Schikhof dual $\Hom_{\cO}^{\cont}(M, \cO)$. We use $M_{\operatorname{tf}}$ to denote its maximal $\cO$-torsion free quotient.
	
	For a prime ideal $\kp$ of a commutative ring $R$, we denote its residue field by $k(\kp)$. Let $R_\kp$ be the localization at $\kp$. We write $\widehat{R_\kp}$ as its $\kp$-adic completion. We say a commutative ring $R$ is a CNL ring if it is a complete noetherian local ring, and we usually denote its maximal ideal by $\mathfrak{m}_R$.
	
	Suppose $F$ is a number field with maximal order $O_F$. We write $F_+$ for the set of totally positive elements in $F$. For any finite place $v$, we write $F_v$ (resp. $O_{F_v}$) for the completion of $F$ (resp. $O_F$) at $v$, $\varpi_v$ for a uniformizer of $F_v$, $k(v)$ for the residue field, $\operatorname{Nm}(v)$ for the norm of $v$ (in $\Q$),  $G_{F_v}$ for the decomposition group above $v$, $I_{F_v}$ (or just $I_v$ if there is no confusion for the number field $F$) for its inertia group and $\Frob_v$ for a geometric Frobenius element in $G_{k(v)}:=G_{F_v}/I_{F_v}$. If $l$ is a rational prime, then we denote $O_F\otimes_\Z \Z_l$ by $O_{F,l}$. The adele ring of $F$ will be denoted by $\A_F$ . Suppose $\Sigma$ is a finite set of places of $F$. We use $G_{F, \Sigma}$ for the Galois group of the maximal extension of $F$ unramified outside $\Sigma$ and all infinite places. The absolute Galois group of $F$ is denoted by $G_F=\Gal(\overbar{F}/F)$. 
	
	We use $\varepsilon$ to denote the $p$-adic cyclotomic character and $\omega$ to denote the mod $p$ cyclotomic character. Our convention for the Hodge-Tate weight of $\varepsilon$ is $-1$. We use $\mu_n$ to denote a primitive $n$-th root of unity.
	
	\subsection{Acknowledgements}
    The author would like to thank his supervisor Professor Takeshi Saito for his constant help and encouragement. The author would also like to thank Kojiro Matsumoto for many helpful discussions.
	
	\section{Galois deformation theory}
	In this section, we study some properties about universal Galois deformation rings.
	\subsection{Local framed deformation rings} In this subsection, we collect some helpful results about the local deformation rings. We recommend a recent remarkable work \cite{B_ckle_2023} to readers about this topic.
	
	Let $p$ be a prime. Let $F_v$ be a finite extension of $\mathbb{Q}_p$. Let $K$ be an another finite extension of $\mathbb{Q}_p$ with its ring of integers $\mathcal{O}$, a uniformizer $\varpi$ and its residue field $\mathbb{F}$.
	
	Let $\chi: G_{F_v} \to \mathcal{O}^\times$ be a continuous character. Let $\bar{\rho}: G_{F_v} \to \operatorname{GL}_2(\mathbb{F})$ be a continuous representation with determinant $\bar{\chi}$. Denote by $D_{\bar{\rho}}^{\square, \chi} : \mathfrak{U}_{\mathcal{O}} \to \operatorname{Sets}$ the functor such that for $ (A, \mathfrak{m}_A) \in \mathfrak{U}_{\mathcal{O}}$, $ D_{\bar{\rho}}^{\square, \chi}(A)$ is the set of continuous representations $\rho_A : G_{F_v} \to \operatorname{GL}_2(A)$ satisfying $ \rho_A \mod \mathfrak{m}_A = \bar{\rho}$ and $\operatorname{det} \rho_A= \chi$. The functor $D_{\bar{\rho}}^{\square, \chi}$ of liftings (framed deformations) of $\bar{\rho}$ is pro-represented by a CNL $\mathcal{O}$-algebra $R_v^{\square}$.
	
	\begin{prop}\label{dim of loc framed der ring}
		1) The ring $R_v^{\square}$ is a local complete intersection, flat over $\mathcal{O}$ and of relative dimension $3([F_v: \mathbb{Q}_p]+1)$. 
		
		2) The ring  $R_v^{\square}$ is a normal integral domain.
	\end{prop}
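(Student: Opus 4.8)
The plan is to treat the two parts separately: part (1) (complete intersection, $\cO$-flatness, relative dimension) via the standard obstruction-theoretic machinery combined with the recent fine analysis of local deformation rings over $p$-adic fields, and part (2) (normality, hence integrality) via Serre's criterion.

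For part (1), I would begin with Mazur's presentation of an obstructed deformation problem:
\[
R_v^{\square} \;\cong\; \cO[[X_1,\dots,X_g]]/(f_1,\dots,f_r),
\]
where $g=\dim_\F Z^1(G_{F_v},\ad^0\bar\rho)$ is the dimension of the fixed-determinant framed tangent space and $r\le \dim_\F H^2(G_{F_v},\ad^0\bar\rho)$ bounds the number of relations. Since $\dim_\F\ad^0\bar\rho=3$, combining $\dim Z^1=\dim H^1+3-\dim H^0$ with the local Euler characteristic formula $\dim H^0-\dim H^1+\dim H^2=-3[F_v:\mathbb{Q}_p]$ gives $g=\dim H^2+3([F_v:\mathbb{Q}_p]+1)$, whence the lower bound $\dim R_v^{\square}\ge g+1-r\ge 3([F_v:\mathbb{Q}_p]+1)+1$. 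The remaining content is the matching upper bound, which then forces $f_1,\dots,f_r$ to be a regular sequence (so that $R_v^{\square}$ is a complete intersection) and $\varpi$ to be a non-zero-divisor (so that $R_v^{\square}$ is $\cO$-flat of relative dimension $3([F_v:\mathbb{Q}_p]+1)$). This comes from controlling both fibres: one shows that $\Spec R_v^{\square}[1/p]$ is equidimensional of dimension $3([F_v:\mathbb{Q}_p]+1)$ — at a point $\rho_x$ the tangent space is $Z^1(G_{F_v},\ad^0\rho_x)$, whose dimension is pinned down by the characteristic-zero Euler characteristic formula, the only subtlety being the obstructed points — and that $\Spec R_v^{\square}/\varpi$ has dimension at most $3([F_v:\mathbb{Q}_p]+1)$. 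This is precisely the $n=2$, fixed-determinant case of the structural results on $\GL_n$-deformation rings over $p$-adic fields established in \cite{B_ckle_2023} (see also B\"ockle's article in \cite{Berger_2013}), which I would invoke.

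For part (2), a complete intersection is Cohen--Macaulay, hence automatically satisfies Serre's condition $(S_2)$; so by Serre's criterion it remains to verify $(R_1)$, namely that the non-regular locus of $\Spec R_v^{\square}$ has codimension $\ge 2$. On the generic fibre this amounts to showing that the locus where $H^2(G_{F_v},\ad^0\rho_x)\ne 0$ — equivalently, by local Tate duality, where $H^0(G_{F_v},\ad^0\rho_x(1))\ne 0$, i.e. roughly where $\rho_x$ is reducible with its two Jordan--H\"older constituents differing by $\varepsilon^{\pm 1}$ — has codimension at least $2$, and analogously for the special fibre. Granting $(R_1)$, $R_v^{\square}$ is normal, and being a complete local ring (so that $\Spec R_v^{\square}$ is connected) it is then automatically an integral domain. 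I expect the verification of $(R_1)$ — the precise identification of the singular locus of $R_v^{\square}$ — to be the main obstacle; it is the most delicate ingredient and is supplied by \cite{B_ckle_2023} (see also \cite{Bockle_2023}).
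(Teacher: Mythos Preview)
Your proposal is correct and ultimately rests on the same source as the paper: the paper's entire proof is a bare citation to \cite[Corollary 5.4]{B_ckle_2023} for part (1) and \cite[Theorem 5.6]{B_ckle_2023} for part (2), with no further argument. What you have written is essentially an outline of the strategy behind those cited results --- Mazur's obstructed presentation plus the local Euler characteristic formula to get the lower bound and complete-intersection structure, dimension control on both fibres for the upper bound and $\cO$-flatness, and Serre's criterion reducing normality to $(R_1)$ --- before invoking \cite{B_ckle_2023} at the two genuinely hard points (the special-fibre dimension bound and the codimension estimate for the singular locus). So there is no disagreement in approach; you have simply unpacked what the paper leaves as a black-box citation, which is perfectly appropriate and arguably more informative.
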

	
	\begin{proof}
		The first result is \cite[Corollary 5.4]{B_ckle_2023}. The second one is \cite[Theorem 5.6]{B_ckle_2023}.
	\end{proof}
	
	For the rest of this subsection, we assume $p=3$, $F_v=\mathbb{Q}_p$ and $ \bar{\rho}(\sigma)=\begin{pmatrix}
		\omega(\sigma) & *\\
		0 & 1
	\end{pmatrix}$, where $*$ is possible to be zero. To decribe the framed deformation ring $R_v^{\square}$, we recall the main work in \cite{bockle2010deformation}.
	
	Let $E=\mathbb{Q}_3(\mu_3)$, and $L$ be the splitting field of $\bar{\rho} $. Then the group $U := \operatorname{Gal}(L/E)$ is of order $1, 3$ or $9$ (depending on $*$). If it is non-trivial, we denote by $u \in U$ a non-trivial element. Up to conjugation, we may assume that $ \bar{\rho}(u)=\begin{pmatrix}
		1& 1\\
		0 & 1
	\end{pmatrix}$. 
	
	Let $P_E$ be the pro-$3$ completion of $G_E$, which is actually a quotient of $G_E$ by a closed normal subgroup. We denote the fixed field of this group by $E(3)$. Define $U_2(\mathbb{F})$ as the upper triangular unipotent subgroup of $ \operatorname{GL}_2(\mathbb{F})$. For any CNL $\mathcal{O}$-algebra $R$, define $ \tilde{\Gamma}(R)$ as the inverse image of $U_2(\mathbb{F})$ via the natural surjection $\operatorname{GL}_2(R) \to \operatorname{GL}_2(\mathbb{F}) $. Viewing $u$ as an element of $P_E$, we define the functor $\operatorname{EH}: \mathfrak{U}_{\mathcal{O}} \to \operatorname{Sets}$ of equivariant homomorphisms corresponding to $\bar{\rho} $ as follows:
	$$ \operatorname{EH}:= \{ \alpha \in \operatorname{Hom}_{\operatorname{Gal}(E/\mathbb{Q}_3)}(P_E, \tilde{\Gamma}(R)):  \alpha \mod \mathfrak{m}_R= \bar{\rho}|_{G_E}, \alpha (u)=\begin{pmatrix}
		1& 1\\
		0 & 1
	\end{pmatrix} \}. $$
	
	\begin{prop}
		The functor $\operatorname{EH}$ is representable. Let $(\tilde{R}, \tilde{\alpha})$ denote the universal pair. Then $ \tilde{R}$ is a versal hull of $\bar{\rho}$. If $\bar{\rho}$ is non-split, then $ \tilde{R}$ is universal.
	\end{prop}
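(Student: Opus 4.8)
The plan is to obtain $\operatorname{EH}$ as a pro-representable functor via Grothendieck's representability criterion, then to compare it with the deformation functor $D_{\bar\rho}$ of $\bar\rho$ (deformations up to conjugation, without fixing the determinant) by Galois descent along $\Delta\defeq\operatorname{Gal}(E/\mathbb{Q}_3)$, and finally to use the non-splitness hypothesis to pass from ``versal hull'' to ``universal''. For representability I would first note that $R\mapsto\tilde\Gamma(R)$, being the preimage of the finite group $U_2(\mathbb{F})$ under $\operatorname{GL}_2(R)\to\operatorname{GL}_2(\mathbb{F})$, commutes with fibre products of objects of $\mathfrak{U}_{\mathcal{O}}$, and hence so does $R\mapsto\operatorname{Hom}_{\cont}(P_E,\tilde\Gamma(R))$. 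Equipping $\tilde\Gamma(R)$ with the $\Delta$-action given by conjugation by a fixed lift $\bar\rho(\tilde\delta)\in\operatorname{GL}_2(\mathbb{F})$ of the generator of $\Delta$ makes this action functorial in $R$, so that the $\operatorname{Gal}(E/\mathbb{Q}_3)$-equivariance, the reduction $\alpha\bmod\mathfrak{m}_R=\bar\rho|_{G_E}$ and the normalisation $\alpha(u)=\begin{pmatrix}1&1\\0&1\end{pmatrix}$ become conditions visible on matrix entries and preserved by fibre products. Thus $\operatorname{EH}$ commutes with fibre products, $\operatorname{EH}(\mathbb{F})$ is a single point, and $\operatorname{EH}(\mathbb{F}[\epsilon])$ is finite-dimensional since $P_E$ is topologically finitely generated (indeed a Demushkin group, as $\mu_3\subset E$); Grothendieck's criterion then produces a CNL $\mathcal{O}$-algebra $\tilde R$ pro-representing $\operatorname{EH}$, with universal equivariant homomorphism $\tilde\alpha$.

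Next I would construct a natural transformation $\eta\colon\operatorname{EH}\to D_{\bar\rho}$. Since $\tilde\Gamma(A)$ is a pro-$3$ group, the restriction to $G_E$ of any lift of $\bar\rho$ to $A$ has image in $\tilde\Gamma(A)$ and factors through $P_E$; conversely, because $\Delta$ has order $2$, coprime to $p=3$, all $H^{i}(\Delta,-)$ with pro-$3$ coefficients vanish, so a $\Delta$-equivariant homomorphism $P_E\to\tilde\Gamma(A)$ lifting $\bar\rho|_{G_E}$ descends --- uniquely up to conjugation --- to a deformation $\rho_\alpha\colon G_{\mathbb{Q}_3}\to\operatorname{GL}_2(A)$ of $\bar\rho$; sending $\alpha$ to the class of $\rho_\alpha$ gives $\eta$. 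The vanishing of $H^2(\Delta,-)$ (for the obstructions) together with the fact that the remaining choices at $\tilde\delta$ form affine spaces shows that $\eta$ is formally smooth. To see that $\eta$ is moreover bijective on tangent spaces --- so that $(\tilde R,\tilde\alpha)$ is a hull, not merely a versal object --- I would use inflation--restriction and again the vanishing of $H^{i}(\Delta,-)$ to identify $t_{D_{\bar\rho}}=H^1(G_{\mathbb{Q}_3},\operatorname{ad}\bar\rho)\cong H^1(P_E,\operatorname{ad}\bar\rho|_{G_E})^{\Delta}$, and then check that the normalisation $\alpha(u)=\begin{pmatrix}1&1\\0&1\end{pmatrix}$ selects, out of $Z^1(P_E,\operatorname{ad}\bar\rho|_{G_E})^{\Delta}$, a set of representatives for this cohomology group --- the point being that $\bar\rho(u)=\begin{pmatrix}1&1\\0&1\end{pmatrix}$ is a non-trivial unipotent, so the only coboundary vanishing at $u$ is the zero one. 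This exhibits $\tilde R$ as a versal hull of $\bar\rho$.

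Finally, if $\bar\rho$ is non-split it is an indecomposable extension of $\mathbf{1}$ by $\omega$, and since $\omega$ is the (quadratic, hence non-trivial) mod $3$ cyclotomic character, $\operatorname{End}_{G_{\mathbb{Q}_3}}(\bar\rho)=\mathbb{F}$; hence $D_{\bar\rho}$ is itself pro-representable, by a universal deformation ring, and a versal hull of a pro-representable functor is isomorphic to the representing object, so $\tilde R$ is universal. I expect the hard part to be the second step: checking that the normalisation at $u$ is precisely the rigidification needed for $\eta$ to be an isomorphism on tangent spaces, rather than formally smooth with positive-dimensional fibres. This hinges on combining the prime-to-$p$ descent along $\Delta$ with the non-triviality of the unipotent $\bar\rho(u)$ to control the residual conjugation; in the split case there is no $u$ to normalise against, and one instead needs a parallel dimension count (using that $\mathbb{F}[\Delta]$ is semisimple) to see that $\operatorname{EH}$ still has the correct tangent space.
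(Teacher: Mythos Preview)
Your sketch is essentially correct and follows precisely the line of argument in B\"ockle's paper \cite{bockle2010deformation}, which is all the present paper's proof does (it simply cites \cite[Proposition~2.2]{bockle2010deformation} and \cite[page~531]{bockle2010deformation}). So there is no meaningful difference in approach to compare: you have filled in the details that the paper outsources.

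One small point of care: your phrase ``the only coboundary vanishing at $u$ is the zero one'' is the heart of the tangent-space bijection, and it deserves to be made precise. The $\Delta$-invariant coboundaries are exactly $\partial X$ for $X\in(\ad\bar\rho)^{\Delta}$ (this uses exactness of taking $\Delta$-invariants, since $\lvert\Delta\rvert=2$ is prime to $p=3$), i.e.\ $X$ diagonal; and $(\partial X)(u)=X-\mathrm{Ad}(N)X$ vanishes iff $X$ commutes with $N=\bigl(\begin{smallmatrix}1&1\\0&1\end{smallmatrix}\bigr)$, which for diagonal $X$ forces $X$ scalar, hence $\partial X=0$. This gives injectivity. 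For surjectivity one also needs that every $\Delta$-invariant cocycle can be modified by such a $\partial X$ to vanish at $u$; here the $\Delta$-equivariance constraint $c(u^{-1})=\mathrm{Ad}(D)c(u)$ (from $\sigma(u)=u^{-1}$) combines with the cocycle relation to pin down the shape of $c(u)$, and one checks the evaluation-at-$u$ map from $\Delta$-invariant cocycles surjects onto the relevant quotient. In the split case your parenthetical remark is in fact simpler than you suggest: since $P_E$ then acts trivially on $\ad\bar\rho$, there are no nonzero coboundaries at all, so $Z^1(P_E,\ad\bar\rho)^{\Delta}=H^1(P_E,\ad\bar\rho)^{\Delta}$ directly.
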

	
	\begin{proof}
		For the first part, it follows from \cite[Proposition 2.2]{bockle2010deformation}. For the second part, see \cite[page 531]{bockle2010deformation}.
	\end{proof}
	
	Let $G:= \operatorname{Gal}(E/\mathbb{Q}_3)=\{1, \sigma\}$. Then the versal deformation $\tilde{\rho}: G_{\mathbb{Q}_3} \to \operatorname{GL}_2(\tilde{R})$ factors through the natural surjection $G_{\mathbb{Q}_3} \twoheadrightarrow \operatorname{Gal}(E(3)/\mathbb{Q}_3)= P_E \rtimes G $. 
	
	Let $F_4$ be the free pro-$3$ group with topological generators $x_1, x_2, x_3, x_4$ with the following $G$-action:
	$$ \sigma(x_1)=x_1^{-1}, \sigma (x_2)=x_2, \sigma(x_3)=x_3^{-1}, \sigma(x_4)=x_4.$$
	Define $r_0:= x_1^3[x_1, x_2][x_3, x_4]$, where $[g, h]= ghg^{-1}h^{-1}$, and define 
	$$r:= r_0\sigma(r_0)^{-1}= x_1^3[x_1, x_2][x_3, x_4][x_4, x_3^{-1}][x_2, x_1^{-1}]x_1^3.$$
	Let $N_4 \subset F_4$ be the closed normal subgroup generated by $r$. By \cite[Lemma 3.2]{bockle2010deformation}, we know that the pro-$3$ group $F_4/N_4$ is as a group with $G$-action isomorphic to $P_E$.
	
	Denote by $A_i \in \tilde{\Gamma}(R) \subset \operatorname{GL}_2(R)$ the image of $x_i$.
	Then we can deduce that $$
	A_1=\begin{pmatrix}
		\sqrt{1+bc}& b\\
		c & \sqrt{1+bc}
	\end{pmatrix},
	A_2=\sqrt{1+a}\begin{pmatrix}
		\sqrt{1+d}& 0\\
		0 & \sqrt{1+d}^{-1}
	\end{pmatrix},$$	
	$$ A_3=\begin{pmatrix}
		\sqrt{1+b'c'}& b'\\
		c' & \sqrt{1+b'c'}
	\end{pmatrix},
	A_4=\sqrt{1+a'}\begin{pmatrix}
		\sqrt{1+d'}& 0\\
		0 & \sqrt{1+d'}^{-1}
	\end{pmatrix},$$	
where $a,a',c,c',d,d' \in \mathfrak{m}_R$ and $b,b' \in R$ (whether $b$ or $b'$ lie in $\mathfrak{m}_R$ depending on $\bar{\rho}$). 

   Let $B_1:=[A_2, A_1]A_1^{-6}[A_1^{-1}, A_2]$, $B_2:= [A_3, A_4][A_4, A_3^{-1}]$. Then $B=1$ is equivalent to $B_1=B_2$, where $B=A_1^3[A_1, A_2][A_3, A_4][A_4, A_3^{-1}][A_2, A_1^{-1}]A_1^3$. Let $B_k(i,j)$ be the $(i,j)$-entry of the matrix $B_k$. Then $B_1=B_2$ is equivalent to the equalities $B_1(1,2)=B_2(1,2)$ and $B_1(2,1)=B_2(2,1)$.
   
   Consider the $1$-cocycle $\beta$ defined by $\bar{\rho}$. We distinguish the following cases:
   
   1) $\beta(x_1) \ne 0$. We choose $u=x_1$, so that $b=1$, and we write $b'= \tilde{\bar{b'}}+\delta_b'$, where $\tilde{\bar{b'}}$ is the Teichm\"uller lifting and $\delta_b' \in \mathfrak{m}_R$.
   
   2) $\beta(x_3) \ne 0 = \beta(x_1)$. We choose $u=x_3$, so that $b'=1$ and $b \in \mathfrak{m}_R$.
   
   3) $\beta=0$. Then $U$ is trivial and $b,b' \in \mathfrak{m}_R$.
   
   \begin{prop} \label{loc def ring at 3}
   	According to the above three cases, we have
   	
   	1) $\tilde{R}= \mathcal{O}[[a, a', \delta_b', c, c', d, d']]/(B_1(1,2)-B_2(1,2), B_1(2,1)-B_2(2,1))$, where we replace $b$ by $1$ and $b'$ by $\tilde{\bar{b'}}+\delta_b'$ in the expressions of the entries.
   	
   	2) $\tilde{R}= \mathcal{O}[[a, a', b, c, c', d, d']]/(B_1(1,2)-B_2(1,2), B_1(2,1)-B_2(2,1))$, where we replace $b'$ by $1$ in the expressions of the entries.
   	
   	3) $\tilde{R}= \mathcal{O}[[a, a',b, b', c, c', d, d']]/(B_1(1,2)-B_2(1,2), B_1(2,1)-B_2(2,1))$. 
   \end{prop}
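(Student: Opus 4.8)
The strategy is to combine the representability of the functor $\operatorname{EH}$ established in the preceding proposition with B\"ockle's presentation $P_E \cong F_4/N_4$, and to verify that the single defining relation $r$ of $P_E$ translates into exactly the two scalar equations in the statement.

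Since $F_4$ is free pro-$3$ and $N_4$ is the normal closure of $r$, for $R \in \mathfrak{U}_{\mathcal{O}}$ an element of $\operatorname{EH}(R)$ amounts to a $G$-equivariant homomorphism $\phi\colon F_4 \to \tilde{\Gamma}(R)$ with $\phi \bmod \mathfrak{m}_R = \bar{\rho}|_{G_E}$, with $\phi(u)=\left(\begin{smallmatrix}1&1\\0&1\end{smallmatrix}\right)$, and with $\phi(r)=1$; such a $\phi$ is freely determined by the four matrices $A_i:=\phi(x_i)$. Here $G$ acts on $\tilde{\Gamma}(R)$ by conjugation by the fixed diagonal matrix $\left(\begin{smallmatrix}-1&0\\0&1\end{smallmatrix}\right)$ representing $\sigma$ (after normalising $\bar{\rho}(\sigma)$), so the prescribed $G$-action on the $x_i$ forces the symmetry relations ${}^{\sigma}A_1=A_1^{-1}$, ${}^{\sigma}A_3=A_3^{-1}$, ${}^{\sigma}A_2=A_2$ and ${}^{\sigma}A_4=A_4$; together with the chosen normalisations these cut $A_1,A_3$ down to the displayed ``symmetric'' form and $A_2,A_4$ to diagonal form. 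The reduction condition fixes the $\bar{A}_i$: the cocycle $\beta$ attached to $\bar{\rho}$ satisfies $\beta\circ\sigma=-\beta$, so from $\sigma(x_2)=x_2$, $\sigma(x_4)=x_4$ one gets $2\beta(x_2)=2\beta(x_4)=0$, whence $\beta(x_2)=\beta(x_4)=0$ and $\bar{A}_2=\bar{A}_4=I$, while $\bar{A}_1,\bar{A}_3$ are unipotent with upper-right entries $\beta(x_1),\beta(x_3)$. Imposing $\phi(u)=\left(\begin{smallmatrix}1&1\\0&1\end{smallmatrix}\right)$ and using the residual (diagonal) conjugation freedom then leaves exactly the parameters listed in the three cases, with $b$ normalised in case 1, $b'$ normalised in case 2, and no rigidification in case 3.

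It remains to unwind $\phi(r)=1$. Because $r=r_0\,\sigma(r_0)^{-1}$ with $r_0=x_1^3[x_1,x_2][x_3,x_4]$ and $\phi$ is $G$-equivariant, $B=\phi(r)=B_0\cdot({}^{\sigma}B_0)^{-1}$ where $B_0:=\phi(r_0)$; hence $\phi(r)=1$ if and only if ${}^{\sigma}B_0=B_0$, i.e.\ $B_0$ is diagonal, and equivalently --- rearranging the word $r$ --- if and only if $B_1=B_2$ with $B_1=[A_2,A_1]A_1^{-6}[A_1^{-1},A_2]$ and $B_2=[A_3,A_4][A_4,A_3^{-1}]$. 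A direct computation gives ${}^{\sigma}B_k=B_k^{-1}$, $\det B_k=1$ and $B_k\equiv I \bmod\mathfrak{m}_R$ for $k=1,2$ (for $B_k\equiv I$ one uses $6\equiv 0\bmod 3$); hence each $B_k$ has equal diagonal entries, and two such matrices agreeing in their $(1,2)$- and $(2,1)$-entries coincide. Therefore $\phi(r)=1$ is equivalent to the pair of equations $B_1(1,2)=B_2(1,2)$ and $B_1(2,1)=B_2(2,1)$, so $\operatorname{EH}$ is pro-represented by the quotient of the power series ring over $\mathcal{O}$ in the parameters of the respective case by these two elements. Since $\tilde{R}$ pro-represents $\operatorname{EH}$, this is the asserted presentation.

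The only genuinely substantive step is this last reduction --- that the single group relation collapses to precisely two scalar equations --- which rests on the symmetry and reduction properties of $B_1,B_2$ recorded above; the rest is a careful transcription of the computations in \cite[\S 3]{bockle2010deformation}, the main care being needed to keep track of the parametrisations and of the normalisation at $u$ across the three cases.
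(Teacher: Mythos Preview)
Your proposal is correct and follows the same route as the paper, which simply cites \cite[Theorem 4.1]{bockle2010deformation}; you have essentially reproduced a sketch of B\"ockle's argument. The key conceptual point you isolate --- that the single group relation $\phi(r)=1$ collapses to exactly two scalar equations because ${}^{\sigma}B_k=B_k^{-1}$, $\det B_k=1$, and $B_k\equiv I\bmod\mathfrak m_R$ force the diagonal entries of each $B_k$ to be equal and hence determined by the off-diagonal entries --- is exactly the mechanism used in B\"ockle's paper, and your verification of these three properties is clean.

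One small caution: your treatment of the normalisation at $u$ is slightly imprecise. In case~1, for instance, you write that imposing $\phi(u)=\left(\begin{smallmatrix}1&1\\0&1\end{smallmatrix}\right)$ together with the residual diagonal conjugation freedom leaves exactly the seven parameters $a,a',\delta_b',c,c',d,d'$; but if $u$ were literally the image of $x_1$ in $P_E$, the condition $A_1=\left(\begin{smallmatrix}1&1\\0&1\end{smallmatrix}\right)$ would impose both $b=1$ \emph{and} $c=0$, killing one variable too many. In B\"ockle's setup the element $u\in P_E$ is a carefully chosen lift of the generator of $U$, and the upshot is that the rigidification removes exactly the one-dimensional diagonal conjugation ambiguity (yielding $b=1$ only). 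You acknowledge that ``the main care [is] needed to keep track of \ldots\ the normalisation at $u$,'' which is fair, but a reader might be misled into thinking $u=x_1$ on the nose. This does not affect the correctness of your overall sketch.
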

	
	\begin{proof}
		See \cite[Theorem 4.1]{bockle2010deformation}.
	\end{proof}
	
	Using this, we can study the ordinary deformation ring. Suppose $R_v^{\triangle}$ pro-represents the functor from $\mathfrak{U}_{\mathcal{O}} $ to the category of sets sending $R$ to the set of pairs $(\rho_R, \psi_R)$ such that 
	\begin{itemize} 
		\item $\rho_R:G_{F_v}\to\GL_2(R)$ is a lifting of $\bar{\rho}$ with determinant $\chi$.
		\item $\psi_R:G_{F_v}\to R^\times$ is a lifting of $\omega$ such that $\rho_R$ has a unique $G_{F_v}$-stable rank one $R$-submodule of $R^2$ with $G_{F_v}$-action via $\psi_R$.
	\end{itemize}
	
	\begin{prop}\label{framed loc ord}
		According to the above three cases, we have
		
		1) $R_v^{\triangle} \cong \mathcal{O}[[\delta_b',d',x, y, z]]$ if $\beta(x_1) \ne 0$.
		
		2) $R_v^{\triangle} \cong \mathcal{O}[[b, d, x, y, z]]$ if $\beta(x_3) \ne 0 = \beta(x_1)$.
		
		3) $R_v^{\triangle} \cong \mathcal{O}[[b, b', d, d', x, y]]/(3b-bd-b'd')$ if $\beta=0$. 
		
		In each case, the ring $R_v^{\triangle}$ is a normal domain of dimension $6$.
	\end{prop}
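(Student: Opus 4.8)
The plan is to derive the presentations of $R_v^{\triangle}$ directly from the explicit description of the versal ring $\tilde R$ in Proposition~\ref{loc def ring at 3}, by translating the ordinary condition into B\"ockle's coordinates. A lifting $\rho_R$ of $\bar\rho$ with $\det\rho_R=\chi$ is ordinary precisely when the matrices $A_1,\dots,A_4$ (together with $\tilde\rho(\sigma)$) admit a common eigenvector $(1,w)^{t}$ with $w\in\mathfrak m_R$; the eigencharacter on that line is then $\psi_R$, and in cases~1) and~2) uniqueness of the line is automatic. Writing out $A_i\cdot(1,w)^{t}\in R\cdot(1,w)^{t}$, the equations for $A_1,A_3$ read $c=bw^{2}$, $c'=b'w^{2}$, while the equations for the diagonal matrices $A_2,A_4$ read $wd=wd'=0$ (using that $\sqrt{1+a},\sqrt{1+a'}$ are units). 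Conjugating the line to $e_1$ absorbs $w$ into the framing and reduces these to $c=c'=0$; the residual framing freedom supplies the auxiliary variables $x,y,z$ (respectively $x,y$). The real content is then what the two B\"ockle relations $B_1(i,j)=B_2(i,j)$ become after this substitution.

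With $c=c'=0$ the matrices $A_1,A_3$ are unipotent upper-triangular and $A_2,A_4$ are diagonal, so every commutator appearing in $B$ is unipotent upper-triangular; a direct calculation gives the upper-right entries of $[A_1,A_2]$, $[A_3,A_4]$, $[A_4,A_3^{-1}]$, $[A_2,A_1^{-1}]$ as $-bd$, $-b'd'$, $-b'd'$, $-bd$, and $A_1^{\pm 3}$ contributes $\pm 3b$. Hence the lower-left entry of $B$ vanishes identically, its upper-right entry is $6b-2bd-2b'd'=2(3b-bd-b'd')$, and (since $2\in\mathcal O^{\times}$) the single relation cutting out $R_v^{\triangle}$ is $3b-bd-b'd'=0$. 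In case~1) we have $b=1$, so this solves for $d$; eliminating $d$ leaves a power series ring over $\mathcal O$ in $\delta_{b'},d',x,y,z$. In case~2) we have $b'=1$, so it solves for $d'$, giving a power series ring in $b,d,x,y,z$. In either case $R_v^{\triangle}$ is regular, hence a normal domain, of dimension $6$. In case~3) none of $b,b',d,d'$ is a unit, the relation cannot be solved for a single variable, and $R_v^{\triangle}=\mathcal O[[b,b',d,d',x,y]]/(3b-bd-b'd')$.

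It remains to verify the assertion in case~3). Set $f=3b-bd-b'd'$. Since the residue field has characteristic $3$, reduction modulo $\varpi$ sends $f$ to $-(bd+b'd')$, a nonzero element of the domain $\mathbb F[[b,b',d,d',x,y]]$; hence $f$ is a nonzerodivisor, $R_v^{\triangle}$ is $\mathcal O$-flat of dimension $6$, and $R_v^{\triangle}/\varpi R_v^{\triangle}\cong\mathbb F[[b,b',d,d',x,y]]/(bd+b'd')$. The quadratic form $bd+b'd'$ has rank $4$, so it is not a product of two linear forms and is therefore irreducible in the power series ring (a UFD); consequently $f$ is irreducible, since in a factorization $f=gh$ one of $\bar g,\bar h$ would be a unit, whence one of $g,h$ is a unit. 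Thus $R_v^{\triangle}$ is a domain. Moreover $\{bd+b'd'=0\}$ is a hypersurface in a regular ring, hence Cohen-Macaulay, and its singular locus is contained in $V(b,b',d,d')$, which has codimension $3$ in it; by Serre's criterion $R_v^{\triangle}/\varpi R_v^{\triangle}$ is a normal domain, and since $R_v^{\triangle}$ is $\varpi$-adically complete and $\varpi$-torsion free with normal special fibre, it is itself a normal domain of dimension $6$.

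The step I expect to cause the most trouble is the first one: making the dictionary between the ordinary condition and B\"ockle's parametrization precise — in particular verifying that exactly the asserted variables survive (five in cases~1) and~2), six in case~3)) after absorbing the line parameter $w$ and the framing, and that, once $c$ and $c'$ are eliminated, the pair $B_1(i,j)=B_2(i,j)$ contributes no constraint beyond $3b-bd-b'd'=0$, with nothing further hidden in the $\Gal(E/\mathbb{Q}_3)$-action on $\tilde\rho$.
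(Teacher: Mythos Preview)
Your approach coincides with the paper's: reduce to $c=c'=0$ in B\"ockle's coordinates and compute the surviving relation. Your explicit calculation of the upper-right entry of $B$ (yielding $3b-bd-b'd'$) and your normality argument in case~3) via Serre's criterion on the special fibre are correct and supply details the paper omits --- the paper simply writes ``an easy calculation'' and declares the other cases ``similar'', even though case~3) is not regular and genuinely needs the extra argument you give.

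The step you flag is indeed where your write-up is loose. The eigenvector equations $c=bw^{2}$, $c'=b'w^{2}$, $dw=d'w=0$ are correct, but ``conjugating the line to $e_1$ absorbs $w$ into the framing'' is not a precise identification of functors: conjugation changes the framed lifting, and the conditions $dw=d'w=0$ end up as red herrings. The paper's route avoids this by invoking \cite[Proposition~2.1]{Khare_2009} to write $R_v^{\square}\cong\tilde R^{\chi}[[x,y,z]]$ (resp.\ $[[x,y]]$ in case~3) and then asserting $R_v^{\triangle}=R_v^{\square}/(c,c')$ directly. The reason this last identification is immediate is that in B\"ockle's normalization the image of the order-$2$ element $\sigma\in\Gal(E/\Q_3)$ is \emph{diagonal} with distinct eigenvalues (reducing to $\omega(\sigma)=-1$ and $1$); hence for the normalized lift any $G_{\Q_3}$-stable rank-one summand reducing to $\F e_1$ is forced to equal $Re_1$ exactly, giving $c=c'=0$ with no free parameter $w$. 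The framing variables then account for all stable lines of the un-normalized framed lifting. Once $R_v^{\triangle}=R_v^{\square}/(c,c')$ is in hand, the rest of your argument goes through unchanged.
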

	
	\begin{proof}
		We only prove the first case, and the other two cases are similar.
		
		Note that $R_v^{\square}$ is the universal framed deformation (lifting) ring. By \cite[Proposition 2.1]{Khare_2009}, we have $R_v^\square \cong \mathcal{O}[[\delta_b', c, c', d, d', x, y,z]]/(B_1(1,2)-B_2(1,2), B_1(2,1)-B_2(2,1))$ as $a, a'$ are uniquely determined by the determinant $\chi$. Therefore, we have $R_v^{\triangle} \cong R_v^{\square}/(c, c')$. Then our result follows from an easy calculation.
	\end{proof}
	
	\begin{rem}
		This proposition is not used in the proof of our main theorem. See Remark \ref{ordinary R=T}.
	\end{rem}
	
	\subsection{Pseudo-representations}
	In this subsection, we will give some results on 2-dimensional pseudo-representations. Our main references are \cite{Skinner_1999} and \cite{pan2022fontaine}.
	
	\begin{defn} \label{pseudo}
		For a profinite group $G$ and a topological commutative ring $R$ in which 2 is invertible, a 2-dimensional \textit{pseudo-representation} is a continuous function $T: G \to R$ such that 
		
		1) $T(1)=2$,
		
		2) $T(\sigma \tau)=T(\tau \sigma)$ for all $ \sigma, \tau \in G$,
		
		3) $T(\gamma \delta \eta)+T(\gamma \eta \delta)-T(\gamma \eta)T(\delta)-T(\eta \delta)T(\gamma)-T(\delta \gamma)T(\eta)+T(\gamma)T(\delta)T(\eta)=0,$
		for any $\delta, \gamma,\eta\in G$.
		
		The \textit{determinant} $\operatorname{det}(T)$ of a pseudo-representation $T$ is a character (using 3)) defined by $$ \operatorname{det}(T): G \to R^\times, ~
		\operatorname{det}(T)(\sigma)=\frac{1}{2}(T(\sigma)^2-T(\sigma^2)), \sigma \in G.$$
		
		If there exists an order $2$ element $c \in G$ such that $T(c)=0$, then for a pseudo-representation $T$ and $\sigma, \tau \in G$,  we define:
		
		1) $a(\sigma)=\frac{1}{2}(T(c\sigma)+T(\sigma))$,
		
		2) $d(\sigma)=\frac{1}{2}(-T(c\sigma)+T(\sigma))$,
		
		3) $y(\sigma, \tau)=a(\sigma \tau)-a(\sigma)a(\tau).$
	\end{defn}

	If $\rho: G \to \operatorname{GL}_2(R)$ is a continuous representation with an element $c \in G$ satisfying $\rho(c)=\begin{pmatrix}
		1 & ~\\
		~& -1
	\end{pmatrix}
	$, then $\operatorname{tr}(\rho)$ is a pseudo-representation. More explicitly, if $\rho(\sigma)=\begin{pmatrix}
		a_\sigma & b_\sigma\\
		c_\sigma& d_\sigma
	\end{pmatrix}$, then $a(\sigma)=a_\sigma$, $d(\sigma)=d_\sigma$ are as defined above and $y(\sigma, \tau)=b_\sigma c_\tau$.
	
	Pan \cite[2.1.4]{pan2022fontaine} verifies that the continuous functions $\{a,d,y\}$ satisfy the following equations.
	
	(1) $y(\sigma, \tau)=d(\tau\sigma)-d(\tau)d(\sigma)$.
	
	(2) $y(\sigma\tau, \delta)=a(\sigma)y(\tau, \delta)+y(\sigma, \delta)d(\tau)$.
	
	(3) $y(\sigma, \tau\delta)=a(\delta)y(\sigma, \tau)+y(\sigma, \delta)d(\tau)$.
	
	(4) $y(\alpha, \beta)y(\sigma, \tau)=y(\alpha, \tau)y(\sigma, \beta)$.\\
	
	The following result is clear from the last equation.
	
	\begin{prop}\label{u}\cite[Proposition 2.1.2]{Zhang2024}
		Let $R$ be an integral domain, and $T: G \to R$ be a pseudo-representation for a profinite group $G$. Let $\alpha, \beta $ be two elements in $ G$.
		
		1) If $y(\alpha, \beta) =0$ in $R$, then either $y(\alpha, \tau)=0$ or $y(\tau, \beta)=0$ for any $\tau \in G$.
		
		2) Assume that the set $J_{\beta} =\{\tau \in G : y(\beta, \tau)\ne 0\}$ (resp. $J'_{\beta} =\{\tau \in G : y(\tau, \beta) \ne 0\}$) is non-empty. For $\tau \in J_{\beta}$ (resp. $J'_{\beta}$), let $u(\alpha, \beta)(\tau)= \frac{y(\alpha, \tau)}{y(\beta, \tau)}$ (resp. $u'(\alpha, \beta)(\tau)=\frac{y(\tau, \alpha)}{y(\tau, \beta)}$) be an element in the fraction field of $R$. Then we have $u(\alpha, \beta)(\tau)=u(\alpha, \beta)(\tau')$ (resp. $u'(\alpha, \beta)(\tau)=u'(\alpha, \beta)(\tau')$)  for any $\tau, \tau' \in J_\beta$ (resp. $J'_{\beta}$). (Afterwards, we will use notations $u(\alpha, \beta)$ and $u'(\alpha, \beta)$ for simplicity if they are well-defined.)
	\end{prop}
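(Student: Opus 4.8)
The statement to prove is Proposition~\ref{u}, which is attributed to \cite[Proposition 2.1.2]{Zhang2024}; the paper itself declares it ``clear from the last equation'', i.e.\ from the relation $y(\alpha,\beta)y(\sigma,\tau)=y(\alpha,\tau)y(\sigma,\beta)$ valid for all $\alpha,\beta,\sigma,\tau\in G$. My plan is to extract everything purely formally from this single identity, using only that $R$ is an integral domain.

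\smallskip

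\textbf{Part 1.} Suppose $y(\alpha,\beta)=0$. Fix an arbitrary $\tau\in G$. Apply the quadrilateral identity with the four arguments $\alpha,\beta$ in the first two slots and $\tau,\beta$ playing the role of the remaining pair; more precisely, specialize the relation $y(a,b)y(c,d)=y(a,d)y(c,b)$ to $a=\alpha$, $b=\tau$, $c=\sigma$, $d=\beta$ for a third element $\sigma$. The cleanest choice: set $\sigma=\alpha$ and use $y(\alpha,\beta)y(\alpha,\tau)$ versus $y(\alpha,\tau)y(\alpha,\beta)$ — that is trivial, so instead I take $a=\alpha,b=\beta,c=\tau',d=\tau$ to get $y(\alpha,\beta)y(\tau',\tau)=y(\alpha,\tau)y(\tau',\beta)$. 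The left side is $0$, hence $y(\alpha,\tau)y(\tau',\beta)=0$ for all $\tau,\tau'\in G$. Now since $R$ is a domain, for each pair $(\tau,\tau')$ one of the two factors vanishes. If there exists $\tau_0$ with $y(\alpha,\tau_0)\neq 0$, then $y(\tau',\beta)=0$ for all $\tau'$, which gives the second alternative; otherwise $y(\alpha,\tau)=0$ for all $\tau$, the first alternative. Renaming the free variable gives exactly the dichotomy in statement~1).

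\smallskip

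\textbf{Part 2.} Now assume $J_\beta=\{\tau\in G: y(\beta,\tau)\neq0\}$ is non-empty, and take $\tau,\tau'\in J_\beta$. The quadrilateral identity with $a=\alpha,b=\tau,c=\beta,d=\tau'$ reads $y(\alpha,\tau)y(\beta,\tau')=y(\alpha,\tau')y(\beta,\tau)$. Since $y(\beta,\tau)$ and $y(\beta,\tau')$ are nonzero elements of the domain $R$, I may divide in the fraction field $\operatorname{Frac}(R)$, obtaining
\[
\frac{y(\alpha,\tau)}{y(\beta,\tau)}=\frac{y(\alpha,\tau')}{y(\beta,\tau')},
\]
i.e.\ $u(\alpha,\beta)(\tau)=u(\alpha,\beta)(\tau')$, proving well-definedness of $u(\alpha,\beta)$. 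The assertion for $u'(\alpha,\beta)$ is the mirror image: for $\tau,\tau'\in J'_\beta$, apply the identity in the form $y(\tau,\alpha)y(\tau',\beta)=y(\tau',\alpha)y(\tau,\beta)$ — this is the quadrilateral relation with the roles of the left and right arguments swapped, which holds because the relation $y(a,b)y(c,d)=y(a,d)y(c,b)$ is symmetric under simultaneously transposing the pairs $(a,c)$ and $(b,d)$. Dividing by the nonzero elements $y(\tau,\beta),y(\tau',\beta)$ yields $u'(\alpha,\beta)(\tau)=u'(\alpha,\beta)(\tau')$.

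\smallskip

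There is essentially no obstacle here: the whole proposition is a formal consequence of one bilinear-type identity together with the absence of zero divisors in $R$. The only point requiring a moment's care is bookkeeping in Part~1 — making sure the specialization of the four-variable identity is the one that puts a factor $y(\alpha,\beta)$ on one side so that the hypothesis forces the product on the other side to vanish — and then invoking integrality to split the product. I expect the written proof to be a few lines, consisting of choosing the right substitution in equation~(4) above and reading off the consequences.
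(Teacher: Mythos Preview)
Your proof is correct and follows exactly the approach the paper indicates: both parts are immediate from the identity $y(\alpha,\beta)y(\sigma,\tau)=y(\alpha,\tau)y(\sigma,\beta)$ together with the fact that $R$ is a domain. The paper's own ``proof'' is just the sentence that the result is clear from this last equation, and your specializations are the natural ones.
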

	
	\begin{defn}\label{construction}\cite[2.1.5]{pan2022fontaine}
		Assume that $R$  is either a field or a DVR. For a pseudo-representation $T: G\to R$, a representation $\rho$ associated to $T$ is in the following form.
		
		1) $ \rho(\sigma)=
		\begin{pmatrix}
			a(\sigma) & ~\\
			~& d(\sigma)
		\end{pmatrix}$, if all $y(\sigma, \tau)=0$. We call this case \textit{reducible}.
		
		2) Choose $\sigma_0, \tau_0$ such that $\frac{y(\sigma, \tau)}{y(\sigma_0, \tau_0)} \in R$ for any $ \sigma, \tau$, if $y(\sigma, \tau) \ne 0 $ for some $ \sigma, \tau$. Define $ \rho(\sigma)=
		\begin{pmatrix}
			a(\sigma) & \frac{y(\sigma, \tau_0)}{y(\sigma_0, \tau_0)}\\
			y(\sigma_0, \sigma)& d(\sigma)
		\end{pmatrix}$.  We call this case \textit{irreducible}.
	\end{defn}
	
	The next result is helpful to study the representation constructed in the previous definition.
	
	\begin{prop}\label{innovation'}
		Let $R$ be a CNL domain with maximal ideal $\mathfrak{m}_R$ and residue field $\F$ satisfying $\operatorname{char} \F=p >0$. Let $G$ be a profinite group and $T: G \to R$ be a pseudo-representation of $G$. Assume that $T~\operatorname{mod}~\mathfrak{m}_R$ is reducible. Let $S$ be a finite subset (not necessarily a group) of $G$. Then there exist a partition of $S = S_1 \amalg S_2$, a positive integer $n>1$ satisfying $(n, p)=1$ and a CNL domain $R'$ satisfying the following conditions.
		
		a) $R'$ is a quotient of $R$.
		
		b) For any $\theta \in S_1$, we have $y(\theta, \alpha)=0$ for any $\alpha \in G$ in $R'$. For any $\theta', \theta'' \in S_2$, we have $y(\theta', \alpha)^n=y(\theta'', \alpha)^n$ for any $\alpha \in G$ in $R'$.
		
		c) For any $\alpha \in G,~\theta \in S_2$, either $ y(\theta, \beta)=0$ for any $\beta \in G$ in $R'$ or $u(\alpha, \theta)$ is well-defined and integral over $R'$.
		
		d) We have $\operatorname{dim} R' \ge \operatorname{dim} R - |S|$. If $S_2$ is not empty, then further $\operatorname{dim} R' \ge \operatorname{dim} R - |S|+1$.
		
	\end{prop}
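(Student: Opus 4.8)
The plan is to cut $\Spec R$ down repeatedly along height-one primes until every element of $S$ is either ``null'' or ``minimal'', and then to rigidify the ratios among the surviving minimal ones. Throughout I use that, $T\bmod\mathfrak{m}_R$ being reducible, every value $y(\sigma,\tau)$ (over $R$, or over any quotient) lies in the maximal ideal. For a CNL domain $A$ which is a quotient of $R$, I call $\theta\in G$ \emph{null over $A$} if $y(\theta,\tau)=0$ in $A$ for all $\tau$, and \emph{minimal over $A$} if it is not null and $u(\alpha,\theta)$ is defined and integral over $A$ for every $\alpha\in G$. I also allow myself to pass from such an $A$ to its normalization $\widetilde A$ and later descend: since $A$ is excellent, $\widetilde A$ is a finite normal CNL domain with $\dim\widetilde A=\dim A$, and if $\widetilde A'$ is a CNL-domain quotient of $\widetilde A$, then the image of $A$ in $\widetilde A'$ is a CNL-domain quotient of $A$ of dimension $\dim\widetilde A'$ over which null-ness, minimality, the equalities of (b) and the integrality of (c) are all inherited from $\widetilde A'$.

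The key step is a cutting lemma: if $\theta$ is not null but not minimal over $A$, then there is a prime $\mathfrak p\subset A$ with $\hht(\mathfrak p)=1$, hence $\dim A/\mathfrak p=\dim A-1$, over which $\theta$ becomes null. I would prove it inside $\widetilde A$: pick $\alpha_0$ with $u(\alpha_0,\theta)\notin\widetilde A$, so by normality $\ord_{\mathfrak q}(u(\alpha_0,\theta))<0$ for some height-one prime $\mathfrak q$ of $\widetilde A$; then for $\tau\in J_\theta$ (with $J_\theta=\{\tau:y(\theta,\tau)\ne 0\}$) one has $\ord_{\mathfrak q}(y(\theta,\tau))=\ord_{\mathfrak q}(y(\alpha_0,\tau))-\ord_{\mathfrak q}(u(\alpha_0,\theta))\ge 1$ (note $y(\alpha_0,\tau)\in\widetilde A$ is nonzero), and $y(\theta,\tau)=0$ for $\tau\notin J_\theta$, so every $y(\theta,\tau)$ lies in $\mathfrak q$; take $\mathfrak p=\mathfrak q\cap A$, the dimension statement following because $\widetilde A/\mathfrak q$ is finite over $A/\mathfrak p$ and a complete local domain is equidimensional and catenary.

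Granting this, I run the loop: starting from $A=R$, while some $\theta\in S$ is not null but not minimal over $A$, replace $A$ by $A/\mathfrak p$ as in the lemma. As null-ness is preserved and each step turns a non-null element of $S$ null, the loop stops after $k\le|S|$ steps at a CNL domain $R^{*}$ with $\dim R^{*}=\dim R-k$, over which each element of $S$ is null or minimal. Put $S_2=\{\theta\in S:\theta\text{ minimal over }R^{*}\}$ and $S_1=S\setminus S_2$ (so $S_1$ is null over $R^{*}$ and $k+|S_2|\le|S|$). If $S_2=\varnothing$, take $R'=R^{*}$ and $n$ any prime $\ne p$; (a)--(c) are clear and $\dim R'=\dim R-k\ge\dim R-|S|$ gives (d). If $S_2\ne\varnothing$, fix $\theta_0\in S_2$: minimality over $R^{*}$ forces each $u(\theta,\theta_0)$ ($\theta\in S_2$) into $(\widetilde{R^{*}})^{\times}$ (its inverse $u(\theta_0,\theta)$ being integral too). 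For each $\theta\in S_2\setminus\{\theta_0\}$ I pass to the quotient of $\widetilde{R^{*}}$ by a minimal prime over $u(\theta,\theta_0)-[\overline{u(\theta,\theta_0)}]$, $[\,\cdot\,]$ the Teichmüller lift (the residue fields here being finite), making $\overline{u(\theta,\theta_0)}$ a root of unity of some order $n_\theta$ prime to $p$ at the cost of at most $1$ in dimension each; descending the resulting domain $B$ to the image $R'$ of $R^{*}$, one gets $\dim R'=\dim B\ge\dim R^{*}-(|S_2|-1)=\dim R-k-|S_2|+1\ge\dim R-|S|+1$, so (d) holds. Taking $n=\operatorname{lcm}_{\theta\in S_2}n_\theta$ (or any prime $\ne p$ if this is $1$), one has $u(\theta,\theta_0)^{n}=1$ over $R'$, hence $u(\theta',\theta'')^{n}=1$ and $y(\theta',\alpha)^{n}=y(\theta'',\alpha)^{n}$ for $\theta',\theta''\in S_2$; with the null-ness of $S_1$ this gives (b), and (c) holds since for $\theta\in S_2$ either $\theta$ is null over $R'$ (first alternative) or, being non-null, each $u(\alpha,\theta)$ is the reduction of an element of $\widetilde{R^{*}}$ integral over $R^{*}$, hence integral over $R'$.

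The substance is concentrated in the cutting lemma — the observation that a single pole of one ratio $u(\alpha_0,\theta)$ suffices to push all of $y(\theta,\cdot)$ into a single height-one prime — and in the bookkeeping, routine but fiddly, that passing to normalizations and descending again preserves CNL-domain-ness, dimension, and the notions of null and minimal; after that the loop and the Teichmüller rigidification are straightforward.
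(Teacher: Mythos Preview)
Your argument is correct. The paper does not give its own proof of this proposition; it simply cites \cite[Corollary 2.1.7]{Zhang2024}, so a line-by-line comparison is not possible. That said, your approach---a ``cutting lemma'' that uses the pole of a single ratio $u(\alpha_0,\theta)$ at a height-one prime of the normalization to force all of $y(\theta,\cdot)$ into that prime, followed by an inductive loop over $S$ and then a Teichm\"uller rigidification to pin down the ratios $u(\theta,\theta_0)$ among the surviving minimal elements---is exactly the natural strategy and matches the spirit of how this result is used downstream (see the proof of Lemma~\ref{find nice}). Two small remarks: (i) the bookkeeping inequality $k+|S_2|\le |S|$ is justified because each cut makes a distinct element of $S$ null, so $k\le |S_1|$; (ii) in the Teichm\"uller step it is cleanest to view each $u(\theta_i,\theta_0)-[\overline{u(\theta_i,\theta_0)}]$ as a fixed element of $\widetilde{R^*}$ and cut on its successive images, so the procedure remains well-defined even if $\theta_0$ happens to become null along the way (in which case all of $S_2$ becomes null and conditions (b), (c) hold trivially).
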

	
	\begin{proof}
		This is \cite[Corollary 2.1.7]{Zhang2024}.
	\end{proof}
	
	 Now suppose the group $G$ satisfies Mazur's finiteness condition $\Phi_p$ (see \cite[2.1.2 Assumption 1]{pan2022fontaine}). 
	 
	 Let $\mathcal{O}$ be a complete DVR of characteristic $0$ with residue field $\mathbb{F}$. Let $T_\F: G \to \mathbb{F}$ be a $2$-dimensional pseudo-representation. In our case, the functor sending each object $R$ of $\mathfrak{U}_{\mathcal{O}}$ to the set of pseudo-representations $T: G \to R$ which lift $T_\F$ is pro-represented by a CNL $\cO$-algebra $R^{\ps}_{T_\F}$.
	 
	Suppose $\kp$ is a one-dimensional prime of $R^{\ps}_{T_\F}$. Let $T(\kp) $ be the pseudo-deformation associated to the prime $\kp$. Consider the functor $D_\kp$ from the category of Artinian local $\kkp$-algebras with residue field $\kkp$ to the category of sets which sends $A$ to the set of $2$-dimensional pseudo-deformations over $A$ lifting $T(\kp)$. Using Definition \ref{construction}, we can construct a representation $ \rho(\kp) : G \to \GL_2(\kkp)$ associated to $T(\kp) \otimes \kkp$.
	 
	\begin{prop} \label{Dkp}
		1) The deformation problem $D_\kp$ is pro-represented by $\widehat{(R^{\ps}_{T_\F})_\kp}$ with universal pseudo-representation $G \to R^{\ps}_{T_\F}\to \widehat{(R^{\ps}_{T_\F})_\kp}$. 
		
		2) Suppose that $ \rho(\kp) $ is absolutely irreducible. Let $D_{\rho(\kp)}$ be the functor from the category of Artinian local $\kkp$-algebras with residue field $\kkp$ to the category of sets which sends $A$ to the set of deformations of $\rho(\kp)$ to $A$. Then $D_{\rho(\kp)}$ is pro-represented by $\widehat{(R^{\ps}_{T_\F})_\kp}$.
	\end{prop}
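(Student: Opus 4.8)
The plan is to deduce 1) from the universal property of $R^{\ps}_{T_\F}$ together with the standard fact that completing a Noetherian ring at a prime pro-represents the deformation functor at that point, and then to deduce 2) from 1) via the classical equivalence, for an absolutely irreducible residual representation, between deformations of the representation and pseudo-deformations of its trace.

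For 1), write $R=R^{\ps}_{T_\F}$, $\kappa=\kkp$ and $B=\widehat{R_\kp}$, so that $B$ is a complete Noetherian local ring with maximal ideal $\kp B$ and residue field $\kappa$; let $x_\kp\colon R\to R/\kp\hookrightarrow\kappa$ be the tautological point attached to $\kp$. Composing the universal pseudo-representation $G\to R$ with $R\to B$ gives a pseudo-representation $T_B\colon G\to B$ whose reduction modulo $\kp B$ is $T(\kp)\otimes\kappa$, hence a natural transformation $\phi\mapsto\phi\circ T_B$ from the functor of local homomorphisms $B\to A$ inducing the identity on residue fields to $D_\kp(A)$, for $A$ an Artinian local $\kappa$-algebra with residue field $\kappa$. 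I claim it is bijective. First, since $A$ is Artinian local, giving such a homomorphism $B\to A$ is the same as giving an $\cO$-algebra map $\psi\colon R\to A$ with $\psi\bmod\mathfrak{m}_A=x_\kp$: indeed such a $\psi$ satisfies $\psi^{-1}(\mathfrak{m}_A)=\kp$ (as $\mathfrak{m}_A$ is the unique prime of $A$), hence carries $R\setminus\kp$ into $A^\times$ and, $\mathfrak{m}_A$ being nilpotent, factors uniquely through $B$; conversely one restricts. Second, by the universal property of $R^{\ps}_{T_\F}$ — in the geometric form, provided by the construction of the pseudo-deformation ring recalled above, that for a complete Noetherian local $\cO$-algebra $A$ with residue field $\kappa'$ the $\cO$-algebra maps $R\to A$ are exactly the pseudo-deformations $G\to A$ reducing to $T_\F\otimes\kappa'$ — a pseudo-deformation $T_A\colon G\to A$ of $T(\kp)$ is $\psi\circ T_B$ for a unique $\psi\colon R\to A$, and the condition $T_A\otimes\kappa=T(\kp)$ forces $\psi\bmod\mathfrak{m}_A=x_\kp$. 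Combining the two identifications gives 1).

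For 2), assume $\rho(\kp)$ is absolutely irreducible. By Definition \ref{construction}(2) fix $\sigma_0,\tau_0\in G$ with $y(\sigma_0,\tau_0)\ne 0$ in $\kappa$, and take $\rho(\kp)$ to be the representation built from $T(\kp)\otimes\kappa$ with this choice. For an Artinian local $\kappa$-algebra $A$ and $T_A\in D_\kp(A)$, the element $y(\sigma_0,\tau_0)\in A$ reduces to a nonzero element of $\kappa$ and is therefore a unit; so the formulas of Definition \ref{construction}(2) make sense over $A$, and the relations $(1)$--$(4)$ for $\{a,d,y\}$ recalled above guarantee multiplicativity, producing $\rho_{T_A}\colon G\to\GL_2(A)$ with $\tr\rho_{T_A}=T_A$ and $\rho_{T_A}\bmod\mathfrak{m}_A=\rho(\kp)$. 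This yields a natural transformation $D_\kp\to D_{\rho(\kp)}$, $T_A\mapsto\rho_{T_A}$, with $\rho_A\mapsto\tr\rho_A$ in the opposite direction. That the two are mutually inverse is the assertion that two liftings of $\rho(\kp)$ to $A$ with equal traces are conjugate by an element of $1+M_2(\mathfrak{m}_A)$: this is the classical fact (Carayol, Nyssen, Rouquier) that over an Artinian local ring a representation with absolutely irreducible reduction is determined up to conjugacy by its trace, the conjugating matrix being forced into $1+M_2(\mathfrak{m}_A)$ because the centralizer of $\rho(\kp)$ is scalar; one proves it by successive approximation along a filtration of $\mathfrak{m}_A$. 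Hence $D_{\rho(\kp)}\cong D_\kp$, and 2) follows from 1).

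The main obstacle is pinning down the exact universal property invoked in 1): one must know that $R^{\ps}_{T_\F}$ continues to represent the pseudo-deformation functor after the (possibly infinite, possibly characteristic-changing) residue extension $R\to\kappa$ attached to $\kp$, which is precisely where the finiteness hypothesis $\Phi_p$ and the general theory of pseudo-deformation rings recalled above are used. In 2) the only non-formal ingredient is the conjugacy lemma for lifts of the absolutely irreducible $\rho(\kp)$, which is standard; once it is granted, both statements are formal.
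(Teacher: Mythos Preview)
Your approach is the standard one and is essentially what Pan does in the cited reference. Part 2) via Carayol/Nyssen/Rouquier is correct.

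There is, however, a genuine imprecision in part 1) that you should repair. You invoke a ``geometric form'' of the universal property asserting that $\cO$-algebra maps $R\to A$ correspond to pseudo-deformations $G\to A$ reducing to $T_\F\otimes\kappa'$. But in the situation at hand $T_A\bmod\mathfrak m_A=T(\kp)$, and $T(\kp)$ is \emph{not} $T_\F\otimes k(\kp)$: the point $\kp$ is not the closed point of $\Spec R$, so the specialization $T(\kp)$ is a genuinely different pseudo-representation over $k(\kp)$ (indeed, when $k(\kp)$ has characteristic~$0$ the expression $T_\F\otimes k(\kp)$ does not even make sense). So as stated, the extended universal property you quote does not apply.

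What actually makes the argument work is the more basic fact that $R$ is topologically generated as an $\cO$-algebra by the values $T^{univ}(g)$, $g\in G$, and that the only relations among these are those forced by the pseudo-representation axioms. This gives, for any suitable topological $\cO$-algebra $A$, a bijection between continuous $\cO$-algebra homomorphisms $R\to A$ and continuous two-dimensional pseudo-representations $G\to A$ (no residual condition needed on the $A$-side). Granting this, your reduction to maps $R\to A$ lifting $x_\kp$ goes through exactly as you wrote, and the identification with local homomorphisms $\widehat{R_\kp}\to A$ is correct. You flag this as the ``main obstacle'' in your final paragraph, which is right; the fix is to state and use this generation property (which is how Pan argues) rather than the incorrectly phrased residual condition.
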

	 
	 \begin{proof}
	 	See \cite[Proposition 2.2.1 \& Corollary 2.2.2]{pan2022fontaine}.
	 \end{proof}
	 
	\subsection{Local pseudo-deformation rings}\label{2.3} In this subsection, we collect some results about local universal pseudo-deformation rings.
	
	Let $p$ be a prime. Let $F_v$ be a finite extension of $\mathbb{Q}_p$. Let $K$ be an another finite extension of $\mathbb{Q}_p$ containing all the roots of unity of $p$-power order in $F_v$, with its ring of integers $\mathcal{O}$, a uniformizer $\varpi$ and its residue field $\mathbb{F}$. Let $\chi :G_{F_v} \to \mathcal{O}^\times$ be a continuous character.
	
	Let $\bar{D}= 1+\bar{\chi}$ be a pseudo-representation. Let $D^{\operatorname{ps}}: \mathfrak{U}_{\mathcal{O}} \to \operatorname{Sets}$ be the functor such that for $ (A, \mathfrak{m}_A) \in \mathfrak{U}_{\mathcal{O}}$, $ D^{\operatorname{ps}}(A)$ is the set of continuous $A$-valued $2$-dimensional pseudo-representation $D : G_{F_v} \to A$ satisfying $ (D \mod \mathfrak{m}_A) = \bar{D}$. The functor is pro-represented by a CNL $\mathcal{O}$-algebra $(R_v^{\operatorname{ps}}, \mathfrak{m}_{R_v^{\operatorname{ps}}})$. Note that in our definition of $ R_v^{\operatorname{ps}}$, we do not fix the determinant of the pseudo-representation.
	
	\begin{prop} \label{dim of ps}
		Assume that $\bar{D}$ is multiplicity free, i.e., $\bar{\chi}$ is non-trivial.
		
		1) The ring $R_v^{\operatorname{ps}}$ is reduced and $\mathcal{O}$-torsion free.
		
		2) The ring $R_v^{\operatorname{ps}}/(\varpi)$ is equidimensional of dimension $4[F_v: \mathbb{Q}_p]+1$.
		
		3) Let $R_v^{\ps, \chi}$ be the universal pseudo-deformation ring parametrizing all pseudo-deformations of $\bar{D}$ with determinant $\chi$. Then $R_v^{\ps, \chi}$ is an $\mathcal{O}$-torsion free integral domain.
	\end{prop}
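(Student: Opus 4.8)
\emph{Overview of the plan.} The plan is to identify $R_v^{\operatorname{ps}}$ with a ring of conjugation-invariant functions on a framed deformation space and then to import the geometry of the framed rings recorded in Proposition \ref{dim of loc framed der ring} and, in the generality we need, in \cite{B_ckle_2023} and \cite{Bockle_2023}. Fix the split lift $\bar{\rho}=\mathbf{1}\oplus\bar{\chi}$ of $\bar{D}$, write $R_v^{\square}$ for its framed deformation ring with determinant $\chi$ (the ring of Proposition \ref{dim of loc framed der ring}), and write $R_v^{\square,\circ}$ for its framed deformation ring with \emph{unconstrained} determinant. Conjugation by $\widehat{\GL}_2$, which factors through $\widehat{\PGL}_2$, acts on $\Spec R_v^{\square}$ and on $\Spec R_v^{\square,\circ}$, and $\sigma\mapsto\tr\rho^{\mathrm{univ}}(\sigma)$ induces $\mathcal{O}$-algebra maps $R_v^{\operatorname{ps}}\to (R_v^{\square,\circ})^{\widehat{\PGL}_2}$ and $R_v^{\operatorname{ps},\chi}\to (R_v^{\square})^{\widehat{\PGL}_2}$. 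Since $\bar{D}$ is multiplicity free, the theory of two-dimensional pseudo-representations together with the GIT analysis of the cited works shows that these maps are isomorphisms; this identification is the backbone of the argument, and the three assertions then follow by transporting properties through the quotient.

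\emph{Part 3).} Granting the identification, part 3) is immediate: by Proposition \ref{dim of loc framed der ring} the ring $R_v^{\square}$ is a normal integral domain, flat over $\mathcal{O}$. A subring of an integral domain is an integral domain, and a subring of an $\mathcal{O}$-flat ring is $\mathcal{O}$-torsion free, so $R_v^{\operatorname{ps},\chi}\cong (R_v^{\square})^{\widehat{\PGL}_2}$ is an $\mathcal{O}$-torsion-free integral domain (in fact normal, being the invariants of a normal domain).

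\emph{Parts 1) and 2).} Here it is convenient to peel off the determinant. As $p$ is odd, the twist sending $(\rho,\det\rho)$ to $(\rho\otimes\eta^{-1},\det\rho)$, where $\eta$ is the unique square root of $(\det\rho)\chi^{-1}$ reducing to $\mathbf{1}$, gives an isomorphism $R_v^{\square,\circ}\cong R_v^{\square}\,\widehat{\otimes}_{\mathcal{O}}\,\Lambda_v$, where $\Lambda_v=\mathcal{O}[[G_{F_v}^{\mathrm{ab}}(p)]]$ is the universal deformation ring of the character $\bar{\chi}$. By local class field theory $G_{F_v}^{\mathrm{ab}}(p)$ is the product of $\mathbb{Z}_p^{[F_v:\mathbb{Q}_p]+1}$ with a finite $p$-group that is trivial exactly when $\mu_p\not\subset F_v$, so $\Lambda_v$ is reduced, $\mathcal{O}$-flat, of relative dimension $[F_v:\mathbb{Q}_p]+1$, and is a domain precisely when $\mu_p\not\subset F_v$ (which is the reason $R_v^{\operatorname{ps}}$ need not be a domain). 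Combined with Proposition \ref{dim of loc framed der ring}, $R_v^{\square,\circ}$ is reduced, $\mathcal{O}$-flat, with $R_v^{\square,\circ}/(\varpi)$ equidimensional of dimension $3([F_v:\mathbb{Q}_p]+1)+([F_v:\mathbb{Q}_p]+1)=4[F_v:\mathbb{Q}_p]+4$. Passing to $\widehat{\PGL}_2$-invariants, reducedness and $\mathcal{O}$-torsion-freeness are inherited by the subring $R_v^{\operatorname{ps}}$, which is part 1); and since $\widehat{\PGL}_2$ acts freely on the dense open locus where $\rho^{\mathrm{univ}}$ is absolutely irreducible — there the centralizer is reduced to the scalars — the quotient lowers dimension by $\dim\widehat{\PGL}_2=3$, so that $\dim R_v^{\operatorname{ps}}/(\varpi)=4[F_v:\mathbb{Q}_p]+4-3=4[F_v:\mathbb{Q}_p]+1$, with equidimensionality surviving because the non-free locus has codimension large enough that it produces no component of its own in the quotient.

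\emph{The main obstacle.} The delicate point is exactly this last step: that the $\widehat{\PGL}_2$-quotient faithfully records reducedness, $\mathcal{O}$-flatness, equidimensionality and the correct Krull dimension in characteristic $p$ — in particular that traces generate the invariants in this mixed residual situation and that the reducible locus contributes no spurious low-dimensional component to $\Spec R_v^{\operatorname{ps}}$. This is precisely the kind of statement established, in greater generality, in the recent analyses of local pseudo-deformation rings in \cite{B_ckle_2023} and \cite{Bockle_2023}; in practice I would invoke their theorems for the geometry of $R_v^{\operatorname{ps}}$ and use the framed description of Proposition \ref{dim of loc framed der ring} chiefly to pin down the numerical invariants and to obtain part 3).
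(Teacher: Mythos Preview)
Your proposal is correct and coincides with the paper's approach: the paper simply cites \cite[Cor.~4.28]{B_ckle_2023}, \cite[Thm.~5.5.1]{Bockle_2023}, and \cite[Cor.~5.11]{B_ckle_2023} for parts 1), 2), 3) respectively, while you unpack the GIT/invariant-theory picture underlying those results before ultimately invoking the same references for the hard steps (the identification $R_v^{\ps}\cong (R_v^{\square,\circ})^{\widehat{\PGL}_2}$ and the equidimensionality of the special fibre under the quotient). Your subring-of-a-domain argument for part 3) is clean and correct, but as you yourself note in the final paragraph, the identification it rests on is exactly what the cited works establish, so this is added exposition rather than an independent proof.
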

	
	\begin{proof}
		For 1), see \cite[Corollary 4.28]{B_ckle_2023}. For 2), see \cite[Theorem 5.5.1]{Bockle_2023}. For 3), see \cite[Corollary 5.11]{B_ckle_2023}.
	\end{proof}
	
	For the rest of this subsection, we assume $p=3$, $F_v=\mathbb{Q}_p$ and $\bar{\chi}=\omega=\omega^{-1}$.
	
	Let $R_v^{\operatorname{ps, ord}}$ be the quotient of $R_v^{\operatorname{ps}}$ parametrizing all reducible liftings of $1+\bar{\chi}$, i.e., $y(\sigma, \tau)=0$ for any $\sigma, \tau \in G_{F_v}$.
	
	\begin{prop}\label{dim of ps ord}
		1) The ring $R_v^{\operatorname{ps}}$ is of dimension $6$.
		
		2) The ring $R_v^{\operatorname{ps, ord}}$ is a power series ring over $\mathcal{O}$ of relative dimension $4$.
	\end{prop}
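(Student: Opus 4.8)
The plan is to treat the two statements separately, using the structural results already recorded in this subsection together with local class field theory.

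For part 1), I would simply combine the results already in hand. Since $\bar{\chi}=\omega$ is non-trivial, $\bar{D}$ is multiplicity free, so Proposition \ref{dim of ps} applies: $R_v^{\operatorname{ps}}$ is $\mathcal{O}$-torsion free, and $R_v^{\operatorname{ps}}/(\varpi)$ is equidimensional of dimension $4[F_v:\mathbb{Q}_p]+1=5$. As $\varpi$ is a nonzerodivisor in the noetherian local ring $R_v^{\operatorname{ps}}$, it lies in no minimal prime, so $\dim R_v^{\operatorname{ps}}/(\varpi)=\dim R_v^{\operatorname{ps}}-1$, whence $\dim R_v^{\operatorname{ps}}=6$. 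This part is essentially immediate.

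For part 2), the plan is to recognize the ordinary (i.e.\ reducible) pseudo-deformation functor as a product of two character deformation functors. Concretely, for $A\in\mathfrak{U}_{\mathcal{O}}$ I would show that a reducible pseudo-deformation $D:G_{\mathbb{Q}_p}\to A$ of $\bar{D}=\mathbf{1}+\omega$ amounts to an ordered pair $(\psi_1,\psi_2)$ of continuous characters of $G_{\mathbb{Q}_p}$ lifting $\mathbf{1}$ and $\omega$ respectively, via $D=\psi_1+\psi_2$. In one direction, when $y(\sigma,\tau)=0$ for all $\sigma,\tau$ the relations $(1)$--$(4)$ recorded after Definition \ref{pseudo} force $a$ and $d$ to be continuous homomorphisms and give $D=a+d$; their reductions are the two distinct characters $\mathbf{1},\omega$ (here $p=3$, so $\omega$ is the nontrivial quadratic character), so the pair $\{a,d\}$ is unambiguously ordered. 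Conversely, any pair $(\psi_1,\psi_2)$ as above yields the reducible pseudo-representation $\psi_1+\psi_2=\operatorname{tr}(\psi_1\oplus\psi_2)$, the normalization $D(c)=0$ at the chosen order-two element $c$ being automatic since $p\ne 2$ forces $\psi_1(c)=1$ and $\psi_2(c)=\omega(c)=-1$. Checking that this correspondence is functorial in $A$ then identifies $R_v^{\operatorname{ps, ord}}$ with $R^{\mathbf{1}}\,\widehat{\otimes}_{\mathcal{O}}\,R^{\omega}$, where $R^{\bar\psi}$ denotes the universal deformation ring of the character $\bar\psi:G_{\mathbb{Q}_p}\to\mathbb{F}^\times$. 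It then remains to compute $R^{\bar\psi}$: since $\mathbb{F}^\times$ has order prime to $p$, the Teichm\"uller lift of $\bar\psi$ splits off, so a deformation of $\bar\psi$ is the same as a continuous homomorphism $G_{\mathbb{Q}_p}\to 1+\mathfrak{m}_A$, i.e.\ a continuous homomorphism out of the maximal abelian pro-$p$ quotient $\Delta$ of $G_{\mathbb{Q}_p}$. By local class field theory $\Delta\cong\mathbb{Z}_p^{2}$ (equivalently $\dim_{\mathbb{Q}_p}H^1(G_{\mathbb{Q}_p},\mathbb{Q}_p)=1+[\mathbb{Q}_p:\mathbb{Q}_p]=2$ by the local Euler characteristic formula, and $H^1(G_{\mathbb{Q}_p},\mathbb{Z}_p)$ is torsion free), so $R^{\bar\psi}\cong\mathcal{O}[[\Delta]]\cong\mathcal{O}[[X_1,X_2]]$, a power series ring of relative dimension $2$. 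Hence $R_v^{\operatorname{ps, ord}}\cong\mathcal{O}[[X_1,X_2]]\,\widehat{\otimes}_{\mathcal{O}}\,\mathcal{O}[[X_3,X_4]]\cong\mathcal{O}[[X_1,X_2,X_3,X_4]]$ is a power series ring of relative dimension $4$, as desired.

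The step I expect to demand the most care is the functorial identification of reducible pseudo-deformations with pairs of characters: one must verify an isomorphism of deformation functors, not merely a bijection on points, keeping track of the pseudo-representation axioms (in particular the vanishing at $c$) and of base change in $A$. I would carry this out by unwinding Definition \ref{pseudo} and the cocycle relations exactly as in the reducible case of Definition \ref{construction}. Alternatively, one could read the answer off from the explicit analysis of these local pseudo-deformation rings at $p=3$ available in \cite{B_ckle_2023} and \cite{Bockle_2023}.
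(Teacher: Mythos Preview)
Your argument for part 1) is correct and is exactly the paper's.

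For part 2) you take a genuinely different route. The paper also begins with the surjection $R:=R_1\widehat\otimes_{\cO}R_{\bar\chi}\twoheadrightarrow R_v^{\operatorname{ps,ord}}$, but instead of inverting it by a functor argument it invokes the explicit versal ring $\tilde R$ from Proposition~\ref{loc def ring at 3} in the split case $\beta=0$: passing to $\tilde R/(b,b',c,c')\cong\cO[[a,a',d,d']]$ produces a map $R_v^{\operatorname{ps,ord}}\to\cO[[a,a',d,d']]$, and the composite $R\to\cO[[a,a',d,d']]$ is then a surjection of power series rings of equal relative dimension, hence an isomorphism, forcing $R\twoheadrightarrow R_v^{\operatorname{ps,ord}}$ to be an isomorphism. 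Your approach is cleaner and avoids the explicit presentation of $\tilde R$ altogether; the paper's approach trades that for a more hands-on sandwich argument and does not need to verify that the correspondence $(\psi_1,\psi_2)\leftrightarrow\psi_1+\psi_2$ is a natural bijection of functors.

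One caveat on your execution of that bijection: you lean on the functions $a,d$ attached to an order-two element $c$ as in Definition~\ref{pseudo}, but $G_{\Q_p}$ is torsion-free (for $p$ odd, $-1$ is a sum of two squares in $\Q_p$, so by Artin--Schreier no subfield of $\overline{\Q_p}$ fixed by a nontrivial finite group can be real closed), and thus no such $c$ exists locally. The conclusion you want is still correct: for a residually multiplicity-free two-dimensional pseudo-deformation over an Artinian local ring, reducibility is equivalent to being a sum of two characters lifting $\mathbf 1$ and $\omega$, uniquely. You should justify this without $c$, for instance by applying Hensel's lemma to factor $X^2-T(\sigma)X+\det(T)(\sigma)$ at any $\sigma$ with $\omega(\sigma)=-1$ and then checking the resulting idempotent decomposition globalises, or by citing the generalised matrix algebra description in the Chenevier framework used in \cite{B_ckle_2023}. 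The paper's own phrasing of the reducible locus via $y(\sigma,\tau)=0$ has the same wrinkle and should be read in that sense.
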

	
	\begin{proof}
		1) By Krull's principal ideal theorem and 2) of Proposition \ref{dim of ps}, $R_v^{\operatorname{ps}}$ is of dimension $5$ or $6$. If $\operatorname{dim} R_v^{\operatorname{ps}} =5$, then $\varpi$ is a zero divisor in $R_v^{\operatorname{ps}}$, which is against to 1) of Proposition \ref{dim of ps}.
		
		2) Let $R_1$ and $ R_{\bar{\chi}}$ be the universal deformation ring of the one-dimensional representation $1$ and $\bar{\chi}$, respectively. By \cite[page 6]{B_ckle_2023}, each of them is a power series ring over $\mathcal{O}$ of relative dimension $2$. Let $R= R_1 \hat{\otimes}_{\mathcal{O}} R_{\bar{\chi}}$ and it is a power series ring over $\mathcal{O}$ of relative dimension $4$. By the universal property, we get a natural surjection $R \twoheadrightarrow R_v^{\operatorname{ps, ord}}$.
		
		Recall the universal framed deformation ring $\tilde{R}$ in the third case of Proposition \ref{loc def ring at 3}, i.e., the cocycle $\beta=0$. Then we get natural maps $R_v^{\operatorname{ps}} \to \tilde{R} \twoheadrightarrow  \tilde{R}/(b, b', c, c') \cong \mathcal{O}[[a, a', d, d']]$. From the description in the previous subsection,  the last ring defines a natural reducible pseudo-representation. Hence, we have natural maps $R_v^{\operatorname{ps}} \twoheadrightarrow R_v^{\operatorname{ps, ord}} \to \mathcal{O}[[a, a', d, d']]$.
		
		By the universal property, we get a natural surjection $R \to \mathcal{O}[[a, a', d, d']]$. Since each of them is a power series ring over $\mathcal{O}$ of relative dimension $4$, the surjection is actually an isomorphism. Now we consider the natural maps $ R \twoheadrightarrow R_v^{\operatorname{ps, ord}} \to  \mathcal{O}[[a, a', d, d']]$. The composite map is an isomorphism, so the natural surjection $R \twoheadrightarrow R_v^{\operatorname{ps, ord}}$ is an isomorphism.
	\end{proof}
	
	The following result is an analogue to \cite[Lemma 7.4.4]{pan2022fontaine}.
	
	\begin{prop}\label{two elements}
		The kernel of the natural surjection $R_v^{\operatorname{ps}} \twoheadrightarrow R_v^{\operatorname{ps, ord}}$ can be generated by two elements.
	\end{prop}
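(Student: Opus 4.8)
The plan is to identify $I := \ker\bigl(R_v^{\operatorname{ps}} \twoheadrightarrow R_v^{\operatorname{ps},\operatorname{ord}}\bigr)$ with the \emph{reducibility ideal} of the universal pseudo-deformation and then to whittle an explicit finite generating set of it down to two elements. By the definition of $R_v^{\operatorname{ps},\operatorname{ord}}$, if $D^{\operatorname{univ}} : G_{\mathbb{Q}_3} \to R_v^{\operatorname{ps}}$ is the universal pseudo-deformation and $a,d,y$ are the functions attached to it (Definition \ref{pseudo}), then $I$ is generated by $\{\,y(\sigma,\tau): \sigma,\tau\in G_{\mathbb{Q}_3}\,\}$. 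The first step is to make this generating set finite: since $a$ lifts the trivial character it is valued in $(R_v^{\operatorname{ps}})^\times$, so relations (2)--(3) for $\{a,d,y\}$ give $y(\sigma\tau,\delta)\in(y(\sigma,\delta),y(\tau,\delta))$, $y(\sigma,\tau\delta)\in(y(\sigma,\tau),y(\sigma,\delta))$ and $y(\sigma^{-1},\delta)\in(y(\sigma,\delta))$; iterating over topological generators of the finite-by-pro-$3$ quotient of $G_{\mathbb{Q}_3}$ through which $D^{\operatorname{univ}}$ and its attached Cayley--Hamilton algebra factor, $I$ becomes finitely generated by elements $y(g_i,g_j)$.

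The second step is to cut down to four generators using Böckle's description \cite{bockle2010deformation}: the relevant quotient is $\operatorname{Gal}(E(3)/\mathbb{Q}_3)=P_E\rtimes G$ with $P_E$ topologically generated by $x_1,x_2,x_3,x_4$, and in the generalized matrix algebra $\begin{pmatrix} R_v^{\operatorname{ps}} & B\\ C & R_v^{\operatorname{ps}}\end{pmatrix}$ one has $y(\sigma,\tau)=b(\sigma)c(\tau)$ with $b(-)\in B$, $c(-)\in C$. Because Böckle's matrices $A_2,A_4$ attached to $x_2,x_4$ are diagonal, and $\bar\rho(\sigma)$ is diagonal so $\sigma$ admits a diagonal lift, the classes $b(x_2),b(x_4),c(x_2),c(x_4)$ and the $B,C$-components of $\sigma$ vanish — concretely, $y(x_2,\tau),y(\tau,x_2),\dots$ map to $0$ under the trace map $R_v^{\operatorname{ps}}\to\tilde R$, which one checks is injective in this situation. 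Hence $B=R_v^{\operatorname{ps}}b(x_1)+R_v^{\operatorname{ps}}b(x_3)$, $C=R_v^{\operatorname{ps}}c(x_1)+R_v^{\operatorname{ps}}c(x_3)$, and
\[
I=\bigl(y(x_1,x_1),\,y(x_1,x_3),\,y(x_3,x_1),\,y(x_3,x_3)\bigr).
\]

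The final and crucial step is to remove two of these four generators. The input is the single defining relation $r$ of $P_E=F_4/N_4$ from \cite[Lemma 3.2]{bockle2010deformation}, i.e. the identity $A_1^3[A_1,A_2][A_3,A_4][A_4,A_3^{-1}][A_2,A_1^{-1}]A_1^3=1$ in the generalized matrix algebra: beyond the Plücker relation $y(x_1,x_1)y(x_3,x_3)=y(x_1,x_3)y(x_3,x_1)$ coming from relation (4), its off-diagonal entries furnish the extra relations defining $R_v^{\operatorname{ps}}$. Multiplying these by suitable off-diagonal entries and translating them into $R_v^{\operatorname{ps}}$ — equivalently, exploiting the presentation $R_v^{\triangle}\cong\mathcal{O}[[b,b',d,d',x,y]]/(3b-bd-b'd')$ of Proposition \ref{framed loc ord}(3) and its $c$-side analogue — should yield, modulo $I^2$, relations of the shape $(3-d)\,y(x_1,x_1)\equiv d'\,y(x_3,x_1)$ and $(3-d)\,y(x_1,x_3)\equiv d'\,y(x_3,x_3)$ over the regular ring $R_v^{\operatorname{ps},\operatorname{ord}}$, together with their transposes. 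Combining these with the structural results on $R_v^{\operatorname{ps}}$ from \cite{B_ckle_2023} and \cite{Bockle_2023} (reducedness, $\mathcal{O}$-flatness, the dimension statements of Proposition \ref{dim of ps ord}, and regularity of $R_v^{\operatorname{ps},\operatorname{ord}}$) one forces the conormal module $I/I^2$ to be generated over $R_v^{\operatorname{ps},\operatorname{ord}}$ by two elements — say the classes of $y(x_3,x_1)$ and $y(x_3,x_3)$ — and then $I$ is generated by two elements by Nakayama's lemma.

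The hard part is precisely this last reduction. In contrast to \cite[Lemma 7.4.4]{pan2022fontaine}, where the analogous coefficient is a unit and one of $B,C$ is free of rank one, here $3\in\mathfrak{m}_{\mathcal{O}}$ is not invertible, the relation $3b-bd-b'd'$ is degenerate, and $R_v^{\operatorname{ps}}$ is genuinely singular; so passing from the four evident generators of $I$ to two requires careful bookkeeping of the higher-order terms of Böckle's relation and, most likely, a direct appeal to the explicit geometry of the local pseudo-deformation ring established in \cite{B_ckle_2023} and \cite{Bockle_2023} rather than a purely formal manipulation.
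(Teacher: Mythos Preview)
Your approach is far more laborious than the paper's, and the crucial final step is not actually carried out. You reduce to four generators $y(x_i,x_j)$, $i,j\in\{1,3\}$, and then hope to eliminate two of them via relations of the shape $(3-d)\,y(x_1,x_1)\equiv d'\,y(x_3,x_1)\pmod{I^2}$. But as you yourself note, every coefficient here lies in the maximal ideal (since $3,d,d'\in\mathfrak m$), so Nakayama gives you nothing: these relations do not let you drop a generator of $I/\mathfrak m I$. You appeal to ``careful bookkeeping'' and the structural results of \cite{B_ckle_2023,Bockle_2023} without indicating how they close this gap, and I do not see that they do in any direct way. (There is also a smaller issue earlier: the claim that $y(x_2,\tau),y(x_4,\tau),\dots$ vanish in $R_v^{\ps}$ is argued by saying their images in $\tilde R$ vanish and that $R_v^{\ps}\to\tilde R$ is injective; that injectivity is not obvious and needs justification.)

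The paper avoids all of this with a two-line dimension argument. One computes (via \cite[(36)]{Bockle_2023} or \cite{bellaiche2012pseudodeformations}) that $\dim_{\F}\mathfrak t_{R_v^{\ps}/(\varpi)}=6$, so there is a surjection $R_6:=\cO[[x_1,\dots,x_6]]\twoheadrightarrow R_v^{\ps}\twoheadrightarrow R_v^{\ps,\ord}$. Both $R_6$ and $R_v^{\ps,\ord}$ are \emph{regular} local rings (the latter by Proposition~\ref{dim of ps ord}(2)), of dimensions $7$ and $5$; hence the kernel of the composite is generated by $7-5=2$ elements (a regular quotient of a regular local ring is cut out by part of a regular system of parameters, cf.\ \cite[Lemma 10.106.4]{stacks-project}). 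The kernel of $R_v^{\ps}\twoheadrightarrow R_v^{\ps,\ord}$ is the image of this, so it too is generated by two elements. No explicit description of the reducibility ideal is needed; the whole point is that regularity of the \emph{target} lets one read off the number of generators from the embedding dimension of the \emph{source}, even though $R_v^{\ps}$ itself is singular.
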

	
	\begin{proof}
		For a Noetherian local ring $R$, we write $\mathfrak{t}_R$ as the $(R/\mathfrak{m}_R)$-vector space $\mathfrak{m}_R/\mathfrak{m}_R^2$.
		
		Using \cite[(36) page 73]{Bockle_2023} or \cite[Theorem 2]{bellaiche2012pseudodeformations}, we have $\operatorname{dim}_{\mathbb{F}} \mathfrak{t}_{R_v^{\operatorname{ps}}/(\varpi)}=6$.  Then we get natural surjections $R_6:= \cO[[x_1, ..., x_6]] \twoheadrightarrow R_v^{\operatorname{ps}} \twoheadrightarrow R_v^{\operatorname{ps, ord}}$. We just need to show that the kernel of the composite surjection can be generated by $2$ elements.
		
		Using 2) of Proposition \ref{dim of ps ord}, we find that both $R_6$ and $R_v^{\operatorname{ps, ord}}$ are regular local rings. Then by \cite[Lemma 10.106.4]{stacks-project}, we know that the kernel can be generated by $\dim R_6 - \dim R_v^{\operatorname{ps, ord}}=2$ elements. This shows the result.

			
	\end{proof}
	
	Actually, the universal pseudo-deformation ring $R_v^{\operatorname{ps}}$ may not be regular in a more general setting (see \cite{Bockle_2023} for more details). For example, in our case, as $\varpi $ is non-zero in $\mathfrak{t}_{R_v^{\operatorname{ps, ord}}}$, we know that $\operatorname{dim}_{\mathbb{F}} \mathfrak{t}_{R_v^{\operatorname{ps}}}=7$, but $\dim R_v^{\operatorname{ps}}=6$. However, we may use the following result to study its singular locus.
	
	\begin{prop} \label{singular locus}
		Let $l$ be an odd prime. Let $\kappa$ be a local field with residue field of characteristic $l$ or a finite field of characteristic $l$.  Let $D_i: G_{\mathbb{Q}_l} \to \kappa^\times, i=1,2$ be continuous characters of dimension $1$. Let $D= D_1 \oplus D_2$, and let $R_D^{\operatorname{ps}}$ be the universal pseudo-deformation ring. If $D_1 \ne D_2 (m)$ for $m \in \{0, \pm 1\}$, then $ R_D^{\operatorname{ps}}$ is regular.
	\end{prop}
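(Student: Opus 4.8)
The plan is to prove regularity by computing the Krull dimension and the embedding dimension of $R_D^{\ps}$ separately and checking that they agree. Put $\chi := D_1 D_2^{-1}$, so that the hypothesis says $\chi \notin\{\mathbf 1,\varepsilon,\varepsilon^{-1}\}$, where $\varepsilon$ is the cyclotomic character (understood mod $l$ when $\kappa$ is finite). Since $\chi\ne\mathbf 1$, the residual pseudo-representation $\bar D = D_1\oplus D_2$ is multiplicity free, so Proposition~\ref{dim of ps}, whose cited proof applies to any multiplicity-free residual pseudo-representation, shows that $R_D^{\ps}$ is $\mathcal O$-torsion free with $R_D^{\ps}/(\varpi)$ equidimensional of dimension $5$ (take $F_v=\Q_l$); as $\varpi$ is then a nonzerodivisor, $\dim R_D^{\ps}=6$. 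It therefore suffices to prove $\dim_\kappa \mathfrak m_{R_D^{\ps}}/\mathfrak m_{R_D^{\ps}}^2\le 6$. (When $\kappa$ is an $l$-adic field one argues identically with $\mathcal O$ replaced by $\kappa$, invoking the characteristic-zero analogue of Proposition~\ref{dim of ps} from \cite{B_ckle_2023}, \cite{Bockle_2023}, so that $\dim R_D^{\ps}=5$ and the target bound becomes $5$.)

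The first input is a cohomology count for $G_{\Q_l}$. By local Tate duality, $H^2(G_{\Q_l},\kappa(\chi))\cong H^0(G_{\Q_l},\kappa(\chi^{-1}\varepsilon))^\vee$ and $H^2(G_{\Q_l},\kappa(\chi^{-1}))\cong H^0(G_{\Q_l},\kappa(\chi\varepsilon))^\vee$, both of which vanish because $\chi\ne\varepsilon,\varepsilon^{-1}$; similarly $H^0(G_{\Q_l},\kappa(\chi))=H^0(G_{\Q_l},\kappa(\chi^{-1}))=0$ since $\chi\ne\mathbf 1$, and $H^2(G_{\Q_l},\kappa)\cong H^0(G_{\Q_l},\kappa(\varepsilon))^\vee=0$ because $l$ is odd, so $\varepsilon$ is nontrivial on $G_{\Q_l}$. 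The local Euler characteristic formula then gives $\dim_\kappa H^1(G_{\Q_l},\kappa(\chi))=\dim_\kappa H^1(G_{\Q_l},\kappa(\chi^{-1}))=1$ and $\dim_\kappa H^1(G_{\Q_l},\kappa)=2$.

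The second input is the reducible quotient together with the reducibility ideal. Let $R_D^{\mathrm{red}}$ be the quotient of $R_D^{\ps}$ parametrizing reducible pseudo-deformations, i.e.\ those with $y(\sigma,\tau)\equiv 0$; for such a deformation both $a$ and $d$ are genuine characters lifting $D_1$ and $D_2$, so $R_D^{\mathrm{red}}\cong R_{D_1}\,\hat\otimes_{\mathcal O}\,R_{D_2}$, where $R_{D_i}$ is the universal deformation ring of the character $D_i$. Each $R_{D_i}$ is unobstructed (as $H^2(G_{\Q_l},\kappa)=0$), hence formally smooth over $\mathcal O$ of relative dimension $\dim_\kappa H^1(G_{\Q_l},\kappa)=2$, so $R_D^{\mathrm{red}}\cong\mathcal O[[x_1,x_2,x_3,x_4]]$ and $\dim_\kappa\mathfrak m_{R_D^{\mathrm{red}}}/\mathfrak m_{R_D^{\mathrm{red}}}^2=5$. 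On the other hand, the kernel $\mathcal I$ of $R_D^{\ps}\twoheadrightarrow R_D^{\mathrm{red}}$ is the reducibility ideal, generated by the values $y(\sigma,\tau)$; by the theory of the Cayley--Hamilton algebra (a generalized matrix algebra) attached to the universal pseudo-deformation (see \cite{B_ckle_2023}, and \cite{bellaiche2012pseudodeformations}), $\mathcal I$ is the image of $B_{12}\otimes_{R_D^{\ps}}B_{21}$ for finitely generated $R_D^{\ps}$-modules $B_{12},B_{21}$ with $B_{12}\otimes_{R_D^{\ps}}\kappa\cong H^1(G_{\Q_l},\kappa(\chi))$ and $B_{21}\otimes_{R_D^{\ps}}\kappa\cong H^1(G_{\Q_l},\kappa(\chi^{-1}))$. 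Hence $\mathcal I/\mathfrak m_{R_D^{\ps}}\mathcal I$ is a quotient of $H^1(G_{\Q_l},\kappa(\chi))\otimes_\kappa H^1(G_{\Q_l},\kappa(\chi^{-1}))$, which is one-dimensional by the previous paragraph, so $\mathcal I$ is principal. Therefore
\[
\dim_\kappa\mathfrak m_{R_D^{\ps}}/\mathfrak m_{R_D^{\ps}}^2\;\le\;\dim_\kappa\mathfrak m_{R_D^{\mathrm{red}}}/\mathfrak m_{R_D^{\mathrm{red}}}^2+\dim_\kappa\mathcal I/\mathfrak m_{R_D^{\ps}}\mathcal I\;\le\;5+1=6=\dim R_D^{\ps},
\]
and since the Krull dimension of a Noetherian local ring never exceeds its embedding dimension, equality holds throughout and $R_D^{\ps}$ is regular.

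I expect the load-bearing step to be the structural input on the Cayley--Hamilton algebra bounding the number of generators of the reducibility ideal by $\dim_\kappa H^1(G_{\Q_l},\kappa(\chi))\cdot\dim_\kappa H^1(G_{\Q_l},\kappa(\chi^{-1}))$; everything else is the Euler characteristic bookkeeping above, plus, in the $l$-adic coefficient case, checking that the dimension statement for $R_D^{\ps}$ from \cite{B_ckle_2023}, \cite{Bockle_2023} is available over coefficient fields of characteristic zero.
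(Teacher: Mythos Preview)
Your argument is correct and is essentially the proof that the cited references carry out; the paper itself simply defers to \cite{Bockle_2023} (for $\kappa$ finite or of equal characteristic~$l$) and \cite{chenevier2009variete} (for $\kappa$ an $l$-adic field), so you are reproving those results rather than giving a different approach. Two small points are worth tightening. First, your case split only names ``$\kappa$ finite'' and ``$\kappa$ an $l$-adic field''; the statement also allows $\kappa$ a local field of characteristic~$l$, and your argument goes through verbatim there (work over $\kappa$ directly, target embedding dimension~$5$, and invoke the equal-characteristic dimension statement from \cite{Bockle_2023}). Second, the assertion $B_{12}\otimes_{R_D^{\ps}}\kappa\cong H^1(G_{\Q_l},\kappa(\chi))$ is slightly stronger than what you need or than what the GMA formalism immediately gives: what Bella\"iche--Chenevier (and \cite{bellaiche2012pseudodeformations}) provide is that the minimal number of $R_D^{\ps}$-module generators of $B_{ij}$ is bounded by $\dim_\kappa\Ext^1(D_j,D_i)$, which already forces each $B_{ij}$ to be cyclic and hence $\mathcal I=B_{12}B_{21}$ to be principal. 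With those two adjustments your write-up is a complete proof, and indeed the same one the references give.
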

	
	\begin{proof}
		For the case $\kappa$ a finite field or a local field of characteristic $l$, see \cite[Lemma 5.5.4]{Bockle_2023}. For the case $\kappa$ a local field of characteristic $0$, see \cite[Th\'eor\`eme]{chenevier2009variete}.
	\end{proof}
	
	The following result is an analogue to \cite[Lemma 7.4.8]{pan2022fontaine}.
	
	\begin{prop}\label{principal}
		Let $\mathfrak{q}$ be prime of $ R_v^{\operatorname{ps, ord}}$ of dimension $1$, and let $\mathfrak{q}'$ be the inverse image of $\mathfrak{q}$ in $R_v^{\operatorname{ps}}$. Suppose that the pseudo-deformation $D_{\mathfrak{q}}= \psi_1 + \psi_2$ associated to $\mathfrak{q}$ satisfies $\psi_1 \ne \psi_2 (\pm 1)$. Then the kernel of the natural surjection $(R_v^{\operatorname{ps}})_{\mathfrak{q}'} \twoheadrightarrow (R_v^{\operatorname{ps, ord}})_{\mathfrak{q}}$ is principal.
	\end{prop}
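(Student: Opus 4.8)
The plan is to imitate \cite[Lemma 7.4.8]{pan2022fontaine}: pass to the localizations at $\mathfrak{q}'$ and $\mathfrak{q}$, prove that $(R_v^{\operatorname{ps}})_{\mathfrak{q}'}$ is a regular local ring by feeding the reducible prime into Proposition \ref{singular locus}, and then conclude with the fact — already used for Proposition \ref{two elements} — that a surjection of regular local rings has kernel generated by the difference of the two dimensions, which here will turn out to be $1$.

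First I would identify $\widehat{(R_v^{\operatorname{ps}})_{\mathfrak{q}'}}$. The group $G_{\mathbb{Q}_3}$ satisfies $\Phi_p$ and $R_v^{\operatorname{ps}}$ is the universal pseudo-deformation ring of $\bar D=\mathbf{1}+\omega$, so Proposition \ref{Dkp}(1) applies verbatim to the one-dimensional prime $\mathfrak{q}'$: the ring $\widehat{(R_v^{\operatorname{ps}})_{\mathfrak{q}'}}$ pro-represents the functor of pseudo-deformations to Artinian local $k(\mathfrak{q}')$-algebras of $T(\mathfrak{q}')$, which under the isomorphism $R_v^{\operatorname{ps}}/\mathfrak{q}'\cong R_v^{\operatorname{ps,ord}}/\mathfrak{q}$ is $D_{\mathfrak{q}}\otimes k(\mathfrak{q}')=\psi_1\oplus\psi_2$ (here the $\psi_i$ are genuinely $k(\mathfrak{q}')$-valued characters because $\mathfrak{q}$ lies in the reducible locus, so all $y(\sigma,\tau)$ vanish in $R_v^{\operatorname{ps,ord}}/\mathfrak{q}$ and Definition \ref{construction}(1) gives the diagonal representation). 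Since $R_v^{\operatorname{ps}}$ does not fix the determinant, $\widehat{(R_v^{\operatorname{ps}})_{\mathfrak{q}'}}$ is exactly the ring $R_D^{\operatorname{ps}}$ of Proposition \ref{singular locus} with $\kappa=k(\mathfrak{q}')$ and $D=\psi_1\oplus\psi_2$. To apply that proposition I must check that $k(\mathfrak{q}')$ is a local field with residue characteristic $3$; this follows from Cohen's structure theorem, since $R_v^{\operatorname{ps}}/\mathfrak{q}'$ is a complete Noetherian local domain of dimension $1$ with finite residue field $\mathbb{F}$, hence module-finite over a complete DVR $V$ (an unramified extension of $\mathbb{Z}_3$ if $\varpi\notin\mathfrak{q}'$, and $\mathbb{F}[[t]]$ if $\varpi\in\mathfrak{q}'$), so that $k(\mathfrak{q}')$ is a finite extension of $\operatorname{Frac}(V)$.

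Next I would verify the hypothesis $\psi_1\ne\psi_2(m)$ for $m\in\{0,\pm1\}$. For $m=\pm1$ this is the hypothesis of the proposition. For $m=0$, note $D_{\mathfrak{q}}$ reduces to $\bar D=\mathbf{1}+\omega$ and $\omega\ne\mathbf{1}$ (as $p=3$), so the reductions $\bar\psi_1,\bar\psi_2$ are the two distinct characters $\mathbf{1},\omega$ in some order, whence $\psi_1\ne\psi_2$. Thus Proposition \ref{singular locus} (with $l=3$) applies and $\widehat{(R_v^{\operatorname{ps}})_{\mathfrak{q}'}}$ is regular, hence so is $(R_v^{\operatorname{ps}})_{\mathfrak{q}'}$.

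Finally, $(R_v^{\operatorname{ps,ord}})_{\mathfrak{q}}$ is a localization of the power series ring $R_v^{\operatorname{ps,ord}}$ (Proposition \ref{dim of ps ord}(2)), hence regular of dimension $5-1=4$. The kernel $I$ of $R_v^{\operatorname{ps}}\twoheadrightarrow R_v^{\operatorname{ps,ord}}$ is a nonzero prime ideal (the quotient is a domain of dimension $5<6=\dim R_v^{\operatorname{ps}}$ by Proposition \ref{dim of ps ord}(1)) contained in $\mathfrak{q}'$, so $(R_v^{\operatorname{ps}})_{\mathfrak{q}'}\twoheadrightarrow(R_v^{\operatorname{ps,ord}})_{\mathfrak{q}}$ has nonzero kernel $I_{\mathfrak{q}'}$; combined with the general inequality $\dim(R_v^{\operatorname{ps}})_{\mathfrak{q}'}=\operatorname{ht}(\mathfrak{q}')\le\dim R_v^{\operatorname{ps}}-\dim(R_v^{\operatorname{ps}}/\mathfrak{q}')=5$ this forces $\dim(R_v^{\operatorname{ps}})_{\mathfrak{q}'}=5$. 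Then \cite[Lemma 10.106.4]{stacks-project} shows $I_{\mathfrak{q}'}$ is generated by $5-4=1$ element, i.e. the kernel is principal. I expect the main obstacle to be the first step — realizing $\widehat{(R_v^{\operatorname{ps}})_{\mathfrak{q}'}}$ as a pseudo-deformation ring over its residue field in a form that literally matches Proposition \ref{singular locus}, in particular confirming that $k(\mathfrak{q}')$ is of the admissible type; once that is settled, the rest is a short dimension count resting on the regularity criterion already invoked for Proposition \ref{two elements}.
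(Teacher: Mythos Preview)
Your proposal is correct and follows essentially the same route as the paper: identify $\widehat{(R_v^{\operatorname{ps}})_{\mathfrak{q}'}}$ with the pseudo-deformation ring of $\psi_1\oplus\psi_2$ via Proposition~\ref{Dkp}, apply Proposition~\ref{singular locus} to get regularity, observe that $(R_v^{\operatorname{ps,ord}})_{\mathfrak{q}}$ is regular of dimension $4$ as a localization of a power series ring, and finish with \cite[Lemma 10.106.4]{stacks-project}. You in fact supply more detail than the paper (the Cohen-structure verification that $k(\mathfrak{q}')$ is a local field of residue characteristic $3$, and the explicit dimension-$5$ count); the only minor imprecision is that ``$I$ nonzero'' does not by itself force $I_{\mathfrak{q}'}\neq 0$ when $R_v^{\operatorname{ps}}$ is not known to be a domain, but this is harmless since if $I_{\mathfrak{q}'}=0$ the kernel is already principal.
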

	
	\begin{proof}
		Note that in our case, the residual pseudo-representation is multiplicity-free, so $\psi_1 \ne \psi_{2}$. Using Proposition \ref{Dkp} and Proposition \ref{singular locus}, we know that $\widehat{(R_v^{\ps})_{\mathfrak{q}'}}$ is a regular local ring. Combining Proposition \ref{dim of ps ord}, $(R_v^{\operatorname{ps}})_{\mathfrak{q}'}$ is a regular local ring (see \cite[Corollary 28.2 \& Proposition 28.2]{matsumura80}) of dimension $5$. Note that $R_v^{\operatorname{ps, ord}}$ is a power series ring. By Proposition \ref{dim of ps ord} and \cite[Corollary 18.3]{matsumura80}, 
		$(R_v^{\operatorname{ps, ord}})_{\mathfrak{q}} $ is a regular local ring of dimension $4$. Using \cite[Lemma 10.106.4]{stacks-project}, we know that the kernel can be generated by one element. This shows the result.
		
	\end{proof}
	
	\subsection{Galois cohomology} In this subsection, we give some results about Galois cohomology. More precisely, we will give the local and global characteristic formulae and the Greenberg-Wiles formula in the case that the residue field is a $p$-adic local field.
	
	Let $p$ be a prime. Let $K$ be a $p$-adic local field with its ring of integers $\mathcal{O}$, a uniformizer $\varpi$ and its residue field $\mathbb{F}$. Let  $G$ be a profinite group, and $V$ a finite dimensional $K$-vector space with continuous $G$-action. At first, we give the Euler-Poincar\'e characteristic formulae in the version for $V$. In the case that $V$ is a finite $G$-module, one can see \cite[Theorem 7.3.1]{Neukirch_2008} and \cite[8.7.4]{Neukirch_2008}.
	
	\begin{thm}[Local Characteristic Formula]\label{local char for}
	  Let $l$ be a prime. Let $E$ be an $l$-adic field, and suppose $G=G_{E}$.
		
		1) For any $i \ge 0$, we have $ \operatorname{dim}_K H^i(G_E, V) < \infty$. If further $i \ge 3$, then we have $\operatorname{dim}_K H^i(G_E, V) =0$.
		
		2) If $l=p$, we have $\sum_{i=0}^{2} (-1)^i \operatorname{dim}_K H^i(G_E, V) = -[E:\mathbb{Q}_p]\cdot \operatorname{dim}_K V$. If $l \ne p$, we have $\sum_{i=0}^{2} (-1)^i \operatorname{dim}_K H^i(G_E, V)=0.$
	\end{thm}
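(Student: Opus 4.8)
The plan is to deduce both parts from the well-known finite-coefficient statements cited above (\cite[Theorem 7.3.1]{Neukirch_2008}, \cite[8.7.4]{Neukirch_2008}) by descending to a Galois-stable $\cO$-lattice and passing to a limit; nothing essentially new happens for $K$-coefficients, one only trades $\cO$-lengths for $\cO$-ranks. Concretely, since $G_E$ is compact and $V$ is finite-dimensional I would first choose a $G_E$-stable $\cO$-lattice $T\subset V$, so that $V=T\otimes_\cO K=\varinjlim(T\xrightarrow{\varpi}T\xrightarrow{\varpi}\cdots)$. Using that $G_E^i$ is compact while $V=\bigcup_{m\ge 1}\varpi^{-m}T$ is an increasing union of open $\cO$-submodules, every continuous cochain $G_E^i\to V$ lands in some $\varpi^{-m}T$; hence $C^\bullet_{\cont}(G_E,V)=C^\bullet_{\cont}(G_E,T)\otimes_\cO K$, and exactness of $-\otimes_\cO K$ gives $H^i(G_E,V)\cong H^i_{\cont}(G_E,T)\otimes_\cO K$ for all $i$.

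\textbf{Part 1).} From $T=\varprojlim_n T/\varpi^nT$ with the $T/\varpi^nT$ finite, the relevant $\varprojlim^1$-term (an inverse limit of finite groups) vanishes, so $H^i_{\cont}(G_E,T)=\varprojlim_n H^i(G_E,T/\varpi^nT)$; this is a profinite $\cO$-module whose reduction modulo $\varpi$ embeds into the finite group $H^i(G_E,T/\varpi T)$, hence is finitely generated over $\cO$ by the topological Nakayama lemma, giving $\dim_K H^i(G_E,V)=\mathrm{rank}_\cO H^i_{\cont}(G_E,T)<\infty$. Since $T/\varpi^nT$ is a finite $p$-group and $\mathrm{cd}_p(G_E)\le 2$, each $H^i(G_E,T/\varpi^nT)$ vanishes for $i\ge 3$, so $H^i_{\cont}(G_E,T)=0$ and $H^i(G_E,V)=0$ for $i\ge 3$ as well.

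\textbf{Part 2).} Set $r_i:=\mathrm{rank}_\cO H^i_{\cont}(G_E,T)=\dim_K H^i(G_E,V)$ and let $c_i$ be the $\cO$-length of the torsion submodule of $H^i_{\cont}(G_E,T)$ (so $c_i=0$ for $i\ge 3$). Applying $H^\bullet(G_E,-)$ to $0\to T\xrightarrow{\varpi^n}T\to T/\varpi^nT\to 0$ yields short exact sequences
\[0\to H^i_{\cont}(G_E,T)/\varpi^n\to H^i(G_E,T/\varpi^nT)\to H^{i+1}_{\cont}(G_E,T)[\varpi^n]\to 0,\]
whence $\mathrm{len}_\cO H^i(G_E,T/\varpi^nT)=n\,r_i+c_i+c_{i+1}$ for $n\gg 0$; summing with signs over $i=0,1,2$ the $c_i$ telescope (and $c_3=0$), leaving
\[\sum_{i=0}^{2}(-1)^i\,\mathrm{len}_\cO H^i(G_E,T/\varpi^nT)=n\,(r_0-r_1+r_2)+c_0.\]
I would then invoke the finite-coefficient local Euler characteristic formula for $T/\varpi^nT$, which has $\cO$-length $n\dim_K V$: the alternating sum above equals $-[E:\Q_p]\cdot n\dim_K V$ if $l=p$ and $0$ if $l\ne p$. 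Comparing the two expressions for all large $n$ forces $c_0=0$ and $r_0-r_1+r_2=-[E:\Q_p]\dim_K V$ (resp. $=0$), which is exactly the claim.

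\textbf{Expected difficulty.} None of this is deep. The only points needing care are the identification $H^\bullet(G_E,V)\cong H^\bullet_{\cont}(G_E,T)\otimes_\cO K$ in the first step, where compactness of $G_E$ and the choice of the $\varpi$-adic topology on $V$ are genuinely used, and the length bookkeeping in the last step ensuring that the torsion contributions cancel in the limit --- which works precisely because $H^3_{\cont}(G_E,T)=0$, so no degree-$3$ boundary term contaminates the alternating sum. Everything else is a direct appeal to the finite-coefficient theory.
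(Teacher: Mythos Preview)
Your argument is correct and is the standard deduction of the $K$-coefficient local Euler characteristic formula from the finite-coefficient version via a Galois-stable lattice. The paper itself does not give a proof but simply cites \cite[Theorem 3.4.1]{Bockle_2023} for $l=p$ and says the case $l\ne p$ is similar; your write-up is precisely the kind of argument one expects to find behind such a citation, so there is nothing to criticize. One tiny remark: you do not actually need to extract $c_0=0$ from the comparison in Part~2), since $H^0_{\cont}(G_E,T)=T^{G_E}$ is a submodule of the free $\cO$-module $T$ and is therefore torsion-free a priori---but this is cosmetic and does not affect the validity of your proof.
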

	
	\begin{proof}
	 See \cite[page 78, Theorem 3.9.1]{Berger_2013}. For the case $l=p$ and $K$ a local field of characteristic $p$, this result is also true and one can see \cite[Theorem 3.4.1]{Bockle_2023}. 
	\end{proof}
	
	\begin{thm}[Global Characteristic Formula] \label{global char for}
		Let $E$ be a number field, and $E_S$ the maximal extension of $E$ unramified outside a finite set of places $S$, containing the set of archimendean places $S_\infty$, and the finite places over $p$. Suppose that $K$ is a finite extension of $\mathbb{Q}_p$ and $G= G_{E,S}= \operatorname{Gal}(E_S/E)$.
		
		1) For any $i \ge 0$, we have $ \operatorname{dim}_K H^i(G_{E, S}, V) < \infty$. If further $i \ge 3$, then we have $\operatorname{dim}_K H^i(G_{E, S}, V) =0$.
		
		2) We have $\sum_{i=0}^{2} (-1)^i \operatorname{dim}_K H^i(G_{E, S}, V) = \sum_{v \in S_\infty} \operatorname{dim}_K V^{G_{E_v}} -[E: \mathbb{Q}]\cdot \operatorname{dim}_K V$.
	\end{thm}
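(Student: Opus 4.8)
The proof is the global counterpart of the local statement (Theorem~\ref{local char for}): one reduces to the classical Euler--Poincar\'e characteristic theory for \emph{finite} $G_{E,S}$-modules (see \cite[8.7.4 and Chapter~VIII]{Neukirch_2008}) by a lattice-and-limit argument, exactly as in \cite[Theorem~3.4.1]{Bockle_2023}. Since $G_{E,S}$ is profinite, its image in $\operatorname{GL}(V)\cong\operatorname{GL}_d(K)$, $d:=\dim_K V$, is a compact subgroup and therefore stabilises an $\mathcal{O}$-lattice $T\subset V$ with $T\otimes_{\mathcal{O}}K=V$. For $n\ge 1$ set $M_n:=T/\varpi^n T$, a finite $G_{E,S}$-module of $p$-power order; as $S\supseteq S_\infty$ and contains all places above $p$, the finite-coefficient results apply to every $M_n$. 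The transition system $\{H^i(G_{E,S},M_n)\}_n$ consists of finite groups, so the higher inverse limit vanishes and $H^i(G_{E,S},T)=\varprojlim_n H^i(G_{E,S},M_n)$, whence $H^i(G_{E,S},V)=H^i(G_{E,S},T)\otimes_{\mathcal{O}}K$; the same reasoning applies verbatim to each $G_{E_v}$.

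\emph{Part 1.} From $0\to T\xrightarrow{\varpi}T\to M_1\to 0$ one sees that $H^i(G_{E,S},T)/\varpi$ embeds into the finite group $H^i(G_{E,S},M_1)$, so $H^i(G_{E,S},T)$ is a finitely generated $\mathcal{O}$-module and $\dim_K H^i(G_{E,S},V)<\infty$ for all $i$. For $i\ge 3$: if $p$ is odd then $\operatorname{cd}_p(G_{E,S})\le 2$ forces $H^i(G_{E,S},M_n)=0$; if $p=2$ then for $i\ge 3$ restriction gives an isomorphism $H^i(G_{E,S},M_n)\xrightarrow{\sim}\bigoplus_{v\in S_\infty}\widehat{H}^i(G_{E_v},M_n)$, and each summand is annihilated by $|G_{E_v}|$, hence by $2$. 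In either case $H^i(G_{E,S},T)=\varprojlim_n H^i(G_{E,S},M_n)$ is bounded torsion, so $H^i(G_{E,S},V)=0$ for $i\ge 3$.

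\emph{Part 2.} Write $q=\#\mathbb{F}$ and $h^i(-)=\log_q\#H^i(-)$. Applying the global Euler characteristic formula for finite modules to $M_n$ (the logarithmic form of $\tfrac{\#H^0\,\#H^2}{\#H^1}=\prod_{v\in S_\infty}\#H^0(G_{E_v},M_n)\big/(\#M_n)^{[E:\mathbb{Q}]}$) gives
\[ h^0(G_{E,S},M_n)-h^1(G_{E,S},M_n)+h^2(G_{E,S},M_n)=\sum_{v\in S_\infty}h^0(G_{E_v},M_n)-[E:\mathbb{Q}]\cdot\log_q\#M_n, \]
and $\log_q\#M_n=nd$. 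Since $H^i(G_{E,S},T)$ and each $H^0(G_{E_v},T)=T^{G_{E_v}}$ are finitely generated over $\mathcal{O}$, applying the snake lemma to multiplication by $\varpi^n$ on $T$ shows that $h^i(G_{E,S},M_n)=n\cdot\operatorname{rank}_{\mathcal{O}}H^i(G_{E,S},T)+O(1)$ and $h^0(G_{E_v},M_n)=n\cdot\dim_K V^{G_{E_v}}+O(1)$, with error terms eventually constant in $n$. Dividing the displayed identity by $n$ and letting $n\to\infty$ yields
\[ \sum_{i=0}^{2}(-1)^i\dim_K H^i(G_{E,S},V)=\sum_{v\in S_\infty}\dim_K V^{G_{E_v}}-[E:\mathbb{Q}]\cdot\dim_K V, \]
which is the asserted formula.

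There is no essential difficulty here; the argument is a reduction to the classical case together with a routine passage to the limit. The two points that require a little attention are the case $p=2$ in degrees $\ge 3$ (where $\operatorname{cd}_2(G_{E,S})$ may be infinite, and one must instead realise $H^i(G_{E,S},M_n)$ via the real archimedean places and observe that it is uniformly $2$-torsion) and the verification that the torsion of $H^i(G_{E,S},T)$ --- i.e.\ the $O(1)$ terms above --- is bounded independently of $n$, which is precisely what legitimises the division by $n$.
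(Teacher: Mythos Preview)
Your proof is correct. The paper does not actually prove this statement: its entire proof is the sentence ``See \cite[Lemma 9.7]{kisin2003overconvergent}.'' Your lattice-and-limit reduction to the classical finite-coefficient Euler--Poincar\'e formula is the standard argument (and essentially what one finds upon chasing that reference), so there is no genuine difference in approach to compare.

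One minor remark: the finite-coefficient formula you display with unmodified $H^0(G_{E_v},M_n)$ at archimedean places is literally Tate's formula only for $p$ odd; for $p=2$ the classical statement uses $\widehat{H}^0$ and there may be nonvanishing $H^i$ for $i\ge 3$. But as you correctly observe, these discrepancies are uniformly bounded in $n$ and hence vanish after dividing by $n$ and passing to the limit, so the conclusion is unaffected. You might make this point explicit rather than leaving it implicit in the final paragraph.
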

	
	\begin{proof}
		See \cite[Lemma 9.7]{kisin2003overconvergent}.
	\end{proof}
	
	Now let $F$ be a number field, and $T$ a finite set of places of $F$ containing the finite places over $p$. Suppose $G=G_{F,T}$. Let $M$ be a $G$-stable $\mathcal{O}$-lattice of $V$, and $\rho: G \to \operatorname{GL}_n(\mathcal{O})$ the corresponding Galois representation, where $n= \operatorname{dim}_K V$. For simplicity, we write $\operatorname{ad}^0_m := \operatorname{ad}^0 \rho \otimes \mathcal{O}/\varpi^m$ and $\operatorname{ad}_m := \operatorname{ad} \rho \otimes \mathcal{O}/\varpi^m$.
	
	 We define the following complex as an analogue to \cite[Section 3.23]{Gee_2022} (or \cite[page 21]{Clozel_2008}). Let $\partial:C^i(G, \operatorname{ad}^0_m)\to C^{i+1}(G,\operatorname{ad}^0_m)$ be the usual coboundary map. Define $C^i_{T,\loc}(G,\ad^0_m)$
	 by $$ C^0_{T, \loc}(G,\ad^0_m)=\oplus_{v\in
	 	T}C^0(G_{F_v},\ad_m), ~~ C^1_{T,\loc}(G, \ad^0_m)=\oplus_{v \in T}C^1(G_{F_v},\ad^0_m),$$
	 	$$C^i_{T,\loc}(G,\ad^0_m)=\oplus_{v\in
	 	T}C^i(G_{F_v},\ad^0_m), i \ge 2.$$
	
	Let $C^0_0(G,\ad^0_m):=C^0(G,\ad_m)$, and let
	$C^i_0(G,\ad^0_m)=C^i(G,\ad^0_m)$ for $i>0$. Then we let $H^i_{T}(G,\ad^0_m)$ denote the cohomology of the complex $$ C^i_{T}(G, \ad^0_m):=C_0^i(G, \ad^0_m)\oplus C^{i-1}_{T,\loc}(G, \ad^0_m),$$ 
	where the coboundary map is given by $ (\phi,(\psi_v))\mapsto(\partial
	\phi,(\phi|_{G_{F_v}}-\partial\psi_v))$. Now we get an exact sequence of complexes $$ 0\to C^{i-1}_{T,\loc}(G,\ad^0_m) \to C^i_{T}(G,\ad^0_m)\to C^i_0(G,\ad^0_m) \to 0,$$
	and the corresponding long exact sequence in cohomology is
	
	$$ 0 \to H^0_{T}(G,\ad^0_m) \to H^0(G, \ad_m) \to \bigoplus_{v \in T}H^0(G_{F_v} , \ad_m) \to $$
	$$ H^1_{T}(G, \ad^0_m) \to H^1(G, \ad^0_m) \to \bigoplus_{v\in T}H^1(G_{F_v}, \ad^0_m) \to $$
    $$ H^2_{T}(G, \ad^0_m) \to H^2(G, \ad^0_m) \to \bigoplus_{v \in T}H^2(G_{F_v},\ad^0_m) \to H^3_{T}(G, \ad^0_m) \to \cdots .$$
	We can also replace $ \ad^0_m$ by $ \ad^0 \rho \otimes K$ and obtain exact sequences similarly.
	
	From Theorem \ref{local char for} and Theorem \ref{global char for}, we have $$H^i(G, \ad^0 \rho \otimes K)=0,~~~ H^i(G_{F_v}, \ad^0 \rho \otimes K)  =0,~~  i \ge 3,  v \in T.$$
	Hence, we get an exact sequence
	
	$$ H^1(G, \ad^0 \rho \otimes K) \to \bigoplus_{v\in T}H^1(G_{F_v}, \ad^0 \rho \otimes K) \to H^2_{T}(G, \ad^0 \rho \otimes K) \to $$
	$$ H^2(G, \ad^0 \rho \otimes K) \to \bigoplus_{v \in T}H^2(G_{F_v},\ad^0 \rho \otimes K) \to H^3_{T}(G, \ad^0 \rho \otimes K) \to 0 .$$
	
	Using the Poitou-Tate duality (\cite[Theorem 8.6.14]{Neukirch_2008}) and the Five Lemma, we have $ H^2_{T}(G, \ad^0 \rho) \cong H^1(G, (\ad^0 \rho)')^\vee$, where $(\ad^0 \rho)'= \operatorname{Hom}(\ad^0 \rho, \mu_{p^\infty}) $. Hence, we have 
	$$ \operatorname{dim}_K H^2_{T}(G, \ad^0 \rho \otimes K)  = \operatorname{rank}_\mathcal{O} H^1(G, (\ad^0 \rho)') = \operatorname{dim}_K H^1(G, (\ad^0 \rho)'\otimes K).$$
	
	Using the Poitou-Tate duality(\cite[Theorem 8.6.10]{Neukirch_2008}), we have $H^3_{T}(G, \ad^0 \rho \otimes K) \cong \varprojlim_m H^0(G, \ad^0_m(1))^\vee \otimes K$. Here we use \cite[Appendix B, Proposition 2.3, 2.4 \& 2.7]{rubin2000euler} and the Mittag-Leffler condition (see \cite[Lemma 10.86.4]{stacks-project}).
	
	Now we can conclude a generalized Greenberg-Wiles formula. For the usual version, one can see \cite[Lemma 2.3.4]{Clozel_2008}.
	
	\begin{prop}\label{Greenberg-Wiles}
		Suppose that $\rho \otimes K$ is irreducible, and $ H^0(G, \ad^0_m(1))=0$ holds for any sufficiently large $m$. Then we have 
		$$ \operatorname{dim}_K H^1_{T}(G, \ad^0 \rho \otimes K)- \operatorname{dim}_K H^1(G, (\ad^0 \rho)'\otimes K)= -1+|T| - \sum_{v \mid \infty} \operatorname{dim}_K H^0(G_{F_v}, \ad^0 \rho \otimes K).$$
	\end{prop}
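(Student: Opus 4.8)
The plan is to read the identity off an Euler--characteristic count along the long exact sequence displayed above, with $\ad^0_m$ replaced throughout by $\ad^0\rho\otimes K$ and $\ad_m$ by $\ad\rho\otimes K$, after identifying its two outer terms $H^0_T$ and $H^3_T$. With $K$-coefficients this sequence is exact, has finite-dimensional terms (Theorems \ref{local char for} and \ref{global char for}), and terminates: the vanishing $H^i(G,\ad^0\rho\otimes K)=H^i(G_{F_v},\ad^0\rho\otimes K)=0$ for $i\ge 3$ forces $H^i_T(G,\ad^0\rho\otimes K)=0$ for $i\ge 4$ and yields an exact sequence ending in $\bigoplus_{v\in T}H^2(G_{F_v},\ad^0\rho\otimes K)\to H^3_T(G,\ad^0\rho\otimes K)\to 0$. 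Hence the alternating sum of the $K$-dimensions of all its terms is $0$.

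I would next pin down the four ingredients that make this alternating sum collapse. (i) By the identification $H^3_T(G,\ad^0\rho)\cong\varprojlim_m H^0(G,\ad^0_m(1))^\vee\otimes K$ recorded above and the hypothesis $H^0(G,\ad^0_m(1))=0$ for all large $m$, the inverse system is eventually zero, so $H^3_T(G,\ad^0\rho\otimes K)=0$. (ii) From the discussion preceding the proposition, $\dim_K H^2_T(G,\ad^0\rho\otimes K)=\dim_K H^1(G,(\ad^0\rho)'\otimes K)$. (iii) Since $\rho\otimes K$ is (absolutely) irreducible, $H^0(G,\ad^0\rho\otimes K)=0$ while $H^0(G,\ad\rho\otimes K)$ is the one-dimensional space of scalars, which (as $T\neq\varnothing$) injects into $\bigoplus_{v\in T}H^0(G_{F_v},\ad\rho\otimes K)$; being the kernel of that restriction map, $H^0_T(G,\ad^0\rho\otimes K)=0$ as well. (iv) For each $v$, $\dim_K H^0(G_{F_v},\ad\rho\otimes K)=\dim_K H^0(G_{F_v},\ad^0\rho\otimes K)+1$, the extra $1$ accounting for the local scalars.

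Substituting (i)--(iv) into ``alternating sum $=0$'', and writing $h^i:=\dim_K H^i(G,\ad^0\rho\otimes K)$ and $\chi_v:=\sum_{i=0}^2(-1)^i\dim_K H^i(G_{F_v},\ad^0\rho\otimes K)$, one obtains
$$\dim_K H^1_T(G,\ad^0\rho\otimes K)-\dim_K H^1(G,(\ad^0\rho)'\otimes K)=-1+|T|+(h^1-h^2)+\sum_{v\in T}\chi_v.$$
It then remains to insert the two Euler--characteristic formulas. Theorem \ref{local char for} gives $\chi_v=0$ for $v\nmid p$ and $\chi_v=-[F_v:\mathbb{Q}_p]\dim_K(\ad^0\rho\otimes K)=-3[F_v:\mathbb{Q}_p]$ for $v\mid p$, whence $\sum_{v\in T}\chi_v=-3[F:\mathbb{Q}]$ because $T$ contains every place above $p$. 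Theorem \ref{global char for} applied to $G_{F,T}$ and $\ad^0\rho\otimes K$, together with $h^0=0$ from (iii), gives $h^1-h^2=3[F:\mathbb{Q}]-\sum_{v\mid\infty}\dim_K H^0(G_{F_v},\ad^0\rho\otimes K)$. The two $3[F:\mathbb{Q}]$ contributions cancel and the asserted equality drops out.

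I expect the only real (and minor) obstacle to be the bookkeeping at the bottom of the long exact sequence, where the degree-$0$ terms are built from $\ad\rho$ rather than $\ad^0\rho$: one must correctly split off the scalar line, which produces the $+|T|$ from the local scalars and an extra global $+1$ that is precisely cancelled by $H^0_T=0$. Beyond that the argument is a direct substitution; the genuine role of the hypotheses is exactly to force the vanishing of $H^0(G,\ad^0\rho\otimes K)$ and of $H^3_T(G,\ad^0\rho\otimes K)$.
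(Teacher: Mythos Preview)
Your proof is correct and follows essentially the same approach as the paper: compute the alternating sum along the long exact sequence for $H^\bullet_T$, use the identifications of $H^2_T$ and $H^3_T$ from Poitou--Tate duality recorded just before the proposition, and then feed in the local and global Euler characteristic formulas. The paper's proof is terser but identical in substance; the only cosmetic difference is that the paper phrases the degree-$0$ bookkeeping as the single identity $\dim_K H^0(G,\ad\,\rho\otimes K)-\dim_K H^0(G,\ad^0\rho\otimes K)=1$ (coming from $\ad=\ad^0\oplus K\cdot\mathrm{id}$, valid whether or not $\rho\otimes K$ is absolutely irreducible), whereas you invoke $H^0(G,\ad^0\rho\otimes K)=0$ directly---your parenthetical ``(absolutely)'' flags the right subtlety, but note that the argument goes through without that vanishing if you carry $-h^0+h^1-h^2$ instead of $h^1-h^2$, since the $h^0$ term is exactly what the global Euler characteristic supplies.
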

	
	\begin{proof}
		As $T$ is not empty, we have $H^0_{T}(G,\ad^0 \rho \otimes K)=0$. Note that we have 
		$$\dim_K H^0(G, \ad \rho \otimes K) - \dim_K H^0(G, \ad^0 \rho \otimes K)=1, $$ 
		 $$\dim_K H^0(G_{F_v} , \ad \rho \otimes K) -  \dim_K H^0(G_{F_v}, \ad^0 \rho \otimes K)=1, ~~~v \in T. $$
		 Then our result follows from Theorem \ref{local char for}, Theorem \ref{global char for} and the previous discussions.
	\end{proof}
	
	\subsection{Global Galois deformation theory} In this subsection, we study some global deformation rings. Our aim is to construct the Galois deformation theory in the case that the residue field is a $p$-adic local field instead of a finite field. For the classical theory, we recommend B\"ockle's paper in \cite{Berger_2013} to readers for reference.
	
	Let $p$ be an odd prime. Let $F$ be a totally real field such that $p$ splits completely, and $T$ a finite set of places of $F$ containing the finite places over $p$. Let $K$ be a $p$-adic local field with its ring of integers $\mathcal{O}$, a uniformizer $\varpi$ and its residue field $\mathbb{F}$. Let $\chi :G_{F,T} \to \mathcal{O}^\times$ be a continuous character, and we suppose that $\bar{\chi}$ is totally odd.
	
	Let $R^\ps$ be the universal pseudo-deformation ring parametrizing all liftings of the pseudo-representation $1+ \bar{\chi}$ with determinant $\chi$. Let $\mathfrak{p} \in \operatorname{Spec} R^\ps$ (we do not require $\varpi \notin \kp$ here) and suppose that it defines an irreducible pseudo-representation. Then by Definition \ref{construction}, we can construct an absolutely irreducible Galois representation $\rho(\mathfrak{p})^\circ: G_{F, T} \to \operatorname{GL}_2(A)$, where $A$ is the normal closure of $R^\ps/\mathfrak{p}$. Write $\bar{\rho}_b = \rho(\mathfrak{p})^\circ \mod \mathfrak{m}_A$, where $ 0 \ne b \in H^1(G_{F, T}, \mathbb{F}(\bar{\chi}^{-1}))$. Then by \cite[Lemma 2.13]{Skinner_1999}, up to some non-zero scalars of $\mathbb{F}$, $b$ is independent of the choice of $\rho(\mathfrak{p})^\circ$ given in Definition \ref{construction}.
	
	In general, it is difficult to study some geometric properties of $ \operatorname{Spec} R^\ps$. However, we can use Proposition \ref{Dkp} to study $ \operatorname{Spec} R^\ps_\mathfrak{p}$ in the classical way.
	 
	\begin{defn} \cite[Definition 5.4.3]{pan2022fontaine}
		Let $R$ be a CNL ring. The connectedness dimension of $R$ is defined to be 
		$$c(R):=\min_{Z_1,Z_2}\{\dim (Z_1\cap Z_2)\}, $$
		where $Z_1,Z_2$ are non-empty unions of irreducible components of $\Spec R$ such that $Z_1\cup Z_2=\Spec R$.
	\end{defn}
	
	\begin{prop}\label{connect dim}
		1) $\operatorname{dim}  R^\ps_\mathfrak{p} \ge 2[F: \mathbb{Q}]$.
		
		2) The connectedness dimension $c(R^\ps_\mathfrak{p})$ is at least $2[F: \mathbb{Q}]-1$.
	\end{prop}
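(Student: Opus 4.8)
The plan is to reduce both assertions to an analysis of the complete local ring $\widehat{(R^\ps)_\mathfrak{p}}$, using the deformation theory of Proposition \ref{Dkp}. Since $\mathfrak{p}$ has characteristic $0$ and defines an irreducible pseudo-representation, Definition \ref{construction} already realises $\rho(\mathfrak{p})$ over the $p$-adic local field $k(\mathfrak{p})$, and (the fixed-determinant version of) Proposition \ref{Dkp}(2) identifies $\widehat{(R^\ps)_\mathfrak{p}}$ with the universal deformation ring of the absolutely irreducible representation $\rho(\mathfrak{p})$ with determinant $\chi$. In particular $\widehat{(R^\ps)_\mathfrak{p}}$ is an equicharacteristic complete local ring with residue field $k(\mathfrak{p})$, and the usual obstruction theory presents it as $\widehat{(R^\ps)_\mathfrak{p}}\cong k(\mathfrak{p})[[x_1,\dots,x_g]]/(f_1,\dots,f_r)$, where $g=\dim_{k(\mathfrak{p})}H^1(G_{F,T},\ad^0\rho(\mathfrak{p}))$ is the dimension of the fixed-determinant tangent space and $r\le\dim_{k(\mathfrak{p})}H^2(G_{F,T},\ad^0\rho(\mathfrak{p}))$.

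The numerical input is the global Euler--Poincar\'e characteristic formula (Theorem \ref{global char for}) for the three-dimensional $k(\mathfrak{p})$-module $\ad^0\rho(\mathfrak{p})$. Absolute irreducibility of $\rho(\mathfrak{p})$ gives $H^0(G_{F,T},\ad^0\rho(\mathfrak{p}))=0$. Because $\bar\chi$ is totally odd and $p$ is odd, $\det\rho(\mathfrak{p})=\chi$ sends each complex conjugation $c_v$ ($v\mid\infty$) to $-1$, so $\rho(\mathfrak{p})(c_v)$ is conjugate to $\diag(1,-1)$ and $\dim_{k(\mathfrak{p})}(\ad^0\rho(\mathfrak{p}))^{G_{F_v}}=1$. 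Substituting into Theorem \ref{global char for} yields $-\dim_{k(\mathfrak{p})}H^1+\dim_{k(\mathfrak{p})}H^2=[F:\mathbb{Q}]-3[F:\mathbb{Q}]$, hence $g-r\ge\dim_{k(\mathfrak{p})}H^1-\dim_{k(\mathfrak{p})}H^2=2[F:\mathbb{Q}]$.

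Part 1) then follows from Krull's height theorem together with the invariance of dimension under completion: $\dim R^\ps_\mathfrak{p}=\dim\widehat{(R^\ps)_\mathfrak{p}}\ge g-r\ge 2[F:\mathbb{Q}]$. For part 2) I would apply Grothendieck's connectedness theorem to the presentation $\widehat{(R^\ps)_\mathfrak{p}}=k(\mathfrak{p})[[x_1,\dots,x_g]]/(f_1,\dots,f_r)$: cutting the regular complete local ring $k(\mathfrak{p})[[x_1,\dots,x_g]]$ of dimension $g$ by $r$ elements decreases the connectedness dimension by at most $r+1$, so $c(\widehat{(R^\ps)_\mathfrak{p}})\ge g-r-1\ge 2[F:\mathbb{Q}]-1$ (all dimensions in sight are at least $2$, since $g-r\ge 2[F:\mathbb{Q}]\ge 2$, so the theorem applies). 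Finally one descends to $R^\ps_\mathfrak{p}$: for any decomposition $\Spec R^\ps_\mathfrak{p}=Z_1\cup Z_2$ into non-empty unions of irreducible components, the minimal primes of $\widehat{(R^\ps)_\mathfrak{p}}$ lie over those of $R^\ps_\mathfrak{p}$, so the preimages of $Z_1$ and $Z_2$ are non-empty unions of irreducible components of $\Spec\widehat{(R^\ps)_\mathfrak{p}}$ whose intersection has dimension $\dim(Z_1\cap Z_2)$; thus $c(R^\ps_\mathfrak{p})\ge c(\widehat{(R^\ps)_\mathfrak{p}})\ge 2[F:\mathbb{Q}]-1$.

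The cohomological bookkeeping and the obstruction-theoretic presentation are entirely routine. The point that calls for the most care is the invocation of Grothendieck's connectedness theorem in the correct normalisation -- guaranteeing a drop of at most $r+1$, rather than something larger, when passing to a quotient by $r$ elements of a regular complete local ring -- and the verification that this lower bound is inherited by $R^\ps_\mathfrak{p}$ from its completion. The remaining points, namely the fixed-determinant form of Proposition \ref{Dkp} and the fact that $\rho(\mathfrak{p})$ is defined over $k(\mathfrak{p})$, are immediate from the constructions recalled above.
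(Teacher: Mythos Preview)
Your argument is correct and is precisely the approach behind the references the paper cites: identify $\widehat{(R^{\ps})_{\mathfrak p}}$ with the fixed-determinant deformation ring of the absolutely irreducible $\rho(\mathfrak p)$, present it as a quotient of a power series ring in $\dim H^1$ variables by at most $\dim H^2$ relations, and then invoke the global Euler--Poincar\'e formula for $\ad^0\rho(\mathfrak p)$ together with the oddness of $\chi$ to get $\dim H^1-\dim H^2=2[F:\mathbb Q]$; part 2) then follows from the Grothendieck--Faltings connectedness bound $c(S/(f_1,\dots,f_r))\ge\dim S-r-1$ for $S$ regular complete local, descended to $R^{\ps}_{\mathfrak p}$ exactly as you describe.

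One terminological caveat: $k(\mathfrak p)$ need not be a $p$-adic \emph{local} field (a finite extension of $\mathbb Q_p$), only a characteristic-$0$ field containing $K$, so your appeal to Theorem~\ref{global char for} as stated requires a word of justification---either note that $\rho(\mathfrak p)$ is already realised over a finitely generated $K$-subalgebra of $k(\mathfrak p)$ and base-change, or observe that the Euler--Poincar\'e formula extends formally to any characteristic-$0$ coefficient field since $H^i(G_{F,T},-)$ commutes with the flat extension $K\to k(\mathfrak p)$. Likewise, Proposition~\ref{Dkp} is stated for one-dimensional primes, but as you implicitly note its proof goes through verbatim for any $\mathfrak p$ with $\rho(\mathfrak p)$ absolutely irreducible.
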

	
	\begin{proof}
		For the first part, see \cite[Lemma 7.1.2]{pan2022fontaine}.
		For the second part, see \cite[Lemma 7.4.6]{pan2022fontaine}.
	\end{proof}
	
	In the rest of this subsection, we study the universal deformation ring of the Galois representation $\rho(\mathfrak{p}) := \rho(\mathfrak{p})^\circ \otimes k(\mathfrak{p})$. Suppose that $\rho(\mathfrak{p})$ is ordinary at each $v|p$ and $\varpi \notin \kp$.
	
	For any finite place $v \in T$, denote by $D_{v}^{\square, \chi} : \mathfrak{U}_{k(\mathfrak{p})} \to \operatorname{Sets}$ the functor such that for $ (B, \mathfrak{m}_B) \in \mathfrak{U}_{k(\mathfrak{p})}$, $ D_{v}^{\square, \chi}(B)$ is the set of continuous representations $\rho_B : G_{F_v} \to \operatorname{GL}_2(B)$ satisfying $ \rho_B \mod \mathfrak{m}_B = \rho(\mathfrak{p})|_{G_{F_v}}$ and $\operatorname{det} \rho_B= \chi |_{G_{F_v}}$. Using the same proof as \cite[page 27, Proposition 1.3.1]{Berger_2013}, we can show that this functor is pro-representable by some $R_{v, k(\mathfrak{p})}^{\square, \chi}$, which is a CNL $ k(\mathfrak{p})$-algebra. 
	
	Similarly, for any $v|p$, we can also define the universal ordinary (framed) deformation ring $R_{v, k(\mathfrak{p})}^{\triangle, \chi}$ parametrizing all ordinary liftings of $\rho(\mathfrak{p})|_{G_{F_v}}$.
	
	The following result is standard using local characteristic formula Theorem \ref{local char for}.
	
	\begin{prop} \label{loc dim}
		Let $v$ be a finite place in the set $T$.
		
		1) If $v \nmid p$, then we have $\operatorname{dim} R_{v, k(\mathfrak{p})}^{\square, \chi} \ge 3$.
		
		2) If $v| p$, then we have $\operatorname{dim} R_{v, k(\mathfrak{p})}^{\triangle, \chi} \ge 5$.
	\end{prop}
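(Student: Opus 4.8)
The plan is to run the standard tangent--obstruction estimate for framed deformation rings over the residue field $k(\mathfrak{p})$ and then to evaluate the resulting inequality with the local Euler characteristic formula of Theorem \ref{local char for}. Recall the general principle I would invoke: a deformation functor over $k(\mathfrak{p})$ whose tangent space has $k(\mathfrak{p})$-dimension $t$ and whose obstructions lie in a space of dimension $o$ is pro-represented by a quotient of $k(\mathfrak{p})[[x_1,\dots,x_t]]$ by at most $o$ relations, hence by a ring of Krull dimension at least $t-o$. Throughout, write $W:=\ad^0\rho(\mathfrak{p})$ (a $3$-dimensional $k(\mathfrak{p})$-linear $G_{F_v}$-representation) and $h^i(M):=\dim_{k(\mathfrak{p})}H^i(G_{F_v},M)$.

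For 1), the tangent space of $D_v^{\square,\chi}$ is the cocycle space $Z^1(G_{F_v},W)$, of dimension $h^1(W)+\dim_{k(\mathfrak{p})}W-h^0(W)=3+h^1(W)-h^0(W)$, and the obstructions lie in $H^2(G_{F_v},W)$; so $\dim R_{v,k(\mathfrak{p})}^{\square,\chi}\ge 3+h^1(W)-h^0(W)-h^2(W)$. Since $v\nmid p$, Theorem \ref{local char for} (case $l\ne p$) gives $h^0(W)-h^1(W)+h^2(W)=0$, and the right-hand side is $3$, as claimed.

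For 2), since $p$ splits completely in $F$ we have $F_v=\mathbb{Q}_p$. I would fix a $G_{F_v}$-stable line of $\rho(\mathfrak{p})|_{G_{F_v}}$ (supplied by ordinariness), put the representation in upper-triangular form with diagonal characters $\psi_1,\psi_2$, and let $\mathfrak{b}^0\subset W$ be the upper-triangular trace-zero subspace (dimension $2$) and $N^-:=W/\mathfrak{b}^0$ the remaining quotient (dimension $1$, $G_{F_v}$ acting by $\psi_2\psi_1^{-1}$). Then $R_{v,k(\mathfrak{p})}^{\triangle,\chi}$ is obtained from the unframed ordinary deformation ring by adjoining $3-h^0(W)$ formal variables (the framing directions), the tangent space of the unframed ordinary functor is the ordinary Selmer group $H^1_{\ord}:=\ker\bigl(H^1(G_{F_v},W)\to H^1(G_{F_v},N^-)\bigr)$, and its obstructions lie in $H^2(G_{F_v},\mathfrak{b}^0)$. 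This yields
\[
\dim R_{v,k(\mathfrak{p})}^{\triangle,\chi}\ \ge\ \bigl(3-h^0(W)\bigr)+\dim_{k(\mathfrak{p})}H^1_{\ord}-h^2(\mathfrak{b}^0).
\]
Chasing the long exact cohomology sequence of $0\to\mathfrak{b}^0\to W\to N^-\to 0$ and inserting the three local Euler characteristics over $\mathbb{Q}_p$ (namely $h^0(W)-h^1(W)+h^2(W)=-3$, $h^0(\mathfrak{b}^0)-h^1(\mathfrak{b}^0)+h^2(\mathfrak{b}^0)=-2$ and $h^0(N^-)-h^1(N^-)+h^2(N^-)=-1$, from Theorem \ref{local char for}, case $l=p$), the right-hand side simplifies to $5-h^0(N^-)$. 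In the cases needed below $\psi_1\not\equiv\psi_2\pmod{\mathfrak{m}}$ (equivalently $\bar\chi|_{G_{F_v}}\ne\mathbf{1}$), so $h^0(N^-)=0$ and we get the bound $5$; in the remaining case $\rho(\mathfrak{p})|_{G_{F_v}}$ carries a pencil of stable lines, which only makes $R_{v,k(\mathfrak{p})}^{\triangle,\chi}$ larger and can be treated directly or via the explicit description behind Proposition \ref{framed loc ord}.

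I expect the main obstacle to be not the Galois cohomology but the precise identification of the tangent and obstruction spaces of the ordinary framed functor --- in particular, making sure the obstructions are controlled by $H^2(G_{F_v},\mathfrak{b}^0)$ rather than by the larger $H^2(G_{F_v},W)$, and carefully handling the passage between the framed functor, the unframed ordinary functor, and the auxiliary datum of the stable line --- since the final inequality is tight (it lands on $5$ exactly) and leaves no slack in that step.
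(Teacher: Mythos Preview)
Your approach is correct and is essentially the same as the paper's: the paper's proof simply cites the standard references (B\"ockle's lecture notes for $v\nmid p$ and Skinner--Wiles, Lemma~2.2, for $v\mid p$), which carry out precisely the tangent--obstruction calculus combined with the local Euler characteristic formula that you describe. Your hand-wave for the edge case $\psi_1=\psi_2$ is harmless here, since in every application of this proposition in the paper one has $\bar\chi|_{G_{F_v}}\neq\mathbf 1$, whence $h^0(N^-)=0$ and your bound lands on~$5$.
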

	
	\begin{proof}
		The proof is almost the same as the case that the residue field is finite. In that case, one can see \cite[page 64, Theorem 3.3.1]{Berger_2013} for $v \nmid p$ and \cite[Lemma 2.2]{Skinner_1999} for $v \mid p$.
	\end{proof}
	
	Let $S \subset T$ be a finite set of primes of $F$. Define the functor $D_{k(\mathfrak{p})}^{\square_S}$ from $ \mathfrak{U}_{k(\mathfrak{p})}$ to the category of sets sending $R$ to the set of tuples $(\rho_R, \alpha_v)_{v \in S}$ modulo the equivalence relation $\sim_S$ where
	\begin{itemize}
		\item $\rho_R:G_{F, T}\to \GL_2(R)$ is a lifting of $\rho(\mathfrak{p})$ to $R$ with determinant $\chi$ such that $ (\rho_R)| _{G_{F_v}}$ is the ordinary lifting of $ \rho(\mathfrak{p})|_{G_{F_v}}$ for any $v | p$.
		\item $\alpha_v\in 1+M_2(\km_R),v\in S$. Here $\km_R$ is the maximal ideal of $R$.
		\item $(\rho_R;\alpha_v)_{v\in S}\sim_S (\rho'_R;\alpha'_v)_{v\in S}$ if there exists an element $\beta\in 1+M_2(\km_R)$ with $\rho'_R=\beta\rho_R\beta^{-1},\alpha'_v=\beta\alpha_v$ for any $v\in M$.
	\end{itemize}
	Then the functor $D_{k(\mathfrak{p})}^{\square_S}$ is pro-representable by a similar argument of \cite[page 102, Proposition 5.1.1]{Berger_2013}. Note that in the case $S=\varnothing$, we need to use the fact that $\rho(\mathfrak{p})$ is absolutely irreducible. We write $R_{k(\mathfrak{p})}^{\square_S, \chi}$ for the universal $S$-framed deformation ring and $R_{k(\mathfrak{p})}^{\operatorname{univ}}$ in the case $S=\varnothing$. Then the following result is also standard.
	
	\begin{prop} Write $R_{\operatorname{loc}} = (\hat{\bigotimes}_{v | p} R_{v, k(\mathfrak{p})}^{\triangle, \chi}) \hat{\otimes}_{k(\mathfrak{p})} (\hat{\bigotimes}_{v \nmid p, v \in T} R_{v, k(\mathfrak{p})}^{\square, \chi})$. 
		
		1) $ R_{k(\mathfrak{p})}^{\square_T, \chi} \cong R_{k(\mathfrak{p})}^{\operatorname{univ}}[[x_1, ..., x_{4|T|-1}]]$.
		
		2) There exists an isomorphism $ R_{\operatorname{loc}} [[y_1, ..., y_r]] /(f_1, ..., f_t) \cong R_{k(\mathfrak{p})}^{\square_T, \chi}$, where 
		$$r= \operatorname{dim}_{k(\mathfrak{p})} H^1_{T}(G, \ad^0 \rho \otimes k(\mathfrak{p})),~~~~ t \le \operatorname{dim}_{k(\mathfrak{p})} H^1(G, (\ad^0 \rho)'\otimes k(\mathfrak{p})).$$
	\end{prop}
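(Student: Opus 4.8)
The strategy is to run the standard presentation arguments for framed deformation rings, as in \cite[page 102, Proposition 5.1.1]{Berger_2013} and \cite{Clozel_2008}, and to check that they remain valid when the residue field $k(\mathfrak{p})$ is a $p$-adic local field rather than a finite field; the inputs needed for the latter --- finiteness of the relevant Galois cohomology and Poitou--Tate duality with $K$-coefficients --- have already been recorded in Theorem \ref{local char for}, Theorem \ref{global char for} and the Poitou--Tate computation preceding Proposition \ref{Greenberg-Wiles}. For 1), the essential point is that $\rho(\mathfrak{p})$ is absolutely irreducible by its very construction (Definition \ref{construction}), so that $\End_{G_{F,T}}(\rho(\mathfrak{p})) = k(\mathfrak{p})$. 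I would then analyse the forgetful morphism $\Spf R_{k(\mathfrak{p})}^{\square_T, \chi} \to \Spf R_{k(\mathfrak{p})}^{\operatorname{univ}}$: prescribing framings $(\alpha_v)_{v\in T}$ with $\alpha_v \in 1 + M_2(\mathfrak{m}_R)$ up to $\sim_T$ amounts, after using the equivalence to normalise $\rho_R$, to choosing $(\alpha_v)_{v\in T}$ modulo the centraliser of $\rho_R$, which by absolute irreducibility is the group of scalars $1 + \mathfrak{m}_R$ acting diagonally by left translation. One checks that this quotient functor is formally smooth of relative dimension $4|T| - 1$ (each framing contributes $4$ parameters, and the diagonal scalar action removes $1$), whence $R_{k(\mathfrak{p})}^{\square_T, \chi}$ is a power series ring over $R_{k(\mathfrak{p})}^{\operatorname{univ}}$ in $4|T| - 1$ variables.

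For 2), I would first produce the natural morphism $R_{\operatorname{loc}} \to R_{k(\mathfrak{p})}^{\square_T, \chi}$ by restricting the universal $T$-framed deformation to each $G_{F_v}$ and using the framing $\alpha_v$ to obtain an honest lift; at $v \mid p$ the ordinary condition imposed in the definition of $D_{k(\mathfrak{p})}^{\square_T}$ is exactly the one defining $R_{v,k(\mathfrak{p})}^{\triangle, \chi}$, so the map is well defined. Next I would compute the relative Zariski cotangent space of $R_{k(\mathfrak{p})}^{\square_T, \chi}$ over $R_{\operatorname{loc}}$: chasing the long exact sequence relating $H^\bullet_T$, $H^\bullet(G_{F,T}, \ad^0\rho)$ and $\bigoplus_{v\in T} H^\bullet(G_{F_v}, \ad^0\rho)$ displayed above, the relative tangent space is identified with $H^1_T(G, \ad^0\rho \otimes k(\mathfrak{p}))$, so that there is a surjection $R_{\operatorname{loc}}[[y_1, \dots, y_r]] \twoheadrightarrow R_{k(\mathfrak{p})}^{\square_T, \chi}$ with $r = \dim_{k(\mathfrak{p})} H^1_T(G, \ad^0\rho \otimes k(\mathfrak{p}))$. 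Finally I would bound the relations by the usual obstruction argument: lifting a framed deformation together with its prescribed local restrictions (which are already lifted, since one works over $R_{\operatorname{loc}}$) along a small surjection yields an obstruction class in $H^2_T(G, \ad^0\rho \otimes k(\mathfrak{p}))$, so the kernel of the surjection is generated by $t \le \dim_{k(\mathfrak{p})} H^2_T(G, \ad^0\rho \otimes k(\mathfrak{p}))$ elements; by the Poitou--Tate identification recalled above, $\dim_{k(\mathfrak{p})} H^2_T(G, \ad^0\rho \otimes k(\mathfrak{p})) = \dim_{k(\mathfrak{p})} H^1(G, (\ad^0\rho)' \otimes k(\mathfrak{p}))$, which gives the bound on $t$.

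I expect the main point requiring care to be the obstruction theory in the relative setting --- verifying that it is precisely the cohomology $H^\bullet_T$ of the cone complex $C^\bullet_T$ (rather than the plain global cohomology) that governs generators in degree $1$ and relations in degree $2$ once one works over $R_{\operatorname{loc}}$, and that the deformation-theoretic yoga (formal smoothness of the framing quotient, identification of the tangent and obstruction spaces, the Nakayama/obstruction count for the kernel) is unaffected by passing from a finite residue field to the $p$-adic field $k(\mathfrak{p})$. This last reduction rests entirely on the finite-dimensionality statements in Theorem \ref{local char for} and Theorem \ref{global char for} and on Poitou--Tate duality; granting these, parts 1) and 2) go through essentially word for word as in the finite-residue-field case treated in \cite{Berger_2013} and \cite{Clozel_2008}.
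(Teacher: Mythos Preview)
Your proposal is correct and follows essentially the same approach as the paper: the paper's own proof simply records that the arguments are identical to the finite-residue-field case and cites \cite[page 103, Corollary 5.2.1]{Berger_2013} for 1) and \cite[Lemma 4.4 \& 4.6]{Khare_2009} for 2), which are precisely the framing-quotient and relative tangent/obstruction computations you sketch. Your expansion of these steps, together with the observation that the finite-dimensionality of Galois cohomology over $k(\mathfrak{p})$ and the Poitou--Tate identification $\dim_{k(\mathfrak{p})} H^2_T = \dim_{k(\mathfrak{p})} H^1(G,(\ad^0\rho)'\otimes k(\mathfrak{p}))$ already established in the paper suffice to carry the standard proof over, is exactly what is being invoked.
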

	
	\begin{proof}
		The proof is also the same as the case that the residue field is finite. For 1), one can see \cite[page 103, Corollary 5.2.1]{Berger_2013}. For 2), one can see \cite[Lemma 4.4 \& 4.6]{Khare_2009}.
	\end{proof}
	
	\begin{cor} \label{completion}
		Assume $ H^0(G_{F, T}, \ad^0 \rho(\mathfrak{p})(1))=0$.
		We have $\operatorname{dim} R_{k(\mathfrak{p})}^{\operatorname{univ}} \ge [F : \mathbb{Q}]$.
	\end{cor}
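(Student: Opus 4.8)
The plan is to deduce the bound from a dimension count that plays the two presentations of $R^{\square_T,\chi}_{k(\mathfrak{p})}$ in the preceding proposition against each other. Write $d:=[F:\mathbb{Q}]$, $K:=k(\mathfrak{p})$ and $G:=G_{F,T}$; recall that $\rho(\mathfrak{p})$ is absolutely irreducible and, by the standing assumption, ordinary at every place above $p$.

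First I would bound $\dim R_{\operatorname{loc}}$ from below. Since $R_{\operatorname{loc}}$ is a completed tensor product over the field $K$ of the local deformation rings, its dimension is the sum of theirs; as $p$ splits completely in $F$ there are exactly $d$ places above $p$ (each contributing at least $5$ by the ordinary local bound) and $|T|-d$ places $v\in T$ with $v\nmid p$ (each contributing at least $3$), so Proposition \ref{loc dim} gives $\dim R_{\operatorname{loc}}\ge 5d+3(|T|-d)=2d+3|T|$. Next, part 2) of the preceding proposition realizes $R^{\square_T,\chi}_{k(\mathfrak{p})}$ as a quotient of $R_{\operatorname{loc}}[[y_1,\dots,y_r]]$ by $t$ elements, so Krull's principal ideal theorem gives $\dim R^{\square_T,\chi}_{k(\mathfrak{p})}\ge\dim R_{\operatorname{loc}}+r-t$, and since $r=\dim_K H^1_T(G,\ad^0\rho\otimes K)$ and $t\le\dim_K H^1(G,(\ad^0\rho)'\otimes K)$, I would invoke the generalized Greenberg--Wiles formula (Proposition \ref{Greenberg-Wiles}) to get $r-t\ge -1+|T|-\sum_{v\mid\infty}\dim_K H^0(G_{F_v},\ad^0\rho\otimes K)$. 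Then, since $\bar\chi$ is totally odd and $\det\rho(\mathfrak{p})=\chi$, one has $\chi(c_v)=-1$ at each (real) infinite place, so $\rho(\mathfrak{p})(c_v)$ has order dividing $2$ and determinant $-1$, hence is conjugate to $\operatorname{diag}(1,-1)$ and $\dim_K H^0(G_{F_v},\ad^0\rho\otimes K)=\dim_K(\ad^0\rho)^{c_v}=1$; summing over the $d$ real places and assembling the inequalities yields $\dim R^{\square_T,\chi}_{k(\mathfrak{p})}\ge (2d+3|T|)+(|T|-d-1)=d+4|T|-1$. On the other hand part 1) of the preceding proposition gives $\dim R^{\square_T,\chi}_{k(\mathfrak{p})}=\dim R^{\operatorname{univ}}_{k(\mathfrak{p})}+4|T|-1$, and comparing the two forces $\dim R^{\operatorname{univ}}_{k(\mathfrak{p})}\ge d=[F:\mathbb{Q}]$.

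The one step requiring genuine care is seeing that the hypothesis $H^0(G,\ad^0\rho(\mathfrak{p})(1))=0$ actually licenses the use of Proposition \ref{Greenberg-Wiles}: what the dimension count needs is the vanishing of $H^3_T(G,\ad^0\rho\otimes K)$, and by the Poitou--Tate computation preceding that proposition this group is the Pontryagin dual of $\varinjlim_m H^0(G,\ad^0_m(1))$ tensored with $K$. That colimit is the group of $G$-invariants of $\ad^0\rho(\mathfrak{p})^\circ(1)\otimes_{\mathcal{O}}K/\mathcal{O}$, and the exact sequence $0\to\ad^0\rho(\mathfrak{p})^\circ(1)\to\ad^0\rho(\mathfrak{p})(1)\to\ad^0\rho(\mathfrak{p})^\circ(1)\otimes_{\mathcal{O}}K/\mathcal{O}\to 0$ together with the hypothesis shows it embeds into the finitely generated $\mathcal{O}$-module $H^1(G,\ad^0\rho(\mathfrak{p})^\circ(1))$ as a torsion submodule, hence is finite; so $H^3_T(G,\ad^0\rho\otimes K)=0$ and Proposition \ref{Greenberg-Wiles} applies. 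Everything else --- additivity of Krull dimension under completed tensor products over $K$, the estimates under adjoining variables and cutting by finitely many relations, and the conjugacy computation at the real places --- is routine.
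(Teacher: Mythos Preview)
Your proof is correct and follows exactly the paper's route: bound $\dim R_{\operatorname{loc}}$ via Proposition \ref{loc dim}, pass through the presentation of $R_{k(\mathfrak{p})}^{\square_T,\chi}$ together with the Greenberg--Wiles formula, and then strip off the $4|T|-1$ framing variables. Your extra paragraph showing that $H^0(G,\ad^0\rho(\mathfrak{p})(1))=0$ forces $H^3_T(G,\ad^0\rho\otimes K)=0$ is a welcome clarification --- the literal hypothesis of Proposition \ref{Greenberg-Wiles} asks for $H^0(G,\ad^0_m(1))=0$ for large $m$, which need not follow (one only gets that these groups stabilize to the finite $\varpi$-power torsion of $H^1$), and the paper glosses over this point.
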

	
	\begin{proof}
		At first, by Proposition \ref{loc dim} and Krull's principal ideal theorem, we have $\dim R_\loc \ge 3|T| +2 [F: \mathbb{Q}]$. Then combining the second part of the previous proposition and Proposition \ref{Greenberg-Wiles}, we have $$\dim R_{k(\mathfrak{p})}^{\square_T, \chi} \ge 3|T| +2 [F: \mathbb{Q}]+ (-1 +|T| - \sum_{v \mid \infty} \operatorname{dim}_K H^0(G_{F_v}, \ad^0 \rho \otimes K)) = 4|T|-1+[F:\mathbb{Q}].$$
		Here the last equality is because $\bar{\chi}$ is totally odd. Using the first part of the previous proposition, we have $\operatorname{dim} R_{k(\mathfrak{p})}^{\operatorname{univ}} \ge [F : \mathbb{Q}]$.
	\end{proof}
	
	Similar to $R_{k(\mathfrak{p})}^{\operatorname{univ}} $, we can also define the universal deformation ring $R_b^{\operatorname{univ}} $ parametrizing all liftings of $\bar{\rho}_b$ which are ordinary at each $v|p$, and with determinant $\chi$. Recall that $A$ is the normal closure of $R^\ps/\kp$, and hence its fraction field is $k(\kp)$. By the universal property, we get a homomorphism $ R_b^{\operatorname{univ}} \to A$. Let $\mathfrak{q}$ be the kernel of this map, and we assume the residue field of $ \widehat{(R_b^{\operatorname{univ}})_{\mathfrak{q}}}$ is $K'$, which is contained in $k(\kp)$. Let $\rho_{k(\mathfrak{p})}^{\operatorname{univ}}$ (resp. $\rho_b^{\operatorname{univ}}$) be the universal lifting. Then $\hat{\rho}=\rho_b^{\operatorname{univ}} \otimes _{R_b^{\operatorname{univ}}} (\widehat{(R_b^{\operatorname{univ}})_{\mathfrak{q}}} \otimes_{K'}  k(\mathfrak{p}))$ defines an ordinary lifting of $\rho(\mathfrak{p}) $. By the universal property, we get a natural map $R_{k(\mathfrak{p})}^{\operatorname{univ}} \to \widehat{(R_b^{\operatorname{univ}})_{\mathfrak{q}}} \otimes_{K'} k(\mathfrak{p})$.
	
	\begin{prop} \label{Kisin}
		The map $f: R_{k(\mathfrak{p})}^{\operatorname{univ}} \to \widehat{(R_b^{\operatorname{univ}})_{\mathfrak{q}}} \otimes_{K'} k(\mathfrak{p})$ is actually an isomorphism.
	\end{prop}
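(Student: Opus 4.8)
The plan is to show $f$ is an isomorphism by proving that $(\widehat{(R_b^{\operatorname{univ}})_{\mathfrak{q}}} \otimes_{K'} k(\mathfrak{p}),\, \hat{\rho})$ pro-represents the functor of ordinary-at-$p$, determinant-$\chi$ deformations of $\rho(\mathfrak{p})$ on Artinian local $k(\mathfrak{p})$-algebras with residue field $k(\mathfrak{p})$ --- the same functor pro-represented by $(R_{k(\mathfrak{p})}^{\operatorname{univ}},\, \rho_{k(\mathfrak{p})}^{\operatorname{univ}})$. This is the analogue in our residually reducible situation (with $\bar{\rho}_b$ a non-split extension of $\mathbf{1}$ by $\bar{\chi}$) of Kisin's lattice argument relating a pseudo-deformation ring localized at a prime to a deformation ring; see \cite{kisin2009fontaine}, and cf. \cite[\S7.1]{pan2022fontaine}. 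I will use that $A$, the normalization of the one-dimensional complete local domain $R^\ps/\mathfrak{p}$, is module-finite over $\mathcal{O}$ and hence equals the ring of integers of the $p$-adic field $k(\mathfrak{p})$; that $\rho(\mathfrak{p})^\circ$ is a $G_{F, T}$-stable $A$-lattice $L_0$ in $\rho(\mathfrak{p})$ reducing to $\bar{\rho}_b$; and that $\mathfrak{q}$ is, by definition, the kernel of the map $R_b^{\operatorname{univ}} \to A$ classifying $L_0$.

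By the standard adjunction together with the universal property of the completion of a localization, for an Artinian local $k(\mathfrak{p})$-algebra $B$ with residue field $k(\mathfrak{p})$ a local $k(\mathfrak{p})$-algebra homomorphism $\widehat{(R_b^{\operatorname{univ}})_{\mathfrak{q}}} \otimes_{K'} k(\mathfrak{p}) \to B$ is the same datum as an $\mathcal{O}$-algebra map $\phi\colon R_b^{\operatorname{univ}} \to B$ whose reduction $\bar{\phi}\colon R_b^{\operatorname{univ}} \to B/\mathfrak{m}_B = k(\mathfrak{p})$ is the prescribed composite $R_b^{\operatorname{univ}} \twoheadrightarrow R_b^{\operatorname{univ}}/\mathfrak{q} \hookrightarrow A \hookrightarrow k(\mathfrak{p})$; and such a $\phi$ yields the $B$-valued lift $\rho_b^{\operatorname{univ}} \otimes_{R_b^{\operatorname{univ}},\phi} B$ of $\rho(\mathfrak{p})$, compatibly with $\hat{\rho}$. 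It therefore suffices to show that, conversely, every ordinary-at-$p$, determinant-$\chi$ lift $\rho_B$ of $\rho(\mathfrak{p})$ to such a $B$ arises from a unique such $\phi$, functorially in $B$.

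For the existence of $\phi$, conjugate so that $\rho(\mathfrak{p})$ is realized on $L_0$, hence valued in $\GL_2(A)$. Since $\mathfrak{m}_B$ is nilpotent, one can choose a multiplicatively closed $A$-lattice $I \subseteq \mathfrak{m}_B$ large enough that every entry of every $\rho_B(g) - \rho(\mathfrak{p})(g)$ ($g \in G_{F, T}$) lies in $I$; then $\mathcal{O}_B := A + I$ is a module-finite flat local $\mathcal{O}$-subalgebra of $B$ with residue field $\mathbb{F}$, $\mathcal{O}_B[1/p] = B$ and $\mathfrak{m}_{\mathcal{O}_B} = (\varpi) + I$, and $\rho_B$ takes values in $\GL_2(\mathcal{O}_B)$ and reduces modulo $\mathfrak{m}_{\mathcal{O}_B}$ to $\bar{\rho}_b$; the ordinary-at-$p$ and determinant conditions are inherited by this integral model (the $G_{F_v}$-stable rank-one submodule at each $v \mid p$ can be taken to be the intersection with $\mathcal{O}_B^2$ of the corresponding submodule over $B$). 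As $\mathcal{O}_B$ is a complete Noetherian local $\mathcal{O}$-algebra with residue field $\mathbb{F}$, this integral lift is classified by a map $\phi^\circ\colon R_b^{\operatorname{univ}} \to \mathcal{O}_B$; composing with $\mathcal{O}_B \hookrightarrow B$ gives the desired $\phi$, and reducing $\phi^\circ$ modulo $I$ recovers the classifying map of $L_0$, i.e. the map $R_b^{\operatorname{univ}} \to A$, so $\bar{\phi}$ is indeed the prescribed composite and $\phi^{-1}(\mathfrak{m}_B) = \mathfrak{q}$. Uniqueness of $\phi$ follows from pro-representability of the deformation functor of $\bar{\rho}_b$: since $\bar{\chi}$ is totally odd, hence $\bar{\chi} \neq \mathbf{1}$, and $\bar{\rho}_b$ is a non-split extension of $\mathbf{1}$ by $\bar{\chi}$, one has $\operatorname{End}_{G_{F, T}}(\bar{\rho}_b) = \mathbb{F}$, so $\phi^\circ$ --- and with it $\phi$ and the induced morphism out of $\widehat{(R_b^{\operatorname{univ}})_{\mathfrak{q}}} \otimes_{K'} k(\mathfrak{p})$ --- is uniquely determined by the strict equivalence class of $\rho_B$ and is independent of the auxiliary choice of $I$. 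Comparing universal properties, the resulting canonical morphism $\widehat{(R_b^{\operatorname{univ}})_{\mathfrak{q}}} \otimes_{K'} k(\mathfrak{p}) \to R_{k(\mathfrak{p})}^{\operatorname{univ}}$ is a two-sided inverse of $f$.

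I expect the crux to be the lattice extraction in the third paragraph: one must be sure the $G_{F, T}$-stable integral lattice produced reduces to $L_0$ itself, not merely to some lattice with the same semisimplification, for only then is $\phi^{-1}(\mathfrak{m}_B)$ exactly $\mathfrak{q}$ and does $\phi$ factor through the completion \emph{at} the prescribed prime. This is precisely where non-splitness of $\bar{\rho}_b$ is used --- it pins $L_0$ down up to homothety among lattices with that reduction --- and it is the heart of Kisin's argument; the only genuinely new bookkeeping is running it with the $p$-adic field $k(\mathfrak{p})$ (respectively $K'$) in place of a finite residue field, which is the running convention of this subsection.
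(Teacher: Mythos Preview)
Your approach is essentially the same as the paper's: both arguments hinge on producing, inside an Artinian local $k(\mathfrak{p})$-algebra $B$, a complete local $\mathcal{O}$-subalgebra with residue field $\mathbb{F}$ over which the given lift of $\rho(\mathfrak{p})$ already lives and reduces to $\bar{\rho}_b$, so that the universal property of $R_b^{\operatorname{univ}}$ applies. Your $\mathcal{O}_B = A + I$ plays the same role as the paper's subalgebras $C_n^\circ = \sum_{j>0} \varpi^{-nj}(\mathfrak{n}^\circ)^j + C^\circ$; the paper credits this lattice extraction to \cite[Proposition~9.5]{kisin2003overconvergent}.

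The packaging differs slightly: rather than verifying pro-representability of the whole functor directly, the paper shows only that $f$ is formally \'etale (the unique-dashed-arrow criterion for square-zero $I\subset C$) and then invokes \cite[Proposition~3.1.5]{Bockle_2023} to conclude. This buys a bit of economy --- one need only lift through square-zero extensions --- whereas your uniqueness step requires arguing that any two maps $\widehat{(R_b^{\operatorname{univ}})_\mathfrak{q}}\otimes_{K'}k(\mathfrak{p})\to B$ inducing strictly equivalent lifts agree, which you do by enlarging $I$ to contain the conjugating matrix and the images of both maps, then appealing to $\End_{G_{F,T}}(\bar{\rho}_b)=\mathbb{F}$. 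Both routes are valid. Note that your standing assumption that $R^{\ps}/\mathfrak{p}$ is one-dimensional (so $A$ is the ring of integers of the $p$-adic field $k(\mathfrak{p})$) is exactly what the paper uses when it writes ``we may suppose $K=K'=k(\mathfrak{p})$'', and is the only case needed for the application in Corollary~\ref{crucial}.
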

	
	\begin{proof}
		Using \cite[Proposition 3.1.5]{Bockle_2023}, we just need to show that $f$ is formally \'etale. The proof is nearly the same as \cite[Proposition 9.5]{kisin2003overconvergent} (also see \cite[Theorem 3.3.1]{Bockle_2023}), and we briefly sketch it here. For simplicity for notations, we may suppose $K=K'=k(\mathfrak{p})$ (using \cite[Lemma 3.2.6]{Bockle_2023}) and write $\widehat{R}=\widehat{(R_b^{\operatorname{univ}})_{\mathfrak{q}}} \otimes_{K'}  k(\mathfrak{p})$.
		
		We consider a commutative diagram with $C$ an Artinian local $K$-algebra and $I$ a square zero ideal. We need to show that there exists a unique dashed arrow $g$ such that the two triangular subdiagrams commute.
		
		$$\begin{tikzcd} 
			R_{K}^{\operatorname{univ}} \arrow[r] \arrow[d, "f"] & C \arrow[d, two heads]\\
			\widehat{R} \arrow[ru, dashed ,"g"] \arrow[r] & C/I\\
		\end{tikzcd}$$
		
		By possibly conjugating $\hat{\rho}$ by some matrix $X \in \GL_2(\widehat{R})$ satisfying $X \equiv \operatorname{id} \mod \mathfrak{m}_{\widehat{R}}$, we can suppose that $\rho_{K}^{\operatorname{univ}} \otimes_{R_{K}^{\operatorname{univ}}} \widehat{R} =\hat{\rho}$.
		
		Choose a finitely generated free $\mathcal{O}$-subalgebra $C^\circ \subset C$ surjecting onto $\mathcal{O}$ under the natural surjection $c: C \twoheadrightarrow K$. Set $\mathfrak{n} = \operatorname{ker} c$ and $\mathfrak{n}^\circ = \mathfrak{n} \cap C^\circ$. For any $n \ge 0$, define $C_n^\circ = \sum_{j >0} \varpi^{-nj}(\mathfrak{n}^\circ)^j + C^\circ \subset C$. Note that the sum is finite since $ \mathfrak{n}^\circ$ is nilpotent. Then we have $\cup_{n >0} C_n^\circ = c^{-1}(\mathcal{O})$. We write $(C/I)_n^\circ= C_n^\circ/(I \cap C_n^\circ)$. As $G_{F, T}$ is compact, we know that $\rho_{K}^{\operatorname{univ}} \otimes_{R_{K}^{\operatorname{univ}}} C$ factors through $\operatorname{GL}_2(C_n^\circ)$ for $n$ sufficiently large. 
		
		Note that $R_b^{\operatorname{univ}}$ is topologically finitely generated, the composite $R_b^{\operatorname{univ}} \to \widehat{R} \to C/I$ factors through $ (C/I)_n^\circ$ for $n$ sufficiently large. 
		
		From our discussions above, we know that for sufficiently large $n$, $\rho_{K}^{\operatorname{univ}} \otimes_{R_{K}^{\operatorname{univ}}} C$ lies in $\operatorname{GL}_2(C_n^\circ) $, and it reduces to
		$\rho_b^{\operatorname{univ}} \otimes_{R_b^{\operatorname{univ}}} (C/I)_n^\circ$ modulo $(I \cap C_n^\circ)$. By the universal property, there exists a unique map $g_n^\circ : R_b^{\operatorname{univ}} \to C_n^\circ$ such that $\rho_b^{\operatorname{univ}} \otimes_{R_b^{\operatorname{univ}}} C_n^\circ $ is equivalent to $ \rho_{K}^{\operatorname{univ}} \otimes_{R_{K}^{\operatorname{univ}}} C$. Taking the completion, we obtain a map $g: \widehat{R} \to C$, and we can check that the diagram commuts with such $g$.  The uniqueness of $g$ follows from the uniqueness of $g_n^\circ$.
	\end{proof}
	
	Combining Corollary \ref{completion} and Proposition \ref{Kisin}, we obtain the following result.
	
	\begin{cor}\label{crucial}
		Assume $ H^0(G_{F, T}, \ad^0 \rho(\mathfrak{p})(1))=0$. Then there exists an irreducible component of $\Spec R_b^{\operatorname{univ}}$ containing $\mathfrak{q}$ of dimension at least $1+[F:\mathbb{Q}]$.
	\end{cor}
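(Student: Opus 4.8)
\emph{Proof strategy.} The idea is to extract the component from a lower bound on $\operatorname{ht}\mathfrak{q}$ in $R_b^{\operatorname{univ}}$ together with the fact that $\mathfrak{q}$ is not the maximal ideal. Granting $\operatorname{ht}\mathfrak{q}\geq[F:\mathbb{Q}]$ and $\dim R_b^{\operatorname{univ}}/\mathfrak{q}\geq 1$, I would pick a strictly descending chain of primes of $R_b^{\operatorname{univ}}$ of length $\operatorname{ht}\mathfrak{q}$ whose largest term is $\mathfrak{q}$; its smallest term $\mathfrak{a}$ is then a minimal prime of $R_b^{\operatorname{univ}}$ (otherwise the chain could be extended downwards), so $V(\mathfrak{a})\subseteq\Spec R_b^{\operatorname{univ}}$ is an irreducible component, and it contains $\mathfrak{q}$ since $\mathfrak{a}\subseteq\mathfrak{q}$. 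Concatenating this chain with one from $\mathfrak{q}$ to the maximal ideal realizing $\dim R_b^{\operatorname{univ}}/\mathfrak{q}$ gives
$$\dim V(\mathfrak{a})=\dim R_b^{\operatorname{univ}}/\mathfrak{a}\ \geq\ \operatorname{ht}(\mathfrak{q}/\mathfrak{a})+\dim R_b^{\operatorname{univ}}/\mathfrak{q}\ \geq\ \operatorname{ht}\mathfrak{q}+1\ \geq\ [F:\mathbb{Q}]+1,$$
which is the claimed bound (catenarity is not needed for this last chain of inequalities).

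For the height bound I would compute
$$\operatorname{ht}\mathfrak{q}=\dim(R_b^{\operatorname{univ}})_{\mathfrak{q}}=\dim\widehat{(R_b^{\operatorname{univ}})_{\mathfrak{q}}}=\dim\big(\widehat{(R_b^{\operatorname{univ}})_{\mathfrak{q}}}\otimes_{K'}k(\mathfrak{p})\big)=\dim R_{k(\mathfrak{p})}^{\operatorname{univ}}\geq[F:\mathbb{Q}].$$
Here the second equality holds because $\mathfrak{q}$-adic completion of a Noetherian local ring preserves Krull dimension; the third because $k(\mathfrak{p})/K'$ is a finite extension of fields, so the base change is module-finite and therefore dimension-preserving; the fourth is Proposition \ref{Kisin}; and the final inequality is Corollary \ref{completion}, whose hypothesis $H^0(G_{F,T},\ad^0\rho(\mathfrak{p})(1))=0$ is exactly what we are assuming. (Formally $\widehat{(R_b^{\operatorname{univ}})_{\mathfrak{q}}}$ is a $K'$-algebra since its residue field is $K'$, so all these rings are genuinely defined, and the middle ring is finite over the local ring $\widehat{(R_b^{\operatorname{univ}})_{\mathfrak{q}}}$.)

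It remains to verify $\dim R_b^{\operatorname{univ}}/\mathfrak{q}\geq 1$, i.e. that $\mathfrak{q}$ is not maximal. By construction $R_b^{\operatorname{univ}}/\mathfrak{q}$ embeds into $A$, the normal closure of $R^\ps/\mathfrak{p}$, whose fraction field $k(\mathfrak{p})$ is a $p$-adic local field; hence $A$ is module-finite over $\mathcal{O}$, and so is its $\mathcal{O}$-subalgebra $R_b^{\operatorname{univ}}/\mathfrak{q}$, which is therefore one-dimensional. Alternatively, $R_b^{\operatorname{univ}}/\mathfrak{q}$ is a complete Noetherian local domain of characteristic $0$ while its residue field $\mathbb{F}$ has characteristic $p$, so it is not a field and hence has dimension $\geq 1$.

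I do not foresee a genuine obstacle: once Corollary \ref{completion} and Proposition \ref{Kisin} are in hand, the statement is a dimension count. The only points that want a little care are the invariance of Krull dimension under $\mathfrak{q}$-adic completion and under the finite base change $(-)\otimes_{K'}k(\mathfrak{p})$, and the (standard) fact that some minimal prime contained in $\mathfrak{q}$ realizes $\operatorname{ht}\mathfrak{q}$; neither is difficult.
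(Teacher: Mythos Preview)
Your argument is correct and follows exactly the paper's route---the paper simply records that the corollary follows from Corollary~\ref{completion} and Proposition~\ref{Kisin}, and you have supplied the dimension-theoretic details (invariance of Krull dimension under completion and under finite base change, plus the chain argument extracting a component from $\operatorname{ht}\mathfrak{q}\ge[F:\mathbb{Q}]$ and $\dim R_b^{\operatorname{univ}}/\mathfrak{q}\ge 1$) that the paper leaves implicit. One minor remark: in fact $K'=k(\mathfrak{p})$, since the image of $R_b^{\operatorname{univ}}$ in $A$ contains $R^{\ps}/\mathfrak{p}$ (the trace of the universal deformation realizes the map $R^{\ps}\to R_b^{\operatorname{univ}}\to A$), so the base-change step is vacuous; and your second argument for $\dim R_b^{\operatorname{univ}}/\mathfrak{q}\ge 1$ is the cleaner one, as it does not rely on $k(\mathfrak{p})$ being a local field.
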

	
	\section{Local-global compatibility} In this section, our goal is to generalize Pan's local-global compatibility results (see \cite[Theorem 3.5.3, Corollary 3.5.8 \& 3.5.10]{pan2022fontaine}) to cover the case $p=3$. It is based on the recent work \cite{Pa_k_nas_2021}.
	
	Throughout this section, we fix a prime $p$ and a totally real field $F$ such that $p$ splits completely. We fix an isomorphism $\iota_p: \overbar{\Q_p} \cong \mathbb{C}$. We denote by $K$  a $p$-adic local field with its ring of integers $\cO$, a uniformizer $\varpi$ and its residue field $\F$.
	
	\subsection{Completed cohomology} In this subsection, we recall the definition of completed cohomology and the relation between it and classical automorphic forms. For more details, one can see \cite[Section 3.1 \& 3.2]{pan2022fontaine}.
	
	Let $D$ be a quaternion algebra with centre $F$ which is ramified at all infinite places of $F$ and unramified at all places above $p$, and we fix isomorphisms $D \otimes F_v \cong M_2(F_v)$ for any $v$ where $D$ is unramified. We view $K_p = \prod_{v \mid p} \operatorname{GL}_2(O_{F_v})$ and $D_p^\times =  \prod_{v \mid p} \operatorname{GL}_2(F_v)$ as subgroups of $(D \otimes_F \mathbb{A}_F)^\times$. We also write $N_{D/F}:(D\otimes_F\A_F)^\times\to \A_F^\times$ as the reduced norm.
	
	Let $A$ be a topological $\mathcal{O}$-algebra and $U = \prod_{v \nmid \infty} U_v$ be an open compact subgroup of $(D \otimes_F \mathbb{A}_F^\infty)^\times$ such that $U_v \subseteq \operatorname{GL}_2(O_{F_v})$ for $v \mid p$. We write $U^p= \prod_{v \nmid p} U_v$ (tame level) and $U_p =\prod_{v \mid p} U_v$. Let $\psi: (\mathbb{A}_F^\infty)^\times/F_+^\times \to A^\times$ be a continuous character, where $F_+$ is the set of totally positive elements in $F$. Let $\tau: \prod_{v \mid p} U_v \to \operatorname{Aut}(W_\tau)$ be a continuous representation on a finite $A$-module $W_\tau$, and we also view $\tau$ as a representation of $U$ by projecting to $\prod_{v \mid p} U_v$. 
	
	Let $S_{\tau,\psi}(U,A)$ be the space of continuous functions:
	\[f:D^{\times}\setminus \DAi\to W_{\tau}\]
	such that for any $g\in \DAi,u\in U,z\in \AFi$, we have
	\begin{itemize}
		\item $f(gu)=\tau(u^{-1})(f(g))$,
		\item $f(gz)=\psi(z)f(g)$.
	\end{itemize}
	Write $\DAi=\bigsqcup_{i\in I}D^\times t_iU\AFi$ for some finite set $I$ and $t_i\in \DAi$. If $\tau^{-1}|_{U\cap \AFi}$ acts as $\psi|_{U\cap \AFi}$, then there is an isomorphism:
	\begin{eqnarray*}
		S_{\tau,\psi}(U,A)\simeq \bigoplus_{i\in I}W_{\tau}^{(t_i^{-1}D^\times t_i\cap U\AFi)/F^{\times}},
	\end{eqnarray*}
	by sending $f$ to $(f(t_i))_i$. We say $U$ is \textit{sufficiently small} if $(t_i^{-1}D^\times t_i\cap U\AFi)/F^{\times}$ is trivial for all $i\in I$. This can be achieved by shrinking $U_v$ for some $v$.
	
	Suppose that $\xi: U^p \to A^\times$ is a continuous smooth character such that $\psi \mid_{\prod_{v \nmid p}(O_{F_v}^\times) \cap U_v}= \xi \mid_{\prod_{v \nmid p}(O_{F_v}^\times) \cap U_v}$. We define $S_{\tau, \psi, \xi}(U,A)$ as the space of continuous functions: $f: D^\times \setminus (D \otimes_F \mathbb{A}_F^\infty)^\times \to W_\tau$, such that for any $g \in (D \otimes_F \mathbb{A}_F^\infty)^\times$, $z \in (\mathbb{A}_F^\infty)^\times$, $u=u^pu_p \in U$, we have $$
	f(guz)= \psi(z)\xi(u^p)\tau(u_p^{-1})(f(g)).$$ 
	Similarly, if $\psi \mid_{U_p \cap \mathcal{O}_{F, p}^\times} = \tau^{-1} \mid_{U_p \cap \mathcal{O}_{F, p}^\times}$, then we also have $$
	S_{\tau, \psi, \xi}(U,A) \simeq \bigoplus_{i \in I} W_\tau ^{(t_i^{-1}D^{\times}t_i \cap U (\mathbb{A}_F^\infty)^\times)/F^\times},$$
	where $I=D^\times \setminus (D \otimes_F \mathbb{A}_F^\infty)^\times/ U(\mathbb{A}_F^\infty)^\times$ and $\{t_i\}_{i \in I}$ is a set of representatives.
	
	 We will ignore $\tau$ in the notation if it is the trivial action on $A$. 
	
	If $A$ is a torsion $\cO$-algebra with discrete topology, we write $S_{\psi}(U^p,A):=\varinjlim_{U_p} S_\psi(U^pU_p,A) $ with discrete topology, where $U_p=\prod_{v|p}U_v$ runs over all open compact subgroups $U_v$ of $\GL_2(F_v)$. 
	
	Note that the definition of $S_\psi(U,A)$ makes sense for any topological $\cO$-module $A$. We will also use $S_\psi(U,A)$ for any topological $\cO$-module $A$ by abuse of notation.
	
	\begin{defn} \label{Completed coh}
	 The \textit{completed cohomology} of tame level $U^p$ is defined to be
		\[S_\psi(U^p):=\Hom_\cO(K/\cO,S_{\psi}(U^p,K/\cO))\]
		equipped with $p$-adic topology, and the \textit{completed homology} is defined to be
		\[M_\psi(U^p):=S_\psi(U^p,K/\cO)^\vee=\Hom_\cO(S_{\psi}(U^p,K/\cO),K/\cO)\]
		equipped with compact-open topology.
		
		We can also define $S_{\psi, \xi}(U^p)$ and $M_{\psi, \xi}(U^p) $ in a similar way.
	\end{defn}
	
	From the definition above, there is a natural action of $D_p^\times=\prod_{v|p}\GL_2(F_v)$ on all spaces defined above by right translation. It is almost by definition that for any open compact subgroup $U_p=\prod_{v|p}U_v\subseteq K_p=\prod_{v|p}\GL_2(O_{F_v})$,
	\[S_{\psi}(U^p,A)^{U_p}=S_{\psi}(U^pU_p,A).\]
	Hence $S_\psi(U^p,K/\cO)$ is a smooth admissible $\cO$-representation of $D_p^\times$. It is also clear that $S_\psi(U^p)$ is $\varpi$-torsion free and
	\begin{eqnarray*}
		S_\psi(U^p)\cong \varprojlim_n S_\psi(U^p,\cO/\varpi^n)\cong \Hom^{\mathrm{cont}}_{\cO}(M_\psi(U^p),\cO),~M_\psi(U^p)\cong \Hom_\cO(S_\psi(U^p),\cO).
	\end{eqnarray*}
	
	\begin{rem}\label{quaternionic and automorphic}
		 Suppose $A=K$ and $(\vec{k},\vec{w})\in \Z_{>1}^{\Hom(F,\overbar{\Q_p})}\times  \Z^{\Hom(F,\overbar{\Q_p})}$ such that $k_\sigma+2w_\sigma$ is independent of $\sigma:F\to \overbar{\Q_p}$. Write $w=k_{\sigma}+2w_{\sigma}-1$. We can define the following algebraic representation $\tau_{(\vec{k},\vec{w})}$ of $D_p^\times=(D\otimes \Q_p)^\times$ on
		\[W_{(\vec{k},\vec{w}),K}=\bigotimes_{\sigma:F\to K}(\Sym^{k_\sigma-2}(K^2)\otimes \det{}^{w_\sigma}),\]
		where $\Sym^{k_\sigma-2}$ denotes the space of homogeneous polynomials of degree $k_\sigma-2$ in two variables with an action of $\GL_2(F_{v(\sigma)})$ given by
		\[\begin{pmatrix} a&b\\c&d\end{pmatrix}(f)(X,Y)=f(\sigma(a)X+\sigma(c)Y,\sigma(b)X+\sigma(d)Y).\] 
		Here $v(\sigma)$ is the place above $p$ given by $\sigma$.  Let $\psi:(\A^\infty_F)^\times/F^\times_{+}\to K^\times$ be a continuous character such that $\tau_{(\vec{k},\vec{w})}^{-1}|_{U\cap \AFi}=\psi|_{U\cap \AFi}$ and write $\tau=\tau_{(\vec{k},\vec{w})}$. For simplicity, we will write $S_{(\vec{k},\vec{w}),\psi}(U,K)$ for $S_{\tau_{(\vec{k},\vec{w})},\psi}(U,K)$ from now on. Then there is an isomorphism:
		\begin{eqnarray*}
			S_{(\vec{k},\vec{w}),\psi}(U,K)\otimes_{K,\iota_p}\bC&\stackrel{\sim}{\longrightarrow} &\Hom_{D_\infty^\times}(W_{\iota_p(\vec{k},\vec{w}),\bC}^*,C^\infty(D^\times\setminus (D\otimes\A_F)^\times/U,\psi_\bC)),\\
			f\otimes 1&\longmapsto& ``w^*\mapsto (g\mapsto w^*(\tau_\bC(g_\infty)^{-1}\tau(g_p)f(g^\infty)))"
		\end{eqnarray*}
		where 
		\begin{itemize}
			\item $D_\infty^\times=(D\otimes_\Q \R)^\times$, 
			\item $W_{\iota_p(\vec{k},\vec{w}),\bC}=W_{(\vec{k},\vec{w}), K}\otimes_{K,\iota_p}\bC$ is viewed as an algebraic representation $\tau_\bC$ of $D_\infty^\times$ (induced by $\iota_p$), and $W_{\iota_p(\vec{k},\vec{w}),\bC}^*$ is its $\bC$-linear dual.
			\item $\psi_\bC:\A_F^\times/F^\times\to\bC^\times$ sends $g$ to $N_{F/\Q}(g_{\infty})^{1-w}\iota_p(N_{F/\Q}(g_p)^{w-1}\psi(g^\infty))$.
			\item $C^\infty(D^\times\setminus (D\otimes\A_F)^\times/U,\psi_\bC)$ is the space of smooth $\bC$-valued functions on $D^\times\setminus(D\otimes\A_F)^\times$, right invariant by $U$ and with central character $\psi_\bC$.
		\end{itemize}
		Note that the right hand side is a subspace of automorphic forms on $(D\otimes\A_F)^\times$.
		
		Furthermore, write $S_\psi(U^p)_K$ for $S_\psi(U^p)\otimes_\cO K$, which is a Banach space with unit ball $S_\psi(U^p)$. Then we have an isomorphism
		\[S_{(\vec{k},\vec{w}),\psi}(U^pU_p,K)\simeq \Hom_{K[U_p]}(W^{*}_{(\vec{k},\vec{w}),K},S_\psi(U^p)_K).\]
		See \cite[3.1.2 \& 3.2.4]{pan2022fontaine}.
	\end{rem}
	
	Let $v|p$ be a finite place of $F$. We assume $\psi|_{N_{D/F}(U^p)}$ is trivial and $\psi|_{O_{F_v}^\times}$ is an algebraic character. In particular, there exists an integer $m$ such that,
	$\psi(a_v)=\sigma_v(a_v)^m,~a_v\in O_{F_v}^\times$,
	where $\sigma_v:F\to K$ is the embedding induced by $v$. Let $U^v$ be an open compact subgroup of $\prod_{w\neq v,w|p}\GL_2(O_{F_w})$.
	
	\begin{defn}\label{locally algebraic vectors}
		The subspace of $\GL_2(O_{F_v})U^v$-\textit{algebraic vectors} of $S_\psi(U^p)_K$ is defined to be the image of the evaluation map:
		\[\bigoplus_{(\vec{k},\vec{w})}\Hom_{K[\GL_2(O_{F_v})U^v]}(W_{(\vec{k},\vec{w}),K},S_\psi(U^p)_K)\otimes_K W_{(\vec{k},\vec{w}),K}\to S_\psi(U^p)_K,\]
		where the sum is taken over all $(\vec{k},\vec{w})$ with $k_\sigma+2w_\sigma=m+2$ for any $\sigma$. The subspace $S_\psi(U^p)_K^{v-\mathrm{a},v'-\mathrm{la}}$ of $\GL_2(O_{F_v})$-\textit{algebraic}, $\prod_{w\neq v,w|p}\GL_2(O_{F_w})$-\textit{locally algebraic vectors} of $S_\psi(U^p)_K$ is defined to be the union of all $\GL_2(O_{F_v})U^v$-algebraic vectors where $U^v$ runs through all open compact subgroups of $\prod_{w\neq v,w|p}\GL_2(O_{F_w})$.
	\end{defn}
	
	The following result is helpful for our local-global compatibility argument.
	
	\begin{prop}\label{density}
		The subspace $S_\psi(U^p)_K^{v-\mathrm{a},v'-\mathrm{la}}$ is dense in $S_\psi(U^p)_K$.
	\end{prop}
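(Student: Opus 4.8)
The statement to prove is Proposition \ref{density}: that the $\GL_2(O_{F_v})$-algebraic, $v'$-locally-algebraic vectors $S_\psi(U^p)_K^{v-\mathrm{a},v'-\mathrm{la}}$ form a dense subspace of the Banach space $S_\psi(U^p)_K$. The natural approach is to reduce to a classical density statement about locally algebraic vectors inside an admissible unitary Banach space representation, combined with the classicality input coming from the comparison in Remark \ref{quaternionic and automorphic} between fixed-weight spaces $S_{(\vec k,\vec w),\psi}(U^pU_p,K)$ and spaces of automorphic forms on $(D\otimes\A_F)^\times$. The skeleton is: (1) observe that $S_\psi(U^p)_K$ is an admissible unitary Banach representation of $D_p^\times=\prod_{v|p}\GL_2(F_v)$; (2) recall the general fact (due essentially to Emerton, in the form used by Pan) that the locally algebraic vectors of such a representation are dense, and more precisely that for each place $v$ the subspace of vectors that are $\GL_2(O_{F_v})$-algebraic on the $v$-component and locally algebraic on the remaining components is dense; (3) identify the $\GL_2(O_{F_v})U^v$-algebraic vectors, via Frobenius reciprocity $\Hom_{K[\GL_2(O_{F_v})U^v]}(W_{(\vec k,\vec w),K},S_\psi(U^p)_K)$, with the classical spaces $S_{(\vec k,\vec w),\psi}(U^pU'_p,K)$ for suitable $U'_p$, so that Definition \ref{locally algebraic vectors} really does capture all the classical automorphic vectors of the right infinitesimal character; (4) conclude density.

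\textbf{Carrying it out.} First I would recall that $S_\psi(U^p)=\varprojlim_n S_\psi(U^p,\cO/\varpi^n)$ is $p$-torsion free with $S_\psi(U^p)^{U_p}=S_\psi(U^pU_p,\cO/\varpi^\bullet)$-style finiteness, so $S_\psi(U^p)_K$ is an admissible unitary $K$-Banach representation of $D_p^\times$ (this is stated in the excerpt just before Remark \ref{quaternionic and automorphic}). Next, the key general input: for an admissible unitary Banach representation $V$ of $\prod_{v|p}\GL_2(F_v)$, the subspace of vectors that become algebraic for the action of $\GL_2(O_{F_v})$ and locally algebraic for $\prod_{w\ne v,\,w|p}\GL_2(O_{F_w})$ is dense in $V$. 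Pan proves exactly the corresponding statement (\cite[Proposition 3.1.4 / Lemma 3.2.2]{pan2022fontaine}, or whatever the precise reference is) and the argument there makes no use of his extra $p=3$ hypothesis — it is a statement about Banach representations of $p$-adic groups and Emerton's spectral-sequence/Jacquet-module machinery, or alternatively a direct argument using that $\bigcup_{U_p}V^{U_p}$ is dense together with the fact that each $V^{U_p}$ is built from finitely many smooth-times-algebraic pieces once one fixes the weight using Remark \ref{quaternionic and automorphic}. So I would simply invoke that argument verbatim, noting that nothing changes for $p=3$.

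\textbf{The main obstacle.} The genuinely substantive point — and the one I expect to be the crux — is step (3): showing that the algebraic vectors one extracts abstractly are precisely the classical ones appearing in Definition \ref{locally algebraic vectors}, i.e. that one does not miss any weights. Concretely, a $\GL_2(O_{F_v})$-algebraic vector $f$ generates, under $\GL_2(O_{F_v})$, a finite-dimensional algebraic representation, which decomposes into the $W_{(\vec k,\vec w),K}$; the constraint $k_\sigma+2w_\sigma=m+2$ is forced by the central character condition $\psi|_{O_{F_v}^\times}(a_v)=\sigma_v(a_v)^m$ together with $\tau^{-1}|_{U\cap\AFi}=\psi|_{U\cap\AFi}$. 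One has to check that this matching of central characters is exactly right and that every algebraic vector indeed lies in the image of the evaluation map in Definition \ref{locally algebraic vectors}; this is a bookkeeping argument with the reduced norm and the embeddings $\sigma_v$, but it is where all the hypotheses ($\psi|_{N_{D/F}(U^p)}$ trivial, $\psi|_{O_{F_v}^\times}$ algebraic) get used. Granting this identification, density of $S_\psi(U^p)_K^{v-\mathrm{a},v'-\mathrm{la}}$ in $S_\psi(U^p)_K$ follows formally from density of the abstractly-defined algebraic-and-locally-algebraic vectors. I would present the proof as: cite the Banach-representation density statement (emphasising its proof is insensitive to $p=3$), then spend the bulk of the argument on the weight/central-character identification, and finish in one line.
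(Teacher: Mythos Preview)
Your step (2) claims, as a general fact about admissible unitary Banach representations $V$ of $\prod_{v|p}\GL_2(F_v)$, that the $\GL_2(O_{F_v})$-algebraic, $v'$-locally-algebraic vectors are dense in $V$. This is false: an absolutely irreducible admissible unitary Banach representation of $\GL_2(\Q_p)$ corresponding under the $p$-adic Langlands correspondence to a two-dimensional $G_{\Q_p}$-representation that is not de Rham has \emph{no} nonzero locally algebraic vectors whatsoever. Density of locally algebraic vectors is not a feature of arbitrary admissible Banach representations; it is a special property of $S_\psi(U^p)_K$ coming from its construction.

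The argument in \cite[Proposition 3.2.6]{pan2022fontaine} (which the present paper simply cites, and which indeed makes no use of any hypothesis on $p$) exploits precisely this special structure. After shrinking $U^p$ so that $U^pK_p$ is sufficiently small, the double-coset decomposition $(D\otimes\A_F^\infty)^\times=\coprod_{i\in I} D^\times t_i U^p K_p \AFi$ shows that $S_\psi(U^p)_K$, \emph{as a representation of the compact group} $K_p=\prod_{v|p}\GL_2(O_{F_v})$, is a finite direct sum of copies of the space of continuous $K$-valued functions on $K_p$ with central character $\psi|_{O_{F,p}^\times}$. Density then follows from Mahler's theorem: polynomial functions in the matrix entries are dense in $C(\GL_2(\Z_p),K)$, and such polynomials are exactly the matrix coefficients of the algebraic representations $W_{(\vec k,\vec w),K}$. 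The central-character constraint is what forces $k_\sigma+2w_\sigma=m+2$ --- that part of your step (3) is correct --- but the substantive density input is Mahler, not any abstract Banach-representation statement or Emerton's Jacquet-module machinery.

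Your alternative sketch via density of smooth vectors does not repair this either: the $U_p$-invariants are smooth vectors, but a smooth (i.e.\ finite-image) representation of $\GL_2(O_{F_v})$ is not in general a sum of restrictions of algebraic representations of $\GL_2$, so one cannot pass from ``smooth'' to ``$\GL_2(O_{F_v})$-algebraic'' without the Mahler-type argument applied to the explicit description of $S_\psi(U^p)_K|_{K_p}$.
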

	
	\begin{proof}
		See \cite[Proposition 3.2.6]{pan2022fontaine}.
	\end{proof}
	
	\subsection{Hecke algebras} In this subsection, we recall the definition of the $p$-adic big Hecke algebra. Our reference is \cite[Section 3.3]{pan2022fontaine}.
	
	Let $A$ be a Noetherian topological $\Z_p$-algebra, and consider a level $U$, a character $\psi$ and a representation $\tau: U_p \to \Aut(W_\tau)$. Let $\Sigma$ be a finite set of primes of $F$ containing all places above $p$ and places $v$ where either $D$ is ramified or $U_v$ is not a maximal open subgroup. For any $v \notin \Sigma$, we define the Hecke operator $T_v \in \operatorname{End}(S_{\tau, \psi}(U,A))$ to be the double coset action $[U_v \begin{pmatrix}
		\varpi_v & ~\\
		~ & 1
	\end{pmatrix}U_v]$. Precisely, if we write $U_v \begin{pmatrix}
		\varpi_v & ~\\
		~ & 1
	\end{pmatrix}U_v  = \coprod_i \gamma_i U_v $, then $$
	(T_v \cdot f) (g) = \sum_i f(g\gamma_i), ~~f \in S_{\tau, \psi}(U,A). $$
	
	We define the Hecke algebra $\mathbb{T}_{\tau, \psi} ^\Sigma (U,A) \subseteq \operatorname{End}(S_{\tau, \psi}(U,A))$ to be the $A$-subalgebra generated  by all $T_v, v \notin \Sigma$. This is a finite commutative $A$-algebra. We will write $\mathbb{T}_{\psi}^\Sigma (U,A)$ for the Hecke algebra if $\tau $ is the trivial action on $A$. Suppose that $A= \mathcal{O}/ \varpi^n$, $ U$ is sufficiently small and $\psi \mid_{U \cap (\mathbb{A}_F^\infty)^\times} $ is trivial modulo $ \varpi^n$. Then we can check that the Hecke algebra $\mathbb{T}_{\psi}^\Sigma (U, \mathcal{O}/ \varpi^n)$ is independent of $\Sigma$, and we may drop $\Sigma$ in this situation.
	
	\begin{defn} (Big Hecke algebra) \label{big}
		Let $U^p$ be a tame level and $\psi: (\mathbb{A}_F^\infty)^\times/ (U^p \cap (\mathbb{A}_F^\infty)^\times)F_+^\times \to \mathcal{O}^\times$ be a continuous character. Then we define the \textit{big Hecke algebra} $$
		\mathbb{T}_{\psi}(U^p) = \varprojlim_{(n, U_p) \in \mathcal{I}} \mathbb{T}_{\psi}(U^pU_p, \mathcal{O}/\varpi^n), $$
		where $\mathcal{I}$ is the set of pairs $(n, U_p)$ with $U_p \subseteq K_p$ and $n$ a positive integer such that $ \psi \mid_{U_p \cap \mathcal{O}_{F,p}^\times} \equiv 1~ \operatorname{mod}~\varpi^n$.
		
	\end{defn}
	
	We conclude some basic properties of the big Hecke algebra.
	
	\begin{prop} 	Let $U^p$ be a tame level and $\psi: (\mathbb{A}_F^\infty)^\times/ (U^p \cap (\mathbb{A}_F^\infty)^\times)F_+^\times \to \mathcal{O}^\times$ be a continuous character.
		
		1) $\mathbb{T}_{\psi}(U^p) $ is semi-local. (Write $\km_1,\cdots,\km_r$ for all the maximal ideals of $\T_\psi(U^p)$.)
		
		2) $\T_\psi(U^p)\cong \T_\psi(U^p)_{\km_1}\times\cdots\times \T_\psi(U^p)_{\km_r}$ and each $\T_\psi(U^p)_{\km_i}$ is $\km_i$-adically complete and separated. 
		
		3) $S_{\psi}(U^p)\cong S_{\psi}(U^p)_{\km_1}\oplus\cdots\oplus S_{\psi}(U^p)_{\km_r}$ and each $S_\psi(U^p)_{\km_i}$ is $\km_i$-adically complete and separated. 
		
		4) $\T_\psi(U^p)$ commutes with base change. In other words, if $K'$ is a finite extension of $K$ with ring of integers $\cO'$ and replace $\cO$ by $\cO'$ in all the definitions, then we have $\T'_\psi(U^p)\cong \T_\psi(U^p)\otimes_\cO \cO'$.
		
	\end{prop}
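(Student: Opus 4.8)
The plan is to deduce all four statements from one structural fact: $\mathbb{T}_\psi(U^p)$ is module-finite over an Iwasawa algebra. Fix a uniform open pro-$p$ subgroup $K_p(1)\subseteq K_p$ and set $\Lambda=\mathcal{O}[[K_p(1)]]$, a complete Noetherian (hence Henselian) local ring. Admissibility of the smooth $\mathcal{O}[D_p^\times]$-representation $S_\psi(U^p,K/\mathcal{O})$ recalled above says precisely that its Pontryagin dual $M_\psi(U^p)$ is a finitely generated $\Lambda$-module. Since each Hecke operator $T_v$ ($v\notin\Sigma$) commutes with the right $D_p^\times$-action, hence with the $K_p(1)$-action, the action of $\mathbb{T}_\psi(U^p)$ on $M_\psi(U^p)$ is $\Lambda$-linear; it is also faithful, being faithful and compatible on each $S_\psi(U^pU_p,\mathcal{O}/\varpi^n)^\vee$. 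Thus $\mathbb{T}_\psi(U^p)$ embeds into $\operatorname{End}_\Lambda(M_\psi(U^p))$, which is a finitely generated $\Lambda$-module (it injects into $M_\psi(U^p)^{\oplus m}$ via any surjection $\Lambda^{\oplus m}\twoheadrightarrow M_\psi(U^p)$, and $\Lambda$ is Noetherian); hence $\mathbb{T}_\psi(U^p)$ is module-finite over $\Lambda$.

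Parts 1) and 2) are then commutative algebra. A module-finite algebra over a Henselian local ring is a finite product of local rings, each again module-finite over $\Lambda$; this is the decomposition $\mathbb{T}_\psi(U^p)\cong\prod_{i=1}^{r}\mathbb{T}_\psi(U^p)_{\km_i}$ with the $\mathbb{T}_\psi(U^p)_{\km_i}$ local. A module-finite algebra over a complete local ring is $\mathfrak{m}_\Lambda$-adically complete and separated, and on each factor the $\mathfrak{m}_\Lambda$-adic topology coincides with the $\km_i$-adic one: $\mathbb{T}_\psi(U^p)_{\km_i}/\mathfrak{m}_\Lambda\mathbb{T}_\psi(U^p)_{\km_i}$ is a finite local $\mathbb{F}$-algebra, so Artinian, whence $\km_i^N\subseteq\mathfrak{m}_\Lambda\mathbb{T}_\psi(U^p)_{\km_i}\subseteq\km_i$ for some $N$. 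This gives the decomposition together with $\km_i$-adic completeness and separatedness.

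For part 3), let $1=\sum_{i=1}^{r}e_i$ be the attached system of orthogonal idempotents. Applying it to $S_\psi(U^p)=\operatorname{Hom}^{\cont}_{\mathcal{O}}(M_\psi(U^p),\mathcal{O})$ gives $S_\psi(U^p)=\bigoplus_{i=1}^{r}e_iS_\psi(U^p)$, with $e_iS_\psi(U^p)=S_\psi(U^p)_{\km_i}$ since $\mathbb{T}_\psi(U^p)$ is semi-local. Each summand is a direct summand of $S_\psi(U^p)\cong\varprojlim_n S_\psi(U^p,\mathcal{O}/\varpi^n)$, which is complete and separated; a direct summand of a complete separated module is complete and separated, whence the claim (with the $\km_i$-adic topology, comparable to the $\varpi$-adic one as above).

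For part 4), the key point is that completed homology commutes with the finite flat base change $\mathcal{O}\to\mathcal{O}'$: from the explicit description of $S_\psi(U^pU_p,A)$ recalled above (and freeness for $U$ sufficiently small) one gets $M_\psi(U^p)\otimes_{\mathcal{O}}\mathcal{O}'\cong M'_\psi(U^p)$, compatibly with all $T_v$. Since $\mathcal{O}'$ is finite free over $\mathcal{O}$, the natural map $\operatorname{End}_{\mathcal{O}}(M_\psi(U^p))\otimes_{\mathcal{O}}\mathcal{O}'\hookrightarrow\operatorname{End}_{\mathcal{O}'}(M_\psi(U^p)\otimes_{\mathcal{O}}\mathcal{O}')$ is injective; inside the target, the $\mathcal{O}'$-subalgebra topologically generated by the $T_v$ is $\mathbb{T}'_\psi(U^p)$, and it is the image of $\mathbb{T}_\psi(U^p)\otimes_{\mathcal{O}}\mathcal{O}'$, which maps in injectively by flatness of $\mathcal{O}'$; hence $\mathbb{T}'_\psi(U^p)\cong\mathbb{T}_\psi(U^p)\otimes_{\mathcal{O}}\mathcal{O}'$. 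The only non-formal ingredient in all of this is the finite generation of $M_\psi(U^p)$ over $\Lambda$, i.e.\ the admissibility of completed cohomology, which is already recorded; the rest is standard, and the argument follows \cite[Section 3.3]{pan2022fontaine} closely.
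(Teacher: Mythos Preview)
Your proposal is correct and follows essentially the same approach as the paper, which simply cites \cite[Proposition 3.3.2, Corollary 3.3.3, 3.3.5]{pan2022fontaine}; indeed, your argument reproduces Pan's: finiteness of $\mathbb{T}_\psi(U^p)$ over the Iwasawa algebra via faithfulness on the finitely generated $\Lambda$-module $M_\psi(U^p)$, followed by standard commutative algebra over a complete local base. The only place to be slightly careful is part 3), where comparing the $\km_i$-adic and $\varpi$-adic topologies on $S_\psi(U^p)_{\km_i}$ is not purely formal since this module need not be finitely generated over $\mathbb{T}_\psi(U^p)$; but the inclusion $\km_i^N\subseteq\mathfrak{m}_\Lambda\mathbb{T}_\psi(U^p)$ together with $\mathfrak{m}_\Lambda$-adic completeness of the $\Lambda$-dual handles this, as in Pan.
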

	
	\begin{proof}
		See \cite[Proposition 3.3.2, Corollary 3.3.3, 3.3.5]{pan2022fontaine}.
	\end{proof}
	
	Suppose $\T_\psi(U^p)$ is non-zero and $\km\in\Spec \T_\psi(U^p)$ is a maximal ideal. Enlarging $\cO$ if necessary, we may assume $\km$ has residue field $\F$. Denote by $T_\km$ the two-dimensional pseudo-representation over $\F$ (see \cite[3.3.3 \& 3.3.4]{pan2022fontaine} for the existence):
	\[G_{F,\Sigma} \longrightarrow \T^\psi(U^p)\to \T^\psi(U^p)/\km=\F.\]
	For any $v|p$, let $R^{\ps, \psi\varepsilon^{-1}}_v$ be the universal deformation ring which parametrizes all two-dimensional pseudo-representations of $G_{F_v}$ lifting $T_\km|_{G_{F_v}}$ with determinant $\psi\varepsilon^{-1}|_{G_{F_v}}$. By the universal property, we have a natural map $R^{\ps,\psi\varepsilon^{-1}}_v\to\T_\psi(U^p)_\km$. Taking tensor products over all $v|p$, we get
	\[R^{\ps,\psi\varepsilon^{-1}}_p:=\widehat{\bigotimes}_{v|p}R^{\ps,\psi\varepsilon^{-1}}_v\to\T_\psi(U^p)_\km.\]
	Therefore, we have defined an action of $R^{\ps,\psi\varepsilon^{-1}}_p$ on the completed cohomology $S_\psi(U^p)_\km$ via the map above.
	
	Suppose that $\xi: U^p \to A^\times$ is a continuous smooth character such that $\psi \mid_{\prod_{v \nmid p}(O_{F_v}^\times) \cap U_v}= \xi \mid_{\prod_{v \nmid p}(O_{F_v}^\times) \cap U_v}$. Then we can also define the big Hecke algebra $\T_{\psi, \xi}(U^p)$ and obtain the properties in the same way.
	
	\subsection{Local-global compatibility}\label{section lgc} In this subsection, we generalize Pan's local-global compatibility result and give some applications. We will frequently use Pašk\=unas' results and keep the notations given in the appendix.
	
	Let $G=\prod_{i=1}^f \GL_2(\Q_p)$ and $Z(G)\simeq \prod_{i=1}^f\Q_p^\times$ be its centre. (In our case, we may take $f=[F: \Q]$.) Let $\zeta: Z(G) \to \cO^\times $ be a continuous character. We first recall Pašk\=unas theory for the $p$-adic analytic group $G$.
	
	Let $\Mod_{G, \zeta}^{\mathrm{l. adm}}(\cO)$ be the full subcategory of $ \Mod_{G, \zeta}^{\mathrm{sm}}(\cO)$ consisting of locally admissible representations. Here an object $V$ is \textit{locally admissible} if for every $v \in V$, the smallest $\cO[G]$-submodule of $V$ containing $v$ is admissible. Then by \cite[Lemma 3.4.1]{pan2022fontaine}, the categories $\Mod_{G, \zeta}^{\mathrm{l. adm}}(\cO)$ and 
	$ \Mod_{G, \zeta}^{\mathrm{l. fin}}(\cO)$ are actually the same. Note that the categories $ \Mod_{G, \zeta}^{\mathrm{sm}}(\cO)$ and $\mathrm{Mod}_{G, \zeta}^{\mathrm{pro\, aug}}(\cO)$ are anti-equivalent via the Pontryagin duality. We write $\kC_{G, \zeta}(\cO)$ as the full subcategory of $\mathrm{Mod}_{G, \zeta}^{\mathrm{pro\, aug}}(\cO)$ 
	anti-equivalent to $\operatorname{Mod}^{\mathrm{l\, fin}}_{G, \zeta}(\mathcal{O})$.
	
	Let $\mathrm{Irr}_{G,\zeta}$ be the set of irreducible representations in $\mathrm{Mod}_{G,\zeta}^{\mathrm{sm}}(\cO)$. Recall that a block is an equivalence class of the relation $\sim$, where $\pi \sim \pi'$ if there exists $\pi_1, \cdots, \pi_n \in \Irr_{G, \zeta}$ such that $\pi \cong \pi_1$, $\pi' \cong \pi_n$, and for $1 \leq i \leq n-1$, $\pi_i \cong \pi_{i+1}$ or $\Ext^1_{G}(\pi_i, \pi_{i+1}) \neq 0$ or $\Ext^1_{G}(\pi_{i+1}, \pi_i) \neq 0$. Then there exists a natural decomposition of $\mathrm{Mod}^{\mathrm{l\, adm}}_{G,\zeta}(\cO)$ with respect to the blocks:
	\begin{eqnarray*}
		\mathrm{Mod}^{\mathrm{l\, adm}}_{G,\zeta}(\cO)\cong \prod_{\mathfrak{B}\in\mathrm{Irr}_{G,\zeta}/\sim}\mathrm{Mod}^{\mathrm{l\, adm}}_{G,\zeta}(\cO)^\kB,
	\end{eqnarray*}
	where $\mathrm{Mod}^{\mathrm{l\, adm}}_{G,\zeta}(\cO)^\kB$ is the full subcategory of $\mathrm{Mod}^{\mathrm{l\, adm}}_{G,\zeta}(\cO)$ consisting of representations with all irreducible subquotients in $\kB$. Taking Pontryagin dual, this gives:
	\[\kC_{G,\zeta}(\cO)\cong\prod_{\mathfrak{B}\in\mathrm{Irr}_{G,\zeta}/\sim}\kC_{G,\zeta}(\cO)^\kB.\]
	
	For a block $\kB$, write $\pi_{\kB}=\bigoplus_{\pi\in\kB_i}\pi$, where $\kB_i$ is the set of isomorphism classes of elements of $\kB$. Suppose that it is a finite set. Let $\pi_\kB\hookrightarrow J_{\kB}$ be an injective envelope of $\pi_{\kB}$ in $\mathrm{Mod}^{\mathrm{l\, adm}}_{G,\zeta}(\cO)$. Its Pontryagin dual $P_{\kB}:=J_{\kB}^\vee$ is a projective envelope of $\pi^\vee\cong\bigoplus_{\pi\in\kB_i}\pi^\vee$ in $\kC_{G,\zeta}(\cO)$. Let
	$E_{\kB}:=\End_{\kC_{G,\zeta}(\cO)}(P_{\kB})\cong \End_G(J_{\kB}).$
	This is a pseudo-compact ring. Recall that the functor 
	$M\mapsto \Hom_{\kC_{G,\zeta}(\cO)}(P_\kB,M)$
	defines an anti-equivalence of categories between $\kC_{G,\zeta}(\cO)^\kB$ and the category of right pseudo-compact $E_{\kB}$-modules. An inverse functor is given by 
	$\mathrm{m}\mapsto (\mm\hat{\otimes}_{E_{\kB}}P_{\kB}).$ 
	
	As the block of $\GL_2(\Q_p)$ has been well-understood (see Appendix \ref{A.2}), we can use the following results to study the blocks of $G=\prod_{i=1}^f \GL_2(\Q_p)$.
	
	\begin{lem}\label{block for G}
		1) For any absolutely irreducible representation $\pi$ in $\mathrm{Mod}_{G,\zeta}^{\mathrm{sm}}(\cO)$, there is a finite extension $\F'/\F$ such that $\pi\otimes_{\F}\F'$ is isomorphic to some $\bigotimes_{i=1}^f\pi_i$, where $\pi_i$ are absolutely irreducible $\GL_2(\Q_p)$ representations over $\F'$.
		
		 2) Keep $\pi_i$ in 1) and assume that $\F'=\F$. Let $P_r\to\pi_r^\vee$ be a projective envelope of $\pi_r^\vee$ for $r=1,\cdots,f$. Then $P_1\hat{\otimes}\cdots\hat{\otimes}P_f\to\widehat{\bigotimes}_{i=1}^f\pi_i^\vee$ is a projective envelope of $\widehat{\bigotimes}_{i=1}^f\pi_i^\vee\cong(\bigotimes_{i=1}^f\pi_i)^\vee$ in $\kC_{G,\zeta}(\cO)$. 
		
		3) Write $G=G_1\times G_2$, where each $G_r=\prod_{i=1}^{f_r}\GL_2(\Q_p)$ with centre $Z_r,~r=1,2$. Suppose $M_r,N_r\in\kC_{G_r,\zeta|_{Z_r}}(\cO),~r=1,2$. Then there exists a natural isomorphism:
		\[\Hom_{\kC_{G,\zeta}(\cO)}(M_1\hat{\otimes}M_2,N_1\hat{\otimes} N_2)\cong\Hom_{\kC_{G_1,\zeta|_{Z_1}}(\cO)}(M_1,N_1)\hat{\otimes}\Hom_{\kC_{G_2,\zeta|_{Z_2}}(\cO)}(M_2,N_2).\]
		
		4) Given $f$ blocks $\kB_1,\cdots,\kB_f$ of $\GL_2(\Q_p)$ that contain an absolutely irreducible representation, then
		\[\kB_1\otimes\cdots\otimes\kB_f:=\{\pi_1\otimes\cdots\otimes\pi_f|\pi_r\in\kB_r,r=1,\cdots,f\}\]
		is a block of $G$. 
	\end{lem}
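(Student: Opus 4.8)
We establish the four assertions in the order 1), 3), 2), 4): statement 3) is the technical core, while 2) and 4) follow from it together with 1). Throughout, $\widehat{\bigotimes}$ denotes completed tensor product over $\cO$ in the relevant pseudo-compact categories.

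\emph{Proof of 1).} By induction on $f$ we reduce to $f=2$, writing $G=G_1\times G_2$ with $G_1=\GL_2(\Q_p)$ and $G_2=\prod_{i=2}^{f}\GL_2(\Q_p)$. After extending scalars to $\overbar{\F}$, $\pi$ stays irreducible, and one invokes the smooth analogue of the classical exterior tensor product theorem: an irreducible smooth $\overbar{\F}$-representation of a product of two locally profinite groups has the form $\tilde\pi_1\otimes\tilde\pi_2$ with $\tilde\pi_i$ irreducible and smooth, the requisite finiteness being supplied by the admissibility of irreducible smooth $\overbar{\F}$-representations of $\GL_2(\Q_p)$, hence (by the inductive hypothesis) of $G_2$. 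Each $\tilde\pi_i$ is absolutely irreducible and defined over a finite extension of $\F$, its isomorphism class being pinned down by finitely much data (a central character together with a Hecke eigensystem or a supersingular invariant); taking $\F'$ to contain a common such extension gives $\pi\otimes_\F\F'\cong\bigotimes_{i=1}^{f}\pi_i$ with the $\pi_i$ absolutely irreducible over $\F'$. We record the converse, used below: if each $\pi_i$ is absolutely irreducible over $\F'$ then so is $\bigotimes_i\pi_i$, since $\End_{\overbar{\F}[G]}\bigl(\bigotimes_i\pi_{i,\overbar{\F}}\bigr)=\bigotimes_i\End_{\overbar{\F}[G_i]}(\pi_{i,\overbar{\F}})=\overbar{\F}$.

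\emph{Proof of 3).} Again take $f=2$, $G=G_1\times G_2$, and note $\zeta=\zeta_1\times\zeta_2$ under $Z(G)=Z_1\times Z_2$. Let $\Lambda_r$ be the completed group algebra of $G_r$ with central character $\zeta_r$; it is a projective generator of $\kC_{G_r,\zeta_r}(\cO)$, it is $\cO$-flat, and $\Hom_{\kC_{G_r,\zeta_r}(\cO)}(\Lambda_r,N_r)\cong N_r$ naturally. The identification $\Lambda_1\hat{\otimes}\Lambda_2\cong\Lambda_G$, the completed group algebra of $G$ with central character $\zeta$, then gives, for the rank-one modules $M_r=\Lambda_r$, the tautological chain $\Hom_{\kC_{G,\zeta}(\cO)}(\Lambda_1\hat{\otimes}\Lambda_2,\,N_1\hat{\otimes}N_2)\cong N_1\hat{\otimes}N_2\cong\Hom_{\kC_{G_1,\zeta_1}(\cO)}(\Lambda_1,N_1)\hat{\otimes}\Hom_{\kC_{G_2,\zeta_2}(\cO)}(\Lambda_2,N_2)$. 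Since $\hat{\otimes}_\cO$ is exact on pseudo-compact $\cO$-modules when one factor is $\cO$-flat, a standard dévissage — presenting $M_1$ and $M_2$ by free pseudo-compact modules $\Lambda_r^{(X_r)}$ and using that both sides are $\cO$-linear and carry cokernel presentations to kernels — reduces the general case to the rank-one case just treated. (Equivalently, after dualizing, the assertion becomes $\Hom_{G}(V_1\otimes V_2,\,W_1\otimes W_2)\cong\Hom_{G_1}(V_1,W_1)\hat{\otimes}\Hom_{G_2}(V_2,W_2)$ for locally admissible smooth representations, which one unwinds via an internal-Hom adjunction using admissibility of the $V_i$.)

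\emph{Proof of 2).} First $\widehat{\bigotimes}_{i=1}^{f}P_i$ is projective in $\kC_{G,\zeta}(\cO)$: each $P_i$ is a direct summand of a free pseudo-compact $\Lambda_i$-module, and $\widehat{\bigotimes}_i\Lambda_i\cong\Lambda_G$, so $\widehat{\bigotimes}_iP_i$ is a direct summand of a free $\Lambda_G$-module. The canonical surjection $\widehat{\bigotimes}_iP_i\twoheadrightarrow\widehat{\bigotimes}_i\pi_i^\vee=\bigl(\bigotimes_i\pi_i\bigr)^\vee$ has kernel $\sum_{j=1}^{f}(\operatorname{rad}P_j)\,\widehat{\otimes}\,\bigl(\widehat{\bigotimes}_{i\ne j}P_i\bigr)\subseteq\operatorname{rad}\bigl(\widehat{\bigotimes}_iP_i\bigr)$, since radicals add in completed tensor products; hence this surjection realizes $\bigl(\bigotimes_i\pi_i\bigr)^\vee$ as the cosocle of the projective object $\widehat{\bigotimes}_iP_i$. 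As $\bigl(\bigotimes_i\pi_i\bigr)^\vee$ is simple in $\kC_{G,\zeta}(\cO)$ by the converse in 1), a projective object with this simple cosocle mapping onto it is exactly a projective envelope, which is 2).

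\emph{Proof of 4), and the main obstacle.} The content is a Künneth formula: for absolutely irreducible $\pi=\bigotimes_i\pi_i$, $\pi'=\bigotimes_i\pi_i'$ one expects $\Ext^1_{G}(\pi,\pi')\cong\bigoplus_{j=1}^{f}\Ext^1_{G_j}(\pi_j,\pi_j')\otimes\bigotimes_{i\ne j}\Hom_{G_i}(\pi_i,\pi_i')$, so that $\Ext^1_{G}(\pi,\pi')\ne0$ precisely when there is a $j$ with $\pi_i\cong\pi_i'$ for all $i\ne j$ and $\Ext^1_{G_j}(\pi_j,\pi_j')\ne0$. This is seen by computing in $\kC_{G,\zeta}(\cO)$ with the projective resolution of $\pi'^{\vee}$ built from the resolutions of the $\pi_j'^{\vee}$ in $\kC_{G_j,\zeta_j}(\cO)$ (tensors of projectives are projective by 2), and the relevant $\cO$-flatness makes the completed tensor of resolutions a resolution), using 2) for the degree-zero term and 3) to evaluate the $\Hom$-groups, simplicity of the $\pi_i^{\vee}$ killing all but the $j$-th term; equivalently one may package this as $E_{\kB_1\otimes\cdots\otimes\kB_f}\cong E_{\kB_1}\hat{\otimes}\cdots\hat{\otimes}E_{\kB_f}$ and observe that this algebra is ``connected'' because each $E_{\kB_i}$ has residue ring a product of copies of $\F$ (here the hypothesis that each $\kB_i$ contains an absolutely irreducible representation is used). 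It follows that $\kB_1\otimes\cdots\otimes\kB_f$ is stable under the linking relation defining blocks and is connected — any two members are joined by altering one coordinate at a time along a chain inside the relevant $\kB_j$ — hence is a single block of $G$. The step I expect to be the main obstacle is 3): once the completed tensor product is in place on the categories $\kC_{\bullet,\zeta}(\cO)$, its heart is the identification $\widehat{\bigotimes}_i\Lambda_i\cong\Lambda_G$ compatibly with the central character, together with the exactness needed to run the dévissage (and, for 4), to control the $\mathrm{Tor}$-type contributions when tensoring resolutions); granted 3), statements 2) and 4) are formal, and 1) is the standard exterior tensor product theorem for smooth representations plus a routine descent to a finite extension.
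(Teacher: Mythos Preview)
The paper does not give its own proof of this lemma; it simply cites \cite[Lemma 3.4.5, 3.4.6 \& 3.4.7]{pan2022fontaine}. Your sketch is a reasonable reconstruction of how such a proof would go, and your architecture --- establishing 3) as the technical core via a d\'evissage on completed group algebras, then deducing 2) and 4) formally from 1) and 3), with 1) being the standard exterior tensor product theorem plus descent --- is essentially the one Pan uses.

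One point deserves more care than you give it. You assert that the completed group algebra $\Lambda_r$ (with central character $\zeta_r$) is a projective generator of $\kC_{G_r,\zeta_r}(\cO)$. Strictly speaking, $\Lambda_r$ lives in the larger category $\mathrm{Mod}^{\mathrm{pro\,aug}}_{G_r,\zeta_r}(\cO)$; whether its Pontryagin dual is locally finite (so that $\Lambda_r$ lies in $\kC_{G_r,\zeta_r}(\cO)$) is not obvious a priori. This is harmless for the argument, since $\kC$ is a full subcategory and the Hom-groups agree, and in any case for $\GL_2(\Q_p)$ one knows that locally admissible equals locally finite and projective envelopes of irreducibles suffice as building blocks; but you should either work in the ambient pro-augmented category throughout the d\'evissage (noting that the conclusion restricts to $\kC$) or invoke the fact that $\kC_{G_r,\zeta_r}(\cO)$ has enough projectives and run the presentation from those instead. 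Similarly, your appeal to exactness of $\widehat{\otimes}_\cO$ ``when one factor is $\cO$-flat'' is stronger than needed: completed tensor product over $\cO$ is exact on pseudo-compact $\cO$-modules unconditionally, which is what makes the K\"unneth computation in 4) go through cleanly.
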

	
	\begin{proof}
		See \cite[Lemma 3.4.5, 3.4.6 \& 3.4.7]{pan2022fontaine}.
	\end{proof}
	
	Using the previous lemma and the explicit description of the blocks of $\GL_2(\Q_p)$ (see Appendix \ref{A.2}), we know that for each block $\kB$ of $G$, it consists of a finite number of isomorphism classes of elements of $\kB$.
	
	For the rest of this subsection, we suppose that $ p$ is an odd prime. 
	
	For each block $\kB_r$ of $\GL_2(\Q_p)$, we write $\bar{\rho}_{\kB_r}$ for the associated semi-simple $2$-dimensional representation of $G_{\Q_p}$ over $\F$, and $R^{\ps,\zeta\varepsilon}_{\kB_r}$ for the universal deformation ring which parametrizes all $2$-dimensional pseudo-representations of $G_{\Q_p}$ lifting $\tr\bar{\rho}_{\kB_r}$ with determinant $\zeta\varepsilon$. Using Theorem \ref{centre finite}, we know that $R^{\ps,\zeta\varepsilon}_{\kB_r}$ is exactly the centre of $E_{\kB_r}$, and hence we obtain a natural injection $ f_r: R^{\ps,\zeta\varepsilon}_{\kB_r} \hookrightarrow E_{\kB_r}$.
	
	\begin{prop}\label{Paskunas theory for G}
		Let $\kB=\kB_1\otimes\cdots\otimes\kB_f$ be a block of $G$ such that each $\kB_i$ contains an absolutely irreducible representation. Then
		\begin{enumerate}
			\item $E_\kB\cong \widehat{\bigotimes}_{r=1}^f E_{\kB_r}$
			\item The natural inclusion $ f_r: R^{\ps,\zeta\varepsilon}_{\kB_r} \hookrightarrow E_{\kB_r}$ induces a natural finite map 
			\[\widehat{\bigotimes}_{r=1}^f R^{\ps,\zeta\varepsilon}_{\kB_r}\to E_{\kB},\] 
			which makes $E_{\kB}$ into a finitely generated module over $\widehat{\bigotimes}_{r=1}^f R^{\ps,\zeta\varepsilon}_{\kB_r}$.
			\item The centre of $E_\kB$ is Noetherian and $E_{\kB}$ is a finitely generated module over its centre.
		\end{enumerate}
	\end{prop}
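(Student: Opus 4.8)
The plan is to reduce each of the three assertions to the corresponding statement for a single copy of $\GL_2(\Q_p)$ (recalled in Appendix \ref{A.2}, together with Theorem \ref{centre finite}) by a systematic application of the tensor-product formalism from Lemma \ref{block for G}. First I would establish (1). By Lemma \ref{block for G}(2), $P_\kB \cong P_{\kB_1}\hat\otimes\cdots\hat\otimes P_{\kB_f}$, where $P_{\kB_r}$ is a projective envelope of $\pi_{\kB_r}^\vee$ in $\kC_{\GL_2(\Q_p),\zeta_r}(\cO)$ (here $\zeta = \zeta_1\cdots\zeta_f$ via the decomposition of the centre). Then Lemma \ref{block for G}(3), applied inductively with $M_r = N_r = P_{\kB_r}$, yields
\[
E_\kB = \End_{\kC_{G,\zeta}(\cO)}(P_\kB) \cong \widehat{\bigotimes}_{r=1}^f \End_{\kC_{\GL_2(\Q_p),\zeta_r}(\cO)}(P_{\kB_r}) = \widehat{\bigotimes}_{r=1}^f E_{\kB_r},
\]
which is exactly (1). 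One should check that this isomorphism is one of $\cO$-algebras, not merely $\cO$-modules: the multiplication on both sides is composition of endomorphisms, and the natural isomorphism of Lemma \ref{block for G}(3) is compatible with composition because the completed tensor product of two composites of morphisms is the composite of the completed tensor products. I would also need to verify the hypotheses of Lemma \ref{block for G}(3) are met, i.e. that each $P_{\kB_r}$ lies in $\kC_{\GL_2(\Q_p),\zeta_r}(\cO)$, which is immediate since it is a projective object there.

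For (2), recall from Theorem \ref{centre finite} that for each $r$ the natural inclusion $f_r: R^{\ps,\zeta\varepsilon}_{\kB_r}\hookrightarrow E_{\kB_r}$ realizes $R^{\ps,\zeta\varepsilon}_{\kB_r}$ as the centre of $E_{\kB_r}$, and $E_{\kB_r}$ is a finitely generated module over it. Completing the tensor product of the maps $f_r$ gives a map $\widehat{\bigotimes}_{r=1}^f R^{\ps,\zeta\varepsilon}_{\kB_r}\to \widehat{\bigotimes}_{r=1}^f E_{\kB_r}\cong E_\kB$, using (1). That this map lands in the centre and makes $E_\kB$ module-finite over the source is a matter of propagating finiteness through $\hat\otimes$: if $E_{\kB_r}$ is generated as an $R^{\ps,\zeta\varepsilon}_{\kB_r}$-module by elements $e^{(r)}_1,\dots,e^{(r)}_{n_r}$, then the elementary tensors $e^{(1)}_{i_1}\otimes\cdots\otimes e^{(f)}_{i_f}$ generate $\widehat{\bigotimes}_r E_{\kB_r}$ over $\widehat{\bigotimes}_r R^{\ps,\zeta\varepsilon}_{\kB_r}$ (one reduces to the case of ordinary tensor products of finite modules over Noetherian rings and then passes to completions, noting that all rings in sight are Noetherian and complete local — $R^{\ps,\zeta\varepsilon}_{\kB_r}$ is a CNL $\cO$-algebra). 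The centrality claim follows because the image of each $R^{\ps,\zeta\varepsilon}_{\kB_r}$ is central in $E_{\kB_r}$, hence the image of the completed tensor product is central in $\widehat{\bigotimes}_r E_{\kB_r}$.

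Assertion (3) is then formal given (2). Since $\widehat{\bigotimes}_{r=1}^f R^{\ps,\zeta\varepsilon}_{\kB_r}$ is a completed tensor product of finitely many CNL $\cO$-algebras, it is itself Noetherian (indeed a CNL $\cO$-algebra). By (2), $E_\kB$ is a finitely generated module over this Noetherian ring, which maps into the centre $Z(E_\kB)$; hence $Z(E_\kB)$ is a subalgebra of the endomorphism-module-finite ring sandwiched between two module-finite things, so $Z(E_\kB)$ is module-finite over $\widehat{\bigotimes}_r R^{\ps,\zeta\varepsilon}_{\kB_r}$ (a submodule of a finite module over a Noetherian ring is finite), hence Noetherian; and $E_\kB$, being finite over $\widehat{\bigotimes}_r R^{\ps,\zeta\varepsilon}_{\kB_r} \subseteq Z(E_\kB)$, is a fortiori finite over $Z(E_\kB)$. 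I expect the main obstacle to be purely bookkeeping rather than conceptual: one must be careful that the various completed tensor products are taken in the correct category (pseudo-compact $\cO$-modules) and that ``finitely generated module'' is interpreted in the pseudo-compact (topological) sense, so that the reduction to the single-$\GL_2(\Q_p)$ statements of the Appendix is legitimate; but all of this is routine once the three parts of Lemma \ref{block for G} are in hand, exactly as in \cite[Section 3.4]{pan2022fontaine}.
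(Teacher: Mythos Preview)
Your proposal is correct and follows essentially the same approach as the paper: part (1) via Lemma \ref{block for G}(2)--(3), part (2) via Theorem \ref{centre finite} and the compatibility of finiteness with completed tensor products, and part (3) as a formal consequence of (2) using that the image of $\widehat{\bigotimes}_r R^{\ps,\zeta\varepsilon}_{\kB_r}$ is central. The paper's proof is a three-sentence sketch citing exactly these inputs; you have simply filled in the routine details.
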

	
	\begin{proof}
		The first assertion follows from 2) and 3) of Lemma \ref{block for G}. The second one follows from Theorem \ref{centre finite}. The third one follows from the second one and the fact that the image of $\widehat{\bigotimes}_{r=1}^f R^{\ps,\zeta\varepsilon}_{\kB_r}$ is in the centre of $E_{\kB}$.
	\end{proof}
	
	Keep the level $U $, the set $\Sigma$ and the character $\psi$ as in the previous subsection. Let $\km$ be a maximal ideal of $\T_\psi(U^p)$. We get a two-dimensional pseudo-representation with determinant $\psi\varepsilon^{-1}$:
	\[T_\km:G_{F,\Sigma}\to \T_\psi(U^p)_\km.\]
	After restricting it to $G_{F_v},v|p$, we can attach a two-dimensional semi-simple representation $\bar{\rho}_{\km,v}$ of $G_{F_v}$ over $\F$. Enlarging $\cO$ if necessary, we assume that $\bar{\rho}_{\km,v}$ is a direct sum of absolutely irreducible representations. Let $\bar{\rho}'_{\km,v}=\bar{\rho}_{\km,v}\otimes \varepsilon$ (as we use the normailzed Colmez's functor $\mathbf{V} $ here). Based on the list given in Appendix \ref{A.2}, we can define a block $\kB_{\km,v}$ of $\GL_2(F_v)\cong\GL_2(\Q_p)$ from $\bar{\rho}'_{\km,v}$ (since $p$ splits completely in $F$). Let $\kB_\km=\otimes_{v|p}\kB_{\km,v}$ be the block of $D_p^\times=\prod_{v|p}\GL_2(F_v)$ (using Lemma \ref{block for G}). Note that it has central character $\psi$. We denote its projective generator by $P_{\kB_{\km}}$.
	
	Our main goal is to study the following objects:

		$$\mm:=\Hom_{\kC_{D_p^\times,\psi}(\cO)}(P_{\kB_\km},M_\psi(U^p)_\km).$$
	These objects will be the Hecke modules for the patching arguments to prove some $R=\mathbb{T}$ theorems.
	
	There are two actions of $R^{\ps,\psi\varepsilon^{-1}}_p:=\widehat{\bigotimes}_{v|p}R^{\ps,\psi\varepsilon^{-1}}_v$ on (the Hecke module) $\mm$:
	\begin{enumerate}
		\item $\tau_{\Gal}$: which comes from the action of $\T_\psi(U^p)_\km$ on $M_\psi(U^p)_\km$. See the discussions at the end of the previous subsection.
		\item $\tau_{\mathrm{Aut}}$: which comes from the action of $\widehat{\bigotimes}_{v|p}R^{\ps,\psi\varepsilon}_v\cong\widehat{\bigotimes}_{v|p}R^{\ps,\psi\varepsilon^{-1}}_v$ on $P_{\kB_\km}$ via $E_{\kB_\km}$ due to the map in (2) of Proposition \ref{Paskunas theory for G}.  The natural isomorphisms between $R^{\ps,\psi\varepsilon}_v$ and $R^{\ps,\psi\varepsilon^{-1}}_v$ are given by twisting the inverse of cyclotomic character. We fix this isomorphism from now on.
	\end{enumerate}
	
	\begin{thm}[Local-global compatibility] \label{lgc}
		Keep notations and assumptions as above. 
		
		\begin{enumerate}
			\item $M_\psi(U^p)_\km\in\kC_{D_p^\times,\psi}(\cO)^{\kB_{\km}}$.
			\item Both actions $\tau_{\Gal},\tau_{\mathrm{Aut}}$ of $R^{\ps,\psi\varepsilon^{-1}}_p$ on $\mm$ are the same.
		\end{enumerate}
	\end{thm}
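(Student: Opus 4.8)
The plan is to follow the proof of \cite[Theorem 3.5.3]{pan2022fontaine} essentially unchanged, the only new point being that the inputs from Pašk\=unas theory previously unavailable at $p=3$ --- existence of the projective envelope $P_{\kB}$, the identification of $R^{\ps,\zeta\varepsilon}_{\kB_r}$ with the centre of $E_{\kB_r}$ (Theorem \ref{centre finite}), and the compatibility of the normalized Colmez functor $\mathbf{V}$ with classical local Langlands --- are now supplied, uniformly over all blocks occurring for $p=3$ (including the block whose semisimple Galois representation has the shape $\eta\oplus\eta\omega$, i.e.\ Pan's excluded case), by \cite{Pa_k_nas_2021} in place of \cite{Pa_k_nas_2013}. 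Thus Proposition \ref{Paskunas theory for G} and the whole formalism of this subsection hold without Pan's hypothesis, and it remains to transport his argument.

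For (1): using the block decomposition $\kC_{D_p^\times,\psi}(\cO)\cong\prod_{\kB}\kC_{D_p^\times,\psi}(\cO)^{\kB}$, one reduces to showing that every irreducible subquotient $\pi\cong\bigotimes_{v\mid p}\pi_v$ of the smooth $D_p^\times$-representation $S_\psi(U^p,K/\cO)_\km$ lies in $\kB_\km$. Such a $\pi$ has Hecke eigensystem factoring through $\T_\psi(U^p)_\km\to\F$, so its associated semisimple Galois pseudo-representation is $T_\km\bmod\km$; restricting to $G_{F_v}$ and invoking mod $p$ local-global compatibility (exactly as in \cite[Section 3.5]{pan2022fontaine}) identifies the local Galois parameter of $\pi_v$ with $\bar\rho'_{\km,v}=\bar\rho_{\km,v}\otimes\varepsilon$, which is precisely the datum from which $\kB_{\km,v}$ was defined (Appendix \ref{A.2}). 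Hence $\pi_v\in\kB_{\km,v}$ for all $v\mid p$, so $\pi\in\kB_\km$ by Lemma \ref{block for G}, and dualizing gives $M_\psi(U^p)_\km\in\kC_{D_p^\times,\psi}(\cO)^{\kB_\km}$.

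For (2): each of $\tau_{\Gal}$, $\tau_{\mathrm{Aut}}$ produces, for every $v\mid p$, a $2$-dimensional pseudo-representation of $G_{F_v}$ valued in $\End_{\T_\psi(U^p)_\km}(\mm)$, both lifting $T_\km|_{G_{F_v}}$ with the same determinant $\psi\varepsilon^{-1}|_{G_{F_v}}$; since a pseudo-representation is determined by the continuous function $g\mapsto T(g)$, it suffices to check that these trace functions agree, for every $g$ and every $v\mid p$. I would verify this first on the locally algebraic part of $\mm$ and then propagate by a capture/continuity argument as in \cite[Section 3.5]{pan2022fontaine}. Concretely, $\mm$ is $\cO$-torsion free and $\varpi$-adically separated, and by Proposition \ref{density} applied through the functor $\Hom_{\kC_{D_p^\times,\psi}(\cO)}(P_{\kB_\km},-)$ the part of $\mm\otimes_\cO K$ coming from the $\GL_2(O_{F_v})$-algebraic, $\prod_{w\ne v}\GL_2(O_{F_w})$-locally algebraic vectors of $S_\psi(U^p)_K$, summed over all weights $(\vec k,\vec w)$, is dense. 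On each such weight component, Remark \ref{quaternionic and automorphic} together with Jacquet--Langlands identifies it with classical automorphic forms on $\GL_2/F$, for which the attached Galois representation is de Rham at $v\mid p$ with Weil--Deligne/filtered-module data matching $\pi_v$ under classical local Langlands; on the automorphic side this match is exactly what the $P_{\kB_\km}$-construction (equivalently $\mathbf{V}$) realizes, now also at $p=3$ thanks to \cite{Pa_k_nas_2021}. So the two trace functions coincide on a dense submodule and, being continuous and valued in a torsion-free $\varpi$-adically separated module, coincide everywhere.

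The main obstacle is the last part of (2): making the interpolation precise, i.e.\ (a) pinning down the topology for which the locally algebraic part of $\mm$ is dense and $g\mapsto T_{\Gal,v}(g)$, $g\mapsto T_{\mathrm{Aut},v}(g)$ are continuous (the analogue of Emerton's capture formalism), and (b) verifying on that part that the two recipes for attaching Galois data coincide --- namely that $\mathbf{V}$ applied to the locally algebraic $\GL_2(\Q_p)$-representations returns the representation predicted by classical local Langlands and $p$-adic Hodge theory. Step (b) is where \cite{Pa_k_nas_2021} is indispensable, since the previously-excluded $p=3$ block with semisimplification of the form $\eta\oplus\eta\omega$ must be handled; and throughout one must keep careful track of the two ring structures on $\mm$, one via $E_{\kB_\km}$ and Proposition \ref{Paskunas theory for G}, the other via $\T_\psi(U^p)_\km$, exactly as in \cite[Section 3.5]{pan2022fontaine}.
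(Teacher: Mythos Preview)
Your overall plan --- follow Pan's argument, replacing the Pašk\=unas inputs from \cite{Pa_k_nas_2013} by those of \cite{Pa_k_nas_2021} --- is exactly right, and matches the paper. But your execution of (1) has a genuine gap, and your mechanism for (2) differs from the paper's in a way that matters.

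For (1), you argue directly in characteristic $p$: take an irreducible subquotient $\pi\cong\bigotimes_v\pi_v$ of $S_\psi(U^p,K/\cO)_\km$ and ``invoke mod $p$ local-global compatibility'' to conclude $\pi_v\in\kB_{\km,v}$. This is circular. There is no a priori local Galois parameter attached to an arbitrary smooth irreducible $\F$-representation $\pi_v$ of $\GL_2(\Q_p)$ against which to compare $\bar\rho'_{\km,v}$; the statement that $\pi_v$ lies in the block determined by $\bar\rho'_{\km,v}$ \emph{is} the content of (1). The paper does not argue in characteristic $p$ at all. It proves (1) \emph{after} and \emph{from} the analysis for (2): once one knows that each $\Pi(\kp)$ lies in $\kB_{\km,v}$, the injection of Lemma \ref{3.5.4} (applied with $\Pi_{\kB'}$ in place of $\Pi_{\kB_{\km,v}}$ for any other block $\kB'$) forces $\Hom^{\cont}_{K[\GL_2(F_v)]}(S_\psi(U^p)_\km\otimes K,\Pi_{\kB'})=0$, hence $M_\psi(U^p)_\km\in\kC_{\GL_2(F_v),\psi|_{F_v^\times}}(\cO)^{\kB_{\km,v}}$.

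For (2), your ``check on a dense locally algebraic part and propagate by continuity'' is morally correct but not how the paper runs it, and you yourself flag the capture/continuity step as the obstacle. The paper avoids this entirely via the injection of Lemma \ref{3.5.4}: $\Hom^{\cont}_{K[\GL_2(F_v)]}(S_\psi(U^p)_\km\otimes K,\Pi_{\kB_{\km,v}})$ embeds into a product, indexed by classical Hecke eigensystems $\kp$, of spaces $\Hom^{\cont}_{K(\kp)[\GL_2(F_v)]}(\Pi(\kp),\Pi_{\kB_{\km,v}}\otimes K(\kp))$. It then suffices to check that on each factor both $R_v^{\ps,\psi\varepsilon^{-1}}$-actions factor through the same ideal $\kp_v$. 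For this one identifies $\Pi(\kp)$ as a quotient of a power of an explicit Banach representation $\Pi_v$ (the universal unitary completion of the relevant locally algebraic representation), and shows via Propositions \ref{4.37}, \ref{irreducible_irr}, \ref{reducible_irr} that $(\Pi_v^0)^d$ occurs in $P_{\kB_{\km,v}}/\kp_v P_{\kB_{\km,v}}$. These last propositions are precisely where \cite{Pa_k_nas_2021} replaces \cite{Pa_k_nas_2013}; the rest of the argument is Pan's verbatim. No density-plus-continuity interpolation is needed.
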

	
	\begin{proof}
		The proof of this theorem is nearly the same as the proof of \cite[Theorem 3.5.3]{pan2022fontaine}, where we just need to replace Pašk\=unas' results in \cite{Pa_k_nas_2013} by the ones in \cite{Pa_k_nas_2021}. We sketch the proof here.
		
		Let $v$ be a place above $p$. It suffices to prove 
		\begin{enumerate} 
			\item $M_\psi(U^p)_\km\in\kC_{\GL_2(F_v),\psi|_{F_v^\times}}(\cO)^{\kB_{\km,v}}$,
			\item both actions $\tau_{\Gal},\tau_{\mathrm{Aut}}$ of $R^{\ps,\psi\varepsilon^{-1}}_v$ on $\Hom_{\kC_{\GL_2(F_v),\psi|_{F_v^\times}}(\cO)}(P_{\kB_{\km,v}},M_\psi(U^p)_\km)$ are the same.
		\end{enumerate}
		
		After twisting some character, we may assume $\psi$ is crystalline at $v$ of Hodge-Tate weight $w_\psi$.  We may also shrink $U^p$ so that $\psi|_{U^p\cap(\A_F^\infty)^\times}$ is trivial and $U^pK_p$ is sufficiently small. 
		
		Recall that for any $(\vec{k},\vec{w})\in \Z_{>1}^{\Hom(F,\overbar{\Q_p})}\times  \Z^{\Hom(F,\overbar{\Q_p})}$ such that $k_\sigma+2w_\sigma=w_\psi+2$ independent of $\sigma$ and $U_p=\GL_2(O_{F_v})U^v$ with $U^v$ an open subgroup of  $\prod_{w\neq v, w|p}\GL_2(O_{F_w})$, we have the following isomorphism (see Remark \ref{quaternionic and automorphic}):
		\begin{eqnarray*}
			S_{(\vec{k},\vec{w}),\psi}(U^pU_p,K)\cong \Hom_{K[U_p]}(W^{*}_{(\vec{k},\vec{w}),K},S_\psi(U^p)_K).
		\end{eqnarray*}
		Then we get a natural surjective map $t_\psi: \T_\psi(U^p)[\frac{1}{p}]\to\T_{(\vec{k},\vec{w}),\psi}(U^pU_p,K)$ sending $T_w$ to $T_w$ for $w\notin \Sigma$. Let $\kp$ be a prime ideal of $\T_{(\vec{k},\vec{w}),\psi}(U^pU_p,K)\otimes_K\overbar{\Q_p}$, and it corresponds to an automorphic representation $\pi_\kp=\pi_\kp^\infty\otimes(\pi_\kp)_\infty$ on $\DAi$. Combining the discussions in Remark \ref{quaternionic and automorphic}, we get 
		\[\varinjlim_{U_v} (S_{(\vec{k},\vec{w}),\psi}(U^pU^vU_v,K)\otimes_{K,\iota_p} \bC)[\kp]\cong (\pi_\kp^\infty)^{U^pU^v}\cong (\pi_\kp)_v^{\oplus d(\kp)}, d(\kp)>0,\]
		where the direct limit ranges over all open compact subgroups $U_v$ of $\GL_2(O_{F_v})$, and $(\pi_\kp)_v$ is the local representation of $\pi_\kp$ at place $v$. Choose a model $\pi^{K(\kp)}_v$ of $(\pi_\kp)_v$ over a finite extension $K(\kp) $ of $K$. Therefore, we get a map 
		\[\Phi_{\kp}:W^{*}_{(\vec{k},\vec{w}),K}\otimes_K (\pi^{K(\kp)}_v)^{\oplus d(\kp)}\to (S_\psi(U^p)\otimes_{\cO} K(\kp))[\kp],\]
		where we also view $\kp$ as a maximal ideal of $\T_\psi(U^p)\otimes K(\kp)$ via the surjection $t_\psi$. Then the image of $\Phi_\kp$ in $S_{\psi}(U^p)\otimes_\cO K(\kp)$ contains the $\GL_2(O_{F_v})U^v$-algebraic vectors of $(S_\psi(U^p)_K\otimes_K K(\kp))[\kp]$ (see Definition \ref{locally algebraic vectors}), and we denote its closure in $S_{\psi}(U^p)\otimes_\cO K(\kp)$ by $\Pi(\kp)$.
		
		Let $\Pi_{\kB_{\km,v}}:=\Hom^{\cont}_\cO(P_{\kB_{\km,v}},K)$. This is a Banach space with unit ball $\Hom^{\cont}_\cO(P_{\kB_{\km,v}},\cO)$. The following lemma is an application of Proposition \ref{density}, and one can see \cite[Lemma 3.5.4]{pan2022fontaine} for the proof.
		
		\begin{lem} \label{3.5.4}
			The inclusion map $\Pi(\kp)\hookrightarrow S_{\psi}(U^p)\otimes_{\cO} K(\kp)$ induces a natural injective map:
			\[\Hom^{\cont}_{K[\GL_2(F_v)]}(S_{\psi}(U^p)_\km\otimes_\cO K,\Pi_{\kB_{\km,v}})\hookrightarrow\prod_{U^v}\prod_{(\vec{k},\vec{w})}\prod_\kp \Hom^{\cont}_{K(\kp)[\GL_2(F_v)]}(\Pi(\kp),\Pi_{\kB_{\km,v}}\otimes K(\kp)),\]
			where $U^v$ runs over all open subgroups of $\prod_{w\neq v,w|p} \GL_2(O_{F_w})$, the pair $(\vec{k},\vec{w})$ runs over all elements  in $\Z_{>1}^{\Hom(F,\overbar{\Q_p})}\times  \Z^{\Hom(F,\overbar{\Q_p})}$ such that $k_\sigma+2w_\sigma=w_\psi+2$  for any $\sigma$, and $\kp$ runs over all the maximal ideals of $\T_{(\vec{k},\vec{w}),\psi}(U^p\GL_2(O_{F_v})U^v)_\km\otimes \overbar{\Q_p}$.
		\end{lem}
		
		Note that we have a natural isomorphism 
		$$\Hom^{\cont}_{K[\GL_2(F_v)]}(S_\psi(U^p)_\km\otimes K,\Pi_{\kB_{\km,v}})\cong \Hom_{\kC_{\GL_2(F_v),\psi|_{F_v^\times}}(\cO)} (P_{\kB_{\km,v}},M_\psi(U^p)_\km) \otimes K.$$
		To prove the theorem, we just need to check $\tau_{\Gal}|_{R^{\ps,\psi\varepsilon^{-1}}_v}=\tau_{\mathrm{Aut}}|_{R^{\ps,\psi\varepsilon^{-1}}_v}$ on the module $\Hom^{\cont}_{K(\kp)[\GL_2(F_v)]}(\Pi(\kp),\Pi_{\kB_{\km,v}}\otimes K(\kp)) $ using the previous lemma and the $p$-torsion freeness of $S_{\psi}(U^p)$ (see the discussion under Definition \ref{Completed coh}). Now we fix such a $\kp$ and  suppose that it comes from $\T_{(\vec{k},\vec{w}),\psi}(U^p\GL_2(O_{F_v})U^v)_\km \otimes \overbar{\Q_p}$. For simplicity, we just assume $K(\kp)=K$ as our settings are compatible with base change.
		
		We divide the proof into two cases.
		
		Case 1: $\pi_\kp$ does not factor through the reduced norm $N_{D/F}$.
		
		The action $\tau_{\Gal}$ of $R^{\ps,\psi\varepsilon^{-1}}_v$ on $\Hom^{\cont}_{K[\GL_2(F_v)]}(\Pi(\kp),\Pi_{\kB_{\km,v}}\otimes K)$ is clear by our knowledge on the classical local-global compatibility (Hecke correspondence) at primes above $p$. We give it in our version here.
		
		\begin{lem} \label{clgc}
			Let $\kp_v=R^{\ps,\psi\varepsilon^{-1}}_v[\frac{1}{p}]\cap\kp$ and $\rho(\kp)_v:G_{F_v}\to\GL_2(K)$ be the semi-simple representation given by $\kp_v$. Then
			\begin{enumerate}
				\item $\rho(\kp)_v$ is de Rham of Hodge-Tate weights $(w_{\sigma_v},w_{\sigma_v}+k_{\sigma_v}-1)$, where $\sigma_v:F\to K$ is the embedding induced by $v$. More precisely, 
				\[\gr^i(\rho(\kp)_v\otimes B_{\dR})^{G_{F_v}}=0\] 
				unless $i=w_{\sigma_v},w_{\sigma_v}+k_{\sigma_v}-1$.
				\item The semi-simple Weil-Deligne representation $\mathrm{WD}(\varepsilon\otimes\rho(\kp)_v)^{\mathrm{ss}}$ corresponds to $\pi_v^{K(\kp)}$ under the Hecke correspondence.
			\end{enumerate}
		\end{lem}
		
		\begin{proof}
			It is \cite[Lemma 3.5.5]{pan2022fontaine}. Also see \cite[Theorem 1.1]{2011prims}.
		\end{proof}
		
		For the automorphic side, we need to study the object $\Pi(\kp)$. Recall that this is the closure of 
		\[W^{*}_{(\vec{k},\vec{w}),K}\otimes (\pi^{K(\kp)}_v)^{\oplus d(\kp)}=[(\Sym^{k_{\sigma_v-2}}(K^2)\otimes \det{}^{w_{\sigma_v}})^*\otimes \pi^{K(\kp)}_v]^{\oplus d(\kp)'}\]
		in $S_\psi(U^p)_K$. Here $d(\kp)'$ is some multiple of $d(\kp)$.
		
		Let $\Pi_v$ be the universal unitary completion of $(\Sym^{k_{\sigma_v-2}}(K^2)\otimes \det{}^{w_{\sigma_v}})^*\otimes \pi^{K(\kp)}_v$ as a $K$-representation of $\GL_2(F_v)$. Note that $\pi^{K(\kp)}_v$ is an irreducible principal series as the level is $\GL_2(O_{F_v})$ (hence unramified). Otherwise it is one-dimensional and $\pi_\kp$ has to factor through the reduced norm map by the approximation Theorem (for example, see \cite[Fact 4.17]{Gee_2022} or \cite[Lemma 6.2]{Kim_2020} in general). Then $\Pi_v$ is a topologically irreducible admissible unitary $\GL_2(F_v)$-representation. Here we use \cite[Theorem 1.1]{Pa_k_nas_2009} and \cite[Th\'eor\`eme]{2010Breuil} for the non-ordinary case and \cite[Proposition 2.2.1]{2010Emerton} for the ordinary case.
		
		The following lemma describes $\Pi(\kp)$ by $\Pi_v$. See \cite[Lemma 3.5.7]{pan2022fontaine} for the proof.
		
		\begin{lem}
			$\Pi(\kp)$ is a quotient of $\Pi_v^{\oplus d(\kp)'}$.
		\end{lem}
		
		Using this lemma, we get an injective map 
		\[\Hom^{\cont}_{K[\GL_2(F_v)]}(\Pi(\kp),\Pi_{\kB_{\km,v}})\hookrightarrow \Hom^{\cont}_{K[\GL_2(F_v)]}(\Pi_v^{\oplus d(\kp)'},\Pi_{\kB_{\km,v}}).\]
		Let $\Pi_v^0$ be a $\GL_2(F_v)$-invariant bounded open ball of $\Pi_v$ and let $(\Pi_v^0)^d=\Hom_\cO(\Pi_v^0,\cO)$ be its Schikhof dual. Then we also have 
		\[\Hom^{\cont}_{K[\GL_2(F_v)]}(\Pi_v^{\oplus d(\kp)'},\Pi_{\kB_{\km,v}})\cong K^{\oplus d(\kp)'}\otimes \Hom_{\kC_{\GL_2(F_v),\psi|_{F_v^\times}}(\cO)}(P_{\kB_{\km,v}},(\Pi_v^0)^d).\]
		Note that $\Hom(P_{\kB_{\km,v}},(\Pi_v^0)^d)\otimes K$ is a right $E_{\mathfrak{B}_{\mathfrak{m},v}}$-module. Then we just need to prove that the action of the centre $R^{\ps,\psi\varepsilon^{-1}}_v$ of $E_{\mathfrak{B}_{\mathfrak{m},v}}$ on $\Hom(P_{\kB_{\km,v}},(\Pi_v^0)^d)\otimes K$ also factors through $R^{\ps,\psi\varepsilon^{-1}}_v[\frac{1}{p}]/\kp_v$. We also view $\kp_v $ as a prime ideal of $R^{\ps,\psi\varepsilon}_v$ (see the isomorphism in the definition of $ \tau_{\mathrm{Aut}}$), and it suffices to show that $(\Pi_v^0)^d $ appears as a subquotient of $P_{\kB_{\km,v}}/\kp_v P_{\kB_{\km,v}}$.
		
		If $\rho(\kp)_v$ is absolutely irreducible, then using Proposition \ref{4.37} and Proposition \ref{irreducible_irr}, we know that there is only one (up to isomorphism) irreducible Banach representation $\Pi'_v$ appeared in the subquotient of $\Hom^{\cont}_\cO(P_{\kB_{\km,v}}/\kp_v P_{\kB_{\km,v}},K)$, which is characterized by $\mathbf{V}(\Pi'_v)\cong\rho(\kp)_v\otimes\varepsilon$. By \cite[Theorem 1.3]{Colmez_2014} and Lemma \ref{clgc}, $\Pi_v'$ is a unitary completion of 
		\[\Sym^{k_{\sigma_v}-2}(K^2)\otimes \det{}^{-(w_{\sigma_v}+k_{\sigma_v}-2)}\otimes \pi_v^{K(\kp)}.\]
		However, such unitary completion is unique up to isomorphism (see \cite[Th\'eor\`eme 4.3.1]{2010Breuil}). We have $\Pi_v' \cong \Pi_v$ and hence $(\Pi_v^0)^d \otimes K$ appears as a subquotient of $P_{\kB_{\km,v}}/\kp_v P_{\kB_{\km,v}} \otimes K$.
		
		If $\rho(\kp)_v=\psi_1\oplus\psi_2$ is reducible, we assume the Hodge-Tate weight of $\psi_1$ (resp. $\psi_2$) is $w_{\sigma_v}$ (resp. $w_{\sigma_v}+k_{\sigma_v}-1$). Then we know that
		\[\pi^{K(\kp)}_v\cong(\Ind^{\GL_2(F_v)}_{B(F_v)}\psi_1\varepsilon^{w_{\sigma_v}}|\cdot|^{1-w_{\sigma_v}}\otimes\psi_2\varepsilon^{w_{\sigma_v}+k_{\sigma_v}-1}|\cdot|^{-w_{\sigma_v}-k_{\sigma_v}+1})_{\mathrm{sm}}\]
		is irreducible by our assumption, and we have $\psi_1/\psi_2\neq \varepsilon^{\pm1}$. Using \cite[Proposition 2.2.1]{2010Emerton}, $\Pi_v$ is the unitary parabolic induction $(\Ind^{\GL_2(F_v)}_{B(F_v)}\psi_2\varepsilon\otimes \psi_1)_{\cont}$. Combining Proposition \ref{4.37} and Proposition \ref{reducible_irr}, we also conclude that $(\Pi_v^0)^d$ appears in the subquotient of $P_{\kB_{\km,v}}/\kp_v P_{\kB_{\km,v}}$.
		
		Case 2: $\pi_\kp$ factors through the reduced norm $N_{D/F}$.
		
		In this case, $\Pi(\kp)$ has the form $\eta\circ\det$ for some continuous character $\eta:F_v^\times\to\cO^\times$ and the corresponding pseudo-character of $G_{F_v}$ is $\eta+\eta\varepsilon^{-1}$. Our claims follow from Proposition \ref{4.37} and Proposition \ref{reducible_irr}.
		
		We now finish the proof of the second assertion of this theorem.
		
		For the first part, we recall the injection map given in Lemma \ref{3.5.4}. As we have shown that $\Pi(\kp)$ belongs to the block $\kB_{\km,v}$, for any block $ \kB'$ of $\GL_2(F_v)$, we have $\Hom^{\cont}_{K[\GL_2(F_v)]}(S_{\psi}(U^p)_\km\otimes K,\Pi_{\kB'})=0$ unless $\kB'=\kB_{\km,v}$. This proves the first claim.
	\end{proof}
	
	There are some applications of the local-global compatibility result. The first  one is the finiteness of the Hecke module $\mm$, which contributes to our patching argument.
	
	\begin{cor}\label{Hecke module}
		Keep notations as above.
		
		\begin{enumerate}
			\item $\mm$ is a faithful, finitely generated $\T_\psi(U^p)_\km$-module. 
			\item $\T_\psi(U^p)_\km$ is a finite $R^{\ps,\psi\varepsilon^{-1}}_p$-algebra.
			\item $S_\psi(U^p,\F)[\km]$ is a representation of $D_p^\times$ of finite length. 
		\end{enumerate}
	\end{cor}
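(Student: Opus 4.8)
The plan is to mimic the proof of \cite[Corollary 3.5.8 \& 3.5.10]{pan2022fontaine} essentially verbatim, feeding in Theorem \ref{lgc} and Proposition \ref{Paskunas theory for G} in place of the corresponding inputs from \cite{Pa_k_nas_2013}. The one automorphic ingredient I would start from is the classical finiteness statement: for a pro-$p$ open compact subgroup $U_p=\prod_{v\mid p}U_v\subseteq K_p$ the space $S_\psi(U^pU_p,\F)=S_\psi(U^p,\F)^{U_p}$ is finite-dimensional over $\F$ (a space of mod $p$ automorphic forms of fixed level on the definite quaternion algebra $D$). Dualizing and applying the topological Nakayama lemma, $M_\psi(U^p)$, hence its direct summand $M_\psi(U^p)_\km$, is a finitely generated $\cO[[U_p]]$-module. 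By Theorem \ref{lgc}(1), $M_\psi(U^p)_\km$ lies in $\kC_{D_p^\times,\psi}(\cO)^{\kB_\km}$, so the equivalence of that block with the category of (right) pseudo-compact $E_{\kB_\km}$-modules identifies it with $\mm$ and reconstructs $M_\psi(U^p)_\km\cong \mm\,\hat{\otimes}_{E_{\kB_\km}}P_{\kB_\km}$, compatibly with the $\T_\psi(U^p)_\km$-action (which commutes with $D_p^\times$).

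The next step is to show $\mm$ is finitely generated over $E_{\kB_\km}$. Since $P_{\kB_\km}\cong\widehat{\bigotimes}_{v\mid p}P_{\kB_{\km,v}}$ (Lemma \ref{block for G}(2)) is itself finitely generated over $\cO[[U_p]]$, the maximal semisimple quotient of $M_\psi(U^p)_\km$ is of finite length; lifting a surjection from a finite sum of copies of $P_{\kB_\km}$ onto that quotient through $M_\psi(U^p)_\km$ and applying Nakayama once more gives $P_{\kB_\km}^{\oplus n}\twoheadrightarrow M_\psi(U^p)_\km$, and then the exact functor $\Hom_{\kC_{D_p^\times,\psi}(\cO)}(P_{\kB_\km},-)$ yields $E_{\kB_\km}^{\oplus n}\twoheadrightarrow\mm$. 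By Proposition \ref{Paskunas theory for G}(2) (together with the twisting isomorphisms $R^{\ps,\psi\varepsilon}_v\cong R^{\ps,\psi\varepsilon^{-1}}_v$ fixed in the definition of $\tau_{\mathrm{Aut}}$), $E_{\kB_\km}$ is module-finite over the Noetherian CNL $\cO$-algebra $R^{\ps,\psi\varepsilon^{-1}}_p$; hence $\mm$ is finitely generated over $R^{\ps,\psi\varepsilon^{-1}}_p$. By Theorem \ref{lgc}(2) the actions $\tau_{\mathrm{Aut}}$ and $\tau_{\Gal}$ of $R^{\ps,\psi\varepsilon^{-1}}_p$ on $\mm$ coincide, and $\tau_{\Gal}$ factors through $R^{\ps,\psi\varepsilon^{-1}}_p\to\T_\psi(U^p)_\km$, so $\mm$ is finitely generated over $\T_\psi(U^p)_\km$. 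Faithfulness is then formal: if $t\in\T_\psi(U^p)_\km$ annihilates $\mm$, it annihilates $\mm\,\hat{\otimes}_{E_{\kB_\km}}P_{\kB_\km}\cong M_\psi(U^p)_\km$, hence $t=0$ because $\T_\psi(U^p)_\km$ acts faithfully on $M_\psi(U^p)_\km$. This proves (1). For (2), the faithful finitely generated module $\mm$ gives an injection $\T_\psi(U^p)_\km\hookrightarrow\End_{R^{\ps,\psi\varepsilon^{-1}}_p}(\mm)$, and the right-hand side is finite over the Noetherian ring $R^{\ps,\psi\varepsilon^{-1}}_p$; so $\T_\psi(U^p)_\km$ is a finite $R^{\ps,\psi\varepsilon^{-1}}_p$-algebra.

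For (3), I would use the identification $S_\psi(U^p,\F)[\km]^\vee\cong M_\psi(U^p)_\km/\km M_\psi(U^p)_\km\cong (\mm/\km\mm)\,\hat{\otimes}_{E_{\kB_\km}}P_{\kB_\km}$ (the last isomorphism because $\mathrm{m}\mapsto\mathrm{m}\,\hat{\otimes}_{E_{\kB_\km}}P_{\kB_\km}$ is exact and $\T_\psi(U^p)_\km$ acts on the $\mm$-factor). By (1), $\mm$ is finitely generated over $\T_\psi(U^p)_\km$, so $\mm/\km\mm$ is finite-dimensional over $\T_\psi(U^p)_\km/\km=\F$, hence of finite length as an $E_{\kB_\km}$-module; since the block equivalence matches finite-length $E_{\kB_\km}$-modules with finite-length objects of $\kC_{D_p^\times,\psi}(\cO)^{\kB_\km}$, the object $S_\psi(U^p,\F)[\km]^\vee$ has finite length, i.e.\ $S_\psi(U^p,\F)[\km]$ is a $D_p^\times$-representation of finite length.

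I expect the step needing the most care to be ``$M_\psi(U^p)_\km$ finitely generated over $\cO[[U_p]]$ $\Rightarrow$ $\mm$ finitely generated over $E_{\kB_\km}$'': this is the only place where the new finiteness results of \cite{Pa_k_nas_2021} (beyond the block decomposition itself) genuinely enter, through the facts that $P_{\kB_\km}$ is $\cO[[U_p]]$-finite and projective in the block. Everything else is bookkeeping built on Theorem \ref{lgc} and Proposition \ref{Paskunas theory for G}.
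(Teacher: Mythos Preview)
Your approach is essentially the paper's: the paper cites Proposition~\ref{4.17}(1) (applied with $\Theta=S_\psi(U^p)_\km$, so $\Theta^d=M_\psi(U^p)_\km$) together with Proposition~\ref{Paskunas theory for G} and Theorem~\ref{lgc}, while you unpack what lies behind Proposition~\ref{4.17}(1). One imprecision: the claim that $P_{\kB_\km}$ is itself finitely generated over $\cO[[U_p]]$ is neither justified nor needed; the finite length of the cosocle of $M_\psi(U^p)_\km$ follows directly from $M_\psi(U^p)_\km$ being finitely generated over the Noetherian ring $\cO[[U_p]]$ (a finitely generated module over a Noetherian ring cannot split as an infinite direct sum of nonzero submodules), after which your Nakayama/lifting argument goes through.
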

	
	\begin{proof}
		The faithfulness of the first claim follows from the first part of the Theorem directly. The finiteness assertions in both the first and the second claims follow from Proposition \ref{Paskunas theory for G}, 1) of Proposition \ref{4.17} and the previous theorem. The last claim follows from the first one.
	\end{proof}
	
	The following application describes modular primes of the big Hecke algebra.

		\begin{cor} \label{classicality} Keep notations as above.
			For a maximal ideal $\kp$ of $\T_{\psi}(U^p)_\km[\frac{1}{p}]$, we assume that for any $v|p$,
			\begin{itemize}
				\item $\rho(\kp)|_{G_{F_v}}$ is absolutely irreducible and de Rham with distinct Hodge-Tate weights,
			\end{itemize}
			where $\rho(\kp):G_{F,\Sigma}\to\GL_2(k(\kp))$ is the semi-simple representation associated to $\kp$. Then $\kp$ is a pull-back of a maximal ideal of $\T_{(\vec{k},\vec{w}),\psi}(U^pU_p)[\frac{1}{p}]$ for some weight $(\vec{k},\vec{w})\in \Z_{>1}^{\Hom(F,\overbar{\Q_p})}\times  \Z^{\Hom(F,\overbar{\Q_p})}$ and open compact subgroup $U_p\subseteq K_p$. By Jacquet-Langlands correspondence, it also arises from a regular algebraic cuspidal automorphic representation of $\GL_2(\A_F)$.
		\end{cor}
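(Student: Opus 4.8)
The plan is to deduce the statement from the local--global compatibility of Theorem \ref{lgc} together with Colmez's theorem that an absolutely irreducible two-dimensional de Rham representation of $G_{\Q_p}$ with distinct Hodge--Tate weights has a $p$-adic Langlands correspondent with nonzero locally algebraic vectors. This is exactly the mechanism of \cite[Corollary 3.5.8 \& 3.5.10]{pan2022fontaine}, which I would transcribe with the appendix references pointing to \cite{Pa_k_nas_2021}.

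First I would attach a Banach representation to $\kp$. Let $\kp_p$ be the contraction of $\kp$ to $R^{\ps,\psi\varepsilon^{-1}}_p[\tfrac1p]$ along the finite map of Corollary \ref{Hecke module}(2), and for each $v\mid p$ let $\kp_v\in\Spec R^{\ps,\psi\varepsilon^{-1}}_v[\tfrac1p]$ be its $v$-component; since $\rho(\kp)|_{G_{F_v}}$ is absolutely irreducible, $\kp_v$ pins down $\rho(\kp)|_{G_{F_v}}$ up to isomorphism. Because $\mm$ is a faithful finitely generated $\T_\psi(U^p)_\km$-module (Corollary \ref{Hecke module}(1)), both $\mm/\kp\mm$ and $\Pi:=\bigl(S_\psi(U^p)_\km\otimes_\cO k(\kp)\bigr)[\kp]$ are nonzero; here $\Pi$ is an admissible unitary $k(\kp)$-Banach representation of $D_p^\times$ lying, by Theorem \ref{lgc}(1), in the block $\kB_\km$.

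Next I would pin down $\Pi$ as a $\GL_2(F_v)$-representation. By Theorem \ref{lgc}(2) the action $\tau_{\mathrm{Aut}}$ of $R^{\ps,\psi\varepsilon^{-1}}_v$ on $\mm$ agrees with $\tau_{\Gal}$, so it factors through the quotient cut out by $\kp_v$; hence, after the block anti-equivalence of Proposition \ref{Paskunas theory for G}, the $\GL_2(F_v)$-structure of $\Pi$ is governed by $P_{\kB_{\km,v}}/\kp_v P_{\kB_{\km,v}}$. As in the proof of Theorem \ref{lgc}, absolute irreducibility of $\rho(\kp)|_{G_{F_v}}$ together with Propositions \ref{4.37} and \ref{irreducible_irr} shows that the unique irreducible Banach representation occurring there is the $\Pi_v$ with $\mathbf V(\Pi_v)\cong\rho(\kp)|_{G_{F_v}}\otimes\varepsilon$; in particular we are in Case 1 of that proof, i.e.\ $\pi_\kp$ does not factor through $N_{D/F}$. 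Since $\rho(\kp)|_{G_{F_v}}$ is de Rham with distinct Hodge--Tate weights $(w_{\sigma_v},w_{\sigma_v}+k_{\sigma_v}-1)$, \cite[Theorem 1.3]{Colmez_2014} and Lemma \ref{clgc} identify $\Pi_v$ with a unitary completion of $\Sym^{k_{\sigma_v}-2}(k(\kp)^2)\otimes\det^{-(w_{\sigma_v}+k_{\sigma_v}-2)}\otimes\pi_v$ with $\pi_v$ an irreducible unramified principal series; in particular $\Pi_v$ has nonzero locally algebraic vectors. Using the completed-tensor description of $\kC_{D_p^\times,\psi}(\cO)^{\kB_\km}$ from Lemma \ref{block for G} and Proposition \ref{Paskunas theory for G}, together with the finiteness of $\mm/\kp\mm$ over $k(\kp)$, I would conclude that $\Pi$ itself has nonzero locally algebraic vectors of a single weight $(\vec k,\vec w)$ with $k_\sigma+2w_\sigma=w_\psi+2$.

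Finally, by Definition \ref{locally algebraic vectors} and the isomorphism $S_{(\vec k,\vec w),\psi}(U^pU_p,K)\cong\Hom_{K[U_p]}(W^{*}_{(\vec k,\vec w),K},S_\psi(U^p)_K)$ of Remark \ref{quaternionic and automorphic}, these locally algebraic vectors in the $\kp$-eigenspace give, for a suitable open compact $U_p\subseteq K_p$, a nonzero space of classical quaternionic eigenforms of weight $(\vec k,\vec w)$ on which $T_w$ acts by the $\kp$-eigenvalue for all $w\notin\Sigma$; thus $\kp$ is the pull-back of a maximal ideal of $\T_{(\vec k,\vec w),\psi}(U^pU_p)[\tfrac1p]$ along the specialization map $t_\psi$. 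Since $D$ is a division algebra and $\pi_\kp$ does not factor through $N_{D/F}$, the Jacquet--Langlands correspondence carries this eigenform to a cuspidal automorphic representation of $\GL_2(\A_F)$, which is regular algebraic because the distinct Hodge--Tate weights force $k_\sigma\ge 2$ for all $\sigma$. I expect the only real work to be bookkeeping: tracking the $\varepsilon$-twist in $\mathbf V$, the switch between $R^{\ps,\psi\varepsilon}_v$ and $R^{\ps,\psi\varepsilon^{-1}}_v$, the Hodge--Tate normalizations, and matching the smooth part $\pi_v$ with the local component of $\pi_\kp$ via Lemma \ref{clgc}; conceptually nothing beyond Theorem \ref{lgc} and Colmez's theorem is needed.
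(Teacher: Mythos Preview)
Your approach is essentially the paper's: pass from $\kp$ to the Banach representation $\Pi=(S_\psi(U^p)_\km\otimes K)[\kp]$, identify its local pieces via Propositions \ref{4.37} and \ref{irreducible_irr}, invoke Colmez to produce locally algebraic vectors, and read off classicality through Remark \ref{quaternionic and automorphic}. Two points deserve tightening.

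First, the passage from ``each $\Pi_v$ has locally algebraic vectors'' to ``$\Pi$ has locally algebraic vectors for $D_p^\times$'' is where the paper does the real work you gloss over. The paper proves an explicit structural lemma: $\widehat{\bigotimes}_{v\mid p}\Pi_v$ is topologically irreducible and $E_{\kB_\km}[\tfrac1p]/(\kp_p)$ is a central simple algebra, whence $(S_\psi(U^p)_\km\otimes K)[\kp]\cong\bigl(\widehat{\bigotimes}_{v\mid p}\Pi_v\bigr)^{\oplus d}$ for some $d>0$. With this isomorphism, $\bigotimes_v(\Pi_v)^{\mathrm{la}}$ visibly sits inside $\Pi$. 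Your appeal to ``the completed-tensor description \ldots\ together with the finiteness of $\mm/\kp\mm$'' is the right instinct, but you should make the irreducibility of $\widehat{\bigotimes}_v\Pi_v$ explicit (it needs Theorem \ref{4.34} applied to $G=D_p^\times$, plus the fact that $\mm(\widehat{\bigotimes}_v\Pi_v)\cong\bigotimes_v\mm(\Pi_v)$ is simple over $\bigotimes_v E_{\kB_{\km,v}}[\tfrac1p]/(\kp_v)$, each factor being central simple over $K$ by Proposition \ref{irreducible_irr}).

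Second, your invocation of Lemma \ref{clgc} is misplaced: that lemma describes $\rho(\kp)_v$ when $\kp$ already comes from a classical Hecke algebra, which is what you are trying to prove. Here you only need \cite[Theorem 1.3]{Colmez_2014} to get locally algebraic vectors in $\Pi_v$; no appeal to classical local--global compatibility is required. Relatedly, the smooth part of $\Pi_v^{\mathrm{la}}$ need not be an \emph{unramified} principal series, and your reference to ``Case 1 of that proof'' and to $\pi_\kp$ is premature since no automorphic $\pi_\kp$ exists yet at this stage.
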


	\begin{proof}
		The proof is nearly the same as \cite[Corollary 3.5.10]{pan2022fontaine}, and the main differences are that we use Theorem \ref{lgc} and Pašk\=unas-Tung's results (see Appendix \ref{A.2}) instead. We sketch it here.
		
		Let $\kp_v\in \Spec R^{\ps,\psi\varepsilon^{-1}}_v$ be the pull-back of $\kp$. Using Proposition \ref{4.37} and Proposition \ref{irreducible_irr}, all the irreducible subquotients of $\Hom^{\cont}_\cO(P_{\kB_{\km,v}}/\kp_v P_{\kB_{\km,v}}, K)$ are isomorphic, and we denote it by $\Pi_v$. Enlarging $K$ if necessary, we may assume $\Pi_v$ is absolutely irreducible. We recall the following lemma (\cite[Lemma 3.5.11]{pan2022fontaine}).
		
		\begin{lem} Keep notations as above.
			\begin{enumerate}
				\item The unitary representation $\Pi:=\widehat{\bigotimes}_{v|p} \Pi_v$ of $D_p^\times$ is topologically irreducible.
				\item Let $\kp_p=\kp\cap R^{\ps,\psi\varepsilon^{-1}}_p$. Then $E_{\kB_\km}[\frac{1}{p}]/(\kp_p)$ is a central simple $E$-algebra.
			\end{enumerate}
		\end{lem}
		
		\begin{proof}[Proof of lemma]
			Using Proposition \ref{6.7}, Proposition \ref{Paskunas theory for G} and Proposition \ref{finite dim}, the conditions of Theorem \ref{4.34} hold for $G=D_p^\times$. Therefore, to prove the lemma, we just need to show that $\mm(\Pi)$
			is a simple $E_{\kB_\km}[\frac{1}{p}]$-module (see Appendix \ref{A.1} for the definition of the functor $\mm$). Combining Lemma \ref{block for G}, we have $ \mm(\Pi)\cong\widehat{\bigotimes}_{v|p}\mm(\Pi_v)$. As $\Pi_v$ is absolutely irreducible, using Lemma \ref{4.1} and Proposition \ref{4.17}, we know that $E_v$ is a central simple algebra over $K$ and $\mm(\Pi_v)$ is a simple $E_v$-module. Hence $ \mm(\Pi)$ is a simple module of $\bigotimes E_v$. As $\rho(\kp_v)$ is absolutely irreducible, using Theorem \ref{decomposition} and Proposition \ref{irreducible_irr}, we know that $E_v$ is in fact $E_{\kB_{\km,v}}[\frac{1}{p}]/(\kp_v)$, and it implies the lemma.
		\end{proof}
		
		Now we return to the proof our the corollary. 
		
		Let $\tilde{\kp} \in \Spec \T_\psi(U^p)_\km$  be the pull-back of $\kp$. Note that we have isomorphisms
		\[\Hom^{\cont}_\cO(M_\psi(U^p)_\km/\tilde\kp M_\psi(U^p)_\km ,K)\cong S_\psi(U^p)_\km[\tilde\kp]\otimes K\cong (S_\psi(U^p)_\km\otimes K)[\kp].\]
		Using Corollary \ref{Hecke module} and the previous lemma, we obtain an isomorphism $(S_\psi(U^p)_\km\otimes K)[\kp]\cong(\widehat{\bigotimes}_{v|p} \Pi_v)^d$ for some positive integer $d$. By \cite[Theorem 1.3]{Colmez_2014}, for each $v|p$, $\Pi_v$ has non-zero locally algebraic vectors of $\GL_2(F_v)$. Thus $(S_\psi(U^p)_\km\otimes K)[\kp]$ contains non-zero locally algebraic vectors of $D_p^\times$. Combining the discussion in Remark \ref{quaternionic and automorphic}, we prove the result.
	\end{proof}
	
	Another application is the lower bound of the dimension of the big Hecke algebra.
	
	\begin{cor}\label{dim of hecke}
		Keep notations as above. Each irreducible component of $\T_\psi(U^p)_\km$ is of characteristic zero and of dimension at least $1+2[F:\Q]$. 
	\end{cor}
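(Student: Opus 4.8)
Since $S_\psi(U^p)$ is $\cO$-torsion free (see the discussion following Definition \ref{Completed coh}), so is $M_\psi(U^p)$, and hence — being a $\Hom$ into an $\cO$-torsion free module — so is $\mm$. By Corollary \ref{Hecke module}(1), $\mm$ is a faithful $\T_\psi(U^p)_\km$-module, so $\varpi$ is a non-zero-divisor on $\T_\psi(U^p)_\km$; i.e. $\T_\psi(U^p)_\km$ is $\cO$-flat. In particular no minimal prime contains $\varpi$, so every irreducible component of $\Spec \T_\psi(U^p)_\km$ has characteristic zero, and $\dim \T_\psi(U^p)_\km = 1 + \dim \T_\psi(U^p)_\km/(\varpi)$. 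Moreover, since $\varpi$ is a non-zero-divisor, Krull's principal ideal theorem shows that the set of dimensions of the irreducible components of $\Spec \T_\psi(U^p)_\km$ is exactly $\{\,1+\dim W : W \text{ an irreducible component of } \Spec \T_\psi(U^p)_\km/(\varpi)\,\}$; hence it suffices to prove that every irreducible component of $\Spec \T_\psi(U^p)_\km/(\varpi)$ has dimension at least $2[F:\Q]$.

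\textbf{The dimension bound.} As $\mm$ is finitely generated, faithful and $\cO$-flat over $\T_\psi(U^p)_\km$, the support of $\mm/\varpi\mm$ over $\T_\psi(U^p)_\km/(\varpi)$ is the whole spectrum, so we must bound from below, along each component, the dimension of this support. By projectivity of $P_{\kB_\km}$ and the computation $M_\psi(U^p)_\km/\varpi \cong S_\psi(U^p,\F)_\km^\vee$ (using that for a definite quaternion algebra the coefficient functor commutes with reduction mod $\varpi$), we have
\[ \mm/\varpi\mm \;\cong\; \Hom_{\kC_{D_p^\times,\psi}(\cO)}\!\big(P_{\kB_\km},\; S_\psi(U^p,\F)_\km^\vee\big), \]
which carries two actions of $R^{\ps,\psi\varepsilon^{-1}}_p/(\varpi)$ — the Galois one factoring through $\T_\psi(U^p)_\km/(\varpi)$ and the automorphic one factoring through $E_{\kB_\km}/(\varpi)$ — and by Theorem \ref{lgc} these agree. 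One now feeds in the structure of mod-$p$ completed cohomology: localized at the non-Eisenstein ideal $\km$, $S_\psi(U^p,\F)$ is a \emph{large} smooth admissible representation of $D_p^\times=\prod_{v\mid p}\GL_2(\Q_p)$; using the Paškūnas(--Tung) description of the blocks $\kB_{\km,v}$ recalled in the Appendix, together with Proposition \ref{Paskunas theory for G} and Pan's finiteness results, one shows that $\mm/\varpi\mm$ is a Cohen--Macaulay module over $E_{\kB_\km}/(\varpi)$ whose support has dimension at least $2$ over $R^{\ps,\psi\varepsilon}_v/(\varpi)$ for each $v\mid p$. Completed-tensoring over the $f=[F:\Q]$ places above $p$ (recall $p$ splits completely) gives $\dim\operatorname{supp}(\mm/\varpi\mm)\ge 2[F:\Q]$ along every component, which combined with the first paragraph yields $\dim\ge 1+2[F:\Q]$ for every irreducible component of $\Spec\T_\psi(U^p)_\km$. (Alternatively one can argue globally: via the surjection $R^{\ps,\psi\varepsilon^{-1}}_\Sigma\twoheadrightarrow\T_\psi(U^p)_\km$ every component contains a pro-modular prime, Proposition \ref{density} and Corollary \ref{classicality} exhibit modular points in abundance, and around a classical point one deforms the local de Rham representations at the places above $p$ — e.g.\ in their Hida families in the ordinary case — to produce a $(1+2[F:\Q])$-dimensional family of modular Galois representations.)

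\textbf{Main obstacle.} The crux is the quantitative statement in the middle of the second paragraph: that mod-$p$ completed cohomology localized at a non-Eisenstein $\km$ is big with the precise rate ``at least $2$ per place above $p$'', with enough control (Cohen--Macaulayness) that the bound is uniform over irreducible components. This rests on the fine structure of the local blocks at $p$ from Paškūnas theory — now available for $p=3$ by the recent work of Paškūnas and of Paškūnas--Tung replacing the earlier $p\neq 3$ results — and its proof follows Pan's Section 3 essentially verbatim once those inputs are in place. The only genuinely new point is the residually reducible case $\bar\chi=\omega$: there one must check that the relevant blocks of $\GL_2(\Q_3)$ remain large enough for the support computation to go through, which is exactly what the cited recent work provides.
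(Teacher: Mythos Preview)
Your characteristic-zero argument is fine, but the reduction at the end of the first paragraph is false: from ``every component of $\Spec\T_\psi(U^p)_\km/(\varpi)$ has dimension $\ge 2[F:\Q]$'' one cannot conclude ``every component of $\Spec\T_\psi(U^p)_\km$ has dimension $\ge 1+2[F:\Q]$''. Take $R=\cO[[x,y,z]]/(xz,yz,z(z-\varpi))$: this is a reduced complete local $\cO$-algebra in which $\varpi$ is a non-zero-divisor, its components (for the minimal primes $(z)$ and $(x,y,z-\varpi)$) have dimensions $3$ and $1$, yet $R/(\varpi)=\F[[x,y,z]]/(xz,yz,z^2)$ has a single component, of dimension $2$. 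Krull's principal ideal theorem only tells you that for each component $C$ of $\Spec\T$ the intersection $C\cap V(\varpi)$ is pure of dimension $\dim C-1$; those intersections need not themselves be components of $V(\varpi)$, so the set-equality you assert fails.

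Your second paragraph does not repair this and is in any case too imprecise to be a proof. The phrase ``support has dimension at least $2$ over $R^{\ps,\psi\varepsilon}_v/(\varpi)$ for each $v\mid p$'' has no clear meaning --- $\mm/\varpi\mm$ is a module over the completed tensor product $R^{\ps,\psi\varepsilon^{-1}}_p/(\varpi)$, not over the individual local factors --- and $\mm$ is not a tensor product over the places, so ``completed-tensoring over the $f$ places'' is a non sequitur. The Cohen--Macaulayness you invoke is precisely the crux and you have only asserted it. Your alternative sketch via Hida-type families would at best exhibit a $(1+2[F:\Q])$-dimensional family through \emph{one} classical point on \emph{one} component; it gives no uniform bound over all components.

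The paper defers to Pan's Section~3.6, whose argument avoids both problems by bounding the \emph{depth} of $\mm$ directly over $\T_\psi(U^p)_\km$ (equivalently over $R^{\ps,\psi\varepsilon^{-1}}_p$, since the Hecke algebra is finite over it by Corollary~\ref{Hecke module}). Since $\depth M\le\dim R/\kp$ for every associated prime $\kp$ of a finitely generated module $M$ over a Noetherian local ring, once one has $\depth\,\mm\ge 1+2[F:\Q]$ it follows immediately that every component of $\Supp(\mm)=\Spec\T_\psi(U^p)_\km$ has the required dimension --- no reduction modulo $\varpi$ is needed. The depth bound itself is extracted from the projectivity of $M_\psi(U^p)_\km$ over $\cO[[K_p]]_\zeta$ (a regular local ring of dimension $1+3[F:\Q]$) together with the structural results on $P_{\kB_\km}$ from Pa\v{s}k\=unas theory --- available for $p=3$ by the work recalled in the Appendix --- which control exactly how much depth is lost on applying $\Hom_{\kC_{D_p^\times,\psi}(\cO)}(P_{\kB_\km},-)$.
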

	
	\begin{proof}
		The proof is the same as \cite[Theorem 3.6.1]{pan2022fontaine}. See \cite[Section 3.6]{pan2022fontaine} for more details.
	\end{proof}
	
	Keep the character $\xi$ as in the previous subsection. Then one can obtain similar results for $M_{\psi, \xi}(U^p) $ and $\T_{\psi,\xi}(U^p)$, which are to apply Taylor's Ihara avoidance trick in \cite{taylor2008automorphy} to prove the patching argument.
	
	\section{Pro-modularity} In this section, we give some pro-modularity results.
	
	For notations, we suppose $p$ is an odd prime. We denote by $K$  a $p$-adic local field with its ring of integers $\cO$, a uniformizer $\varpi$ and its residue field $\F$.
	
	\subsection{The residually reducible case}\label{sec 4.1}
	
	Let $F$ be a totally real number field of even degree over $\mathbb{Q}$ in which $p$ splits completely. Write $\Sigma_p$ as the set of places of $F$ above $p$. Let $\Sigma$ be a finite set of finite places of $F$ containing $\Sigma_p$ such that for all $v \in \Sigma \setminus \Sigma_p$, we have $p \mid \operatorname{Nm}(v)-1$. Let $\xi_v: k(v)^\times \to \mathcal{O}^\times$ be characters of $p$-power order for $v \in \Sigma \setminus \Sigma_p$, and we can view them as characters of $I_v$ by local class field theory. Let $\chi: G_{F,\Sigma} \to \mathcal{O}^{\times}$ be a continuous totally odd character such that 
	
	1) $\chi$ is unramified outside $ \Sigma_p$,
	
	2) $\chi(\operatorname{Frob}_v) \equiv 1~\operatorname{mod}~\varpi$ for  
	$v \in \Sigma \setminus \Sigma_p$.
	
	Let $D$ be a quaternion algebra over $F$ ramified exactly at all infinite places, and we fix an isomorphism between $( D \otimes_F \mathbb{A}_F^\infty)^\times$ and $\operatorname{GL}_2(\mathbb{A}_F^\infty)$. Let $\psi=\chi \varepsilon$ be a character of $ (\mathbb{A}_F^\infty)^\times/ F_+^\times$ via global class field theory. Let $U^p=\prod_{v \nmid p} U_v$ be a tame level such that $U_v= \operatorname{GL}_2(O_{F_v})$ if $v \notin \Sigma$ and $U_v = \operatorname{Iw}_v := \{g \equiv \begin{pmatrix}
		* & *\\
		0 & *
	\end{pmatrix}
	~\operatorname{mod}~\varpi_v, g \in \operatorname{GL}_2(O_{F_v})\}$ otherwise. For any $v \in \Sigma \setminus \Sigma_p$, the map $\begin{pmatrix}
		a & b\\
		c & d
	\end{pmatrix} \to \xi_v (\frac{a}{d}~\operatorname{mod}~\varpi_v)$ defines a character of $U_v$ and the product of $\xi_v$ can be viewed as a character $\xi$ of $U^p$ by projecting to $\prod_{v \in  \Sigma \setminus \Sigma_p} U_v$. From our setting, we can define a Hecke algebra $\mathbb{T}_{\xi} := \mathbb{T}_{\psi, \xi}(U^p)$ as in the previous section. We also make the following assumption:
	
	\begin{assumption}\label{assumption}
		There exists a maximal ideal $\mathfrak{m}_{\xi}$ of $\mathbb{T}_{\xi}$ generated by $T_v-(1+\chi(\Frob_v)),v\notin \Sigma$ and $\varpi$.
	\end{assumption}
	 
	 Consider the universal deformation ring $R^{\ps}$ which pro-represents the functor from $\mathfrak{U}_{\mathcal{O}}$ to the category of sets sending $R$ to the set of two-dimensional pseudo-representations $T$ of $G_{F,\Sigma}$ over $R$  such that $T$ is a lifting of $1+\bar\chi$ with determinant $\chi$. Let $R^{\ps,\{\xi_v\}}$ be the natural quotient of $R^{\ps}$ further satisfying 
	 $T|_{I_{F_v}}=\xi_v+\xi_v^{-1} $ for any $v\in \Sigma\setminus \Sigma_p$. 
	 
	 \begin{rem}\label{belong}
	 	Using \cite[Proposition 3.1.5]{Zhang2024}, for any minimal prime $\kq$ of $R^{\ps}$, the natural surjection $R^{\ps} \twoheadrightarrow R^{\ps}/ \kq$ factors through $R^{\ps,\{\xi_v\}}$ for some character $\xi$.
	 \end{rem}
	 
	 Recall that there is a natural pseudo-deformation $T_{\mathfrak{m}_{\xi}} : G_{F, \Sigma} \to (\mathbb{T}_{\xi})_{\mathfrak{m}_{\xi}}$ with determinant $\chi$ sending $\operatorname{Frob}_v$ to the Hecke operator $T_v$ for $v \notin \Sigma$.  By the local-global compatibility at $v \in \Sigma \setminus \Sigma_p$, we get a natural surjection $R^{\textnormal{ps}, \{\xi_v\}} \twoheadrightarrow (\mathbb{T}_{\xi})_{\mathfrak{m}_{\xi}} $.
	 
	 For any $\mathfrak{p} \in \operatorname{Spec} (\mathbb{T}_{\xi})_{\mathfrak{m}_{\xi}} $, we can define a two-dimensional semi-simple representation $\rho(\mathfrak{p}): G_{F, \Sigma} \to \operatorname{GL}_2(k(\mathfrak{p}))$ with trace $T_{\mathfrak{m}_{\xi}}~\operatorname{mod}~\mathfrak{p}$ in the sense of Definition \ref{construction}.
	
	\begin{defn}\label{nice}
		(1) We say that a prime  of $ R^{\textnormal{ps}, \{\xi_v\}}$ is \textit{pro-modular} if it comes from a prime of $(\mathbb{T}_{\xi})_{\mathfrak{m}_{\xi}} $. We say that a prime of $ R^\textnormal{ps}$ is \textit{pro-modular} if it comes from a pro-modular prime of some $ R^{\textnormal{ps}, \{\xi_v\}}$.
		
		(2) Let $\mathfrak{q} $ be a prime of $ (\mathbb{T}_{\xi})_{\mathfrak{m}_{\xi}} $ and $A$ be the normal closure of $(\mathbb{T}_{\xi})_{\mathfrak{m}_{\xi}} / \mathfrak{q} $ in $k(\mathfrak{q})$. We say that $\mathfrak{q} $ is a \textit{nice} prime if $\mathfrak{q} $ contains $p$ and $\operatorname{dim} (\mathbb{T}_{\xi})_{\mathfrak{m}_{\xi}} / \mathfrak{q}=1$ and there exists a two-dimensional representation $\rho(\mathfrak{q})^o : G_{F, \Sigma} \to \operatorname{GL}_2(A)$ satisfying:
		
		\quad 1) $\rho(\mathfrak{q})^o \otimes k(\mathfrak{q}) \cong \rho(\mathfrak{q})$ is irreducible.
		
		\quad 2) The $\operatorname{mod}~\mathfrak{m}_A$ reduction $\bar{\rho}_b$ of $\rho(\mathfrak{q})^o$ is a non-split extension and has the form $ \bar{\rho}_b(g)=\begin{pmatrix}
			* & *\\
			0 & *
		\end{pmatrix}$, $g \in G_{F, \Sigma}$. Here $\mathfrak{m}_A $ is the maximal ideal of $A$.
		
		\quad 3) If $\rho(\mathfrak{q}) $ is dihedral, namely isomorphic to $\operatorname{Ind}_{G_L}^{G_F} \theta$ for some quadratic extension $L$ of $F$ and continuous character $\theta: G_L \to k(\mathfrak{q})^\times$, then $L \cap F(\mu_p) = F$,
		where $\mu_p $ is a primitive $p$-th root of unity.
		
		\quad 4) $\rho(\mathfrak{q})^o \mid_{G_{F_v}} = \bar{\rho}_b \mid_{G_{F_v}} $ for any $v \in \Sigma \setminus \Sigma_p$, i.e. $\rho(\kq)^o|_{G_{F_v}}$ is the trivial lifting.
		
		(3) We say that a prime of $ R^{\textnormal{ps}, \{\xi_v\}}$ is \textit{nice} if it comes from a nice prime of $(\mathbb{T}_{\xi})_{\mathfrak{m}_{\xi}} $.
		
	\end{defn}
	
	Now we can state our $R=\mathbb{T}$ theorem.
	
	\begin{thm} \label{R=T}
		Keep notations and assumptions as above. Let $\kq\in \Spec  (\mathbb{T}_{\xi})_{\mathfrak{m}_{\xi}} $ be a nice prime and $\kq^{\ps}=\kq\cap R^{\ps,\{\xi_v\}}$. Then the natural surjective map 
		\[(R^{\ps,\{\xi_v\}})_{\kq^{\ps}}\to(\mathbb{T}_{\xi})_{\mathfrak{q}}\]
		has nilpotent kernel.
	\end{thm}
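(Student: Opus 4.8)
The plan is to run the usual Taylor–Wiles–Kisin patching machine, adapted to the residually reducible setting as in Pan, but with the Hecke module $\mm$ replaced by the patched module built from it, and to verify that the patched deformation ring acts on the patched module through the Hecke algebra, so that the support of the patched module is all of $\Spec R_\infty$; since $R_\infty$ is known to be (after quotienting) a quotient of a power series ring over $R_{\mathrm{loc}}$ of the expected dimension, this forces the map to have nilpotent kernel. First I would fix the nice prime $\kq$, the associated non-split $\bar\rho_b$, and the quadratic-extension-avoiding data from 3) of Definition \ref{nice}, and replace the global picture near $\kq$ by the deformation theory of $\bar\rho_b$: using Proposition \ref{Dkp} and Corollary \ref{crucial} the completed local ring $(R^{\ps,\{\xi_v\}})_{\kq^\ps}$ is identified with (a completed localization of) the universal ordinary-at-$p$, fixed-inertial-type deformation ring $R_b^{\mathrm{univ}}$ of $\bar\rho_b$, which sits inside the $S$-framed ring $R_{k(\kq)}^{\square_T,\chi}$; and the local-global compatibility Theorem \ref{lgc}, together with Corollary \ref{Hecke module}, shows that the corresponding localization/completion of $(\mathbb{T}_\xi)_{\km_\xi}$ receives a surjection from this deformation ring and is module-finite over the local pseudo-deformation ring $R_p^{\ps,\psi\varepsilon^{-1}}$.

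Next I would carry out the Taylor–Wiles argument. Using the generalized Greenberg–Wiles formula (Proposition \ref{Greenberg-Wiles}) together with the vanishing $H^0(G_{F,T},\ad^0\rho(\kq)(1))=0$ — which holds because $\rho(\kq)$ is irreducible and the dihedral condition 3) of Definition \ref{nice} rules out the only way $\ad^0\rho(\kq)(1)$ could have invariants — I would produce, for each integer $N$, an auxiliary set $Q_N$ of Taylor–Wiles primes (congruent to $1$ mod $p^N$, with $\rho(\kq)(\Frob_v)$ having distinct eigenvalues) of a fixed cardinality $q=\dim_{k(\kq)}H^1_T$, kill the dual Selmer group, and patch: this gives a power series ring $R_\infty=R_{\mathrm{loc}}[[x_1,\dots,x_g]]$ acting on a patched module $\mm_\infty$ over $S_\infty=\mathcal O[[z_1,\dots]]$, with $\mm_\infty$ maximal Cohen–Macaulay over $S_\infty$ and $\mm_\infty/\mathfrak a\mm_\infty\cong \mm$ (localized and completed appropriately). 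Here the role of the big Hecke algebra and Corollary \ref{dim of hecke} is to guarantee that $\mm$, hence $\mm_\infty$, is nonzero and of the right depth; Ihara avoidance via the character $\xi$ (as flagged at the end of Section \ref{section lgc}) is used to handle the ramified-away-from-$p$ deformation conditions and to ensure $R_{\mathrm{loc}}$ is the expected complete intersection / normal domain by Proposition \ref{dim of loc framed der ring} and Proposition \ref{framed loc ord}.

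The numerical coincidence then does the work: $\dim R_{\mathrm{loc}} + g = \dim S_\infty$, so $\mm_\infty$ being a faithful maximal Cohen–Macaulay $S_\infty$-module forces $\Supp_{R_\infty}\mm_\infty$ to be a union of irreducible components of $\Spec R_\infty$ of full dimension; combined with the fact (Proposition \ref{dim of loc framed der ring}(2), Proposition \ref{framed loc ord}) that $R_{\mathrm{loc}}$, and hence $R_\infty$, is a domain (or at worst that $R_\infty[1/p]$ is connected of the right dimension — this is where Pan's connectedness-dimension input, Proposition \ref{connect dim}, and the one-dimensional-prime trick of \cite{Zhang2024} enter to reduce to checking a single pro-modular point), the support is all of $\Spec R_\infty$. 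Unpatching (dividing by the $S_\infty$-regular sequence) yields that $(R^{\ps,\{\xi_v\}})_{\kq^\ps}$ acts faithfully on $\mm$ up to nilpotents, and since the map to $(\mathbb{T}_\xi)_{\kq}$ is already surjective with $\mm$ a faithful $(\mathbb{T}_\xi)_{\km_\xi}$-module, the kernel is nilpotent.

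The main obstacle I expect is not the patching formalism but establishing that the patched ring $R_\infty$ has the expected dimension and enough irreducibility/connectedness in the residually reducible case with $p=3$ and $\bar\chi=\omega$: the local framed ordinary ring at $p$ in case 3) of Proposition \ref{framed loc ord} is only a hypersurface $\mathcal O[[b,b',d,d',x,y]]/(3b-bd-b'd')$, so one must check carefully that $R_{\mathrm{loc}}$ is still a domain of the expected dimension and that the global deformation ring inherits this — here the generalized Greenberg–Wiles formula is essential to get the dimension count right, and the passage between the pseudo-deformation ring $R^{\ps,\{\xi_v\}}$ and the genuine deformation ring $R_b^{\mathrm{univ}}$ (Proposition \ref{Kisin}, which requires formal étaleness and is delicate precisely because $\bar\chi=\omega^{-1}$) is the step most likely to need care.
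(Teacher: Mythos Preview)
Your high-level strategy is correct and matches the paper: Taylor--Wiles--Kisin patching of the Hecke module $\mm$ at the nice prime $\kq$, followed by a dimension count and Taylor's Ihara avoidance trick with the characters $\xi_v$. However, several of the specific inputs you invoke are misplaced, and one is a genuine conceptual error.

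The main error is that you repeatedly impose an \emph{ordinary} condition at places above $p$: you cite Proposition~\ref{framed loc ord}, speak of the ``universal ordinary-at-$p$'' deformation ring $R_b^{\mathrm{univ}}$, and worry about the hypersurface $\mathcal{O}[[b,b',d,d',x,y]]/(3b-bd-b'd')$. None of this enters the proof of Theorem~\ref{R=T}. A nice prime need not be ordinary, and the local component of $R_{\loc}^{\{\xi_v\}}$ at $v\mid p$ is the \emph{unrestricted} framed ring $R_v^\square$, which is a normal integral domain of dimension $3[F_v:\Q_p]+4$ by Proposition~\ref{dim of loc framed der ring}. The ordinary deformation rings, Corollary~\ref{crucial}, and Proposition~\ref{Kisin} are used elsewhere (in \S\ref{ord section} and \S5) to \emph{find} nice primes, not to prove $R=\mathbb{T}$ once one is in hand. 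With the wrong local condition your dimension count would not close.

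Two smaller misattributions: the Taylor--Wiles primes (Proposition~\ref{exttwp}) are produced using the classical Greenberg--Wiles formula over the characteristic-$p$ field $k(\kq)$, not the characteristic-zero version of Proposition~\ref{Greenberg-Wiles}; and the connectedness-dimension input (Proposition~\ref{connect dim}) and the trick of \cite{Zhang2024} play no role here---the relevant irreducibility of $\widehat{(R_\infty^{\{\xi_v\}})_{\kq_\infty^{\{\xi_v\}}}}$ when all $\xi_v$ are nontrivial comes from Proposition~\ref{4.2.3}, which in turn rests on condition~4) of Definition~\ref{nice} (trivial lift at $v\in\Sigma\setminus\Sigma_p$). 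Condition~3) of Definition~\ref{nice} is used to run the Chebotarev argument for Taylor--Wiles primes, not to force $H^0(G_{F,\Sigma},\ad^0\rho(\kq)(1))=0$ as you suggest.

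Finally, the paper's patching has a wrinkle you omit: one patches over a subring $S_\infty'\subset S_\infty$ and tensors up, producing a factor $2^r$ in $\mm_\infty^{\{\xi_v\}}/\ka_1\cong(\mm_0^{\{\xi_v\}})^{\oplus 2^r}$; and the lower bound $\dim\widehat{(\mathbb{T}_\xi)_\kq}\ge 2[F:\Q]$ from Corollary~\ref{dim of hecke} is what makes the dimension inequality an equality.
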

	
	\begin{rem}
		Compared to \cite[Theorem 4.1.4]{pan2022fontaine}, our theorem removes the condition $\bar{\chi}|_{G_{F_v}}\neq \omega^{\pm 1}$ for any $v|p$ if $p=3$. The proof is a little bit complicated and technical but nearly the same as Pan's since the local-global compatibility results also hold in this case based on the recent work \cite{Pa_k_nas_2021}. We will not give all the details of the proof and just outline the main steps and point out the differences between Pan's proof and ours later.
	\end{rem}
	
	\begin{thm}\label{cor of R=T}
		Keep notations and assumptions as in the previous theorem. Let $\kp$ be a maximal ideal of $R^{\ps,\{\xi_v\}}[\frac{1}{p}]$. Assume that 
		\begin{itemize}
			\item For any $v|p$, $\rho(\kp)|_{G_{F_v}}$ is irreducible and de Rham with distinct Hodge-Tate weights.
			\item There exists an irreducible component of $R^{\ps,\{\xi_v\}}$ containing both $\kp$ and a nice prime $\kq$.
		\end{itemize}
		Then $\rho(\kp)$ arises from a regular algebraic cuspidal automorphic representation of $\GL_2(\A_F)$.
	\end{thm}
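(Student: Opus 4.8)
The plan is to combine Theorem~\ref{R=T} with (the $\xi$-variant of) Corollary~\ref{classicality}: I would first show that $\mathfrak p$ is pro-modular, and then read off classicality and automorphy from the Hecke side. Write $\tilde{\mathfrak p}\subseteq R^{\ps,\{\xi_v\}}$ for the prime with $\tilde{\mathfrak p}[\tfrac1p]=\mathfrak p$; it does not contain $p$, and since $\mathfrak p$ is a maximal ideal of $R^{\ps,\{\xi_v\}}[\tfrac1p]$ the quotient $R^{\ps,\{\xi_v\}}/\tilde{\mathfrak p}$ is a one-dimensional complete local domain finite over $\mathcal O$. By hypothesis there is a minimal prime $\mathfrak a$ of $R^{\ps,\{\xi_v\}}$ whose irreducible component $C=V(\mathfrak a)$ contains both $\tilde{\mathfrak p}$ and $\mathfrak q^{\ps}:=\mathfrak q\cap R^{\ps,\{\xi_v\}}$, where $\mathfrak q$ is the given nice prime. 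Let $\pi\colon R^{\ps,\{\xi_v\}}\twoheadrightarrow(\mathbb{T}_\xi)_{\mathfrak m_\xi}$ be the canonical surjection with kernel $I$; recall that $\pi$ carries the universal pseudo-representation of $R^{\ps,\{\xi_v\}}$ to $T_{\mathfrak m_\xi}$, so that $\mathfrak q^{\ps}=\pi^{-1}(\mathfrak q)$ and $R^{\ps,\{\xi_v\}}/\mathfrak q^{\ps}\cong(\mathbb{T}_\xi)_{\mathfrak m_\xi}/\mathfrak q$.

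The first real step is to spread the $R=\mathbb{T}$ statement from $\mathfrak q$ over all of $C$. Since $\mathfrak q$ is nice, Theorem~\ref{R=T} says that the kernel $I_{\mathfrak q^{\ps}}$ of $(R^{\ps,\{\xi_v\}})_{\mathfrak q^{\ps}}\to(\mathbb{T}_\xi)_{\mathfrak q}$ is nilpotent. As $R^{\ps,\{\xi_v\}}$ is Noetherian, some power $I^n$ is annihilated by an element $s\notin\mathfrak q^{\ps}$; then for every minimal prime $\mathfrak b\subseteq\mathfrak q^{\ps}$ we have $s\notin\mathfrak b$ and $sI^n\subseteq\mathfrak b$, hence $I\subseteq\mathfrak b$. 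In particular $I\subseteq\mathfrak a$, so $\pi$ induces an isomorphism $R^{\ps,\{\xi_v\}}/\mathfrak a\cong(\mathbb{T}_\xi)_{\mathfrak m_\xi}/\mathfrak b$ with $\mathfrak b:=\mathfrak a/I$ a prime of $(\mathbb{T}_\xi)_{\mathfrak m_\xi}$ contained in $\mathfrak q$. Transporting $\tilde{\mathfrak p}/\mathfrak a$ through this isomorphism gives a prime $\mathfrak P$ of $(\mathbb{T}_\xi)_{\mathfrak m_\xi}$ with $\pi^{-1}(\mathfrak P)=\tilde{\mathfrak p}$ and $(\mathbb{T}_\xi)_{\mathfrak m_\xi}/\mathfrak P\cong R^{\ps,\{\xi_v\}}/\tilde{\mathfrak p}$; in particular $k(\mathfrak P)=k(\mathfrak p)$, and since this quotient is a one-dimensional complete local domain finite over $\mathcal O$ with $p\notin\mathfrak P$, the ideal $\mathfrak P[\tfrac1p]$ is a maximal ideal of $(\mathbb{T}_\xi)_{\mathfrak m_\xi}[\tfrac1p]$. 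Thus $\mathfrak p$ (equivalently $\tilde{\mathfrak p}$) is pro-modular.

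Next I would identify Galois representations and invoke classicality. Because $\pi$ sends the universal pseudo-representation to $T_{\mathfrak m_\xi}$, the pseudo-character attached to $\mathfrak P$ is $T_{\mathfrak m_\xi}\bmod\mathfrak P$, which corresponds, under $k(\mathfrak P)=k(\mathfrak p)$, to the one attached to $\tilde{\mathfrak p}$; since $\rho(\mathfrak p)$ and the representation $\rho(\mathfrak P)$ associated to the modular prime $\mathfrak P$ are both obtained from this single pseudo-character over the same field by Definition~\ref{construction}, they are isomorphic as semisimple representations. Hence, for every $v\mid p$, $\rho(\mathfrak P)|_{G_{F_v}}\cong\rho(\mathfrak p)|_{G_{F_v}}$ is irreducible and de Rham with distinct Hodge--Tate weights, so the hypotheses of the $\xi$-analogue of Corollary~\ref{classicality} (for $M_{\psi,\xi}(U^p)$ and $\mathbb{T}_{\psi,\xi}(U^p)$, see the discussion at the end of Section~\ref{section lgc}) are met by $\mathfrak P$. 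That corollary then shows $\mathfrak P$ is the pull-back of a maximal ideal of some weight-$(\vec k,\vec w)$ Hecke algebra $\mathbb{T}_{(\vec k,\vec w),\psi,\xi}(U^pU_p)[\tfrac1p]$, and by the Jacquet--Langlands correspondence that $\rho(\mathfrak p)\cong\rho(\mathfrak P)$ arises from a regular algebraic cuspidal automorphic representation of $\GL_2(\A_F)$.

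I expect the crux to be the second step: upgrading the nilpotent-kernel conclusion of Theorem~\ref{R=T}, which a priori only concerns the local ring at the single nice prime $\mathfrak q$, to the statement $I\subseteq\mathfrak a$ about the entire irreducible component $C$. This is exactly where the hypothesis that $C$ contains a nice prime is used, and it is what turns $R=\mathbb{T}$ at $\mathfrak q$ into pro-modularity of $\mathfrak p$. A secondary point needing care is the faithful passage of the conditions ``irreducible and de Rham with distinct Hodge--Tate weights at each $v\mid p$'' between the pseudo-deformation side and the Hecke side, so that Corollary~\ref{classicality} applies on the nose, together with the coefficient-field enlargements that are already built into the local-global compatibility machinery of Section~\ref{section lgc}.
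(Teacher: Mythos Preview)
Your proposal is correct and follows precisely the approach the paper indicates; the paper's own proof is the single sentence ``This is clear from the previous theorem and Corollary~\ref{classicality},'' and you have simply unpacked that sentence in full detail. In particular, your spreading argument (from $I_{\mathfrak q^{\ps}}$ nilpotent to $I\subseteq\mathfrak a$ for every minimal prime $\mathfrak a\subseteq\mathfrak q^{\ps}$) is the standard and correct way to pass from Theorem~\ref{R=T} to pro-modularity of the entire irreducible component, and the remaining steps are just bookkeeping.
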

	
	\begin{proof}
		This is clear from the previous theorem and Corollary \ref{classicality}.
	\end{proof}
	
     Using these two theorems above, to prove the Fontaine-Mazur conjecture in our case, we just need to find enough nice primes in $\Spec R^{\ps,\{\xi_v\}}$. 
     
     The following lemma explains the third condition of the definition of a nice prime.
     
     \begin{lem} \label{nirred}
     	Let $\kq$ be a prime ideal of $\Spec R^{\ps,\{\xi_v\}}$ containing $p$ such that $R^{\ps,\{\xi_v\}}/\kq$ is one-dimensional.  Suppose $\rho(\kq)$ is irreducible. Then the third condition in (2) of Definition \ref{nice} holds for $\kq$ if one of the following conditions holds:
     	\begin{enumerate}
     		\item $\bar{\chi}$ is not quadratic. 
     		\item $\bar{\chi}|_{G_{F_v}}=\mathbf{1}$ for some $v|p$.
     		\item There exists a place $v|p$ such that $\bar{\chi}|_{G_{F_v}}\neq\mathbf{1}$ and $\rho(\kq)^o|_{G_{F_v}}\cong \begin{pmatrix}\chi_{v,1}& *\\ 0 & \chi_{v,2}\end{pmatrix}$ is reducible. Moreover $\chi_{v,1}/\chi_{v,2}$ is of infinite order, which is equivalent with saying $\chi_{v,1}$ is of infinite order as $\chi_{v,1}\chi_{v,2}=\bar{\chi}$ is of finite order.
     	\end{enumerate}
     \end{lem}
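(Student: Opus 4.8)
The plan is to observe that the third condition in Definition~\ref{nice}~(2) is automatic unless $\rho(\kq)$ is dihedral, and then to show that being dihedral is extremely restrictive in our situation. So suppose $\rho(\kq)\cong\Ind_{G_L}^{G_F}\theta$ for a quadratic extension $L/F$ and a continuous character $\theta\colon G_L\to k(\kq)^\times$, and let $\sigma$ generate $\Gal(L/F)$; irreducibility of $\rho(\kq)$ forces $\theta\neq\theta^\sigma$. Let $\rho(\kq)^o$ be the lattice model over the normalization of $R^{\ps,\{\xi_v\}}/\kq$ and $\bar{\rho}_b$ its reduction, so that $\operatorname{tr}\bar{\rho}_b$ equals the reduction $1+\bar{\chi}$ of the pseudo-representation carried by $\rho(\kq)^o$. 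By Brauer--Nesbitt the semisimplification of $\bar{\rho}_b$ is $(\Ind_{G_L}^{G_F}\bar\theta)^{\mathrm{ss}}$; since its Jordan--Hölder factors are $\mathbf{1}$ and $\bar{\chi}$, it is reducible, hence $\bar\theta=\bar\theta^\sigma$, so $\bar\theta$ extends to a character $\bar\mu$ of $G_F$ and $(\Ind_{G_L}^{G_F}\bar\theta)^{\mathrm{ss}}\cong\bar\mu\oplus\bar\mu\,\eta_{L/F}$, where $\eta_{L/F}$ is the quadratic character of $G_F$ with fixed field $L$. Comparing the two descriptions gives $\bar{\chi}=\eta_{L/F}$; in particular $\bar{\chi}$ is quadratic and $L$ is exactly the quadratic extension of $F$ cut out by $\bar{\chi}$. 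This already disposes of case (1): if $\bar{\chi}$ is not quadratic then $\rho(\kq)$ cannot be dihedral, so the third condition holds vacuously.

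For cases (2) and (3) I argue by contradiction, assuming $\rho(\kq)$ is dihedral and that the third condition fails, i.e.\ $L\cap F(\mu_p)\neq F$, which (as $[L:F]=2$) means $L\subseteq F(\mu_p)$. Then $\bar{\chi}=\eta_{L/F}$ factors through $\Gal(F(\mu_p)/F)$. Because $p$ splits completely in $F$, for every $v\mid p$ one has $\omega|_{G_{F_v}}=\omega|_{G_{\Q_p}}$, which is surjective onto $(\Z/p)^\times$ since $\Q_p(\mu_p)/\Q_p$ is totally ramified of degree $p-1$; hence $\omega|_{G_F}$ surjects onto $(\Z/p)^\times$, $\Gal(F(\mu_p)/F)$ is cyclic of order $p-1$, and its unique quadratic quotient pins down $\bar{\chi}=\omega^{(p-1)/2}|_{G_F}$. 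Consequently $\bar{\chi}|_{G_{F_v}}$ has order exactly $2$, hence is nontrivial, for \emph{every} $v\mid p$. This immediately contradicts the hypothesis of case (2) that $\bar{\chi}|_{G_{F_v}}=\mathbf{1}$ for some $v\mid p$.

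It remains to treat case (3). Fix the place $v\mid p$ furnished by the hypothesis: $\bar{\chi}|_{G_{F_v}}\neq\mathbf{1}$ and $\rho(\kq)^o|_{G_{F_v}}\cong\begin{pmatrix}\chi_{v,1}&*\\0&\chi_{v,2}\end{pmatrix}$ with $\chi_{v,1}/\chi_{v,2}$ of infinite order. Since $L$ is cut out by $\bar{\chi}$ and $\bar{\chi}|_{G_{F_v}}\neq\mathbf{1}$, the place $v$ is nonsplit in $L$; write $L_w/F_v$ for the resulting quadratic extension. Mackey's formula gives $\rho(\kq)|_{G_{F_v}}\cong\Ind_{G_{L_w}}^{G_{F_v}}\bigl(\theta|_{G_{L_w}}\bigr)$. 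But $\rho(\kq)^o|_{G_{F_v}}$, hence $\rho(\kq)|_{G_{F_v}}$, is reducible, and a two-dimensional representation induced from an index-two subgroup is reducible only when the inducing character is invariant under the quotient group; in that case $\rho(\kq)|_{G_{L_w}}\cong\theta|_{G_{L_w}}\oplus\theta|_{G_{L_w}}$, so $\chi_{v,1}|_{G_{L_w}}=\chi_{v,2}|_{G_{L_w}}$ and $\chi_{v,1}/\chi_{v,2}$ is trivial on $G_{L_w}$, forcing its order to divide $[L_w:F_v]=2$ --- contradicting infinite order. In all three cases the third condition of Definition~\ref{nice}~(2) holds for $\kq$. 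The step requiring the most care will be the reduction computation of the first paragraph (that a dihedral $\rho(\kq)$ forces $\bar{\chi}=\eta_{L/F}$, using that $\mathbf{1}$ occurs in $\bar\rho_b^{\mathrm{ss}}$), together with the elementary but crucial fact that $\omega|_{G_{F_v}}$ is surjective onto $(\Z/p)^\times$; granted these, the three cases follow by formal Clifford--Mackey theory.
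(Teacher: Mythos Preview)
Your proof is correct. The paper does not give its own argument here --- it simply cites \cite[Lemma 4.1.6]{pan2022fontaine} --- so there is no approach to compare against, and what you have written is essentially the standard Clifford--Mackey argument one would expect in Pan's proof.

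Two small remarks. First, in the opening reduction you should note (as you implicitly use) that a continuous character $\theta\colon G_L\to k(\kq)^\times$ automatically lands in $A^\times$ because $G_L$ is compact and $A\cong\F'[[T]]$, so the reduction $\bar\theta$ makes sense; this is routine but worth a sentence. Second, in case~(3) you never actually use the assumption ``the third condition fails'' (i.e.\ $L\subseteq F(\mu_p)$): once $\bar\chi=\eta_{L/F}$ and $\bar\chi|_{G_{F_v}}\neq\mathbf{1}$ force $v$ to be nonsplit in $L$, the Mackey argument already gives the contradiction with infinite order. So under hypothesis~(3) you in fact show $\rho(\kq)$ cannot be dihedral at all, which is a slightly stronger conclusion than needed. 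This is harmless, but you could streamline the framing by not bundling case~(3) into the same contradiction setup as case~(2).
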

	
	\begin{proof}
		This is \cite[Lemma 4.1.6]{pan2022fontaine}.
	\end{proof}
	
    For the rest of this subsection, we outline the main steps of the proof of Theorem \ref{R=T}. Here we follow most notations as in \cite[Section 4]{pan2022fontaine}, and one can find more details there.
    
    Enlarging $\cO$ if necessary, we may assume $A \cong \F[[T]]$. Let $\mathfrak{q}$ be a nice prime of $(\mathbb{T}_{\xi})_{\mathfrak{m}_{\xi}}$. Let $B$ be the topological closure of the $\mathbb{F}$-algebra generated by all the entries of $\rho(\mathfrak{q})^o(G_{F,\Sigma})$ as a subring of $A$. By Chebotarev’s density Theorem, we may find a finite set of primes $T'$ disjoint with $\Sigma$ such
    that the entries of $\rho(\mathfrak{q})^o(\operatorname{Frob}_v), v \in T'$, topologically generate $B$. Let $P= T' \cup \Sigma$.
	
	\textbf{Step 1}. Describe universal deformation rings. 
	
	Write $$ R_{\textnormal{loc}}^{\{\xi_v\}}:=(\widehat{\bigotimes}_{v\in \Sigma_p}R^{\square}_v)\widehat{\otimes}_{\cO}(\widehat{\bigotimes}_{v\in \Sigma\setminus \Sigma_p}R^{\square,\xi_v}_v)\widehat{\otimes}_{\cO}(\widehat{\bigotimes}_{v\in T'}R^{\square,ur}_v)
	$$ for the completed tensor product of local framed deformation rings determined by primes in $P$, where the local deformation problem (in the sense of \cite[Definition 2.2.2]{Clozel_2008}) for $v \in \Sigma_p$ (resp. $v \in \Sigma\setminus \Sigma_p$, $v \in T'$) is the unrestricted (resp. level raising, unramified) one with determinant $\chi$. By the universal property, the Galois representation $\rho(\mathfrak{q})^o$ gives rise to a one-dimensional prime $\kq^{\{\xi_v\}}_{\loc}$ of $R^{\{\xi_v\}}_{\loc}$. 
	
	\begin{prop}{\cite[Proposition 4.2.3]{pan2022fontaine}}\label{4.2.3}
		The $\kq^{\{\xi_v\}}_{\loc}$-adic completion $\widehat{(R^{\{\xi_v\}}_{\loc})_{\kq^{\{\xi_v\}}_{\loc}}}$ of $(R^{\{\xi_v\}}_{\loc})_{\kq^{\{\xi_v\}}_{\loc}}$ is equidimensional of dimension $3[F:\Q]+3|P|$. The generic point of each irreducible component has characteristic zero. Moreover,
		\begin{enumerate}
			\item If all $\xi_v$ are non-trivial, then $\widehat{(R^{\{\xi_v\}}_{\loc})_{\kq^{\{\xi_v\}}_{\loc}}}$ is integral.
			\item In general, each minimal prime of $\widehat{(R^{\{\xi_v\}}_{\loc})_{\kq^{\{\xi_v\}}_{\loc}}}/(\varpi)$  contains a unique minimal prime of $\widehat{(R^{\{\xi_v\}}_{\loc})_{\kq^{\{\xi_v\}}_{\loc}}}$.
		\end{enumerate}
	\end{prop}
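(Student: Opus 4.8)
The plan is to deduce the statement from the structure of the individual local framed deformation rings, together with standard facts about completed tensor products over $\cO$ and about completions of Cohen--Macaulay local rings. First I would record that every local factor occurring in $R^{\{\xi_v\}}_{\loc}$ is $\cO$-flat, Cohen--Macaulay and equidimensional over $\cO$ of the expected relative dimension: for $v\in\Sigma_p$ this is Proposition~\ref{dim of loc framed der ring}, namely a local complete intersection of relative dimension $3([F_v:\Q_p]+1)=6$ since $p$ splits completely; for $v\in T'$ the unramified framed deformation ring $R^{\square,ur}_v$ with fixed determinant $\chi$ is formally smooth over $\cO$ of relative dimension $3$ (it parametrizes a matrix lifting $\bar\rho(\operatorname{Frob}_v)$ with fixed determinant, which is a smooth condition); and for $v\in\Sigma\setminus\Sigma_p$ the level-raising ring $R^{\square,\xi_v}_v$ is $\cO$-flat, Cohen--Macaulay and equidimensional of relative dimension $3$ by Taylor's analysis of these rings in the Ihara-avoidance argument (\cite{taylor2008automorphy}). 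Since a completed tensor product over $\cO$ of rings of this type is again $\cO$-flat, Cohen--Macaulay and equidimensional, $R^{\{\xi_v\}}_{\loc}$ has dimension
\[
1+\sum_{v\in P}(\text{relative dimension})=1+6[F:\Q]+3(|\Sigma\setminus\Sigma_p|+|T'|)=1+3[F:\Q]+3|P|,
\]
where I used $|\Sigma_p|=[F:\Q]$.

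Next, since $R^{\{\xi_v\}}_{\loc}$ is Cohen--Macaulay, hence catenary and equidimensional, and $\kq^{\{\xi_v\}}_{\loc}$ is one-dimensional, I obtain $\dim(R^{\{\xi_v\}}_{\loc})_{\kq^{\{\xi_v\}}_{\loc}}=3[F:\Q]+3|P|$; passing to the $\kq^{\{\xi_v\}}_{\loc}$-adic completion preserves Cohen--Macaulayness, equidimensionality and dimension, which gives the first two assertions. For the characteristic-zero claim, $\cO$-flatness makes $\varpi$ a non-zero-divisor on $R^{\{\xi_v\}}_{\loc}$, hence on its localization and on the completion, so $\varpi$ lies in no minimal prime and the generic point of each irreducible component of $\widehat{(R^{\{\xi_v\}}_{\loc})_{\kq^{\{\xi_v\}}_{\loc}}}$ has characteristic zero.

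For the two refined statements I would analyze $\widehat{(R^{\{\xi_v\}}_{\loc})_{\kq^{\{\xi_v\}}_{\loc}}}$ factor by factor, through the primes $\kq_v$ of $R^{\bullet}_v$ induced by $\rho(\kq)^o|_{G_{F_v}}$ for $v\in P$. If all $\xi_v$ are non-trivial, then each $R^{\bullet}_v$ is a normal $\cO$-flat domain --- Proposition~\ref{dim of loc framed der ring}(2) for $v\mid p$, a power series ring over $\cO$ for $v\in T'$, and Taylor's level-raising ring, which is irreducible for a non-trivial $\xi_v$, for $v\in\Sigma\setminus\Sigma_p$ --- and these are moreover geometrically integral, so $R^{\{\xi_v\}}_{\loc}$ is normal; then the excellent local ring $(R^{\{\xi_v\}}_{\loc})_{\kq^{\{\xi_v\}}_{\loc}}$ and its completion are normal, and a Noetherian normal local ring is a domain, which proves (1). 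In general the only obstruction to integrality comes from factors $R^{\square,\mathbf 1}_v$ with trivial $\xi_v$, and the relevant input of Ihara avoidance is that such a ring is $\cO$-flat and equidimensional, with each minimal prime of $R^{\square,\mathbf 1}_v/(\varpi)$ containing a unique minimal prime of $R^{\square,\mathbf 1}_v$. I would check that this property is preserved under $\widehat{\otimes}_{\cO}$ against a geometrically integral $\cO$-flat factor and under localization-and-completion at $\kq^{\{\xi_v\}}_{\loc}$, which yields (2).

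I expect the main obstacle to be the factor-by-factor reduction underlying (1) and (2): one must justify that completing the global ring $R^{\{\xi_v\}}_{\loc}$ at the \emph{non-maximal} prime $\kq^{\{\xi_v\}}_{\loc}$, whose residue field is one-dimensional over $\F$ rather than finite, is compatible with the completed-tensor-product decomposition into the $\widehat{(R^{\bullet}_v)_{\kq_v}}$, which forces one to track the excellence and the formal fibres of all rings in sight, and then to transport normality (for (1)) and the ``two components with a bijection mod $\varpi$'' structure (for (2)) through these operations. The dimension count and the characteristic-zero statement, by contrast, are routine once the Cohen--Macaulayness and $\cO$-flatness of the local factors are recorded.
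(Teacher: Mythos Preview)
Your plan matches the paper's treatment, which simply defers to Pan's proof with the single modification that Proposition~\ref{dim of loc framed der ring} supplies the normality and complete-intersection structure of $R_v^\square$ for $v\mid p$ when $p=3$. One point the paper explicitly flags that you leave implicit: condition~4) in Definition~\ref{nice}(2) --- that $\rho(\kq)^o|_{G_{F_v}}$ is the trivial lifting for every $v\in\Sigma\setminus\Sigma_p$ --- is used here; it forces the induced prime $\kq_v\subset R^{\square,\xi_v}_v$ at each such $v$ to be the maximal ideal, so that the level-raising factor appearing in your factor-by-factor analysis is $R^{\square,\xi_v}_v$ itself, and Taylor's explicit component description (irreducibility for non-trivial $\xi_v$, and the bijection of components with mod-$\varpi$ components for trivial $\xi_v$) applies directly without any further local computation at a non-closed point.
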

	
	\begin{rem}
		1) Here we need to use the last condition of the definition of a nice prime.
		
		2) To prove this proposition, we first need to know some properties of the local framed deformation rings. More precisely, Pan shows that $R_v^\square$ is a normal domain of dimension $7$ for any $v|p$ in \cite[Lemma 4.2.7]{pan2022fontaine}. However, Pan uses \cite[Corollary B.5]{Pa_k_nas_2013} and \cite[Corollary 3.6 \& 3.7]{hutan15} to depict the versal deformation ring in the case $(\bar{\rho}_b|_{G_{F_v}})^{\operatorname{ss}} \cong \eta \oplus \eta\omega$, and both results require the assumption $p>3$. To cover the case $p=3$, we just need to use Proposition \ref{dim of loc framed der ring} instead.
	\end{rem}
	
	\textbf{Step 2}. Find Taylor-Wiles primes.
	
	For a finite set $Q$ of finite places of $F$, we define $R^{\square_P,\{\xi_v\}}_{\bar{\rho}_b,Q}$ to be the universal object  pro-representing the functor $\mathrm{Def}^{\square_M,\{\xi_v\}}_{\bar{\rho}_b,Q}$ from $\mathfrak{U}_{\mathcal{O}}$ to the category of sets sending $R$ to the set of tuples $(\rho_R;\alpha_v)_{v\in M}$ modulo the equivalence relation $\sim_P$ where
	\begin{itemize}
		\item $\rho_R:G_{F,\Sigma\cup Q}\to\GL_2(R)$ is a lifting of $\bar{\rho}_b$ to $R$ with determinant $\chi$ such that $\tr (\rho_R)|_{I_{F_v}}=\xi_v+\xi_v^{-1}$ for any $v\in \Sigma \setminus \Sigma_p$.
		\item $\alpha_v\in 1+M_2(\km_R),v\in P$. Here $\km_R$ is the maximal ideal of $R$.
		\item $(\rho_R;\alpha_v)_{v\in P}\sim_P (\rho'_R;\alpha'_v)_{v\in P}$ if there exists an element $\beta\in 1+M_2(\km_R)$ with $\rho'_R=\beta\rho_R\beta^{-1},\alpha'_v=\beta\alpha_v$ for any $v\in P$.
	\end{itemize}
	By the universal property, there is a natural map $ R^{\{\xi_v\}}_{\loc}\to R^{\square_P,\{\xi_v\}}_{\bar{\rho}_b,Q}$.
	
	\begin{prop} \label{exttwp}{\cite[Proposition 4.3.1]{pan2022fontaine}}
		Let $r=\dim_{k(\kq)}H^1(G_{F,P},\ad^0\rho(\kq)(1))$. Then there exists an integer $C$ such that for any positive integer $N$, we can find a finite set of primes $Q_N$ disjoint with $P$ such that
		\begin{enumerate}
			\item $|Q_N|=r$.
			\item $N(v)\equiv 1\mod p^N$ for any $v\in Q_N$.
			\item $\rho(\kq)(\Frob_v)$ has distinct eigenvalues $\alpha_v,\beta_v$ with $\ell(A/(\alpha_v-\beta_v)^2)<C$ for any $v\in Q_N$.
			\item There exists an $A$-module $M_N$ with $\ell(M_N)<C$ such that 
			\[\kq_{b,Q_N}/(\kq_{b,Q_N}^2,\kq^{\{\xi_v\}}_{\loc})\otimes_B A \cong A^{\oplus g} \oplus M_N\] 
			as $A$-modules, where $g=r+|P|-[F:\Q]-1$.
			\item There exists a map $R_{\infty}^{\{\xi_v\}}:=R^{\{\xi_v\}}_{\loc}[[x_1,\cdots,x_g]]\to R^{\square_P,\{\xi_v\}}_{\bar{\rho}_b,Q_N}$ such that the images of $x_i$ are in $\kq_{b,Q_N}$ and $\kq_{b,Q_N}/(\kq_{b,Q_N}^2,\kq^{\{\xi_v\}}_{\loc},x_1,\cdots,x_g)$ is killed by some element $f\in B$ independent of $N$ with $\ell(A/(f))<C$. 
		\end{enumerate}
	\end{prop}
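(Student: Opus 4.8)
This is the Taylor--Wiles--Kisin construction of auxiliary primes, carried out with coefficients in the one-dimensional ring $A\cong\F[[T]]$ rather than a field; it is essentially \cite[Proposition 4.3.1]{pan2022fontaine}, the only place at which Pan's hypothesis on $\bar{\chi}|_{G_{F_v}}$ enters being the ring structure of the local framed deformation rings above $p$, which we now supply by Proposition \ref{dim of loc framed der ring}. The plan is: (i) express, after $\otimes_B k(\kq)$, the relative cotangent space at $\kq_{b,Q}$ of $R^{\{\xi_v\}}_{\loc}\to R^{\square_P,\{\xi_v\}}_{\bar{\rho}_b,Q}$ in terms of Galois cohomology of $\ad^0\rho(\kq)$, and compute its dimension by the generalized Greenberg--Wiles formula; (ii) build $Q_N$ by a Chebotarev argument realizing conditions (1)--(3) and simultaneously annihilating the free part of the dual Selmer $A$-module; (iii) read off (4) and (5), the bounded torsion being exactly what the integral (non-field) coefficients produce.

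For (i): let $M$ be a $B$-lattice in $\ad^0\rho(\kq)^o$. The standard presentation of $P$-framed global deformation rings over $R^{\{\xi_v\}}_{\loc}$ (as in the proof of Corollary \ref{completion}, cf.\ \cite[Lemma 4.4 \& 4.6]{Khare_2009}) identifies $\kq_{b,Q}/(\kq_{b,Q}^2,\kq^{\{\xi_v\}}_{\loc})\otimes_B k(\kq)$ with the $k(\kq)$-dual of a cohomology group of the type introduced in the Galois cohomology subsection, governed by $\ad^0\rho(\kq)$. The hypotheses of Proposition \ref{Greenberg-Wiles} hold because $\rho(\kq)$ is absolutely irreducible, and since $\rho(\kq)$ is odd one has $\sum_{v\mid\infty}\dim_{k(\kq)}H^0(G_{F_v},\ad^0\rho(\kq))=[F:\Q]$; noting also $(\ad^0\rho(\kq))'\otimes k(\kq)\cong\ad^0\rho(\kq)(1)$ as $p$ is odd, Proposition \ref{Greenberg-Wiles} and Theorem \ref{local char for} give
\[
\dim_{k(\kq)}H^1_{P}(G_{F,P},\ad^0\rho(\kq))=r+|P|-[F:\Q]-1=g .
\]
The usual Taylor--Wiles manipulation --- the local Euler characteristic at a regular semisimple $\Frob_v$ with eigenvalue ratio away from $\operatorname{Nm}(v)^{\pm1}$ contributing nothing --- then shows that, after imposing the unrestricted condition at a set $Q_N$ of $r$ primes chosen to annihilate the dual Selmer group over $k(\kq)$, the relative cotangent space at $\kq_{b,Q_N}$ remains of dimension $g$.

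For (ii): fix an $A$-basis $\psi_1,\dots,\psi_r$ of the free quotient of the dual Selmer $A$-module. For each $N$, apply the Chebotarev density theorem to the finite Galois extension $L_N/F$ generated by $F(\mu_{p^N})$, the fixed field of $\ker(\rho(\kq)^o\bmod T^N)$, and the fixed fields of $\psi_1,\dots,\psi_r\bmod T^N$, choosing the Frobenius class of $v$ so that: $\Frob_v$ is trivial on $F(\mu_{p^N})$, whence $\operatorname{Nm}(v)\equiv1\bmod p^N$; $\rho(\kq)(\Frob_v)$ is regular semisimple with $\ell(A/(\alpha_v-\beta_v)^2)$ bounded by a constant independent of $N$ and $v$ --- here the irreducibility of $\rho(\kq)$ and the third condition in (2) of Definition \ref{nice} (via Lemma \ref{nirred}) are used, and this is precisely why one must settle for condition (3) rather than distinctness of the eigenvalues of $\bar{\rho}_b(\Frob_v)$, which can fail (for instance when $\bar{\chi}$ has order prime to $p$, so that $\operatorname{Nm}(v)\equiv1\bmod p$ forces $\bar{\chi}(\Frob_v)=1$); and $\psi_j(\Frob_v)$ is nonzero in the ``ramified'' eigenline modulo a bounded power of $T$. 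Handling the $\psi_j$ one at a time --- and checking, as usual, that a prime killing $\psi_j$ does not revive the classes already disposed of --- produces $Q_N$ disjoint from $P$, with $|Q_N|=r$, satisfying (1)--(3), and for which the dual Selmer $A$-module with the unramified condition at $Q_N$ imposed is torsion of length bounded by a constant $C$ depending only on $r$, on the (fixed) torsion submodule of the dual Selmer $A$-module for the empty auxiliary set, and on the uniform bound for $\ell(A/(\alpha_v-\beta_v)^2)$.

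For (iii): having killed the free part of the dual Selmer module, (i) upgrades to the statement that $\kq_{b,Q_N}/(\kq_{b,Q_N}^2,\kq^{\{\xi_v\}}_{\loc})\otimes_B A$ is free of rank $g$ over $A$ modulo a torsion submodule $M_N$ with $\ell(M_N)<C$; since $A$ is a DVR this splits as $A^{\oplus g}\oplus M_N$, which is (4). Choosing $x_1,\dots,x_g\in\kq_{b,Q_N}$ lifting an $A$-basis of the free part defines $R_\infty^{\{\xi_v\}}=R^{\{\xi_v\}}_{\loc}[[x_1,\dots,x_g]]\to R^{\square_P,\{\xi_v\}}_{\bar{\rho}_b,Q_N}$, and then $\kq_{b,Q_N}/(\kq_{b,Q_N}^2,\kq^{\{\xi_v\}}_{\loc},x_1,\dots,x_g)\cong M_N$ is killed by an element $f\in B$ with $\ell(A/(f))<C$, which is (5). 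The step I expect to be the genuine obstacle is the Chebotarev construction in (ii): because $\bar{\rho}_b$ is reducible one cannot in general separate the eigenvalues of $\rho(\kq)(\Frob_v)$ modulo $\mathfrak{m}_A$, so one must quantify both the eigenvalue separation $\ell(A/(\alpha_v-\beta_v)^2)$ and the loss of primitivity of $\psi_j(\Frob_v)$ and prove these stay bounded as $N\to\infty$; that uniformity is exactly the content of the constant $C$, and it is what forces the dihedral condition of Definition \ref{nice}. Everything else is the bookkeeping of \cite[Section 4]{pan2022fontaine} together with Proposition \ref{dim of loc framed der ring}.
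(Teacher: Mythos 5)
The paper does not reprove this proposition: it is imported verbatim from \cite[Proposition 4.3.1]{pan2022fontaine}, with the only accompanying commentary being the remark that Pan's proof uses the dihedral condition (3) of Definition \ref{nice}. Your sketch is a fair reconstruction of the general shape of Pan's Taylor--Wiles argument over the DVR $A$: you correctly identify that the constant $C$ tracks both the eigenvalue separation $\ell(A/(\alpha_v-\beta_v)^2)$ and the torsion of the dual Selmer $A$-module, and you rightly place the dihedral condition at the Chebotarev step.

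Two citations in your write-up should be corrected, though. First, your opening claim that Proposition \ref{dim of loc framed der ring} is ``the only place'' Pan's hypothesis on $\bar\chi|_{G_{F_v}}$ enters is misattributed to \emph{this} proposition: the normality of the local framed deformation rings above $p$ is what feeds Proposition \ref{4.2.3} (see the remark after it), whereas Proposition \ref{exttwp} is a Galois cohomology and Chebotarev statement that does not invoke the explicit structure of those rings. Second, you appeal to Proposition \ref{Greenberg-Wiles} to compute $\dim_{k(\kq)}H^1_P(G_{F,P},\ad^0\rho(\kq))$, but that proposition rests on Theorem \ref{global char for}, which is stated only for $K$ a finite extension of $\Q_p$; a nice prime $\kq$ contains $p$, so here $k(\kq)\cong\F((T))$ has characteristic $p$. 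The ``generalized Greenberg--Wiles'' formula of this paper is designed for the characteristic-zero one-dimensional primes appearing in Corollary \ref{crucial}, not for this situation. What Pan actually uses is the classical Greenberg--Wiles formula for finite $\ad^0$ modules applied integrally over $A$ (truncating modulo powers of $\mathfrak m_A$), with the bounded error terms producing $C$. Your final formula $g=r+|P|-[F:\Q]-1$ is correct, but it is justified by a lemma that does not cover the coefficient field at hand.
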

	
	\begin{rem}
		1) The proof of the proposition needs to use the third condition of the definition of a nice prime.
		
		2) With Taylor-Wiles primes, we can define the completed homology with auxiliary levels. See \cite[Section 4.4]{pan2022fontaine} for precise definitions.
		
	\end{rem}
	
	\textbf{Step 3}. Modify Hecke modules.
	
	Define:
	
	\begin{itemize}
		\item $\Delta_\infty:=\Z_p^{\oplus r}$, where $r$ is defined in Proposition \ref{exttwp}.
		\item $\cO_\infty=\cO[[y_1,\cdots,y_{4|P|-1}]]$ with maximal ideal $\kb$ and prime ideal $\kb_1=(y_1,\cdots,y_{4|P|-1})$.
		\item $S_\infty=\cO_\infty[[\Delta_\infty]]\cong \cO_{\infty}[[s_1,\cdots,s_r]]$. This is a local $\cO_\infty$-algebra with maximal ideal $\ka$. Let $\ka_0=\ker(S_\infty\to\cO_\infty)=(s_1,\cdots,s_r)$ be the augmentation ideal, and $\ka_1=(\ka_0,\kb_1)=(y_1,\cdots,y_{4|P|-1},s_1,\cdots,s_r)$.
		\item $S_\infty'\subseteq\cO_\infty[[\Delta_\infty]]$ is the closure (under the profinite topology) of the $\cO_\infty$-subalgebra generated by all elements of the form $g+g^{-1}$ with $g=(0,\cdots,0,a,0,\cdots,0)\in\Delta_\infty$ for some $a\in\Z_p$. This is a regular local $\cO_\infty$-algebra and $S_\infty$ is a finite free $S_\infty'$-algebra. Write $\ka_0'=\ka_0\cap S_\infty',\ka_1'=\ka_1\cap S_\infty'$. We may find $r$ elements $s_1',\cdots,s_r'$ that generate $\ka_0'$ and $S_\infty'\cong\cO_\infty[[s_1',\cdots,s_r']]$.
		\item $\mm^{\{\xi_v\}}:=\Hom_{\kC_{D_p^\times,\psi}(\cO)}(P_{\kB_\km},M_{\psi, \xi}(U^p)_\km)$, and $\mm_0^{\{\xi_v\}}:=$ the $\kq$-adic completion of $(\mm^{\{\xi_v\}})_\kq$ as a $\T_{\psi,\xi}(U^p)_\kq$-module.
	\end{itemize} 
	
	In general, the (patched) completed homology is not finitely generated module over the big Hecke algebra as there is an action of $\GL_2(\Q_p)$. Thus, it is necessary to modify it (using \cite[Proposition 4.4.4]{pan2022fontaine}). After taking the inverse limit, we denote the modified Hecke module by $\mm_{\infty}^{\{\xi_v\}}$ (see \cite[Definition 4.7.4]{pan2022fontaine} for the precise definition).
	
	\begin{prop} {\cite[Proposition 4.7.5]{pan2022fontaine}} 
		Let $\kq^{\{\xi_v\}}_{\infty}$ be the prime ideal of $R_{\infty}^{\{\xi_v\}}$ generated by $\kq^{\{\xi_v\}}_{\loc},x_1,\cdots,x_g$. 
		\begin{enumerate}
			\item $\mm^{\{\xi_v\}}_\infty$ is a finitely generated $\widehat{(R^{\{\xi_v\}}_\infty)_{\kq^{\{\xi_v\}}_{\infty}}}$-module.
			\item $\mm^{\{\xi_v\}}_\infty$ is a flat $S_\infty$-module. 
			\item We have $\mm^{\{\xi_v\}}_\infty/\ka_1\mm^{\{\xi_v\}}_\infty\cong (\mm_0^{\{\xi_v\}})^{\oplus 2^r}.$
		\end{enumerate}
	\end{prop}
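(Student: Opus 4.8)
The plan is to run the Taylor--Wiles--Kisin patching argument of \cite[Section 4.4--4.7]{pan2022fontaine} essentially unchanged, the point being that every ingredient it requires is now available for $p=3$ as well: the finiteness of the Hecke module (Corollary \ref{Hecke module}) and the Pašk\=unas-theoretic local--global compatibility (Theorem \ref{lgc}) replace the corresponding results of \cite{Pa_k_nas_2013}, while Proposition \ref{4.2.3} --- which now rests on the local deformation-ring facts recalled in Proposition \ref{dim of loc framed der ring} and Proposition \ref{loc def ring at 3} rather than on \cite[Corollary B.5]{Pa_k_nas_2013} --- controls the local framed ring. So there is essentially nothing new to prove, and I will only indicate how the three assertions drop out of the patched data.

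First I would recall that for each $N$ the modification procedure of \cite[Proposition 4.4.4]{pan2022fontaine}, applied to the completed homology at auxiliary level $Q_N$ coming from Proposition \ref{exttwp}, produces a module over $R^{\square_P,\{\xi_v\}}_{\bar{\rho}_b,Q_N}$ that is finitely generated with a number of generators bounded independently of $N$ (this uses Corollary \ref{Hecke module}), that carries a free action of the group algebra of the diamond operators at the primes of $Q_N$, and that becomes $\mm_0^{\{\xi_v\}}$ after killing those diamond operators and the framing variables. Patching along a cofinal family of $N$ --- via the maps $R_\infty^{\{\xi_v\}}=R^{\{\xi_v\}}_{\loc}[[x_1,\cdots,x_g]]\to R^{\square_P,\{\xi_v\}}_{\bar{\rho}_b,Q_N}$ of Proposition \ref{exttwp}(5) together with the $S_\infty$-action --- then yields $\mm_\infty^{\{\xi_v\}}$. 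The uniform generator bound survives the limit, so after localizing and completing at $\kq^{\{\xi_v\}}_\infty$ one gets assertion (1); flatness over $S_\infty$, assertion (2), follows because $\mm_\infty^{\{\xi_v\}}$ is an inverse limit of modules that are finite free over the finite quotients of $S_\infty$ through which the finite-level diamond operators act, by the usual topological Nakayama / Mittag-Leffler argument.

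For assertion (3), quotienting $\mm_\infty^{\{\xi_v\}}$ by $\kb_1=(y_1,\cdots,y_{4|P|-1})$ undoes the framing and quotienting by $\ka_0=(s_1,\cdots,s_r)$ kills the patched diamond operators, so one recovers $\mm_0^{\{\xi_v\}}$ up to the discrepancy between $S_\infty$ and the subalgebra $S_\infty'$ generated by the elements $g+g^{-1}$: the quaternionic completed homology only sees the $S_\infty'$-action at Taylor--Wiles primes, and since $S_\infty$ is finite free of rank $2^r$ over $S_\infty'$ this produces the extra multiplicity $2^r$, exactly as in \cite[Section 4.7]{pan2022fontaine}. I expect this last piece of bookkeeping to be the only point needing any care; everything else is a formal consequence of the patching machinery, whose hypotheses are met for $p=3$ thanks to Theorem \ref{lgc} and Proposition \ref{4.2.3}.
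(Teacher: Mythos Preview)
Your proposal is correct and matches the paper's own approach: the paper's proof is simply the remark that one runs Pan's patching argument verbatim, replacing \cite[Theorem 3.5.3]{pan2022fontaine} and \cite[Corollary 3.5.8]{pan2022fontaine} by Theorem \ref{lgc} and Corollary \ref{Hecke module}. Your sketch of how (1)--(3) fall out of the patched data is accurate; the only superfluous ingredient you cite is Proposition \ref{4.2.3}, which is used later in Step 4 (the dimension count and Taylor's trick) rather than in the proof of this particular proposition.
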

	
	\begin{rem}
		To prove this proposition, we just need to replace \cite[Theorem 3.5.3]{pan2022fontaine} and \cite[Corollary 3.5.8]{pan2022fontaine} by Theorem \ref{lgc} and Corollary \ref{Hecke module} in Pan's proof.
	\end{rem}
	
	\textbf{Step 4}. Use Taylor's trick.
	
	The strategy of the proof of Theorem \ref{R=T} follows from \cite[Theorem 4.1]{taylor2008automorphy}.
	
	At first, using Corollary \ref{Hecke module}, we know that $\textnormal{m}_0^{\{\xi_v\}}$ is a finitely generated faithful $\widehat{(\mathbb{T}_{\xi})_{\mathfrak{q}}}$-module. Then by Corollary \ref{dim of hecke}, we have $\operatorname{dim}_{\widehat{(\mathbb{T}_{\xi})_{\mathfrak{q}}}} (\textnormal{m}_0^{\{\xi_v\}})= \operatorname{dim} \widehat{(\mathbb{T}_{\xi})_{\mathfrak{q}}} \ge 2[F:\mathbb{Q}]$ since $\mathfrak{q}$ is of dimension $1$.
	
	Let $(R^{\{\xi_v\}})'$ be $\widehat{(R_{\infty}^{\{\xi_v\}})_{\mathfrak{q}_{\infty}^{\{\xi_v\}}}} \otimes_{S_\infty'} S_\infty$.	We know that $\widehat{(\mathbb{T}_{\xi})_{\mathfrak{q}}} $ is a natural quotient of $(R^{\{\xi_v\}})'$ by mapping $ S_\infty$ to $S_\infty/ \mathfrak{a}_1=\mathcal{O}$. Note that $ y_1, ... , y_{4|P|-1}, s_1, ... , s_r \in \mathfrak{a}_1$ form a regular sequence of $\textnormal{m}_\infty^{\{\xi_v\}}$. Then we can conclude that $$\operatorname{dim}_{(R^{\{\xi_v\}})'}(\textnormal{m}_\infty^{\{\xi_v\}}) \ge 4|P|-1 +r +\operatorname{dim} \widehat{(\mathbb{T}_{\xi})_{\mathfrak{q}}} \ge 4|P|-1 +r+2[F:\mathbb{Q}].$$
	
	Using Proposition \ref{4.2.3}, we can conclude that $\widehat{(R_{\infty}^{\{\xi_v\}})_{\mathfrak{q}_{\infty}^{\{\xi_v\}}}}$ is equidimensional of dimension $ 4|P|-1 +r+2[F:\mathbb{Q}]$ (also see \cite[Section 4.2, Lemma 4.8.2]{pan2022fontaine}). As $(R^{\{\xi_v\}})'$ is finite free over $\widehat{(R_{\infty}^{\{\xi_v\}})_{\mathfrak{q}_{\infty}^{\{\xi_v\}}}}$, we have $$\operatorname{dim}_{(R^{\{\xi_v\}})'}(\textnormal{m}_\infty^{\{\xi_v\}})=\operatorname{dim}_{\widehat{(R_{\infty}^{\{\xi_v\}})_{\mathfrak{q}_{\infty}^{\{\xi_v\}}}}}(\textnormal{m}_\infty^{\{\xi_v\}})=4|P|-1 +r+2[F:\mathbb{Q}]= \operatorname{dim} \widehat{(R_{\infty}^{\{\xi_v\}})_{\mathfrak{q}_{\infty}^{\{\xi_v\}}}}.$$
	
	If the characters in $\{\xi_v\}$ are all non-trivial, using Proposition \ref{4.2.3} again, $\widehat{(R_{\infty}^{\{\xi_v\}})_{\mathfrak{q}_{\infty}^{\{\xi_v\}}}}$ is irreducible. From the previous equations, we know that $\textnormal{m}_\infty^{\{\xi_v\}} $ has full support on $\widehat{(R_{\infty}^{\{\xi_v\}})_{\mathfrak{q}_{\infty}^{\{\xi_v\}}}}$. By \cite[Lemma 4.8.4]{pan2022fontaine}, Theorem \ref{R=T} holds in this case.
	
	In general, we suppose that $\xi'_v:k(v)\to \cO^\times$ are non-trivial characters of $p$-power order for $v\notin \Sigma \setminus \Sigma_p$. We have natural isomorphisms $\widehat{(R^{\{\xi'_v\}}_\infty)_{\kq^{\{\xi'_v\}}_{\infty}}}/(\varpi)\cong \widehat{(R^{\{\xi_v\}}_\infty)_{\kq^{\{\xi_v\}}_{\infty}}}/(\varpi)$ and $\mm^{\{\xi'_v\}}_\infty/\varpi \mm^{\{\xi'_v\}}_\infty\cong \mm^{\{\xi_v\}}_\infty/\varpi \mm^{\{\xi_v\}}_\infty$. Then Theorem \ref{R=T} follows from Proposition \ref{4.2.3} and the previous case.
	
	\subsection{The ordinary case} \label{ord section}In this subsection, we recall the modularity result in the ordinary case proved in \cite[Section 5]{pan2022fontaine}, which will be devoted to finding enough pro-modular primes in the non-ordinary case.
	
	Let $F$ be an abelian totally real field in which $p$ is unramified. Let $\Sigma$ be a finite set of finite places containing all places above $p$. 
	
	\begin{defn} \label{psiord}
		Let $T:G_{F}\to R$ be a two-dimensional pseudo-representation over some ring $R$ such that for some place $v$, $T|_{G_{F_v}}=\psi_1+\psi_2$ is a sum of two characters. We say that $T$ is $\psi_1$-\textit{ordinary} if for any $\sigma,\tau\in G_{F_v},\eta\in G_F$,
		\[T(\sigma\tau\eta)-\psi_1(\sigma)T(\tau\eta)-\psi_2(\tau)T(\sigma\eta)+\psi_1(\sigma)\psi_2(\tau)T(\eta)=0.\] 
	\end{defn}
	
	\begin{lem} \label{ordpseudo}
		Let $\rho:G_F\to\GL_2(R)$ be a two-dimensional irreducible representation over some ring $R$ with trace $T$ such that $T|_{G_{F_v}}=\psi_1+\psi_2$, a sum of two characters. Suppose $\rho|_{G_{F_v}}$ has the form $\begin{pmatrix}\psi_1&*\\0& \psi_2\end{pmatrix}$. Then $T$ is $\psi_1$-ordinary. Conversely, if $R$ is a field and $T$ is $\psi_1$-ordinary, then after possibly enlarging $R$,
		\[\rho|_{G_{F_v}}\cong\begin{pmatrix}\psi_1&*\\0& \psi_2\end{pmatrix}.\]
	\end{lem}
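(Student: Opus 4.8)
The plan is to reduce both implications to a single matrix identity. First I would rewrite the defining relation of Definition \ref{psiord}: for $\sigma,\tau\in G_{F_v}$ and $\eta\in G_F$, setting $S=\rho(\sigma),U=\rho(\tau),H=\rho(\eta)$, bilinearity of the trace gives
\[
T(\sigma\tau\eta)-\psi_1(\sigma)T(\tau\eta)-\psi_2(\tau)T(\sigma\eta)+\psi_1(\sigma)\psi_2(\tau)T(\eta)=\tr\bigl(M_{\sigma,\tau}\,H\bigr),
\]
where $M_{\sigma,\tau}:=\bigl(S-\psi_1(\sigma)I\bigr)\bigl(U-\psi_2(\tau)I\bigr)=\bigl(\rho(\sigma)-\psi_1(\sigma)I\bigr)\bigl(\rho(\tau)-\psi_2(\tau)I\bigr)$; indeed $SU-\psi_1(\sigma)U-\psi_2(\tau)S+\psi_1(\sigma)\psi_2(\tau)I=S(U-\psi_2(\tau)I)-\psi_1(\sigma)(U-\psi_2(\tau)I)$. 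Thus $T$ is $\psi_1$-ordinary if and only if $\tr\bigl(M_{\sigma,\tau}\rho(\eta)\bigr)=0$ for all $\sigma,\tau\in G_{F_v}$ and all $\eta\in G_F$.

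For the first assertion, assume $\rho|_{G_{F_v}}\cong\begin{pmatrix}\psi_1&*\\0&\psi_2\end{pmatrix}$ and fix a basis $e_1,e_2$ realising this shape, with $L=Re_1$. Then for $\sigma\in G_{F_v}$ the matrix $\rho(\sigma)-\psi_1(\sigma)I$ has zero first column, hence annihilates $L$; and $\rho(\tau)-\psi_2(\tau)I$ has zero second row, hence its image lies in $L$. Therefore $M_{\sigma,\tau}=0$, and a fortiori $\tr(M_{\sigma,\tau}\rho(\eta))=0$, so $T$ is $\psi_1$-ordinary. (This direction uses nothing beyond the shape of $\rho|_{G_{F_v}}$.)

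For the converse, suppose $R$ is a field and $T$ is $\psi_1$-ordinary. After a finite extension of $R$ I may assume $\rho$ is absolutely irreducible: if instead $\rho$ becomes a direct sum of two characters after base change, then restricting to $G_{F_v}$ and comparing traces shows one summand restricts to $\psi_1$, which already yields a $G_{F_v}$-stable line with $G_{F_v}$ acting by $\psi_1$. Assuming $\rho$ absolutely irreducible, Burnside's theorem shows the $\rho(\eta)$, $\eta\in G_F$, span $M_2(R)$, and non-degeneracy of the trace pairing turns $\psi_1$-ordinarity into
\[
\bigl(\rho(\sigma)-\psi_1(\sigma)I\bigr)\bigl(\rho(\tau)-\psi_2(\tau)I\bigr)=0\qquad\text{for all }\sigma,\tau\in G_{F_v}.
\]
If every $\rho(\sigma)$ with $\sigma\in G_{F_v}$ is scalar, then $\tr\rho|_{G_{F_v}}=\psi_1+\psi_2$ together with $\det\rho|_{G_{F_v}}=\psi_1\psi_2$ force $\psi_1=\psi_2$ on $G_{F_v}$ and $\rho|_{G_{F_v}}=\psi_1\cdot I$, of the required form. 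Otherwise choose $\sigma_0\in G_{F_v}$ with $\rho(\sigma_0)$ non-scalar; since $\psi_1(\sigma_0)$ is an eigenvalue of $\rho(\sigma_0)$, the operator $X:=\rho(\sigma_0)-\psi_1(\sigma_0)I$ has rank exactly $1$, so $L:=\Ker X$ is a line. The identity $X\bigl(\rho(\tau)-\psi_2(\tau)I\bigr)=0$ gives $\im\bigl(\rho(\tau)-\psi_2(\tau)I\bigr)\subseteq L$ for every $\tau\in G_{F_v}$, equivalently $\rho(\tau)$ preserves $L$ and acts on $R^2/L$ by $\psi_2(\tau)$. Hence $L$ is $G_{F_v}$-stable with quotient action $\psi_2$, and comparing traces once more, the action on $L$ is $\psi_1$; a basis with first vector in $L$ puts $\rho|_{G_{F_v}}$ in the shape $\begin{pmatrix}\psi_1&*\\0&\psi_2\end{pmatrix}$.

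The only step that is not purely formal is the implication "$\tr(M_{\sigma,\tau}\rho(\eta))=0$ for all $\eta\in G_F$ $\Rightarrow$ $M_{\sigma,\tau}=0$", which genuinely needs absolute irreducibility and hence the harmless enlargement of $R$ allowed in the statement; I would also check that the degenerate case $\psi_1=\psi_2$ is correctly subsumed — it is, since there $\Ker(\rho(\sigma_0)-\psi_1(\sigma_0)I)$ is still the unique $\rho(\sigma_0)$-stable line and the argument runs verbatim. Everything else is a short computation.
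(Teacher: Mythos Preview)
Your argument is correct. The paper does not supply its own proof here but simply cites Pan's Lemma 5.3.2; your self-contained reduction of $\psi_1$-ordinarity to the matrix identity $(\rho(\sigma)-\psi_1(\sigma)I)(\rho(\tau)-\psi_2(\tau)I)=0$ via Burnside's theorem and non-degeneracy of the trace pairing is a clean and standard way to handle both directions.
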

	
	\begin{proof}
		See \cite[Lemma 5.3.2]{pan2022fontaine}.
	\end{proof}
	
	Let $\chi:G_{F,\Sigma}\to \cO^\times$ be a continuous totally odd character such that
	\begin{itemize}
		\item $\bar{\chi}$, the reduction of $\chi$ modulo $\varpi$, can be extended to a character of $G_\Q$.
		\item $\bar{\chi}|_{G_{F_v}}\neq \mathbf{1}$ for any $v|p$.
		\item $\chi|_{G_{F_v}}$ is de Rham for any $v|p$. 
	\end{itemize}
	
	Let $R^{\ps,\ord}$ be the universal pseudo-deformation ring  which pro-represents the functor from $\mathfrak{U}_{\mathcal{O}}$ to the category of sets sending $R$ to the set of two-dimensional pseudo-representations $T$ of $G_{F,\Sigma}$ over $R$  such that $T$ is a lifting of $1+\bar\chi$ with determinant $\chi$ and $T|_{G_{F_v}}$ is reducible for any $v|p$, i.e. $y(\sigma,\tau)=0$ for any $\sigma,\tau\in G_{F_v},v|p$. Clearly, there is a natural surjection $R^{\ps} \twoheadrightarrow R^{\ps,\ord}$. Denote the universal pseudo-representation by $T^{univ}:G_{F,\Sigma}\to R^{\ps,\ord}$. 
	
	As $\bar{\chi}|_{G_{F_v}}$ is non-trivial, we may write $T^{univ}|_{G_{F_v}}=\psi^{univ}_{v,1}+\psi^{univ}_{v,2}$ for some characters $\psi^{univ}_{v,1},\psi^{univ}_{v,2}:G_{F_v}\to (R^{\ps,\ord})^\times$ which are liftings of $\mathbf{1},\bar{\chi}|_{G_{F_v}}$, respectively. By local class field theory, $\psi^{univ}_{v,1}|_{I_{F_v}}$ induces a homomorphism $\cO[[O_{F_v}^\times(p)]]\to R^{\ps,\ord}$ for any $v|p$. Here $O_{F_v}^\times(p)$ denotes the $p$-adic completion of $O_{F_v}^\times$. Taking the completed tensor product over $\cO$ for all $v|p$, we get a map from the Iwasawa algebra to the universal ordinary pseudo-deformation ring:
	\[\Lambda_F:=\widehat{\bigotimes}_{v|p}\cO[[O_{F_v}^\times(p)]]\to R^{\ps,\ord}.\]
	
	\begin{thm} \label{ordinary case}{\cite[Theorem 5.1.1]{pan2022fontaine}}
		Under the assumptions for $F,\chi$ as above, we have
		\begin{enumerate}
			\item $R^{\ps,\ord}$ is a finite $\Lambda_F$-algebra.
			\item For any maximal ideal $\kp$ of $R^{\ps,\ord}[\frac{1}{p}]$, we denote the associated semi-simple representation $G_{F,\Sigma}\to\GL_2(k(\kp))$ by $\rho(\kp)$ in the sense of Definition \ref{construction}. Assume that
			\begin{itemize}
				\item $\rho(\kp)$ is irreducible.
				\item For any $v|p$, $\rho(\kp)|_{G_{F_v}}\cong\begin{pmatrix}\psi_{v,1} & *\\ 0 & \psi_{v,2}\end{pmatrix}$ such that $\psi_{v,1}$ is de Rham and has strictly less Hodge-Tate number than $\psi_{v,2}$ for any embedding $F_v\hookrightarrow \overbar{\Q_p}$.
			\end{itemize}
			Then $\rho(\kp)$ comes from a twist of a Hilbert modular form.
		\end{enumerate}
	\end{thm}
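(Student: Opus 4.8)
The plan is to read Theorem~\ref{ordinary case} as a residually reducible \emph{ordinary} modularity statement and to prove it by interfacing Hida's theory of ordinary Hilbert modular forms with a Skinner--Wiles type $R=\mathbb{T}$ theorem, while carrying the extra bookkeeping forced on us by working with pseudo-deformations of the reducible residual pseudo-character $1+\bar\chi$. The hypotheses on $\chi$ are exactly what the Skinner--Wiles machinery needs: the condition $\bar\chi|_{G_{F_v}}\neq\mathbf 1$ for $v\mid p$ guarantees that $(1+\bar\chi)|_{G_{F_v}}$ is multiplicity free, so that it canonically decomposes as $\mathbf 1\oplus\bar\chi|_{G_{F_v}}$ and one may speak of ``the'' ordinary line and ``the'' ordinary quotient; the assumption that $\bar\chi$ extends to $G_{\mathbb Q}$ (with $F$ abelian) supplies the auxiliary dihedral/CM input and the congruence-module control used in that argument; and total oddness is the usual modularity prerequisite.

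For part (1), I would first reduce the finiteness of $R^{\ps,\ord}$ over $\Lambda_F$ to the assertion that the special fibre $R^{\ps,\ord}/\km_{\Lambda_F}R^{\ps,\ord}$ is a finite $\F$-algebra; since $R^{\ps,\ord}$ is a complete Noetherian local $\Lambda_F$-algebra this is the topological Nakayama lemma. Killing $\km_{\Lambda_F}$ forces the restrictions $\psi^{univ}_{v,1}|_{I_{F_v}}$ to be trivial on the pro-$p$ part of $O_{F_v}^\times$, i.e.\ it fixes the weight at $p$; together with the reducible-at-$p$ condition this embeds the mod-$\varpi$ tangent space of the relevant pseudo-deformation functor into a product of the finite-dimensional Galois cohomology groups $H^1(G_{F,\Sigma},\F(\bar\chi^{\pm1}))$ and $H^1(G_{F,\Sigma},\F)$, using Mazur's finiteness condition $\Phi_p$ and the structure theory of reducible pseudo-deformations recalled in Section~2, so that $R^{\ps,\ord}/\km_{\Lambda_F}$ is at least Noetherian. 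To upgrade Noetherian to finite, one compares $R^{\ps,\ord}$ with Hida's big ordinary Hilbert modular Hecke algebra $\mathbb{T}^{\ord}$ of the appropriate tame level: $\mathbb{T}^{\ord}$ is finite and torsion-free over $\Lambda_F$ by Hida's control theorem, there is a natural surjection $R^{\ps,\ord}\twoheadrightarrow\mathbb{T}^{\ord}$ by Chebotarev (the target being generated by the Hecke operators $T_v=T(\Frob_v)$), and a Taylor--Wiles patching argument in the ordinary residually reducible setting (in the style of Wiles, Fujiwara and Skinner--Wiles) shows this surjection has nilpotent kernel, whence $R^{\ps,\ord}$ is a finite $\Lambda_F$-algebra.

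For part (2), given a maximal ideal $\kp$ of $R^{\ps,\ord}[\tfrac1p]$ with the stated properties, I would form the semi-simple representation $\rho(\kp):G_{F,\Sigma}\to\GL_2(k(\kp))$ of Definition~\ref{construction}: it is irreducible by hypothesis, and for each $v\mid p$ it is of the shape $\begin{pmatrix}\psi_{v,1}&*\\0&\psi_{v,2}\end{pmatrix}$ with $\psi_{v,1}$ de Rham of strictly smaller Hodge--Tate weight than $\psi_{v,2}$, so $\rho(\kp)$ is nearly ordinary and de Rham of regular weight. Its residual representation is a non-split extension of characters reducing $1+\bar\chi$, which is \emph{distinguished} at every $v\mid p$ precisely because $\bar\chi|_{G_{F_v}}\neq\mathbf 1$; together with the extension of $\bar\chi$ to $G_{\mathbb Q}$ and the oddness of $\chi$ this places us in the hypotheses of the residually reducible ordinary modularity theorems of Skinner--Wiles. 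Invoking those results (after a solvable base change, permitted since $F$ is abelian, and after a twist to normalise the central character) gives that $\rho(\kp)$ is automorphic; the ordinariness together with the regular weight then identify it, via Hida's classicality criterion, with a twist of a classical ordinary Hilbert modular eigenform, which is the claim.

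The main obstacle, and the point requiring the most care, is the interface between pseudo-deformations and genuine deformations in the residually reducible case: the global pseudo-character need not become reducible, so one cannot run Skinner--Wiles verbatim for a single residual representation and must instead organise the Taylor--Wiles system and the local conditions (at $p$ and at the level-raising primes) directly on pseudo-deformation rings, all the while checking that the Skinner--Wiles hypotheses --- distinguishedness at $p$, the descent of $\bar\chi$ from $G_{\mathbb Q}$, and the non-degeneracy of the pertinent congruence modules --- hold uniformly along the irreducible component of $\Spec R^{\ps,\ord}$ through $\kp$. Establishing the nilpotence of the kernel of $R^{\ps,\ord}\twoheadrightarrow\mathbb{T}^{\ord}$ in this pseudo-deformation framework (rather than in the genuine-deformation framework treated by Skinner--Wiles) is where the real work lies.
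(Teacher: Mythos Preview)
The paper does not prove this theorem at all: it is quoted verbatim from \cite[Theorem 5.1.1]{pan2022fontaine} and used as a black box. There is therefore no ``paper's own proof'' to compare against; the author simply imports Pan's result.

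That said, your sketch is in the right general spirit for what Pan actually does --- the ordinary case is indeed handled by a Skinner--Wiles type argument combined with Hida's control theorem for the nearly-ordinary Hecke algebra --- but your treatment of part (1) has a genuine gap. Bounding the tangent space of $R^{\ps,\ord}/\km_{\Lambda_F}R^{\ps,\ord}$ by Galois cohomology groups only shows that this quotient is a Noetherian local $\F$-algebra, not that it is Artinian; you acknowledge this and propose to close the gap via the surjection $R^{\ps,\ord}\twoheadrightarrow\mathbb{T}^{\ord}$ having nilpotent kernel. But establishing that nilpotence \emph{is} the entire content of the theorem, and it is not something one can wave through by invoking ``Taylor--Wiles patching in the style of Skinner--Wiles''. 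In Pan's argument the finiteness is obtained only after a carefully chosen solvable base change that makes $[F:\mathbb{Q}]$ large relative to the dimension of the relevant Selmer group, and this relies on Washington's theorem bounding the $p$-part of class groups in cyclotomic $\mathbb{Z}_\ell$-towers (cf.\ the remark following the irreducible case in Section~5 of the present paper). Without this step --- or an equivalent mechanism for controlling the reducible locus and the contribution of $H^1(G_{F,\Sigma},\F(\bar\chi^{-1}))$ --- your patching system will not see enough of $\Spec R^{\ps,\ord}$ to force the kernel to be nilpotent.

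For part (2) your outline is essentially correct: once the finiteness is in hand, the regular de Rham ordinary points are classical by Hida's classicality criterion, and their modularity follows from the Skinner--Wiles theorem after the appropriate twist and base change.
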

	
	\begin{defn}
		Let $\kq$ be a prime ideal of $R^{\ps,\ord}$. If $ \kq$ is an inverse image of a prime $\kp$ of $R^{\ps,\ord}[\frac{1}{p}]$ satisfying the assumptions in (2) of the previous theorem via the natural map $R^{\ps,\ord} \to  R^{\ps,\ord}[\frac{1}{p}]$, then we call it a \textit{regular de Rham} prime.
	\end{defn}
	
	The following lemma is an essential application of Pan's finiteness result.
	
	\begin{lem}\label{find nice}
		Suppose that $p$ splits completely in $F$. Let $C$ be an irreducible component of $\Spec R^{\ps,\ord}$, and here we also view $C$ as a subset of $\Spec R^{\ps}$. Assume that $C$ is of dimension $[F:\mathbb{Q}]+1$. 
		
		1) The subset $C^{\textnormal{reg}}$ consisting of regular de Rham primes of $C$ is dense in $C$. In other words, every prime in $C$ is pro-modular, which implies that Assumption \ref{assumption} holds.
		
		2) If further $[F:\mathbb{Q}] \ge 7|\Sigma \setminus \Sigma_p| +2 $, then there exists a nice prime in $C$. 
	\end{lem}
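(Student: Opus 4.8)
The plan is to treat the two parts separately: part~1 is a density argument resting on Pan's ordinary finiteness theorem, and part~2 is a dimension count. For part~1, recall that $R^{\ps,\ord}$ is a finite $\Lambda_F$-algebra (Theorem~\ref{ordinary case}(1)). Since $p$ splits completely in $F$, each $O_{F_v}^\times(p)\cong\Z_p$, so $\Lambda_F\cong\cO[[T_1,\dots,T_{[F:\Q]}]]$ is a regular domain of dimension $[F:\Q]+1$. As $C$ is an irreducible component of $\Spec R^{\ps,\ord}$ with $\dim C=[F:\Q]+1=\dim\Lambda_F$ and $C\to\Spec\Lambda_F$ is finite, the image of $C$ is a closed irreducible subset of full dimension, hence all of $\Spec\Lambda_F$; thus $C$ maps onto $\Spec\Lambda_F$. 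Next I would check that the set $X\subseteq\Spec\Lambda_F[\tfrac1p]$ of closed points at which $\psi^{univ}_{v,1}|_{I_{F_v}}$ becomes de Rham of Hodge--Tate weight strictly smaller than that of $\psi^{univ}_{v,2}|_{I_{F_v}}$ for every $v\mid p$ is Zariski dense: each one-dimensional factor $\Spec\cO[[T_v]][\tfrac1p]$ contains infinitely many such algebraic points, a nonzero element of $\cO[[T_v]]$ has only finitely many zeros by Weierstrass preparation, and a standard induction then shows the product is dense in $\Spec\Lambda_F$. The preimage $\widetilde X$ of $X$ in $C$ is then dense: were $\overline{\widetilde X}\subsetneq C$, its image under the finite (hence closed) map would be a closed set containing $X$, hence all of $\Spec\Lambda_F$, forcing $\dim\overline{\widetilde X}\ge\dim\Lambda_F=\dim C$, a contradiction.

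It remains to impose irreducibility. The globally reducible locus of $\Spec R^{\ps,\ord}$ is a quotient of $\Spec\cO[[G_{F,\Sigma}^{\mathrm{ab}}(p)]]$ (writing $T=\eta_1+\eta_2$ with $\eta_1$ a deformation of $\mathbf 1$ and $\eta_2=\chi\eta_1^{-1}$); since $F$ is abelian, Leopoldt's conjecture holds for $F$ by Brumer's theorem, so $G_{F,\Sigma}^{\mathrm{ab}}(p)$ has $\Z_p$-rank $1$ and this locus has dimension $\le 2<[F:\Q]+1$. Hence $C$ is not contained in it, so the open subscheme of $C$ on which some $y(\sigma_0,\tau_0)\ne0$ --- equivalently, on which $\rho(\mathfrak p)$ is irreducible in the sense of Definition~\ref{construction} --- is nonempty, hence dense, and it meets $\widetilde X$ in a dense set of regular de Rham primes. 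By Theorem~\ref{ordinary case}(2) each such $\rho(\mathfrak p)$ is a twist of a Hilbert modular form; since $C$ lies in some $\Spec R^{\ps,\{\xi_v\}}$ (Remark~\ref{belong}), classical local--global compatibility at the primes of $\Sigma\setminus\Sigma_p$ together with Jacquet--Langlands make these primes pro-modular. As the pro-modular locus is closed --- being the finite union, over the finitely many admissible systems $\{\xi_v\}$, of the images of the closed immersions $\Spec(\mathbb{T}_\xi)_{\mathfrak{m}_\xi}\hookrightarrow\Spec R^{\ps,\{\xi_v\}}\hookrightarrow\Spec R^{\ps}$ --- a dense pro-modular subset of the irreducible $C$ forces all of $C$ to be pro-modular, which is part~1.

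For part~2, by part~1 the component $C$ is entirely pro-modular, so finding a nice prime in $C$ amounts to finding a one-dimensional prime $\kq\subset C$ with $p\in\kq$ satisfying conditions (1)--(4) of Definition~\ref{nice}(2). Since $C$ has a characteristic-zero generic point (else $\dim C\le\dim\Lambda_F/\varpi=[F:\Q]$), every component $Z$ of $V(\varpi)\cap C$ has dimension $[F:\Q]$ by Krull's principal ideal theorem. I would then cut down inside such a $Z$: condition (4), that $\rho(\kq)^o|_{G_{F_v}}$ be the trivial lifting of $\bar\rho_b|_{G_{F_v}}$ for each $v\in\Sigma\setminus\Sigma_p$, prescribes a point in a local framed deformation space at $v$, and by Proposition~\ref{dim of loc framed der ring}, Proposition~\ref{loc dim} and the level-raising structure this costs at most $7$ per prime, so the resulting closed subscheme $Z'$ has dimension $\ge[F:\Q]-7|\Sigma\setminus\Sigma_p|\ge2$. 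The reducible locus meets $V(\varpi)$ in dimension $\le1$, and the ``bad dihedral'' locus (dihedral via a quadratic $L\subseteq F(\mu_p)$) has small dimension --- here one uses Brumer's theorem once more, together with the fact that $\rho(\kq)^o$ is ordinary at each $v\mid p$ (Lemma~\ref{ordpseudo}), or else invokes Lemma~\ref{nirred} to dispose of the dihedral condition outright --- so after deleting these loci one is left with a nonempty open subset of $Z'$ of dimension $\ge 2$; any one-dimensional prime $\kq$ in it is irreducible, giving (1), whence (2) follows by the Ribet lattice argument over the complete DVR $A$, (3) by the above, and (4) by construction, so $\kq$ is a nice prime of $C$.

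The hard part will be the dimension bookkeeping for part~2: making precise that prescribing the trivial local lifting at each $v\in\Sigma\setminus\Sigma_p$ costs at most $7$ and that this point is actually attained on $Z$, which is where the local deformation-ring computations of Section~2.1 and the passage from the pseudo-deformation ring to the genuine ordinary deformation ring $R_b^{\ord}$ of the non-split extension $\bar\rho_b$ (Proposition~\ref{Kisin}, Corollary~\ref{crucial}) enter, along with the verification that the lattice $\rho(\kq)^o$ produced lies in the ordinary framed deformation problem with determinant $\chi$ so that Lemma~\ref{nirred} applies. Apart from this, the argument follows \cite[Section~5]{pan2022fontaine}.
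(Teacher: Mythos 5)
Part 1 of your proposal is essentially the paper's argument: finiteness of $R^{\ps,\ord}$ over $\Lambda_F$ (Theorem~\ref{ordinary case}), the resulting finite surjection $C\twoheadrightarrow\Spec\Lambda_F$ between irreducible schemes of equal dimension, a density count for the de Rham locus downstairs, and a bound on the reducible locus via Leopoldt/Brumer. You work a bit harder than the paper (explicit Weierstrass preparation, closedness of the pro-modular locus), but the skeleton and the conclusion match.

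Part 2 has a genuine gap at its central step. You propose to force condition~(4) of Definition~\ref{nice}(2) — that $\rho(\kq)^o|_{G_{F_v}}$ be the trivial lifting at each $v\in\Sigma\setminus\Sigma_p$ — by ``prescribing a point in a local framed deformation space at $v$'' and citing Proposition~\ref{dim of loc framed der ring} and Proposition~\ref{loc dim}, asserting a cost of $7$ per prime. This is not how the constraint can be imposed and is not where the paper's $7$ comes from. One is working inside a pseudo-deformation ring, so there is no a priori framing; the representation attached to a one-dimensional prime $\kq$ via Definition~\ref{construction} has off-diagonal entries governed by the bilinear functions $y(\sigma,\tau)$, and vanishing of $a(\sigma_v)-1$, $d(\sigma_v)-1$ (a cheap, $3|\Sigma\setminus\Sigma_p|$-element cut) says nothing about those off-diagonal entries. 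The real work is to cut down $R^{\ps,\ord}/\kq$ so that a \emph{single} conjugation of the constructed lattice trivializes all of $\{\sigma_v,\Frob_v:v\in\Sigma\setminus\Sigma_p\}$ simultaneously; this is exactly Proposition~\ref{innovation'} (from \cite[Cor.~2.1.7]{Zhang2024}), applied twice to the rows and columns of the $y$-matrix plus one extra principal cut, which is what yields the bound $[F:\Q]-7|\Sigma\setminus\Sigma_p|$. Your proposal neither invokes this proposition nor supplies a substitute, so the case analysis (whether each $\theta\in S$ has a nonzero $y$-row/column, the normalization by the unit $u(\alpha,\theta)$, etc.) and the resulting conjugating matrix $P$ are simply missing. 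Relatedly, your appeals to Proposition~\ref{Kisin} and Corollary~\ref{crucial} are out of place here — those enter in Section~5.3 in the residually-reducible $\bar\chi=\omega^{-1}$ irreducible case, not in the proof of this lemma — and the dihedral condition is disposed of not by a dimension count on a ``bad dihedral locus'' but by applying Lemma~\ref{nirred}(3) to the $\psi_{v,1}$-ordinary lattice produced by the above argument together with Theorem~\ref{ordinary case}.
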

	
	\begin{proof}
		Under our assumptions, the Iwasawa algebra $\Lambda_F$ is a power series ring over $\cO$ of relative dimension $[F:\mathbb{Q}]$. Let $\kq$ be the minimal prime of $ R^{\ps,\ord}$ corresponding to $C$. Combining Theorem \ref{ordinary case}, the map $ \Lambda_F \to R^{\ps,\ord}/\kq$ is a finite injection, and hence the morphism $C \to \Spec \Lambda_F$ is a finite surjection.
		
		For the first claim, using Lemma \ref{ordpseudo}, we know that for any $v|p$, there exists $n_v\in\{1,2\}$ such that for any $\kp \in C$,
		\[\rho(\kp)|_{G_{F_v}}\cong \begin{pmatrix} \psi^{univ}_{v,n_v}\modd\kp & *\\ 0 & \psi^{univ}_{v,3-n_v}\modd \kp\end{pmatrix}.\]
		Combining this and the surjectivity of the map $C \to \Spec \Lambda_F$, we know that the image of $C^{\textnormal{reg}}$ in $\Spec \Lambda_F $ is dense. Thus, $C^{\textnormal{reg}}$ is dense in $C$.
		
		For the second claim, the proof is the same as \cite[Lemma 4.4.4]{Zhang2024}. We briefly sketch it here.
		
		For any $v \in \Sigma \setminus \Sigma_p$, we fix a lifting $\sigma_v $ of the generator of the $\mathbb{Z}_p$-quotient of $I_v$ and a lifting $\operatorname{Frob}_v$ of the Frobenius element. Write $S=\{\sigma_v, \operatorname{Frob}_v:~v \in \Sigma \setminus \Sigma_p\}$. Clearly, $|S|=2|\Sigma \setminus \Sigma_p|$.
		
		Following notations in Definition \ref{pseudo}, we let $I$ be an ideal of $R^\textnormal{ps,ord}/\kq$ generated by $$\{\varpi, a(\sigma_{v})-1, a(\operatorname{Frob}_{v})-1, d(\operatorname{Frob}_{v})-1,~ v\ \in \Sigma \setminus \Sigma_p\}.$$ 
		We consider a minimal prime $\mathfrak{u}$ of $I$. By Krull's principal ideal theorem, $(R^\textnormal{ps,ord}/\kq)/\mathfrak{u}$ is of dimension at least $[F: \mathbb{Q}]-3|\Sigma \setminus \Sigma_p|$. For simplicity, we write such a domain as $R_0$. 
		
		Now we consider the following steps.\\
		
	    \textbf{Step 1}. We start from the CNL domain $R_0$.
		
		By Proposition \ref{innovation'}, we can find a partition of $S = S_1 \amalg S_2$, a positive integer $n>1$ prime to $p$ and a CNL domain $R_0'$ satisfying the following conditions.
		\begin{enumerate}
		\item  $R_0'$ is a quotient of $R_0$.
		\item  For any $\theta \in S_1$, we have $y(\theta, \alpha)=0$ for any $\alpha \in G_{F, \Sigma}$ in $R_0'$. For any $\theta', \theta'' \in S_2$, we have $y(\theta', \alpha)^n=y(\theta'', \alpha)^n$ for any $\alpha \in G_{F, \Sigma}$ in $R_0'$.
		\item  For any $\alpha \in G_{F, \Sigma}, \theta \in S_2$, either $ y(\theta, \beta)=0$ for any $\beta \in G_{F, \Sigma}$ or $u(\alpha, \theta)$ is well-defined and integral over $R_0'$.
		\item  We have $\operatorname{dim} R_0' \ge \operatorname{dim} R_0 - |S|$. If $S_2$ is not empty, then further $\operatorname{dim} R_0' \ge \operatorname{dim} R_0 - |S|+1$.
	\end{enumerate}
	
	   \textbf{Step 2}. We start from the CNL domain $R_0'$.
		
		Similar to \textbf{Step 1}, we can find a partition of $S = S_1' \amalg S_2'$, a positive integer $n'>1$ prime to $p$ and a CNL domain $R_0''$ satisfying the following conditions.
		
			\begin{enumerate}
		\item $R_0''$ is a quotient of $R_0'$.
		\item  For any $\theta \in S_1'$, we have $y(\alpha, \theta)=0$ for any $\alpha \in G_{F, \Sigma}$ in $R_0''$. For any $\theta', \theta'' \in S_2'$, we have $y(\alpha, \theta')^{n'}=y(\alpha, \theta'')^{n'}$ for any $\alpha \in G_{F, \Sigma}$ in $R_0''$.
		\item  For any $\alpha \in \operatorname{Gal}(F_\Sigma/F), \theta \in S_2'$, either $ y(\beta, \theta)=0$ for any $\beta \in G_{F, \Sigma}$ or $u'(\alpha, \theta)$ is well-defined and integral over $R_0''$.
		\item  We have $\operatorname{dim} R_0'' \ge \operatorname{dim} R_0' - |S|$. If $S_2'$ is not empty, then further $\operatorname{dim} R_0'' \ge \operatorname{dim} R_0' - |S|+1$.
	\end{enumerate}
	
		\textbf{Step 3}. We consider the domain $R_0''$.
		
		If both $S_2$ and $S_2'$ are not empty, then we fix an element $\theta_{i} \in S_2$ and an element $\theta_{j} \in S_2'$. Consider a prime $\mathfrak{p}_{ij}$ of $R_0''$ of the ideal $(y(\theta_{i}, \theta_{j}))$ and let $R''=R_0''/\mathfrak{p}_{ij}$. By Proposition \ref{u}, we know that either $y(\theta_{i}, \alpha)=0$ or $y(\beta, \theta_{j})=0$ for all $\alpha, \beta \in \operatorname{Gal}(F_\Sigma/F)$. By Krull's principal ideal theorem, we have $\operatorname{dim} R'' \ge \operatorname{dim} R_0'' -1$.
		
		If not, we take $R''=R_0''$\\.

		In conclusion, there are three possibilities.
		
		\begin{enumerate}
		 \item  $ y(\theta, \alpha) =y(\alpha, \theta)=0$ for all $\theta \in S$ and $\alpha \in G_{F, \Sigma}$.
		\item  There exists an element $\theta_{i} \in S$ such that either $y(\theta, \alpha)=0$ or $y(\theta, \alpha)^{n}=y(\theta_i, \alpha)^{n}$ for all $\theta \in S$ and $\alpha \in G_{F, \Sigma}$, and $y(\alpha, \theta) =0 $ for all $\theta \in S$ and $\alpha \in G_{F, \Sigma}$.
		\item  There exists an element $\theta_{j} \in S$ such that either $y(\alpha, \theta)=0$ or $y(\alpha, \theta)^{n'}=y(\alpha, \theta_j)^{n'}$ for all $\theta \in S$ and $\alpha \in G_{F, \Sigma}$, and $y(\theta, \alpha) = 0 $ for all $\theta \in S$ and $\alpha \in G_{F, \Sigma}$.
	\end{enumerate}
	
		In particular, we have $\operatorname{dim} R'' \ge [F: \mathbb{Q}]-7|\Sigma \setminus \Sigma_p| \ge 2$. Using \cite[Corollary 3.2.4]{Zhang2024}, we can choose an irreducible one-dimensional prime $\mathfrak{r}$ of $R''$. By the first claim of this lemma, we know that $\mathfrak{r}$ is pro-modular. Now we verify that $\mathfrak{r} $ is actually a nice prime in the sense of Definition \ref{nice}.\\
		
			Using Remark \ref{belong}, we view $C$ as a subset of $\Spec R^{\ps, \{\xi_v\}}$ for some character $\xi$. Let $A$ be the normal closure of $R''/\mathfrak{r}$. As $A$ is a DVR, we can construct a continuous irreducible representation $\rho(\mathfrak{r}) : G_{F, \Sigma} \to \operatorname{GL}_2(A)$ in the following form $$
		\rho(\mathfrak{r})(\sigma)=\begin{pmatrix}
			a(\sigma) & \frac{y(\sigma, \tau_{\mathfrak{r}})}{y(\sigma_{\mathfrak{r}}, \tau_{\mathfrak{r}})}\\
			y(\sigma_{\mathfrak{r}}, \sigma)& d(\sigma)
		\end{pmatrix}, ~y(\sigma_{\mathfrak{r}}, \tau_{\mathfrak{r}}) \ne 0.$$ 
		From our construction, $\rho(\mathfrak{r})$ is ordinary at each $v|p$. Then by Lemma \ref{nirred} and Theorem \ref{ordinary case}, the third condition holds in each case. Therefore, we only need to check the last condition.
		
		In case (1), it is clear.
		
		In case (2),  if $y(\theta_{i}, \alpha)=0$ for all $\alpha \in G_{F, \Sigma}$, then it is the same as case a). If not, by Proposition \ref{u}, we know that $y(\theta_{i}, \tau_{\mathfrak{r}}) \ne  0$ (otherwise $y(\sigma_{\mathfrak{r}}, \tau_{\mathfrak{r}}) = 0$). By the third part in \textbf{Step 1}, $u(\sigma_{\mathfrak{r}}, \theta_{i})$ is well-defined and integral over $R''/\mathfrak{r}$, and hence a unit in $A$. Then we can take $\rho(\mathfrak{r})^o=P\rho(\mathfrak{r})P^{-1}$, where $$P= \begin{pmatrix}
			u(\sigma_{\mathfrak{r}}, \theta_{i}) & 0\\
			0 & 1
		\end{pmatrix} \in \operatorname{GL}_2(A).$$
		From our construction, we know that either $u(\theta, \theta_i)=0$ or $u(\theta, \theta_i)^{n}=1$ for any $\theta \in S$. As $(n,p)=1$, by Hensel's lemma, $ u(\theta, \theta_{i})$ is an element of $\mathbb{F}' \hookrightarrow \mathbb{F}'[[T]]=A$, where $\mathbb{F}' $ is the residue field of $A$. Hence, the last condition holds in this case.
		
			In case (3), we consider the continuous irreducible Galois representation $\rho(\mathfrak{r})' : G_{F, \Sigma} \to \operatorname{GL}_2(A)$ in the following form $$
		\rho(\mathfrak{r})'(\sigma)=\begin{pmatrix}
			a(\sigma) & y(\sigma, \tau_{\mathfrak{r}'})\\
			\frac{y(\sigma_{\mathfrak{r}'}, \sigma)}{y(\sigma_{\mathfrak{r}'}, \tau_{\mathfrak{r}'})}& d(\sigma)
		\end{pmatrix}, ~y(\sigma_{\mathfrak{r}'}, \tau_{\mathfrak{r}'}) \ne 0.$$
		If $y(\alpha, \theta_j)=0$ for all $\alpha \in G_{F, \Sigma}$, then we can take $$
		\rho(\mathfrak{r})^o=\begin{pmatrix}
			0 & 1\\
			1 & 0
		\end{pmatrix} \rho(\mathfrak{r})'\begin{pmatrix}
			0 & 1\\
			1 & 0
		\end{pmatrix} \subset \operatorname{GL}_2(A). $$
		Similar to case (1), we can vefiry that $\mathfrak{r}$ is a nice prime.
		If not, we can take $$ \rho(\mathfrak{r})^o=\begin{pmatrix}
			0 & u'(\tau_{\mathfrak{r}'}, \theta_{j})\\
			1 & 0
		\end{pmatrix} \rho(\mathfrak{r})'\begin{pmatrix}
			0 & 1\\
			u'(\theta_{j}, \tau_{\mathfrak{r}'}) & 0
		\end{pmatrix} \subset \operatorname{GL}_2(A). $$
		Similar to case (2), we can also verify that $\mathfrak{r}$ is a nice prime.
	\end{proof}
	
	By the previous lemma, to find a nice prime, it is crucial to find irreducible components in $\Spec R^{\ps,\ord}$ of large dimension. For this purpose, we recall a classical result in the rest of this subsection.
	
	Let $\Sigma^o$ be a subset of $\Sigma_p$. We define $\bar{\psi}_{v,1}:G_{F_v}\to \F^\times$ to be $\mathbf{1}$ if $v\in\Sigma^o$ and $\bar{\chi}|_{G_{F_v}} (\ne \mathbf{1})$ if $v\notin\Sigma^o$. 
	
	Consider the universal deformation ring $R^{\ps,\ord}_{\Sigma^o}$ which pro-represents the functor from $\mathfrak{U}_{\mathcal{O}}$ to the category of sets sending $R$ to the set of tuples $(T,\{\psi_{v,1}\}_{v\in\Sigma_p})$ where
	\begin{itemize}
		\item For any $v|p$, $\psi_{v,1}:G_{F_v}\to R^\times$ is a character which lifts $\bar{\psi}_{v,1}$ and satisfies
		\[T|_{G_{F_v}}=\psi_{v,1}+\psi_{v,1}^{-1}\chi.\]
		Moreover $T$ is $\psi_{v,1}$-ordinary.
	\end{itemize}
	From the definition, it is clear that $R^{\ps,\ord}_{\Sigma^o}$ is a quotient of $R^{\ps,\ord}$.
	
	Write \[H^1_{\Sigma^o}(F):=\ker(H^1(G_{F,\Sigma},\F(\bar{\chi}^{-1}))\stackrel{\mathrm{res}}{\longrightarrow}\bigoplus_{v\in \Sigma_p\setminus \Sigma^o}H^1(G_{F_v},\F(\bar{\chi}^{-1}))).\]
	Then it is easy to find that any cohomology class $b=[\phi_b]\in H^1_{\Sigma^o}(F)$ defines a two-dimensional representation
	\[\bar{\rho}_b:G_{F,\Sigma}\to \GL_2(\F),~g\mapsto \begin{pmatrix} 1 & \phi_b(g)\bar{\chi}(g)\\0& \bar{\chi}(g)\end{pmatrix},\]
	which can be viewed as an extension of $\bar\chi$ by $\mathbf{1}$. 
	
	For any non-zero $b\in H^1_{\Sigma^o}(F)$, we will write $R^{\ord}_{b}$ for the universal (unframed) deformation ring which pro-represents the functor from $\mathfrak{U}_{\mathcal{O}}$ to the category of sets sending $R$ to the set of tuples $(\rho_R,\{\psi_{v,1}\}_{\Sigma_p})$ where
	\begin{itemize}
		\item $\rho_R:G_{F,\Sigma}\to\GL_2(R)$ is a lifting of $\bar{\rho}_b$ with determinant $\chi$.
		\item For $v|p$, $\psi_{v,1}:G_{F_v}\to R^\times$ is a lifting of $\bar{\psi}_{v,1}$ such that $\rho_R$ has a (necessarily unique) $G_{F_v}$-stable rank one $R$-submodule, which is a direct summand of $\rho_R$ as a $R$-module, with $G_{F_v}$ acting via $\psi_R$.
	\end{itemize}
	By the universal property, there is a natural map $ R^{\ps,\ord}_{\Sigma^o} \to R^{\ord}_{b}$.
	
	\begin{prop}\label{dim for global deformation ring}
		Each irreducible component of $R^{\ord}_b$ has dimension at least $[F:\Q]+1$ if $H^0(G_{F,\Sigma},\ad^0\bar{\rho}_b(1))=0$ and $[F:\Q]$ in general.
	\end{prop}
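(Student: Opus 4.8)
The strategy is to run, over the residue field $\F$, the argument that gives Corollary \ref{completion} over $k(\mathfrak{p})$: present a framed version of $R_b^{\ord}$ over a completed tensor product of local framed deformation rings, bound the latter using the preceding subsections, and apply the generalized Greenberg--Wiles formula. Two preliminary observations make this go through in the residually reducible case. First, since $\bar\rho_b$ is a non-split extension of $\bar\chi$ by $\mathbf 1$ and $\bar\chi\neq\mathbf 1$, a direct check of matrix entries gives $\End_{G_{F,\Sigma}}(\bar\rho_b)=\F$, hence (as $p$ is odd, so $\ad=\ad^0\oplus\F$) $H^0(G_{F,\Sigma},\ad^0\bar\rho_b)=0$; this suffices for $R_b^{\ord}$ to be pro-representable and, after adjoining $4|\Sigma|-1$ framing variables, for the $\Sigma$-framed ring $R_b^{\square_\Sigma}$ to satisfy $R_b^{\square_\Sigma}\cong R_b^{\ord}[[x_1,\dots,x_{4|\Sigma|-1}]]$. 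Second, $\dim_\F H^0(G_{F,\Sigma},\ad^0\bar\rho_b(1))\le 1$: the module $\ad^0\bar\rho_b$ carries a $G_{F,\Sigma}$-stable filtration with graded pieces $\bar\chi^{-1},\mathbf 1,\bar\chi$, and comparing entries one finds that every invariant of $\ad^0\bar\rho_b(1)$ is a scalar multiple of the upper-triangular nilpotent (a nonzero diagonal or lower-triangular component would force $\bar\rho_b$ to split).

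Next I would invoke the standard presentation (the $\F$-coefficient analogue of the unnumbered proposition preceding Corollary \ref{completion}; cf. B\"ockle's article in \cite{Berger_2013} and \cite[Section 4]{pan2022fontaine}):
\[ R_b^{\square_\Sigma}\;\cong\;R_{\operatorname{loc}}[[y_1,\dots,y_r]]\big/(f_1,\dots,f_t), \]
with $R_{\operatorname{loc}}=\bigl(\widehat{\bigotimes}_{v\in\Sigma_p}R_v^{\triangle,\chi}\bigr)\widehat{\otimes}_\cO\bigl(\widehat{\bigotimes}_{v\in\Sigma\setminus\Sigma_p}R_v^{\square,\chi}\bigr)$, where $R_v^{\triangle,\chi}$ is the ordinary framed local ring at $v\mid p$ (recall $F_v=\Q_p$) and $R_v^{\square,\chi}$ the unrestricted framed local ring at $v\nmid p$; here $r=\dim_\F H^1_\Sigma(G_{F,\Sigma},\ad^0\bar\rho_b)$ in the notation of the Galois cohomology subsection and $t\le\dim_\F H^1(G_{F,\Sigma},\ad^0\bar\rho_b(1))$. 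By Proposition \ref{framed loc ord} for $v\mid p$ — this is exactly the place where the hypotheses now cover $p=3$, the inputs used by Pan being valid only for $p>3$ — and by Proposition \ref{dim of loc framed der ring} together with the standard local theory at $v\nmid p$, each local factor is $\cO$-flat, a complete intersection and equidimensional, of relative dimension $5$ (resp. $\geq 3$); hence $R_{\operatorname{loc}}$ is equidimensional with $\dim R_{\operatorname{loc}}\geq 1+5[F:\Q]+3|\Sigma\setminus\Sigma_p|$, using $|\Sigma_p|=[F:\Q]$.

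Finally one computes $r-t$. Running the long exact sequence of the Galois cohomology subsection together with the global and local Euler characteristic formulae (Theorems \ref{global char for} and \ref{local char for} in their $\F$-coefficient forms), Poitou--Tate duality, the identification $H^3_\Sigma\cong H^0(G_{F,\Sigma},\ad^0\bar\rho_b(1))^\vee$, and the inputs $H^0(G_{F,\Sigma},\ad^0\bar\rho_b)=0$ and (by total oddness of $\bar\chi$) $\sum_{v\mid\infty}\dim_\F H^0(G_{F_v},\ad^0\bar\rho_b)=[F:\Q]$ — i.e. exactly the computation in the proof of Corollary \ref{completion}, but retaining the term coming from $H^3_\Sigma$ — one obtains
\[ r-\dim_\F H^1(G_{F,\Sigma},\ad^0\bar\rho_b(1))\;=\;-1+|\Sigma|-[F:\Q]-\dim_\F H^0(G_{F,\Sigma},\ad^0\bar\rho_b(1)). \]
Since $R_{\operatorname{loc}}[[y_1,\dots,y_r]]$ is equidimensional and catenary, Krull's height theorem applied to $(f_1,\dots,f_t)$ shows every irreducible component of $R_b^{\square_\Sigma}$ has dimension at least $\dim R_{\operatorname{loc}}+r-t$; subtracting the $4|\Sigma|-1$ framing variables and plugging in the estimates above, every irreducible component of $R_b^{\ord}$ has dimension at least $[F:\Q]+1-\dim_\F H^0(G_{F,\Sigma},\ad^0\bar\rho_b(1))$, which by the second observation is $[F:\Q]+1$ if this $H^0$ vanishes and $[F:\Q]$ in general. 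The one genuine difficulty is the local estimate at $v\mid p$ when $p=3$, now handled by Propositions \ref{dim of loc framed der ring} and \ref{framed loc ord}; the remaining bookkeeping (in particular the $\ad$-versus-$\ad^0$ correction threaded through the Euler characteristic count) is the same as in \cite[Section 4]{pan2022fontaine}.
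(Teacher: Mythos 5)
Your proof is correct and is essentially the argument that the paper delegates to \cite[Proposition 3.24]{Gee_2022} and \cite[Lemma 5.4.2]{pan2022fontaine}: a framed presentation of $R_b^{\ord}$ over the completed tensor product of local framed deformation rings, with the generators-minus-relations count given by the Euler characteristic formula, retaining the $H^0(G_{F,\Sigma},\ad^0\bar\rho_b(1))$-term (dually $H^3_\Sigma$) rather than assuming it vanishes; the two preliminary reductions $\End_{G_{F,\Sigma}}(\bar\rho_b)=\F$ and $\dim_\F H^0(G_{F,\Sigma},\ad^0\bar\rho_b(1))\le 1$ and the ensuing arithmetic all check out. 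One remark on emphasis: the local input at $v\mid p$ you need (every irreducible component of the ordinary framed ring has relative dimension $\ge 5$ over $\cO$) already follows for any odd $p$ from the local Euler characteristic formula as in \cite[Lemma 2.2]{Skinner_1999}, so Proposition \ref{framed loc ord} is not strictly required here — which is exactly why the paper can cite Pan's Lemma 5.4.2 without modification even when $\bar\chi|_{G_{F_v}}=\omega^{-1}$ and $p=3$; the explicit $p=3$ computations in this paper are needed for Propositions \ref{two elements} and \ref{principal}, not for this dimension count.
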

	
	\begin{proof}
		 One can see \cite[Proposition 3.24]{Gee_2022} or \cite[Lemma 5.4.2]{pan2022fontaine}.
	\end{proof}
	
	\begin{rem}\label{dim not enough}
		Let $\kq$ be a one-dimensional prime of $ R^{\ps,\ord}_{\Sigma^o}$ of characteristic $0$ such that the corresponding Galois representation $\rho(\kq): G_{F, \Sigma} \to \GL_2(k(\kq))$ is absolutely irreducible, and let $\bar{\rho_{b'}}$ be its non-split residual representation, where $b'$ is a non-zero element in $H^1_{\Sigma^o}(F) $. Let $R^{\ord}_{b'}$ be the universal ordinary deformation ring defined as above, and $\rho(\kq) $ defines a one-dimensional prime $\kq_{b'}$ of $R^{\ord}_{b'}$. Using a similar argument as Proposition \ref{Dkp}, we have an isomorphism $ \widehat{(R^{\ps,\ord}_{\Sigma^o})_{\kq}} \cong \widehat{(R^{\ord}_{b'})_{\kq_{b'}}}$. Then by the previous proposition, we have $\dim (R^{\ps,\ord}_{\Sigma^o})_{\kq} \ge [F: \mathbb{Q}]-1$. In particular, if $\bar{\chi} \ne \omega^{-1}$, we have $\dim (R^{\ps,\ord}_{\Sigma^o})_{\kq} \ge [F: \mathbb{Q}]$, and then we can use Lemma \ref{find nice} to find nice primes in some irreducible component containing $\kq$.
		
		If $p>3$ and $\bar{\chi}=\omega^{\pm 1}$, after some finite twist, we may assume $ \bar{\chi}=\omega \ne \omega^{-1}$. Then we can use the argument as in the previous paragragh to find nice primes. However, if $p=3$, the $\mod p$ cyclotomic character is quadratic, and the lower bound of each irreducible component containing $\kq$ is $ [F: \mathbb{Q}]$. In this case, we cannot use Lemma \ref{find nice},  and hence we need some more arguments. 
	\end{rem}
	
	\subsection{The residually irreducible case} \label{irr sec} Let $F$ be a totally real field of even degree in which $p$ splits completely. Write $\Sigma_p$ as the set of places of $F$ above $p$. Let $\Sigma$ be a finite set of finite places of $F$ containing $\Sigma_p$ such that for all $v \in \Sigma \setminus \Sigma_p$, we have $p \mid \operatorname{Nm}(v)-1$.
	
	Let $\bar{\rho}: G_F \to \GL_2(\F)$ be a continuous, odd, absolutely irreducible Galois representation. Note that in this case, the deformation of $\bar{\rho}$ is actually the same as the pseudo-deformation of $\tr \bar{\rho}$. We assume that $\bar{\rho}$ is unramified outside $p$ and for each $v \in \Sigma \setminus \Sigma_p$, $\bar{\rho}|_{G_{F_v}}$ is trivial. Further assume that $\bar{\rho}$ is modular, i.e.  it arises from a regular algebraic cuspidal automorphic representation of $\GL_2(\mathbb{A}_F)$. Write $\det \bar{\rho} =\bar{\chi}$, and let $\chi : G_{F, \Sigma} \to \cO^\times$ be a lifting of $\bar{\chi}$.
	
    By the global class field theory, we may view $\psi=\chi\varepsilon$ as a character of $\AFi/F^\times_{+}$. Let $D$ be the quaternion algebra over $F$ which is ramified exactly at all infinite places. Fix an isomorphism between $\DAi$ and $\GL_2(\A_F^\infty)$. We define a tame level $U^p=\prod_{v\nmid p}U_v$ as follows: $U_v=\GL_2(O_{F_v})$ if $v\notin \Sigma$ and 
    \[U_v=\mathrm{Iw}_v:=\{g\in\GL_2(O_{F_v}),g\equiv \begin{pmatrix}*&*\\0&*\end{pmatrix}\mod \varpi_v\}\]
    otherwise. For any $v \in \Sigma\setminus \Sigma_p$, the map $\begin{pmatrix}a&b\\c&d\end{pmatrix}\mapsto \xi_v(\frac{a}{d}\mod \varpi_v)$ defines a character of $U_v$, where $\xi_v: k(v)^\times \to \cO^\times$ is the trivial character. The product of $\xi_v$ can be viewed as a character $\xi$ of $U^p$ by projecting to $\prod_{v\in \Sigma\setminus \Sigma_p}U_v$. Using this, we can define a completed homology $M_{\psi,\xi}(U^p)$ and a Hecke algebra $\T:=\T_{\psi,\xi}(U^p)$. As $\bar{\rho}$ is modular, we know that $T_v-\tr\bar{\rho}(\Frob_v),v\notin \Sigma$ and $\varpi$ generate a maximal ideal $\km$ of $\T$. Since $\bar{\rho}$ is absolutely irreducible, there is a two-dimensional representation $\rho_\km:G_{F,\Sigma}\to\GL_2(\T_\km)$ with determinant $\chi$ such that the trace of $\Frob_v$ is $T_v$ for $v\notin \Sigma$.
	
	Let $R^{\{\xi_v\}}$ be the universal deformation ring parametrizing all deformations of $\bar\rho$ satisfying  $\operatorname{tr}(\rho)|_{I_{F_v}}=\xi_v+\xi_v^{-1}$ for all $v\in \Sigma\setminus \Sigma_p$, and with determinant $\chi$ . By the universal property,  there is a natural surjection $R^{\{\xi_v\}} \twoheadrightarrow \T_{\mathfrak{m}}$. Similar to Definition \ref{nice}, we can also define pro-modular and nice primes in this case.
	
	\begin{defn}
		We say a prime of $R^{\{\xi_v\}}$ is \textit{pro-modular} if it comes from a prime of $\T_\km$. A pro-modular prime $\kq$ of $R^{\{\xi_v\}}$ is \textit{nice} if 
		\begin{enumerate}
			\item $R^{\{\xi_v\}}/\kq$ is one-dimensional with characteristic $p$.
			\item $\rho(\kq)$ is absolutely irreducible and not induced from a quadratic extension of $F$. Here $\rho(\kq)$ denotes the push-forward of the universal deformation on $R^{\{\mathbf{1}\}}$ to $k(\kq)$.
			\item $\rho(\kq)|_{G_{F_v}}$ is trivial for $v\in \Sigma\setminus\Sigma_p$.
		\end{enumerate}
	\end{defn}
	
	\begin{prop}
		Let $\kq\in\Spec R^{\{\xi_v\}}$ be a nice prime. Then the kernel of $(R^{\{\xi_v\}})_\kq\to \T_\kq$ is nilpotent. Here, we also view it as a prime of $\T_\km$.
	\end{prop}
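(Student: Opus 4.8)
The plan is to run a Taylor--Wiles--Kisin patching argument localized at $\kq$, following the template of the proof of Theorem \ref{R=T} but in the technically simpler residually irreducible setting, where the deformation theory of $\bar\rho$ agrees with the pseudo-deformation theory of $\tr\bar\rho$. Let $A$ be the normal closure of $R^{\{\mathbf{1}\}}/\kq$; since $\kq$ is one-dimensional of characteristic $p$, $A$ is a complete discrete valuation ring with fraction field $k(\kq)$, and pushing forward the universal deformation on $R^{\{\mathbf{1}\}}$ gives $\rho(\kq)^\circ:G_{F,\Sigma}\to\GL_2(A)$ with $\rho(\kq)=\rho(\kq)^\circ\otimes_A k(\kq)$ and $\rho(\kq)^\circ\bmod\mathfrak{m}_A$ a base change of $\bar\rho$. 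By Chebotarev's density theorem, I would choose a finite set $T'$ of primes disjoint from $\Sigma$ so that the entries of $\rho(\kq)^\circ(\Frob_v)$, $v\in T'$, topologically generate the subring of $A$ they generate globally, and set $P=T'\cup\Sigma$.

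First I would describe the relevant local deformation rings
\[
R^{\{\mathbf{1}\}}_{\loc}:=(\widehat{\bigotimes}_{v\in\Sigma_p}R^{\square}_v)\,\widehat{\otimes}_{\cO}\,(\widehat{\bigotimes}_{v\in\Sigma\setminus\Sigma_p}R^{\square,\mathbf{1}}_v)\,\widehat{\otimes}_{\cO}\,(\widehat{\bigotimes}_{v\in T'}R^{\square,ur}_v),
\]
where for $v\mid p$ the local problem is the unrestricted one with determinant $\chi$. By Proposition \ref{dim of loc framed der ring}, each $R^{\square}_v$ with $v\mid p$ is a normal local complete intersection domain, flat over $\cO$, of relative dimension $6$, and the rings at $v\in\Sigma\setminus\Sigma_p$ (level raising with trivial character) and $v\in T'$ (unramified) behave as in the standard references; consequently, as in Proposition \ref{4.2.3}, the completion of $R^{\{\mathbf{1}\}}_{\loc}$ at the one-dimensional prime $\kq_{\loc}$ cut out by $\rho(\kq)^\circ$ is equidimensional of dimension $3[F:\Q]+3|P|$ with characteristic-zero generic points, and each minimal prime of its reduction mod $\varpi$ lies over a unique minimal prime (here the triviality of $\rho(\kq)^\circ|_{G_{F_v}}$ at $\Sigma\setminus\Sigma_p$, i.e.\ condition 3) of the definition of a nice prime, is used). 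Next I would produce Taylor--Wiles primes $Q_N$ as in Proposition \ref{exttwp}: the hypothesis that $\rho(\kq)$ is absolutely irreducible and \emph{not} induced from a quadratic extension of $F$ makes $\ad^0\rho(\kq)$ well-behaved and lets one kill the dual Selmer group by adjoining $|Q_N|=r=\dim_{k(\kq)}H^1(G_{F,P},\ad^0\rho(\kq)(1))$ such primes. Then, using the local--global compatibility Theorem \ref{lgc} together with Corollary \ref{Hecke module} and the Paškūnas-type formalism of Section \ref{section lgc}, I would form the Hecke module $\mm^{\{\mathbf{1}\}}=\Hom_{\kC_{D_p^\times,\psi}(\cO)}(P_{\kB_\km},M_{\psi,\xi}(U^p)_\km)$, localize and complete it at $\kq$, and patch over the $Q_N$ -- modifying the completed homology to remove the residual $\GL_2(\Q_p)$-action exactly as in Pan's Step 3 -- to obtain a module $\mm_\infty$ over $R_\infty:=R^{\{\mathbf{1}\}}_{\loc}[[x_1,\dots,x_g]]$, with $g=r+|P|-[F:\Q]-1$, that is finitely generated over $\widehat{(R_\infty)_{\kq_\infty}}$ (with $\kq_\infty$ generated by $\kq_{\loc}$ and the $x_i$), flat over the power series ring $S_\infty$, and satisfies $\mm_\infty/\ka_1\mm_\infty\cong(\mm_0^{\{\mathbf{1}\}})^{\oplus 2^r}$, where $\mm_0^{\{\mathbf{1}\}}$ is the completed localization of $\mm^{\{\mathbf{1}\}}$ at $\kq$.

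The proof then concludes by the dimension count of Step 4 in the proof of Theorem \ref{R=T}. By Corollary \ref{Hecke module}, $\mm_0^{\{\mathbf{1}\}}$ is a faithful finitely generated module over $\widehat{\T_\kq}$, so by Corollary \ref{dim of hecke} and the fact that $\kq$ is one-dimensional we get $\dim\widehat{\T_\kq}\ge 2[F:\Q]$; since $\mm_\infty$ is flat over $S_\infty$ the $(4|P|-1+r)$ generators of $\ka_1$ form a regular sequence on $\mm_\infty$, whence $\dim_{R_\infty}\mm_\infty\ge 2[F:\Q]+4|P|+r-1$, while by Proposition \ref{4.2.3} (see also \cite[Lemma 4.8.2]{pan2022fontaine}) and the formula for $g$ the ring $\widehat{(R_\infty)_{\kq_\infty}}$ is equidimensional of dimension exactly $2[F:\Q]+4|P|+r-1$. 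Hence $\mm_\infty$ has full support on $\widehat{(R_\infty)_{\kq_\infty}}$, so (as in \cite[Lemma 4.8.4]{pan2022fontaine}, invoking Taylor's Ihara-avoidance trick by comparing with non-trivial characters $\xi'_v$ at $\Sigma\setminus\Sigma_p$, for which the local deformation rings become integral domains) $R_\infty$ acts on $\mm_\infty$ faithfully up to nilpotents; specializing along $\ka_1$ shows $\widehat{(R^{\{\mathbf{1}\}})_\kq}\to\widehat{\T_\kq}$ has nilpotent kernel, and descending along the faithfully flat map $(R^{\{\mathbf{1}\}})_\kq\to\widehat{(R^{\{\mathbf{1}\}})_\kq}$ yields the statement. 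I expect the main obstacle to be the same as in the residually reducible case: arranging that the patched module $\mm_\infty$ has full support, which rests on matching the lower bound $\dim\widehat{\T_\kq}\ge 2[F:\Q]$ (itself a consequence of the local--global compatibility of Section \ref{section lgc}) against the equidimensionality of $\widehat{(R_\infty)_{\kq_\infty}}$, and on handling the non-domain local deformation rings at $\Sigma\setminus\Sigma_p$ via Ihara avoidance; the remainder is a faithful transcription of Pan's argument with the residual-reducibility complications removed.
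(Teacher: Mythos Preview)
Your proposal is correct and follows exactly the approach the paper indicates: the paper's own proof simply says that the argument is ``nearly the same as Theorem~\ref{R=T}'' (referring to \cite[Sections~8.2--8.3, Proposition~9.0.6]{pan2022fontaine}) with the local--global compatibility results of Section~\ref{section lgc} substituted in, and you have faithfully reconstructed that Taylor--Wiles--Kisin patching argument in the residually irreducible setting, including the correct use of Proposition~\ref{dim of loc framed der ring} at places above $p$, the Ihara-avoidance trick at $\Sigma\setminus\Sigma_p$, and the dimension bound from Corollary~\ref{dim of hecke}.
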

	
	\begin{proof}
		The proof is nearly the same as Theorem \ref{R=T}. One can see \cite[Section 8.2 \& 8.3]{pan2022fontaine} and \cite[Proposition 9.0.6]{pan2022fontaine} for details. Note that we need to use local-global compatibility results in Section \ref{section lgc} instead.
	\end{proof}
	
	\begin{prop}\label{big R=T in irr}
		If $[F: \mathbb{Q}] \ge 4|\Sigma \setminus \Sigma_p|+2$, then the kernel of $R^{\{\xi_v\}}\to\T_\km$ is nilpotent.
	\end{prop}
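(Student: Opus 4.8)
The plan is to show that the closed subscheme $\Spec\mathbb{T}_\km\hookrightarrow\Spec R^{\{\mathbf 1\}}$ (closed because $\mathbb{T}_\km$ is a \emph{quotient} $R^{\{\mathbf 1\}}/I$) is all of $\Spec R^{\{\mathbf 1\}}$; equivalently, that every irreducible component of $\Spec R^{\{\mathbf 1\}}$ is contained in it, equivalently that $I$ is nilpotent. The one leverage available is the preceding proposition: if a component $C$ of $\Spec R^{\{\mathbf 1\}}$ contains a nice prime $\kq$, then the kernel of $(R^{\{\mathbf 1\}})_\kq\to\mathbb{T}_\kq$, which is $I(R^{\{\mathbf 1\}})_\kq$, is nilpotent, so $I$ is contained in the minimal prime defining $C$, whence $C\subseteq V(I)=\Spec\mathbb{T}_\km$. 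Also, if $C$ is a ``dihedral'' component — the generic point $\mathrm{Ind}_L^{F}\theta$ for a fixed quadratic $L/F$ unramified outside $\Sigma$, which forces every point of $C$ to be of this form — then $C$ is pro-modular by automorphic induction. So I would let $Z$ be the union of those irreducible components of $\Spec R^{\{\mathbf 1\}}$ which are contained in $\Spec\mathbb{T}_\km$; by the above it contains all components bearing a nice prime and all dihedral components, and it suffices to prove $Z=\Spec R^{\{\mathbf 1\}}$.

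Next I would collect the dimension bounds. Using Proposition \ref{dim of loc framed der ring} at $v\mid p$, the framed–unframed comparison, and the generalized Greenberg–Wiles formula (Proposition \ref{Greenberg-Wiles}, with $\bar\chi$ totally odd), one shows exactly as in the classical situation that every irreducible component of $R^{\{\mathbf 1\}}$ has dimension at least $1+2[F:\Q]$, and — this is the analogue of Proposition \ref{connect dim} and of \cite[Lemma 7.4.6]{pan2022fontaine} — that the connectedness dimension satisfies $c(R^{\{\mathbf 1\}})\ge 2[F:\Q]$; the latter rests on the normality and the irreducibility modulo $\varpi$ of the local framed deformation rings in Proposition \ref{dim of loc framed der ring}. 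On the automorphic side the argument of Corollary \ref{dim of hecke}, via Corollary \ref{Hecke module}, gives that every irreducible component of $\mathbb{T}_\km$ has dimension at least $1+2[F:\Q]$; since $\bar\rho$ is modular, $\mathbb{T}_\km\ne 0$, so a minimal prime of $\mathbb{T}_\km$ furnishes a pro-modular prime $\kp_1$ with $\dim R^{\{\mathbf 1\}}/\kp_1\ge 1+2[F:\Q]$.

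The core will be a ``cutting down'' step à la \cite{Zhang2024}: given a pro-modular prime $\mathfrak s$ on a \emph{non-dihedral} component $C$ with $d:=\dim R^{\{\mathbf 1\}}/\mathfrak s\ge 2[F:\Q]$, I would impose inside $\Spec(R^{\{\mathbf 1\}}/\mathfrak s)\subseteq C$ the conditions of a nice prime — passage to characteristic $p$ (one equation) and triviality of $\rho(\kq)|_{G_{F_v}}$ for each $v\in\Sigma\setminus\Sigma_p$, which, using the presentation of $G_{F_v}$ by a pro-$p$ inertia generator and a Frobenius lift together with the relations among $a,d,y$ (Proposition \ref{u}), costs at most $4$ equations per such $v$. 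This produces a closed subscheme of $C$ of dimension at least $d-1-4|\Sigma\setminus\Sigma_p|\ge 2[F:\Q]-1-4|\Sigma\setminus\Sigma_p|\ge 2$, the last step being exactly the hypothesis $[F:\Q]\ge 4|\Sigma\setminus\Sigma_p|+2$. By \cite[Corollary 3.2.4]{Zhang2024} it contains a one-dimensional prime $\kq$ with $\rho(\kq)$ absolutely irreducible (automatic) and not induced from a quadratic extension, the dihedral locus being a proper closed subset of $C$ (it is closed, being a finite union of conditions ``$\mathrm{tr}\,\rho(g)=0$ for $g\notin G_L$'', and proper since $C$ is non-dihedral) of a scheme of dimension $\ge 2$. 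Then $\kq\supseteq\mathfrak s\supseteq I$ so $\kq$ is pro-modular, $\kq$ lies on $C$, and $\kq$ satisfies (1)–(3), so $\kq$ is a nice prime on $C$, whence $C\subseteq Z$.

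Finally I would assemble: applying the cutting-down step to $\mathfrak s=\kp_1$ (after first replacing $\kp_1$, if its component is dihedral, by a prime on a non-dihedral component, or else noting directly that $Z\ne\varnothing$ already) shows $Z\ne\varnothing$. If $Z\ne\Spec R^{\{\mathbf 1\}}$, write $\Spec R^{\{\mathbf 1\}}=Z\cup Z'$ with $Z'$ the union of the remaining components — none of which is dihedral, since dihedral components lie in $Z$ — both nonempty. Since $c(R^{\{\mathbf 1\}})\ge 2[F:\Q]$, pick a prime $\mathfrak r\in Z\cap Z'$ with $\dim R^{\{\mathbf 1\}}/\mathfrak r\ge 2[F:\Q]$; it is pro-modular because $\mathfrak r\in Z\subseteq\Spec\mathbb{T}_\km$, and it lies on some non-dihedral component $C\subseteq Z'$. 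The cutting-down step with $\mathfrak s=\mathfrak r$ yields a nice prime on $C$, so $C\subseteq Z$, contradicting $C\subseteq Z'$. Hence $Z=\Spec R^{\{\mathbf 1\}}$ and $I$ is nilpotent. I expect the genuine obstacle to be the second paragraph — proving $c(R^{\{\mathbf 1\}})\ge 2[F:\Q]$ and the ``connected in codimension one'' behaviour of $\Spec R^{\{\mathbf 1\}}$, where the fine geometry of the local framed deformation rings of Proposition \ref{dim of loc framed der ring} enters — whereas the cutting-down bookkeeping and the dihedral carve-out are routine adaptations of \cite{pan2022fontaine} and \cite{Zhang2024}.
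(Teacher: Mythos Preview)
Your overall architecture matches the argument of \cite[Proposition 9.0.4]{pan2022fontaine}, to which the paper simply defers: establish $c(R^{\{\mathbf 1\}})\ge 2[F:\Q]$ via the local input of Proposition~\ref{dim of loc framed der ring}, seed $Z$ using the nonvanishing of $\T_\km$ together with Corollary~\ref{dim of hecke}, manufacture nice primes by cutting down inside a pro-modular closed subset, and propagate across components via connectedness. The skeleton is correct and is the intended proof.

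There is, however, a genuine gap in your cutting-down step. You produce a closed subscheme $Y\subseteq\Spec(R^{\{\mathbf 1\}}/\mathfrak s)$ of dimension $\ge 2$ and claim it contains a one-dimensional non-dihedral prime because ``the dihedral locus is a proper closed subset of $C$''. But properness in $C$ says nothing about properness in $Y$: the equations you impose (passage to characteristic $p$, triviality at each $v\in\Sigma\setminus\Sigma_p$) could in principle force $Y$ entirely into the dihedral locus. The reference to \cite[Corollary 3.2.4]{Zhang2024} only produces an \emph{irreducible} one-dimensional prime, which here is automatic since $\bar\rho$ is absolutely irreducible, and says nothing about dihedrality. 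What is actually needed is a dimension estimate: for each quadratic $L/F$ with $\bar\rho$ $L$-dihedral (finitely many such $L$), the $L$-dihedral locus inside $\Spec R^{\{\mathbf 1\}}/(\varpi)$ is the image of a character-deformation ring for $G_{L,\Sigma_L}$ with a norm constraint, and one bounds its dimension strictly below $\dim Y$. Under the hypothesis $[F:\Q]\ge 4|\Sigma\setminus\Sigma_p|+2$ your cutting yields $\dim Y\ge[F:\Q]+1$, and the slack in the hypothesis is there precisely to beat the dihedral bound. Once this is in place, incidentally, every component of $R^{\{\mathbf 1\}}$ (having dimension $\ge 1+2[F:\Q]$) is automatically non-dihedral, so your separate treatment of ``dihedral components'' via automorphic induction becomes unnecessary.

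A minor point: your appeal to Proposition~\ref{u} and the $a,d,y$ formalism is out of place. That machinery belongs to the residually reducible pseudo-representation setting; here $\bar\rho$ is absolutely irreducible and you have an honest universal deformation $\rho^{\mathrm{univ}}:G_{F,\Sigma}\to\GL_2(R^{\{\mathbf 1\}})$, so the triviality conditions at $v\in\Sigma\setminus\Sigma_p$ are simply equations in the matrix entries of $\rho^{\mathrm{univ}}(\sigma_v)$ and $\rho^{\mathrm{univ}}(\Frob_v)$.
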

	
	\begin{proof}
		The proof is the same as \cite[Proposition 9.0.4]{pan2022fontaine}.
	\end{proof}

	\section{The proof in the residually reducible case}\label{section 5} In this section, we will give the proof of the Fontaine-Mazur conjecture in the residually reducible case. 
	
	Let $p$ be an odd prime. Throughout this section, we denote by $K$  a $p$-adic local field with its ring of integers $\cO$, a uniformizer $\varpi$ and its residue field $\F$.
	
	\subsection{Setting}
	Our main goal is to prove the following theorem.
	
   \begin{thm} \label{mainthm}
	 Let $F$ be a totally real abelian extension of $\Q$ in which $p$ completely splits. Suppose 
	\[\rho:\Gal(\overbar{F}/F)\to \GL_2(\cO)\] 
	is a continuous irreducible representation with the following properties
	\begin{itemize}
		\item $\rho$ ramifies at only finitely many places.
		\item Let $\bar{\rho}$ be the reduction of $\rho$ modulo $\varpi$. We assume its semi-simplification has the form $\bar{\chi}_1\oplus\bar{\chi}_2$ and ${\bar{\chi}_1}/\bar{\chi}_2$ can be extended to a character of $G_\Q$.
		\item $\rho|_{G_{F_v}}$ is irreducible and de Rham of distinct Hodge-Tate weights for any $v|p$. 
		\item $({\bar{\chi}_1}/{\bar{\chi}_2})(c)=-1$ for any complex conjugation $c\in \Gal(\overbar{F}/F)$.
	\end{itemize}
	Then $\rho$ arises from a twist of a Hilbert modular form, i.e. a regular algebraic cuspidal automorphic representation of $\GL_2(\A_F)$.
\end{thm}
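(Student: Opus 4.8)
The plan is to realize $\rho$ as a characteristic-zero point $\mathfrak p$ of a universal pseudo-deformation ring of the shape studied in Section 4.1, and then to deduce modularity from the $R=\mathbb T$ theorem (Theorem \ref{cor of R=T}); the only hypothesis of that theorem not already granted to us is the existence of a \emph{nice} prime $\mathfrak q$ lying on the same irreducible component of $\Spec R^{\ps,\{\xi_v\}}$ as $\mathfrak p$. Such a prime is manufactured out of the ordinary locus, combining Pan's finiteness result (Theorem \ref{ordinary case}) with Lemma \ref{find nice}; in the one remaining case $p=3$, $\bar\chi|_{G_{F_v}}=\omega^{\pm1}$, the naive dimension bound on the ordinary deformation ring is one short of what Lemma \ref{find nice} needs, and this is repaired by the generalized Greenberg--Wiles formula, Proposition \ref{Greenberg-Wiles} (equivalently Corollary \ref{crucial}). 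First I would normalize: twisting $\rho$ by a character changes neither $\bar\chi:=\bar\chi_1/\bar\chi_2$, nor irreducibility, nor local irreducibility and de Rhamness at $p$, nor the oddness condition, so after such a twist $\bar\rho^{\mathrm{ss}}=\mathbf 1\oplus\bar\chi$ with $\bar\chi$ totally odd and extending to $G_{\mathbb Q}$, and $\chi:=\det\rho$ lifts $\bar\chi$. Then I would base change along a multi-quadratic, hence abelian over $\mathbb Q$, totally real extension $F'/F$ in which $p$ still splits completely: for $p=3$ the fields $\mathbb Q(\sqrt{d_1},\dots,\sqrt{d_k})$ with $d_i\equiv1\pmod 3$ prime give abelian totally real extensions of arbitrarily large even degree in which $3$ splits completely, so I may simultaneously arrange $[F':\mathbb Q]$ even and larger than the bounds in Lemma \ref{find nice} and Proposition \ref{big R=T in irr}, make $\rho|_{I_{F'_v}}$ unipotent at the finitely many primes of ramification away from $p$, and keep $\rho|_{G_{F'}}$ irreducible (avoid the at most finitely many quadratic subfields through which $\rho$ could be induced). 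By solvable (cyclic) descent for modularity it suffices to prove $\rho|_{G_{F'}}$ is modular, so I rename $F'$ to $F$.

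\textbf{The deformation set-up and the case $\bar\chi|_{G_{F_v}}\neq\omega$.}
With $D$ the quaternion algebra over $F$ ramified exactly at the infinite places, $U^p$ the tame level with Iwahori structure at $\Sigma\setminus\Sigma_p$, $\psi=\chi\varepsilon$, and $\xi_v$ the characters of $U_v$ attached to $\rho|_{I_{F_v}}$ (trivial, by the previous step), we are in the situation of Section 4.1. Now $\tr\rho$ is a pseudo-representation lifting $1+\bar\chi$ with determinant $\chi$, hence by Remark \ref{belong} defines a characteristic-zero prime $\mathfrak p$ of $R^{\ps,\{\xi_v\}}$; irreducibility of $\rho$ gives $\rho(\mathfrak p)\cong\rho$ in the sense of Definition \ref{construction}, so $\rho(\mathfrak p)|_{G_{F_v}}$ is irreducible and de Rham of distinct Hodge--Tate weights for every $v\mid p$ — the first bullet of Theorem \ref{cor of R=T} holds. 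When $\bar\chi|_{G_{F_v}}\neq\omega$ (recall $\omega=\omega^{-1}$ for $p=3$; this includes the case $\bar\chi|_{G_{F_v}}=\mathbf 1$, already within reach of Pan's method after the preliminary twists of \cite[\S7.4]{pan2022fontaine}), the production of a nice prime on the component of $\mathfrak p$ is Pan's argument in \cite[\S7.4]{pan2022fontaine} verbatim: it is now legitimate for $p=3$ because the local--global compatibility of Section \ref{section lgc}, the $R=\mathbb T$ theorem \ref{R=T}, the local framed and pseudo-deformation rings of Sections 2.1 and 2.3, and the ordinary finiteness Theorem \ref{ordinary case} are all available without Pan's assumption, and one concludes by Lemma \ref{find nice} together with a connectedness argument resting on Proposition \ref{connect dim}. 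So from here on I assume $p=3$ and $\bar\chi|_{G_{F_v}}=\omega$ for all $v\mid p$.

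\textbf{The case $p=3$, $\bar\chi|_{G_{F_v}}=\omega$.}
Choose the minimal prime $\mathfrak p_C\subseteq\mathfrak p$ with $\operatorname{ht}(\mathfrak p/\mathfrak p_C)=\operatorname{ht}(\mathfrak p)$, and let $C=C_{\mathfrak p}$ be the corresponding irreducible component of $\Spec R^{\ps,\{\xi_v\}}$; by Proposition \ref{connect dim}(1) and $\dim R^{\ps,\{\xi_v\}}/\mathfrak p=1$ one gets $\dim C\geq 1+2[F:\mathbb Q]$. The kernel of $R^{\ps,\{\xi_v\}}\twoheadrightarrow R^{\ps,\mathrm{ord},\{\xi_v\}}$ is generated by at most $2[F:\mathbb Q]$ elements (the two local generators of Proposition \ref{two elements} at each of the $[F:\mathbb Q]$ places over $p$), and $\mathfrak p$ is not ordinary since $\rho(\mathfrak p)|_{G_{F_v}}$ is irreducible; hence $C\cap\Spec R^{\ps,\mathrm{ord},\{\xi_v\}}$ is nonempty of dimension at least $1$, and a refinement of this bound (using Proposition \ref{connect dim}(2) and the geometry of the singular local ring at $p$, Propositions \ref{dim of ps ord}, \ref{singular locus}, \ref{principal}, \ref{framed loc ord}, together with finiteness over $\Lambda_F$ from Theorem \ref{ordinary case}) lets me pick a characteristic-zero point $\mathfrak p'$ on it with $\rho(\mathfrak p')$ absolutely irreducible and $H^0(G_{F,\Sigma},\ad^0\rho(\mathfrak p')(1))=0$. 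Applying Corollary \ref{crucial}, i.e.\ the generalized Greenberg--Wiles formula of Proposition \ref{Greenberg-Wiles}, at $\mathfrak p'$ shows that the irreducible component of the ordinary locus through $\mathfrak p'$ has dimension at least $[F:\mathbb Q]+1$; by normality of the local framed deformation rings (Proposition \ref{dim of loc framed der ring}) and Proposition \ref{4.2.3} this ordinary component is contained in $C$. Now Lemma \ref{find nice}(2) — applicable since $[F:\mathbb Q]$ was made large in Step 1 — produces a nice prime $\mathfrak q$ on this ordinary component, hence on $C$, and Theorem \ref{cor of R=T} gives the modularity of $\rho(\mathfrak p)=\rho$.

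\textbf{The main obstacle.}
The delicate point is exactly the dimension and connectedness bookkeeping in the last case: when $\bar\chi|_{G_{F_v}}=\omega$ the local pseudo-deformation ring at $p$ is singular (its tangent space has dimension $7$ while the ring has dimension $6$), the ordinary locus drops to dimension $[F:\mathbb Q]$ rather than $[F:\mathbb Q]+1$, and yet one must certify that the nice prime coming from the ordinary component lies on the \emph{same} component $C$ as $\rho$. This is the only place the argument genuinely leaves \cite{pan2022fontaine}; it is carried by Proposition \ref{Greenberg-Wiles}/Corollary \ref{crucial} (to recover the missing unit of dimension) and by the fine geometric inputs of Sections 2.1 and 2.3 (Propositions \ref{dim of loc framed der ring}, \ref{framed loc ord}, \ref{two elements}, \ref{singular locus}, \ref{principal}), and it is where I would expect the bulk of the technical work to lie.
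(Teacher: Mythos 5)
Your high-level plan — realize $\rho$ as a prime $\mathfrak p$ of $R^{\ps,\{\xi_v\}}$, enlarge the field, and manufacture a nice prime from the ordinary locus using Corollary~\ref{crucial} and Lemma~\ref{find nice} — matches the skeleton of the paper's argument. However, the key reduction you take in the case $p=3$, $\bar\chi|_{G_{F_v}}=\omega^{-1}$ is stated but not actually established, and that is where the bulk of the paper's work lies.

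Concretely, after producing $C^{\ord}=C\cap\Spec R^{\ps,\ord}$ of dimension $\geq 1$ and choosing a one-dimensional prime $\kq\in C^{\ord}$, the paper runs a genuine three-way case analysis on the associated representation $\rho(\kq)$: generic reducible, irreducible, and non-generic reducible, and each case receives its own argument. You collapse these into the single claim that one may ``pick a characteristic-zero point $\mathfrak p'$ on it with $\rho(\mathfrak p')$ absolutely irreducible and $H^0(G_{F,\Sigma},\ad^0\rho(\mathfrak p')(1))=0$'' and then invoke Corollary~\ref{crucial}. This is precisely the nontrivial content: when $\rho(\kq)$ is irreducible the paper must push $\kq$ along the ordinary locus inside the Iwasawa algebra $\Lambda_F$ using the Weierstrass-preparation machinery (Lemmas~\ref{WPT}, \ref{com alg}, \ref{comalg1}) to land on a point with the required vanishing of $H^0$, and when $\rho(\kq)$ is non-generic reducible (i.e.\ $\psi_1/\psi_2=\varepsilon\theta$) the whole strategy changes: one cannot simply move to an irreducible point, and the paper switches to the $R_B$, $Z_B$ apparatus (Lemmas~\ref{RB}, \ref{ZB}, \ref{5.4.5} and the subsequent connectedness argument) following \cite[\S 7.4.5]{pan2022fontaine}. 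None of this is accounted for in your sketch, so the existence of $\mathfrak p'$ is an unjustified assumption rather than a consequence of the cited propositions.

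A second, smaller inaccuracy: your reason for concluding that the nice prime lies on the component $C$ — ``by normality of the local framed deformation rings and Proposition~\ref{4.2.3} this ordinary component is contained in $C$'' — is not how the paper proceeds and does not hold as stated (the ordinary locus need not sit inside a single global irreducible component). The paper instead proves and uses Lemma~\ref{promodular to nice}: once one exhibits a one-dimensional \emph{irreducible pro-modular} prime on $C$, a connectedness-dimension argument (Proposition~\ref{connect dim}) forces \emph{every} irreducible component of $R^{\ps}$ through it, in particular $C$, to be pro-modular. This lemma, not a containment claim, is what transmits modularity from the nice prime back to $\mathfrak p$.
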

	
	After a finite twist, we may assume $\bar{\chi}_1$ is trivial. Let $\Sigma$ be a finite set of places containing all the ramified finite places of $\rho$ and let $\Sigma_p$ be the set of primes above $p$. Write $\det \rho =\chi$ and $\bar{\chi}$ for its reduction modulo $\varpi$.
	
	By Grunwald-Wang's theorem (see \cite[Theorem 5, page 80]{artin}) and abelian base change, we may suppose the number field $F$ satisfies the following assumptions:
	\begin{itemize}
		\item $F$ is a totally real abelian extension of $\mathbb{Q}$ of even degree.
		\item $p$ splits completely in $F$.
		\item  For any $v \in \Sigma \setminus \Sigma_p$, $p| \operatorname{Nm}(v)-1$.
		\item $[F:\mathbb{Q}]>7|\Sigma \setminus \Sigma_p|+3$.
		\item  $\chi$ is unramified outside $ \Sigma_p$, and $\chi(\operatorname{Frob}_v) \equiv 1~\operatorname{mod}~\varpi$ for any $v \in \Sigma \setminus \Sigma_p$.

	\end{itemize}
	
	Consider the functor from $\mathfrak{U}_{\mathcal{O}}$ to the category of sets sending $R$ to the set of $2$-dimensional pseudo-representations of $G_{F,\Sigma}$ which lift $1+\bar{\chi}$ with determinant $\chi$. This is pro-represented by a CNL ring $R^{\ps}$ and $\tr\rho$ gives rise to a prime ideal $\kp\in\Spec R^{\ps}$.  As $F$ is an abelian totally real field, the Leopoldt's conjecture holds in this case, and hence the reducible locus of $R^{\ps}$ is of dimension at most $2$ (for example, see \cite[5.4.5]{pan2022fontaine}).
	
	We say that an irreducible component of $\Spec R^{\ps}$ is \textit{pro-modular}, if every prime of it is pro-modular. Equivalently, its generic point comes from some big Hecke algebra. 
	
	Compared to \cite[Theorem 7.1.1]{pan2022fontaine}, our theorem just removes the assumption $\bar{\chi}|_{G_{F_v}} \ne \omega^{\pm 1}$ for any $v|p$ if $p=3$. From now on, we assume $\bar{\chi}|_{G_{F_v}} = \omega^{- 1}$ for any $v|p$ and $p=3$.
	
	\begin{lem}\label{promodular to nice}
		Suppose that Assumption \ref{assumption} holds. Let $\mathfrak{r}$ be a one-dimensional irreducible pro-modular prime of $R^{\ps}$. Then every irreducible component of $R^{\ps}$ containing $\mathfrak{r}$ contains a nice prime, and hence is pro-modular.
	\end{lem}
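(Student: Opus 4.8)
The plan is to follow the strategy of \cite[Section 7.4]{pan2022fontaine}, the novelty being a dimension estimate supplied by the generalized Greenberg-Wiles formula Proposition \ref{Greenberg-Wiles}. First fix an irreducible component $C$ of $\Spec R^{\ps}$ containing $\mathfrak{r}$. By Remark \ref{belong} one may choose a system $\{\xi_v\}$ of characters so that $C$ is the image of an irreducible component $C'$ of $\Spec R^{\ps,\{\xi_v\}}$, and $\mathfrak{r}$ lifts to a one-dimensional, irreducible, pro-modular prime $\mathfrak{r}'$ of $C'$. Since a nice prime $\mathfrak{q}^{\ps}$ lying in $C'$ forces, via Theorem \ref{R=T} and the resulting $R=\mathbb{T}$ statement at that prime (the minimal primes of the localized ring all coming from $(\mathbb{T}_\xi)_{\mathfrak{m}_\xi}$), every component through $\mathfrak{q}^{\ps}$ — in particular $C'$, and hence its image $C$ — to be pro-modular, it suffices to produce such a nice prime in $C'$.

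To this end I would pass to the ordinary locus. Using the fact that the kernel of $R_v^{\ps}\twoheadrightarrow R_v^{\ps,\ord}$ is generated by two elements for each $v\mid p$ (Proposition \ref{two elements}), together with the bound $\dim C'\ge 1+2[F:\Q]$ (obtained from Corollary \ref{dim of hecke} applied to the Hecke component through $\mathfrak{r}'$ and transported to $C'$), one sees that $C'$ meets the locus of ordinary pseudo-deformations in dimension at least one. Discarding at this stage the dihedral part of $C$ (on which $\rho$ is automatically automorphic by automorphic induction, and which is treated separately), I would select in this intersection a characteristic-zero prime $\mathfrak{p}$ with $\rho(\mathfrak{p})$ absolutely irreducible, ordinary at every $v\mid p$, and such that $H^0(G_{F,\Sigma},\ad^0\rho(\mathfrak{p})(1))=0$ — the last vanishing being available precisely because $\mathfrak{p}$ has characteristic zero, so that $\ad^0\rho(\mathfrak{p})$ is a genuine three-dimensional representation.

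Now comes the key step. Let $\bar{\rho}_b$ be the non-split upper-triangular residual representation attached to a suitable lattice in $\rho(\mathfrak{p})$, and let $\mathfrak{q}$ be the prime of the universal ordinary deformation ring $R_b^{\operatorname{univ}}$ determined by $\rho(\mathfrak{p})$. Corollary \ref{crucial} then produces an irreducible component of $\Spec R_b^{\operatorname{univ}}$ through $\mathfrak{q}$ of dimension at least $1+[F:\Q]$. Transporting this through the isomorphism of completions of the relevant ordinary deformation rings (as in Remark \ref{dim not enough}, using Proposition \ref{Dkp}), I would deduce that there is an irreducible component $C_0$ of $\Spec R^{\ps,\ord}$ through $\mathfrak{p}$ of dimension at least $[F:\Q]+1$; since $R^{\ps,\ord}$ is finite over $\Lambda_F$ of dimension $[F:\Q]+1$ (Theorem \ref{ordinary case}), this dimension is exactly $[F:\Q]+1$. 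Because $[F:\Q]>7|\Sigma\setminus\Sigma_p|+3$, Lemma \ref{find nice}(2) now yields a nice prime $\mathfrak{q}^{\ps}$ in $C_0$; being ordinary, it satisfies the dihedral condition by Lemma \ref{nirred}(3), and, $\mathfrak{p}$ having been chosen to lie on no other component of $\Spec R^{\ps}$, we get $C_0\subseteq C$, so $\mathfrak{q}^{\ps}$ (or its lift) lies in $C'$. Combined with the reduction of the first paragraph, this proves the lemma.

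The main obstacle is the dimension count in the third step. At the residual level one has $H^0(G_{F,\Sigma},\ad^0\bar{\rho}_b(1))\ne 0$ when $\bar{\chi}=\omega^{-1}$ and $p=3$, so Proposition \ref{dim for global deformation ring} only gives the lower bound $[F:\Q]$ for the ordinary deformation ring, which is insufficient to feed into Lemma \ref{find nice} (this is exactly the difficulty recorded in Remark \ref{dim not enough}); one genuinely has to move to the characteristic-zero point $\mathfrak{p}$ and invoke Proposition \ref{Greenberg-Wiles} to recover the missing $+1$. Secondary technical points are the control of the dihedral locus and the verification that the ordinary component $C_0$ can be arranged to sit inside $C$.
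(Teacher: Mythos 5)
Your proposal takes a genuinely different route from the paper, and it has serious gaps that the paper's argument is specifically designed to avoid.

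The paper's own proof never passes to the ordinary locus. It is a pure connectedness argument on $R^{\ps}$: decompose the components of $(R^{\ps})_{\mathfrak{r}}$ into the pro-modular ones $Z_1$ and the rest $Z_2$; if both were nonempty, Proposition~\ref{connect dim}(2) would produce a pro-modular prime $\mathfrak{r}'$ of dimension at least $2[F:\Q]-1$ lying in a component of $Z_2$; running the quotienting-by-conditions argument from the proof of Lemma~\ref{find nice} directly on $\mathfrak{r}'$ inside $R^{\ps}$ (not on any ordinary quotient) then produces a prime $\mathfrak{r}''$ of dimension $\ge [F:\Q]+2$ containing $\varpi$, above which one finds a nice prime (handling the dihedral condition either via Lemma~\ref{nirred}(1) when $\bar\chi\ne\omega$, or by bounding the dihedral locus when $\bar\chi=\omega$); Theorem~\ref{R=T} then forces that component to be pro-modular, a contradiction. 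Finally, $Z_1\ne\emptyset$ is shown by applying the same specialization argument to a large pro-modular prime $\mathfrak{r}_1\subset\mathfrak{r}$ supplied by Corollary~\ref{dim of hecke}. Crucially, every prime produced this way contains a pro-modular prime, hence is automatically pro-modular because $\T_{\km}$ is finite over $R^{\ps}$; no ordinarity, no Greenberg--Wiles, and no Theorem~\ref{ordinary case} enter.

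Your plan instead tries to manufacture a nice prime on an arbitrary component $C$ through $\mathfrak{r}$ by intersecting $C$ with the ordinary locus and invoking Corollary~\ref{crucial}. Two of the steps do not go through. First, the bound $\dim C'\ge 1+2[F:\Q]$ for an \emph{arbitrary} component $C'$ through $\mathfrak{r}$ is not justified: Corollary~\ref{dim of hecke} gives the bound only for the Hecke component, and that Hecke component lies in \emph{some} $R^{\ps}$-component through $\mathfrak{r}$, not a priori in the one you fixed — transporting the bound to $C'$ is essentially the content of the lemma, so this is circular. (The paper's bound $\dim C\ge 2[F:\Q]+1$ in Section 5.1 is for the specific component through the characteristic-zero prime $\kp$, via Proposition~\ref{connect dim}(1), which is a different input.) Second, the step ``select a characteristic-zero prime $\mathfrak{p}$ in $C^{\ord}$ with $\rho(\mathfrak{p})$ absolutely irreducible and ordinary'' is exactly the hard part and cannot just be done: $C^{\ord}$ may have dimension exactly $1$, may be entirely of characteristic $p$, may be entirely reducible, or may be entirely dihedral. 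The paper spends Sections 5.2--5.4 (generic reducible / irreducible / non-generic reducible, with the Weierstrass-preparation gymnastics) precisely to produce \emph{one} good ordinary prime on the \emph{one} component $C$ through $\kp$, and then invokes Lemma~\ref{promodular to nice} to propagate pro-modularity. Your proposal inverts this logic: it tries to rerun that whole machine inside the lemma, for every component through $\mathfrak{r}$, which is neither needed nor available. You flag the $C_0\subseteq C$ issue as ``secondary,'' but the non-existence of a suitable $\mathfrak{p}$ on a general $C$ is the primary one and it is not secondary at all.

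If you want to repair the argument, the right move is to abandon the ordinary locus and instead use the connectedness dimension of $(R^{\ps})_{\mathfrak{r}}$ together with the observation that primes containing a pro-modular prime are pro-modular; that is what lets you cheaply convert a large-dimensional pro-modular prime into a nice prime and then apply Theorem~\ref{R=T}.
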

	
	\begin{proof}
		The proof is almost the same as \cite[Proposition 4.5.1]{Zhang2024}. The unique difference is that we need to use Corollary \ref{dim of hecke} instead of \cite[Theorem 3.6.1]{pan2022fontaine} in the proof of \cite[Proposition 4.5.1]{Zhang2024} to cover the case $p=3$.
	\end{proof}
	
	\begin{rem}\label{obstruction}
		In \cite[Section 7.4]{pan2022fontaine}, Pan's strategy is to find a ``potentially nice" prime in each irreducible component of $R^\ps$ of dimension at least $1+2[F:\mathbb{Q}]$. In his setting $(p \ge 5) $, Pan may assume $\bar{\chi}|_{G_{F_v}}= \omega \ne \omega^{-1}$, which implies that $\bar{\chi}$ is not quadratic. Thus, Pan may ignore the third condition in (2) of Definition \ref{nice} using the first part of Lemma \ref{nirred}. However, in our case (particularly $\bar{\chi}= \omega^{-1}$), $\bar{\chi}$ is possible to be quadratic, and it seems to be difficult to ensure the third condition to find nice primes.
		
		One of the differences between our method in this section and Pan's is that under our setting, we can find nice primes directly (using Lemma \ref{find nice}) rather than ``potentially nice" ones. Furthermore, using the previous lemma, we only need to find one-dimensional irreducible pro-modular primes in the irreducible component, which is more convenient.
	\end{rem}
	
	By Proposition \ref{connect dim}, we can choose an irreducible component $C$ of $\Spec R^\ps$ containing $\kp$ of dimension at least $2[F: \mathbb{Q}]+1$. Combining Theorem \ref{cor of R=T}, we only need to show that $C$ is pro-modular.
	
	\begin{rem}
		Actually, we can show that under our setting, each irreducible component of $R^\ps$ of dimension at least $2[F:\mathbb{Q}]+1$ is pro-modular. This generalizes the big $R=\mathbb{T}$ theorem studied in \cite{Zhang2024}.
	\end{rem}
	
	Consider the following map given by the universal property:
	\[R^{\ps}_p\to R^{\ps}\]
	where $R^{\ps}_p$ denotes the completed tensor product $\widehat\bigotimes_{v|p}R^{\ps}_v$ over $\cO$ and $R^{\ps}_v$ denotes the universal pseudo-deformation ring which parametrizes all $2$-dimensional pseudo-representations of $G_{F_v}$ that lift $(1+\bar{\chi})|_{G_{F_v}}$. Note that we do not fix the determinant here. Let $R^{\ps,\ord}_v$ be the quotient of $R^{\ps}_v$ that parametrizes all reducible liftings and $R^{\ps,\ord}_p$ be the completed tensor product of $R^{\ps,\ord}_v,v|p$. We denote
	\[R^{\ps,\ord}=R^{\ps}\otimes_{R^{\ps}_p}R^{\ps,\ord}_p;~C^{\ord}=\Spec R^{\ps,\ord}\cap C.\]
	
	\begin{lem}
		We have $\dim C^{\ord} \ge 1$.
	\end{lem}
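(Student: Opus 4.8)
The plan is to exhibit $C^{\ord}$ as a closed subscheme of $C$ cut out by only $2[F:\mathbb{Q}]$ equations, check that it is non-empty, and then apply Krull's height theorem.

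First I would record the (local) number of equations needed. Since $p$ splits completely, there are $[F:\mathbb{Q}]$ places above $p$, and by Proposition \ref{two elements} the kernel of $R^{\ps}_v \twoheadrightarrow R^{\ps,\ord}_v$ is generated by two elements for each such $v$. Hence the kernel of $R^{\ps}_p = \widehat{\bigotimes}_{v\mid p} R^{\ps}_v \twoheadrightarrow \widehat{\bigotimes}_{v\mid p} R^{\ps,\ord}_v = R^{\ps,\ord}_p$ is generated by $2[F:\mathbb{Q}]$ elements (lifts of these local generators, one batch of two for each $v$). Since $R^{\ps,\ord}=R^{\ps}\otimes_{R^{\ps}_p}R^{\ps,\ord}_p = R^{\ps}/I$, where $I$ is the ideal of $R^{\ps}$ generated by the image of that kernel, the ideal $I$ is generated by $2[F:\mathbb{Q}]$ elements. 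This is exactly the place where the new local input (Proposition \ref{two elements}, the $p=3$, $\bar\chi=\omega^{-1}$ analogue of Pan's Lemma 7.4.4) is indispensable: $R^{\ps}_v$ is not regular here, so a naive count via embedding dimensions would fail.

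Next I would verify $C^{\ord}\neq\varnothing$. Let $\mathfrak{m}=\mathfrak{m}_{R^{\ps}}$; it lies on every irreducible component of $\Spec R^{\ps}$, in particular on $C$. The composite $R^{\ps}_p \to R^{\ps} \to R^{\ps}/\mathfrak{m}=\mathbb{F}$ classifies the trivial deformation $(1+\bar\chi)|_{G_{F_v}}=\mathbf{1}+\bar\chi|_{G_{F_v}}$ of the residual local pseudo-representation, which is a sum of two characters and hence reducible ($y(\sigma,\tau)=0$ for all $\sigma,\tau$). Therefore this composite factors through $R^{\ps,\ord}_p$, which says precisely that $I\subseteq\mathfrak{m}$, i.e.\ $\mathfrak{m}\in\Spec R^{\ps,\ord}$. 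So $\mathfrak{m}\in C\cap\Spec R^{\ps,\ord}=C^{\ord}$.

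Finally I would run the dimension count. Let $\mathfrak{q}_C$ be the minimal prime of $R^{\ps}$ defining $C$ and let $\bar I$ be the image of $I$ in $R^{\ps}/\mathfrak{q}_C$, so that $C^{\ord}=\Spec\big((R^{\ps}/\mathfrak{q}_C)/\bar I\big)$ with $\bar I$ generated by $2[F:\mathbb{Q}]$ elements and proper by the previous paragraph. By the Cohen structure theorem $R^{\ps}$ is a quotient of a power series ring over $\mathcal{O}$, so $R^{\ps}/\mathfrak{q}_C$ is a local domain that is a quotient of a Cohen--Macaulay (indeed regular) local ring; hence the dimension formula $\dim = \operatorname{ht}(\mathfrak{p})+\dim(R^{\ps}/\mathfrak{q}_C)/\mathfrak{p}$ holds for all primes $\mathfrak{p}$. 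By Krull's generalized principal ideal theorem every minimal prime over $\bar I$ has height $\le 2[F:\mathbb{Q}]$ in $R^{\ps}/\mathfrak{q}_C$, so the corresponding component of $C^{\ord}$ has dimension at least $\dim C - 2[F:\mathbb{Q}]\ge (2[F:\mathbb{Q}]+1)-2[F:\mathbb{Q}]=1$, using $\dim C\ge 2[F:\mathbb{Q}]+1$ from Proposition \ref{connect dim}. Thus $\dim C^{\ord}\ge 1$. The only genuinely delicate ingredient is the bound ``$2[F:\mathbb{Q}]$ generators'' for $I$, which rests on Proposition \ref{two elements}; the non-emptiness and the height-theorem step are routine.
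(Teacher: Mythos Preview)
Your proof is correct and follows essentially the same approach as the paper: both use Proposition \ref{two elements} to bound the number of generators of the ideal cutting out the ordinary locus by $2[F:\mathbb{Q}]$, then combine Krull's principal ideal theorem with the bound $\dim C\ge 2[F:\mathbb{Q}]+1$ from Proposition \ref{connect dim}. Your version is somewhat more detailed (explicitly checking non-emptiness via the maximal ideal and invoking catenarity for the height-to-dimension passage), but these are points the paper leaves implicit rather than genuine differences in strategy.
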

	
	\begin{proof}
		For each $v|p$, by Proposition \ref{two elements}, the kernel of $R^{\ps}_v \twoheadrightarrow R^{\ps,\ord}_v$ can be generated by two elements. Hence the kernel of $R^{\ps} \to R^{\ps,\ord} $ can be generated by at most $2[F: \mathbb{Q}]$ elements. By Krull's principal ideal theorem, we have $\dim C^{\ord} \ge 1$.
	\end{proof}
	
	Choose a prime $\kq\in C^{\ord}$ such that $\dim R^{\ps}/\kq=1$. Enlarging $\cO$ if necessary, we may assume the normalization of $R^{\ps}/\kq$ is either $\cO$ or isomorphic to $\F[[T]]$. There are three possibilities for the associated semi-simple representation $\rho(\kq)$:
	\begin{enumerate}
		\item $\rho(\kq)\cong\psi_1\oplus \psi_2$ is reducible  and $\psi_1/\psi_2$ is not of the form $\varepsilon^{\pm 1}\theta$, where $\theta$ is a finite order character of $G_{F}$. We call this case \textit{generic reducible}.
		\item $\rho(\kq)$ is irreducible.
		\item $\rho(\kq)\cong\psi_1\oplus \psi_2$ and $\psi_1/\psi_2=\varepsilon\theta$, for some finite order character $\theta$ of $G_{F}$. We call this case \textit{non-generic reducible}.
	\end{enumerate}

	\subsection{The generic reducible case} In this subsection, we deal with the case $\rho(\kq)\cong\psi_1\oplus \psi_2$ is reducible  and $\psi_1/\psi_2$ is not of the form $\varepsilon^{\pm 1}\theta$, where $\theta$ is a finite order character of $G_{F}$.
	
	For any $v|p$, let $\kq_v^{\ord}\in \Spec R^{\ps,\ord}_v, \kq_v \in \Spec R^{\ps}_v$ be the pull-back of $\kq$. We first show that both $\kq_v^{\ord}$ and $ \kq_v$ are of dimension one. If $\varpi \notin \kq_v^{\ord}$, then it is clear. If $\varpi \in \kq_v^{\ord}$, then the image of $G_F$ under $\psi_1$ is infinite (otherwise $\kq$ is the maximal ideal by Theorem \ref{ordinary case}). As $F$ is an abelian totally real field, we know that Leopoldt's conjecture holds in this case, and hence the image of the composite map
	\[ O_{F_v}^{\times}\to G_{F_v}^{\mathrm{ab}}\to G_{F,\Sigma}^{\mathrm{ab}}(p)\]
	has finite index inside $G_{F,\Sigma}^{\mathrm{ab}}(p)$. This implies that the image of $\psi_1|_{G_{F_v}}$ is infinite as well, and hence $\kq_v^{\ord}$ (resp. $\kq_v$) is not the maximal ideal. Moreover, we also have $(\frac{\psi_1}{\psi_2})|_{G_{F_v}}\neq \varepsilon ^{\pm1}$ from the generic reducibility assumption.
	
	\begin{lem}
		We have $\dim C^{\ord} \ge [F:\mathbb{Q}]+1$ in this case.
	\end{lem}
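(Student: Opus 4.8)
The plan is to localize everything at the chosen one-dimensional prime $\kq$ and to exploit that, at each place $v\mid p$, the passage to the ordinary locus costs only \emph{one} equation near $\kq_v$ rather than two; this is precisely the content of Proposition \ref{principal}, and it is what improves the trivial bound $\dim C^{\ord}\ge 1$ of the previous lemma to $\dim C^{\ord}\ge[F:\Q]+1$. First I would check that Proposition \ref{principal} applies at each $v\mid p$: we have already observed that $\kq_v\in\Spec R^{\ps}_v$ and $\kq_v^{\ord}\in\Spec R^{\ps,\ord}_v$ are one-dimensional, that the associated local pseudo-deformation is $\psi_1|_{G_{F_v}}+\psi_2|_{G_{F_v}}$, and that $(\psi_1/\psi_2)|_{G_{F_v}}\neq\varepsilon^{\pm1}$; moreover the residual pseudo-representation $\mathbf 1+\bar\chi|_{G_{F_v}}=\mathbf 1+\omega$ is multiplicity free since $p=3$, so also $\psi_1|_{G_{F_v}}\neq\psi_2|_{G_{F_v}}$. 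Hence the kernel of $(R^{\ps}_v)_{\kq_v}\twoheadrightarrow(R^{\ps,\ord}_v)_{\kq_v^{\ord}}$ is principal.

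Next, writing $J=\ker(R^{\ps}_p\twoheadrightarrow R^{\ps,\ord}_p)$ and $\kq_p=\kq\cap R^{\ps}_p$, I would use that $R^{\ps}_p=\widehat\bigotimes_{v\mid p}R^{\ps}_v$ and $R^{\ps,\ord}_p=\widehat\bigotimes_{v\mid p}R^{\ps,\ord}_v$, so that $J$ is generated by the images of the local kernels $\ker(R^{\ps}_v\to R^{\ps,\ord}_v)$; localizing at $\kq_p$ and using $\kq_p\cap R^{\ps}_v=\kq_v$, the principal generators just produced show that $J\,(R^{\ps}_p)_{\kq_p}$ is generated by $[F:\Q]$ elements. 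With this in hand the conclusion is a dimension count. Let $\kq_C$ be the minimal prime of $R^{\ps}$ with $C=V(\kq_C)$, so $\bar C:=R^{\ps}/\kq_C$ is a CNL domain with $\dim\bar C=\dim C\ge 2[F:\Q]+1$. Since $R^{\ps,\ord}=R^{\ps}\otimes_{R^{\ps}_p}R^{\ps,\ord}_p=R^{\ps}/JR^{\ps}$, the closed subset $C^{\ord}$ has coordinate ring $\bar C/J\bar C$, hence $\dim C^{\ord}=\dim \bar C/J\bar C$. Localizing at the image $\bar\kq$ of $\kq$, the ideal $J\bar C_{\bar\kq}$ is the image of $J\,(R^{\ps}_p)_{\kq_p}$, hence generated by $[F:\Q]$ elements, and Krull's height theorem gives $\dim \bar C_{\bar\kq}/J\bar C_{\bar\kq}\ge\dim \bar C_{\bar\kq}-[F:\Q]$. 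Because $\bar C$ is a complete local domain, it is catenary and equidimensional, so $\dim \bar C_{\bar\kq}=\dim \bar C-\dim R^{\ps}/\kq=\dim C-1\ge 2[F:\Q]$; combining with $\dim \bar C/J\bar C\ge\dim(\bar C/J\bar C)_{\bar\kq}+\dim R^{\ps}/\kq$ and $\dim R^{\ps}/\kq=1$ yields $\dim C^{\ord}\ge[F:\Q]+1$.

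The only delicate point I anticipate is the passage in the second paragraph from ``the localized kernel is principal at each $v\mid p$'' to ``$J\,(R^{\ps}_p)_{\kq_p}$ needs only $[F:\Q]$ generators'', and then to ``$J\bar C_{\bar\kq}$ needs only $[F:\Q]$ generators'': one has to be slightly careful with the flatness/localization bookkeeping for completed tensor products and with the descent of generators along $R^{\ps}_p\to R^{\ps}\to\bar C$. Everything else is routine; in fact this argument is the precise analogue of how \cite[Lemma 7.4.8]{pan2022fontaine} improves on \cite[Lemma 7.4.4]{pan2022fontaine}, with Propositions \ref{two elements} and \ref{principal} playing the roles of their $p\ge5$ counterparts.
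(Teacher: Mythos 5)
Your argument is correct and follows the same route as the paper: localize at $\kq$, invoke Proposition \ref{principal} at each $v\mid p$ to see the local kernel becomes principal, deduce that the kernel of $(R^{\ps})_\kq \to (R^{\ps,\ord})_\kq$ is generated by at most $[F:\Q]$ elements, and conclude via Krull's principal ideal theorem together with $\dim C \ge 2[F:\Q]+1$. Your write-up is merely more explicit than the paper's about the completed-tensor-product bookkeeping and the final dimension count using that $\bar C$ is catenary and equidimensional.
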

	
	\begin{proof}
		By Proposition \ref{principal}, for any $v|p$, the kernel of $(R^{\ps}_v)_{\kq_v}\to (R^{\ps,\ord}_v)_{\kq_v^{\ord}}$ is principal. Localizing $C$ and $C^{\ord}$ at $\kq$ and using Krull's principal ideal theorem, we know that the kernel of $(R^{\ps})_{\kq}\to (R^{\ps,\ord})_{\kq}$ can be generated by at most $[F:\Q]$ elements. As $\dim C \ge 2[F: \mathbb{Q}]+1$ (by Proposition \ref{connect dim}), the localization of $C^{\ord}$ at $\kq$ is of dimension at least $[F:\mathbb{Q}]$. This proves the lemma.
	\end{proof}
	
	\begin{prop}
		Theorem \ref{mainthm} holds in this case.
	\end{prop}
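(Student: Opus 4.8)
The plan is to find a nice prime lying in the component $C$ and then conclude by Theorem \ref{cor of R=T}; the genericity hypothesis on $\rho(\kq)$ enters only through the dimension bound recorded in the preceding lemma.

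First I would verify that the hypotheses of Subsection \ref{ord section} hold for $(F,\chi)$: the character $\chi$ is totally odd, after the twist that makes $\bar{\chi}_1$ trivial one has $\bar{\chi}=\bar{\chi}_2=(\bar{\chi}_1/\bar{\chi}_2)^{-1}$, which extends to a character of $G_\Q$ by hypothesis, we are in the case $\bar{\chi}|_{G_{F_v}}=\omega^{-1}\neq\mathbf{1}$ for every $v\mid p$, and $\chi|_{G_{F_v}}=\det(\rho|_{G_{F_v}})$ is de Rham. Hence Theorem \ref{ordinary case} applies and shows that $R^{\ps,\ord}$ is a finite $\Lambda_F$-algebra; since $p$ splits completely in $F$ we have $\Lambda_F\cong\cO[[x_1,\dots,x_{[F:\Q]}]]$, of Krull dimension $[F:\Q]+1$, so $\dim R^{\ps,\ord}\le[F:\Q]+1$. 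As $C^{\ord}=C\cap\Spec R^{\ps,\ord}$ is a closed subset of $\Spec R^{\ps,\ord}$ of dimension at least $[F:\Q]+1$ by the lemma just proved, it follows that $\dim R^{\ps,\ord}=[F:\Q]+1$ and that $C^{\ord}$ contains an irreducible component $C_0$ of $\Spec R^{\ps,\ord}$ with $\dim C_0=[F:\Q]+1$.

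Next, the standing hypothesis $[F:\Q]>7|\Sigma\setminus\Sigma_p|+3$ gives in particular $[F:\Q]\ge 7|\Sigma\setminus\Sigma_p|+2$, so the second part of Lemma \ref{find nice} produces a nice prime $\mathfrak r$ in $C_0$. Regarded as a prime of $R^{\ps}$, $\mathfrak r$ lies in $C^{\ord}\subseteq C$ and, being nice, is one-dimensional with absolutely irreducible and pro-modular associated representation. Thus $C$ --- which by Remark \ref{belong} is an irreducible component of $\Spec R^{\ps,\{\xi_v\}}$ for the appropriate character $\xi$ --- contains both $\kp$ and the nice prime $\mathfrak r$, and since $\rho(\kp)=\rho$ is by hypothesis irreducible and de Rham of distinct Hodge--Tate weights at every place above $p$, Theorem \ref{cor of R=T} shows that $\rho$ arises from a regular algebraic cuspidal automorphic representation of $\GL_2(\A_F)$; undoing the initial twist yields the statement of Theorem \ref{mainthm}. (Alternatively, Lemma \ref{promodular to nice} shows that $C$ is pro-modular, whence $\kp$ is pro-modular and Corollary \ref{classicality} applies.)

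The only nontrivial point is the dimension bound $\dim C^{\ord}\ge[F:\Q]+1$ imported from the preceding lemma, which rests on Proposition \ref{principal} --- the principality of the kernel of $(R^{\ps}_v)_{\kq_v}\to(R^{\ps,\ord}_v)_{\kq_v^{\ord}}$ for each $v\mid p$ --- and this is exactly where the generic reducibility assumption $(\psi_1/\psi_2)|_{G_{F_v}}\neq\varepsilon^{\pm1}$ is essential. So the real difficulty of the residually reducible case lies not here but in the non-generic cases, where that kernel need not be principal and one must instead bound the dimension of a single irreducible component of $R_b^{\ord}$ using the generalized Greenberg--Wiles formula of Proposition \ref{Greenberg-Wiles}.
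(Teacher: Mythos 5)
Your proposal is correct and follows the same route as the paper's (very terse) proof: combine the dimension bound $\dim C^{\ord}\geq[F:\Q]+1$ from the previous lemma with the upper bound $\dim R^{\ps,\ord}\leq[F:\Q]+1$ coming from Theorem \ref{ordinary case}, locate an irreducible component $C_0\subseteq C^{\ord}$ of $\Spec R^{\ps,\ord}$ of top dimension, apply Lemma \ref{find nice} to produce a nice prime in $C_0\subseteq C$, and conclude by Theorem \ref{cor of R=T}. Your verification of the hypotheses of Subsection \ref{ord section} and the check that $C_0$ really is an irreducible component of $\Spec R^{\ps,\ord}$ usefully spell out details the paper leaves implicit.
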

	
	\begin{proof}
		From the discussions in the previous subsection, we only need to show that $C$ is pro-modular, and it follows from Theorem \ref{R=T}, Lemma \ref{find nice} and the previous lemma.
	\end{proof}
	
	\begin{rem}\label{generic red rem}
		From the arguments above, we can also conclude that if for any $v|p$, there exist characters $\psi_{v,1}$ and $\psi_{v,2}$ satisfying $(\tr \rho(\kq))|_{G_{F_v}} = \psi_{v,1} + \psi_{v,2}$ and $\frac{\psi_{v,1}}{\psi_{v,2}} \neq \varepsilon ^{\pm1}$, then Theorem \ref{mainthm} also holds.
	\end{rem}
	
	\subsection{The irreducible case} In this subsection, we handle the irreducible case, i.e. $\rho(\kq)$ is irreducible.
	
	We first assume $\bar{\chi} \ne \omega^{-1}$. In this situation, we can follow the stategy of the proof in \cite[Section 7.4.2]{pan2022fontaine}.
	
	\begin{lem}
		There exists an irreducible component $C'$ of $\Spec R^{\ps,\ord}$ containing $\kq$ and a nice prime.
	\end{lem}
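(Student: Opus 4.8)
The plan is to produce an irreducible component $C'$ of $\Spec R^{\ps,\ord}$ through $\kq$ of the maximal possible dimension $[F:\Q]+1$, and then to quote Lemma \ref{find nice} (2). Since $\rho(\kq)$ is irreducible, the lifting $\rho(\kq)^\circ$ attached to $\kq$ by Definition \ref{construction} has residual representation $\bar{\rho}_b := \rho(\kq)^\circ \bmod \mathfrak{m}_A$ a non-split extension of $\bar{\chi}$ by $\mathbf{1}$, classified by a non-zero $b \in H^1(G_{F,\Sigma}, \F(\bar{\chi}^{-1}))$. As $\kq \in C^{\ord}$, $\rho(\kq)$ is ordinary at every $v \mid p$, so by Lemma \ref{ordpseudo} (after possibly enlarging $\cO$) $\rho(\kq)|_{G_{F_v}}$ is upper triangular, its rank-one $G_{F_v}$-stable $A$-submodule reducing mod $\mathfrak{m}_A$ to a character $\bar{\psi}_{v,1} \in \{\mathbf{1}, \bar{\chi}|_{G_{F_v}}\}$. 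Put $\Sigma^o := \{v \mid p : \bar{\psi}_{v,1} = \mathbf{1}\}$; for $v \notin \Sigma^o$ the local representation $\bar{\rho}_b|_{G_{F_v}}$ admits the non-trivial character $\bar{\chi}|_{G_{F_v}}$ as a sub, hence splits and $b|_{G_{F_v}} = 0$, so $b \in H^1_{\Sigma^o}(F)$, $\kq$ comes from $\Spec R^{\ps,\ord}_{\Sigma^o}$, and $\rho(\kq)$ cuts out a one-dimensional prime $\kq_b$ of the universal ordinary deformation ring $R^{\ord}_b$.

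The heart of the matter is the lower bound $\dim (R^{\ps,\ord})_\kq \geq [F:\Q]$. Following Remark \ref{dim not enough}, a variant of Proposition \ref{Dkp} yields a natural isomorphism $\widehat{(R^{\ps,\ord}_{\Sigma^o})_\kq} \cong \widehat{(R^{\ord}_b)_{\kq_b}}$, while Proposition \ref{dim for global deformation ring} gives that every irreducible component of $R^{\ord}_b$ has dimension $\geq [F:\Q]+1$ as soon as $H^0(G_{F,\Sigma}, \ad^0\bar{\rho}_b(1)) = 0$. This cohomological vanishing is exactly where $\bar{\chi} \neq \omega^{-1}$ is used: the successive quotients of the natural three-step filtration of $\ad^0\bar{\rho}_b(1)$ are $\F(\bar{\chi}^{-1}\omega)$, $\F(\omega)$ and $\F(\bar{\chi}\omega)$, and since $p$ is odd one has $\omega \neq \mathbf{1}$, while since $p = 3$ the character $\omega$ is quadratic, so $\bar{\chi} \neq \omega^{-1}$ rules out both $\bar{\chi} = \omega$ and $\bar{\chi} = \omega^{-1}$; none of the three quotients is trivial, whence $H^0 = 0$. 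Therefore $\dim(R^{\ps,\ord}_{\Sigma^o})_\kq = \dim(R^{\ord}_b)_{\kq_b} \geq [F:\Q]$, and a fortiori $\dim(R^{\ps,\ord})_\kq \geq [F:\Q]$ because $R^{\ps,\ord} \twoheadrightarrow R^{\ps,\ord}_{\Sigma^o}$.

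Since $\dim R^{\ps,\ord}/\kq = 1$, this bound forces some irreducible component $C'$ of $\Spec R^{\ps,\ord}$ containing $\kq$ to have $\dim C' \geq [F:\Q]+1$; but $R^{\ps,\ord}$ is finite over $\Lambda_F$ by Theorem \ref{ordinary case} (1), and $\dim \Lambda_F = [F:\Q]+1$ as $p$ splits completely in $F$, so necessarily $\dim C' = [F:\Q]+1$. Now the standing hypothesis $[F:\Q] > 7|\Sigma \setminus \Sigma_p| + 3$ permits the application of Lemma \ref{find nice} (2) to $C'$, which yields a nice prime in $C'$; together with $\kq \in C'$ this is the component sought.

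The single genuinely delicate point is the vanishing $H^0(G_{F,\Sigma}, \ad^0\bar{\rho}_b(1)) = 0$ — it is precisely what fails when $\bar{\chi} = \omega^{-1}$, necessitating the separate later argument — while the rest is the dimension bookkeeping already built in Sections 2 and 4 and is insensitive to whether the residue characteristic of $\kq$ is $0$ or $p$.
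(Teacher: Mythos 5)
Your proof is correct and follows essentially the same route as the paper: define $\Sigma^o$ from the ordinary filtration of $\rho(\kq)$, pass to $R^{\ps,\ord}_{\Sigma^o}$ and the lattice $\rho(\kq)^o$ with reducible non-split reduction $\bar{\rho}_b$, invoke the completion isomorphism $\widehat{(R^{\ps,\ord}_{\Sigma^o})_{\kq}}\cong \widehat{(R^{\ord}_b)_{\kq_b}}$ together with Proposition \ref{dim for global deformation ring} via the vanishing $H^0(G_{F,\Sigma},\ad^0\bar{\rho}_b(1))=0$ (which indeed collapses to $\bar{\chi}\neq\omega^{-1}$ since $\omega$ is quadratic for $p=3$), and finish with Lemma \ref{find nice}. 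You make explicit a couple of points the paper leaves tacit --- that $b\in H^1_{\Sigma^o}(F)$, and that $\dim C'$ is exactly $[F:\Q]+1$ because $R^{\ps,\ord}$ is finite over $\Lambda_F$ --- but the argument is the same.
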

	
	\begin{proof}
		Let $\psi^{univ}_{v,1},\psi^{univ}_{v,2}:G_{F_v}\to (R^{\ps,\ord})^\times$ be the liftings of $\mathbf{1},\bar{\chi}|_{G_{F_v}}$ respectively. Let $\Sigma^o$ be the set of places $v|p$ such that
		\[\rho(\kq)|_{G_{F_v}}\cong \begin{pmatrix} \psi^{univ}_{v,1}\modd\kq & *\\ 0 & \psi^{univ}_{v,2}\modd \kq\end{pmatrix}\]
		and $\psi^{univ}_{v,1}$ is a lifting of $\mathbf{1}$.
		Recall that the universal pseudo-deformation ring $R^{\ps,\ord}_{\Sigma^o}$ is a quotient of $R^{\ps,\ord}$ parametrizing all pseudo-representations that are $\psi_{v,1}^{univ}$-ordinary (resp. $\psi_{v,2}^{univ}$-ordinary) if $v\in\Sigma^o$ (resp. $v\in\Sigma_p\setminus \Sigma^o$). We also view $\kq$ as a prime of $R^{\ps,\ord}_{\Sigma^o}$. 
		
		Choose a lattice $\rho(\kq)^o$ of $\rho(\kq)$ such that its residual representation $\bar\rho_b$ is a non-split extension of $\mathbf{1}$ by $\bar\chi$. Consider the universal ordinary deformation $R^{\ord}_b$ which parametrizes all deformations $\rho_b$ of $\bar\rho_b:G_{F,\Sigma}\to\GL_2(\F)$ with determinant $\chi$ such that for any $v|p$, $\rho_b|_{G_{F_v}}\cong \begin{pmatrix} \psi_{v,1} & *\\ 0 & *\end{pmatrix}$, where $\psi_{v,1}$ is a lifting of $\mathbf{1}$ (resp. $\bar\chi|_{G_{F_v}}$) if $v\in\Sigma^o$ (resp. $v\in\Sigma_p\setminus \Sigma^o$). Let $\kq_b$ be the prime of $R^{\ord}_b$ corresponding to $\rho(\kq)^o$. Using similar arguments as in Proposition \ref{Dkp}, we have an isomorphism \[\widehat{(R^{\ps,\ord}_{\Sigma^o})_{\kq}}\cong \widehat{(R^{\ord}_b)_{\kq_b}}.\]
		
		As $\bar{\chi} \ne \omega^{-1}$, we have $H^0(G_{F,\Sigma},\ad^0(\bar\rho_b(1)))=0$. Then by Proposition \ref{dim for global deformation ring}, we have $\dim (R^{\ps,\ord}_{\Sigma^o})_{\kq} = \dim (R^{\ord}_b)_{\kq_b}\geq [F:\Q].$ Then there exists an irreducible component $C'$ of $\Spec R^{\ps,\ord}_{\Sigma^o}$ (hence of $\Spec R^{\ps,\ord}$) containing $\kq$ of dimension at least $[F: \mathbb{Q}]+1$. Then the result follows from Lemma \ref{find nice}.
	\end{proof}
	
	\begin{prop}
		Theorem \ref{mainthm} holds if $\rho(\kq)$ is irreducible and $\bar{\chi} \ne \omega^{-1}$.
	\end{prop}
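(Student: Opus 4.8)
The plan is to upgrade the preceding lemma (which produces an ordinary component through $\kq$ carrying a nice prime) into pro-modularity of the component $C$, and then invoke the $R=\mathbb T$ theorem. First I would record that the component $C'$ of $\Spec R^{\ps,\ord}$ supplied by the previous lemma has dimension \emph{exactly} $[F:\Q]+1$: it contains $\kq$ and has dimension at least $[F:\Q]+1$ by that lemma, while $R^{\ps,\ord}$ is finite over the Iwasawa algebra $\Lambda_F$ (Theorem \ref{ordinary case}), which is a power series ring over $\cO$ of relative dimension $[F:\Q]$, so no component can have larger dimension. Since $p$ splits completely in $F$ and $[F:\Q]>7|\Sigma\setminus\Sigma_p|+3\ge 7|\Sigma\setminus\Sigma_p|+2$, Lemma \ref{find nice} applies to $C'$: it both contains a nice prime and has the property that \emph{every} prime of $C'$ is pro-modular. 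In particular $\kq$, viewed as a prime of $R^{\ps}$ (via $R^{\ps}\twoheadrightarrow R^{\ps,\ord}$), is pro-modular.

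Next I would observe that $\kq$ is a one-dimensional, irreducible, pro-modular prime of $R^{\ps}$: one-dimensional because $\kq$ was chosen with $\dim R^{\ps}/\kq=1$, and irreducible precisely because we are in the case that $\rho(\kq)$ is irreducible. Hence Lemma \ref{promodular to nice} applies with $\mathfrak r=\kq$, and it shows that every irreducible component of $\Spec R^{\ps}$ containing $\kq$ contains a nice prime and is pro-modular. As $\kq\in C^{\ord}\subseteq C$, the component $C$ is pro-modular. Finally, $C$ is an irreducible component of $\Spec R^{\ps}$, hence factors through some $R^{\ps,\{\xi_v\}}$ by Remark \ref{belong}, and it contains both $\kp$ (the prime attached to $\tr\rho$, which by hypothesis is irreducible and de Rham with distinct Hodge--Tate weights at every place above $p$) and a nice prime; so Theorem \ref{cor of R=T} (equivalently, the reduction recorded in the Setting) gives that $\rho=\rho(\kp)$ arises from a regular algebraic cuspidal automorphic representation of $\GL_2(\A_F)$, which is the assertion of Theorem \ref{mainthm} under the standing hypotheses of this subsection.

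I do not expect a genuine obstacle here: the present proposition is essentially an assembly of Lemma \ref{find nice}, Lemma \ref{promodular to nice}, and Theorem \ref{cor of R=T}. The one place where the hypothesis $\bar\chi\ne\omega^{-1}$ is really used was already dispatched in the preceding lemma, namely the vanishing $H^0(G_{F,\Sigma},\ad^0(\bar\rho_b(1)))=0$, which forces $\dim(R^{\ps,\ord}_{\Sigma^o})_{\kq}\ge[F:\Q]$ (Proposition \ref{dim for global deformation ring}) and hence the existence of the full-dimensional component $C'$. The genuinely delicate case is $\bar\chi=\omega^{-1}$, where that $H^0$ need not vanish and the ordinary component through $\kq$ is a priori too small; that case is treated separately, using the generalized Greenberg--Wiles formula of Proposition \ref{Greenberg-Wiles} to pin down the dimension of a suitable irreducible component of $R^{\ord}_b$.
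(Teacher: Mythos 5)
Your proof is correct and follows essentially the same route as the paper: the preceding lemma supplies an ordinary component $C'$ of $\Spec R^{\ps,\ord}$ containing $\kq$ together with a nice prime, Lemma \ref{find nice} then yields pro-modularity of $\kq$, and Lemma \ref{promodular to nice} upgrades this to pro-modularity of $C$, after which Theorem \ref{cor of R=T} concludes. Your remark that $\dim C'$ is exactly $[F:\Q]+1$ (forced by finiteness of $R^{\ps,\ord}$ over $\Lambda_F$) is a small but useful clarification that is left implicit in the paper when invoking Lemma \ref{find nice}.
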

	
	\begin{proof}
		From the previous lemma and Theorem \ref{R=T}, we know that $\kq$ is a one-dimensional irreducible pro-modular prime. Then by Lemma \ref{promodular to nice}, we know that $C$ is pro-modular, and hence $\kp$ is pro-modular.
	\end{proof}
	
	Now we assume $\bar{\chi}=\omega^{-1}$. Note that in this situation, the arguments above may not work because the lower bound of each irreducible component of $R_b^{\ord}$ containing $\kq$ is $[F: \mathbb{Q}]$, and we cannot use Theorem \ref{ordinary case} directly. See Remark \ref{dim not enough}.
	
	Keep notations as above. Then by Proposition \ref{dim for global deformation ring}, we have $\dim (R^{\ps,\ord}_{\Sigma^o})_{\kq} = \dim (R^{\ord}_b)_{\kq_b}\geq [F:\Q]-1.$ We choose an irreducible component $C_1^{\ord}$ of $\Spec R^{\ps,\ord}_{\Sigma^o}$ containing $\kq$ of dimension $[F: \mathbb{Q}]$ (if its dimension is at least $[F: \mathbb{Q}]+1$, then we can use the same proof as above), and suppose its generic point is $\kq_1$. Consider the natural maps $$
	\Lambda_F \to R^{\ps,\ord} \twoheadrightarrow R^{\ps,\ord}_{\Sigma^o}, $$
	where $ \Lambda_F$ is the Iwasawa algebra of Krull dimension $1+[F: \mathbb{Q}]$ defined in Section \ref{ord section}. Viewing $\kq_1$ as a prime of $ \Lambda_F$ via the composite map, using Theorem \ref{ordinary case}, we find that $\kq_1$ is a prime of $ \Lambda_F$ of height $1$.
	
	Write $\Lambda_F=\cO[[X_1, ..., X_d]]$, where $d=[F:\mathbb{Q}]$, $X_i$ corresponds to a generator of $O_{F_{v_i}}^\times(p)$ (the $p$-adic completion of $O_{F_{v_i}}^\times$) for $v_i |p$. Write $(\tr \rho(\kq))|_{G_{F_{v_i}}} = \psi_{v_{i},1} + \psi_{v_{i},2}$. If $\frac{\psi_{v_i,1}}{\psi_{v_i,2}} = \varepsilon ^{\pm1}$, then $X_i$ has to take a value in a finite subset $A_i$ of $\mathfrak{m}_{\bar{K}}$. The finiteness of the set $A_i$ is because the determinant is $\chi|_{G_{F_{v_{i}}}}$, and hence $\psi_{v_i,1}^2= (\chi \varepsilon)|_{G_{F_{v_{i}}}}$ or $(\chi \varepsilon^{-1})|_{G_{F_{v_{i}}}}$. Enlarging $\cO$ if necessary, we assume that $A_i$ is a subset of $\mathfrak{m}_K$.
	
	\begin{prop}
		Write $\kp_{i,j}$ for the height $1$ prime of $\Lambda_F$ generated by $X_i -a_{i,j}$, where $a_{i,j} \in A_i$. If $\kq_1 \ne \kp_{i,j}$ for any $i,j>0$, then we can find a prime $\kq_1'$ of $\Lambda_F$ of dimension $1$ such that $\kq_1 \subset \kq_1'$, and $\kp_{i,j} \nsubseteq \kq_1'$ for any $i,j>0$.
	\end{prop}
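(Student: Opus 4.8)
The plan is to reduce to a prime-avoidance argument in the complete local domain $R:=\Lambda_F/\kq_1$ and then pull back. First I would record the relevant geometry. Since $\Lambda_F=\cO[[X_1,\dots,X_d]]$ with $d=[F:\Q]$ is regular --- in particular Cohen--Macaulay, hence catenary and equidimensional --- of dimension $d+1$, and $\kq_1$ has height $1$, the ring $R$ is a complete local domain of dimension $d$, it is catenary, and $\operatorname{ht}\mathfrak p+\dim R/\mathfrak p=d$ for every prime $\mathfrak p$ of $R$; moreover $d\ge 2$ by the running assumption $[F:\Q]>7|\Sigma\setminus\Sigma_p|+3$. Each $\kp_{i,j}=(X_i-a_{i,j})$ is a prime ideal of height one, since $a_{i,j}\in\km_K$ makes $X_i-a_{i,j}\in\km_{\Lambda_F}$ and $\Lambda_F/\kp_{i,j}$ a power series ring over $\cO$, hence a domain of dimension $d$. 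As $\kq_1\neq\kp_{i,j}$ by hypothesis and two distinct primes of height one cannot be nested, $\kp_{i,j}\not\subseteq\kq_1$; hence the image $\bar f_{i,j}$ of $X_i-a_{i,j}$ in $R$ is nonzero, and therefore so is $g:=\prod_{i,j}\bar f_{i,j}\in R$.

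The main step is the following claim, which I would prove by induction on $\dim R$: if $R$ is a Noetherian catenary local domain with $\dim R\ge 2$ and $\operatorname{ht}\mathfrak p+\dim R/\mathfrak p=\dim R$ for all primes $\mathfrak p$, then for any nonzero $g\in R$ there is a prime $\mathfrak P$ with $\dim R/\mathfrak P=1$ and $g\notin\mathfrak P$. The induction rests on two elementary facts: (i) only finitely many height-one primes of $R$ contain $g$, since each such prime is a minimal prime of the principal ideal $(g)$ by Krull's principal ideal theorem; and (ii) a Noetherian domain of dimension $\ge 2$ has infinitely many height-one primes --- if there were only $\mathfrak p_1,\dots,\mathfrak p_n$, choose a height-two prime $\mathfrak P_0$, pick $0\neq x\in\mathfrak P_0$ avoiding $\bigcup_i\mathfrak p_i$ by prime avoidance (possible as $\mathfrak P_0$ lies in none of the $\mathfrak p_i$), and note that a minimal prime of $(x)$ is a new height-one prime. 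Combining (i) and (ii) gives a height-one prime $\mathfrak p_1\not\ni g$; if $\dim R=2$ we are done with $\mathfrak P=\mathfrak p_1$, and otherwise $R/\mathfrak p_1$ is again a domain of the same type with $\dim=d-1\ge 2$ and $\bar g\neq 0$, so the inductive hypothesis applies, the equality $\dim R/\mathfrak P=1$ being exact because $R$ is catenary with the stated height identity.

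Applying the claim to our $R$ and $g$ produces a prime $\mathfrak P$ of $R$ with $\dim R/\mathfrak P=1$ and $g\notin\mathfrak P$; let $\kq_1'\subset\Lambda_F$ be its preimage. Then $\kq_1\subseteq\kq_1'$, the containment is strict since $\dim\Lambda_F/\kq_1'=1<d=\dim\Lambda_F/\kq_1$, and $\dim\Lambda_F/\kq_1'=1$. Finally $g\notin\mathfrak P$ forces $\bar f_{i,j}\notin\mathfrak P$ for every $i,j$, i.e. $X_i-a_{i,j}\notin\kq_1'$, which is precisely $\kp_{i,j}\not\subseteq\kq_1'$. The only delicate point --- and the one I would state carefully at the outset rather than revisit at each inductive step --- is the exact dimension bookkeeping: that a height-one prime $\mathfrak p_1$ of $R$ satisfies $\dim R/\mathfrak p_1=\dim R-1$ and that this property descends to $R/\mathfrak p_1$; this is exactly where the Cohen--Macaulayness of $\Lambda_F$, transferred to its quotients through catenariness, gets used.
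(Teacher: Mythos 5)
Your argument is correct, and it is genuinely different from the one in the paper. The paper's proof exploits the explicit power-series structure of $\Lambda_F$: since $\Lambda_F$ is a regular local ring and hence a UFD, the height-one prime $\kq_1$ is principal, generated by some $F$; the paper then proves a Weierstrass-preparation-type statement (the analogue of your Lemma, a lengthy induction resting on the Weierstrass preparation theorem and an elementary vanishing lemma for power series evaluated on $\mathfrak{m}_{\bar K}^d$) to the effect that a power series vanishing only at points with some coordinate in a prescribed finite set must be, up to a unit, a monomial in $\varpi$ and the factors $X_i-a_{i,j}$; since $F$ is irreducible and equal to none of these, $F$ must vanish at a point of $\mathfrak{m}_{\bar K}^d$ avoiding all $A_i$, and the kernel of evaluation at that point is $\kq_1'$ (the case $\kq_1=(\varpi)$ is handled separately by an explicit choice). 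Your proof instead passes to the complete local domain $R=\Lambda_F/\kq_1$ and runs a purely ring-theoretic prime-avoidance induction: because $\Lambda_F$ is Cohen--Macaulay hence universally catenary, $R$ is a catenary local domain, so the height identity $\operatorname{ht}\mathfrak p+\dim R/\mathfrak p=\dim R$ holds (this is in fact automatic for a catenary local domain, so you could drop it as a separate hypothesis in your claim); only finitely many height-one primes contain $g=\prod\bar f_{i,j}$ (Krull), while there are infinitely many height-one primes (your prime-avoidance argument with $\mathfrak P_0=\mathfrak m$ works and does not require producing a prime of height exactly two), so one can peel off height-one primes avoiding $g$ until reaching dimension one. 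What your route buys is uniformity (no case split on $\kq_1=(\varpi)$), independence from the explicit coordinates on $\Lambda_F$, and avoidance of the two power-series lemmas the paper must first establish; what the paper's route buys is an explicit closed point of $\Spec\Lambda_F[\tfrac1p]$ in the generic case, though the surrounding argument in the paper does not actually use that extra explicitness.
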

	
	\begin{proof}
		We can prove this result by induction on $d$ based on the Weierstrass preparation theorem (\cite[Theorem 2.2, page 130]{Lang_1990}), but here we give an easy proof.
			
			Consider the CNL ring $\Lambda_F/\kq_1$. If our result is not true, then by \cite[Lemma 3.2.5]{Zhang2024}, the closed subsets (of $\Spec \Lambda_F$) $V(\kp_{i,j}) \cap V(\kq_1)$ form a cover of $\Spec (\Lambda_F/\kq_1) = V (\kq_1)$. In other words, we have $ V (\kq_1) \subset \cup_{i,j} V(\kp_{i,j})$. As $ \kq_1$ and $\kp_{i,j}$ are of height one, we have $\kq_1 = \kp_{i,j}$ for some $i,j>0$, which is against to our assumption.
	\end{proof}
	
	Now we may assume $\kq_1=(X_i - a_{i,j})$ for some $i,j>0$. Otherwise, using the previous proposition, Theorem \ref{ordinary case} and the going-up property, we can find a one-dimensional prime $\kq_1''$ of $\Spec R^{\ps,\ord}_{\Sigma^o}$ such that 
	for any $v|p$, if we write $(\tr \rho(\kq_1''))|_{G_{F_v}} = \psi_{v,1} + \psi_{v,2}$, then $\frac{\psi_{v,1}}{\psi_{v,2}} \neq \varepsilon ^{\pm1}$. Arguing as the generic reducible case, Theorem \ref{mainthm} holds in this case. See Remark \ref{generic red rem}.
	
	Choose a one-dimensional prime $\kq_2$ of $\Lambda_F$ containing $\kq_1$ such that for any $t \ne i$, $X_t - a_{t, j} \notin \kq_2$. By the going-up property, we can find a one-dimensional prime $\kq_2' $ contained in $C_1^{\ord}$. Arguing as in the previous subsection, we know that $\rho(\kq_2')$ has to be irreducible. From our construction, 
	if we write $(\tr \rho(\kq_2'))|_{G_{F_v}} = \psi_{v,1} + \psi_{v,2}$, then we have $\frac{\psi_{v,1}}{\psi_{v,2}} \neq \varepsilon ^{\pm1}$ for any $v \ne v_i$. This implies $H^0(G_{F, \Sigma}, \ad^0 \rho(\kq_2')(1))=0$. 
	
	\begin{prop}
		Theorem \ref{mainthm} holds if $\bar{\chi}=\omega^{-1}$ and $\rho(\kq)$ is irrreducible.
	\end{prop}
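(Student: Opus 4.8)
The plan is to exhibit a one–dimensional pro-modular prime inside the fixed component $C$ and then conclude by Lemma \ref{promodular to nice}. Recall from the construction above the one–dimensional prime $\kq_2'$ lying in $C_1^{\ord}\subseteq\Spec R^{\ps,\ord}_{\Sigma^o}$: the associated semi-simple representation $\rho(\kq_2')$ is irreducible, and $H^0(G_{F,\Sigma},\ad^0\rho(\kq_2')(1))=0$. After possibly replacing $\kq_2$ by another one–dimensional prime of $\Lambda_F$ with $\varpi\notin\kq_2$ (still containing $\kq_1$ and avoiding $X_t-a_{t,j}$ for $t\neq i$) we may assume in addition that $R^{\ps,\ord}_{\Sigma^o}/\kq_2'$ is of characteristic zero, so that $k(\kq_2')$ is a $p$-adic local field. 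As in the irreducible case treated above, $\rho(\kq_2')$ is ordinary at each $v\mid p$ by Lemma \ref{ordpseudo}; choosing a lattice $\rho(\kq_2')^\circ$ whose reduction $\bar{\rho}_{b'}$ is a non-split extension of $\mathbf 1$ by $\bar{\chi}$ (which exists since $\rho(\kq_2')$ is irreducible with residual pseudo-representation $1+\bar{\chi}$, the class $b'$ being $b$ up to a nonzero scalar), the argument of Proposition \ref{Dkp} gives an isomorphism $\widehat{(R^{\ps,\ord}_{\Sigma^o})_{\kq_2'}}\cong\widehat{(R^{\ord}_{b'})_{\kq_{2,b'}}}$, and near $\kq_{2,b'}$ the ring $R^{\ord}_{b'}$ coincides with the ordinary lifting ring $R_{b'}^{\operatorname{univ}}$ of Section 2.5, because $\bar{\psi}_{v,1}\neq\bar{\psi}_{v,2}$ for every $v\mid p$ (the latter since $\bar{\chi}=\omega^{-1}\neq\mathbf 1$ for $p=3$).

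Next I would feed this into the generalized Greenberg--Wiles estimate. Since $\rho(\kq_2')$ is irreducible, $H^0(G_{F,\Sigma},\ad^0\rho(\kq_2')(1))=0$, and $k(\kq_2')$ is a $p$-adic local field, Corollary \ref{crucial} produces an irreducible component of $\Spec R_{b'}^{\operatorname{univ}}$ through $\kq_{2,b'}$ of dimension at least $1+[F:\Q]$; as $R_{b'}^{\operatorname{univ}}/\kq_{2,b'}$ is one–dimensional this forces $\dim(R_{b'}^{\operatorname{univ}})_{\kq_{2,b'}}\geq[F:\Q]$, hence, by invariance of dimension under completion together with the isomorphism of the previous paragraph, $\dim(R^{\ps,\ord}_{\Sigma^o})_{\kq_2'}\geq[F:\Q]$. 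Since $R^{\ps,\ord}_{\Sigma^o}$ is a quotient of a complete Noetherian local ring, hence catenary, and $R^{\ps,\ord}_{\Sigma^o}/\kq_2'$ is one–dimensional, there is an irreducible component $C_2^{\ord}$ of $\Spec R^{\ps,\ord}_{\Sigma^o}$ through $\kq_2'$ with $\dim C_2^{\ord}\geq[F:\Q]+1$; by Theorem \ref{ordinary case} the ring $R^{\ps,\ord}$ (hence its quotient $R^{\ps,\ord}_{\Sigma^o}$) is finite over the $([F:\Q]+1)$-dimensional ring $\Lambda_F$, so $\dim C_2^{\ord}=[F:\Q]+1$ and $C_2^{\ord}$ is in fact an irreducible component of $\Spec R^{\ps,\ord}$. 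Lemma \ref{find nice}(1) then applies to $C_2^{\ord}$: every prime of $C_2^{\ord}$ is pro-modular, so in particular $\kq_2'$ is a one–dimensional pro-modular prime with $\rho(\kq_2')$ irreducible.

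It remains to transport pro-modularity to $C$. By Lemma \ref{promodular to nice}, since $\rho(\kq_2')$ is absolutely irreducible, every irreducible component of $\Spec R^{\ps}$ containing $\kq_2'$ is pro-modular. Viewing $C_1^{\ord}$ as an irreducible closed subset of $\Spec R^{\ps}$ via the closed immersion $\Spec R^{\ps,\ord}_{\Sigma^o}\hookrightarrow\Spec R^{\ps}$, it contains both $\kq_2'$ and the original prime $\kq$, hence lies in some irreducible component $C^{\dagger}$ of $\Spec R^{\ps}$; by the preceding sentence $C^{\dagger}$ is pro-modular, and therefore the prime $\kq\in C^{\dagger}$ is pro-modular. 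Finally $\rho(\kq)$ is irreducible by hypothesis and $\dim R^{\ps}/\kq=1$, so Lemma \ref{promodular to nice} applied to $\kq$ shows that every irreducible component of $\Spec R^{\ps}$ containing $\kq$ is pro-modular; in particular $C$ is pro-modular. Exactly as in the preceding cases (using, as in the proof of Lemma \ref{promodular to nice} and the bound $[F:\Q]>7|\Sigma\setminus\Sigma_p|+3$, that the pro-modular component $C$, of dimension $\geq 2[F:\Q]+1$, contains a nice prime), Theorem \ref{cor of R=T} now yields that $\rho=\rho(\kp)$ arises from a regular algebraic cuspidal automorphic representation of $\GL_2(\A_F)$, which is Theorem \ref{mainthm}.

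The delicate point is the dimension count in the second paragraph, and it is precisely here that the hypothesis $\bar{\chi}=\omega^{-1}$ bites: the standard estimate of Proposition \ref{dim for global deformation ring} only bounds the components of $R^{\ord}_{b'}$ from below by $[F:\Q]$, because $H^0(G_{F,\Sigma},\ad^0\bar{\rho}_{b'}(1))\neq 0$ when $\bar{\chi}=\omega^{-1}$; moreover the original prime $\kq$ may be of characteristic $p$, where no analogue of Corollary \ref{crucial} is available. The device that saves the day is the passage to the characteristic-zero prime $\kq_2'$ — whose existence, with the crucial vanishing $H^0(G_{F,\Sigma},\ad^0\rho(\kq_2')(1))=0$, was arranged by the $\Lambda_F$-theoretic argument above avoiding the loci $X_i=a_{i,j}$ — combined with the generalized Greenberg--Wiles formula (Proposition \ref{Greenberg-Wiles}, applied through Corollary \ref{crucial}), which recovers the one missing unit of dimension and thereby forces a $([F:\Q]+1)$-dimensional, and hence pro-modular, component of $R^{\ps,\ord}$.
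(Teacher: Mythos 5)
Your argument is correct and follows the paper's own proof essentially step by step: apply Corollary \ref{crucial} through the isomorphism $\widehat{(R^{\ps,\ord}_{\Sigma^o})_{\kq_2'}}\cong\widehat{(R^{\ord}_{b'})_{\kq_{b'}}}$ to get $\dim(R^{\ps,\ord}_{\Sigma^o})_{\kq_2'}\ge [F:\Q]$, conclude that $\kq_2'$ lies on a $([F:\Q]+1)$-dimensional component of $\Spec R^{\ps,\ord}$ and hence is pro-modular by Lemma \ref{find nice}, then transport pro-modularity from $\kq_2'$ to $\kq$ and finally to $C$ by two applications of Lemma \ref{promodular to nice}. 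You make explicit two points the paper leaves implicit and which are worth recording: that $\kq_2$ can and should be chosen with $\varpi\notin\kq_2$ (by specializing the remaining variables $X_t$, $t\neq i$, to suitable elements of $\mathfrak m_{\cO}$), so that $k(\kq_2')$ is a $p$-adic local field and Corollary \ref{crucial} applies as stated; and that the ring $R^{\ord}_{b'}$ of Section 4 is identified near $\kq_{b'}$ with the ring $R^{\operatorname{univ}}_{b'}$ of Section 2.5 to which Corollary \ref{crucial} literally refers. One small inaccuracy: the parenthetical ``the class $b'$ being $b$ up to a nonzero scalar'' is neither needed for the argument nor clearly true (the residual extension class may vary over $C_1^{\ord}$); all that is used is that $b'$ is nonzero, which follows from the irreducibility of $\rho(\kq_2')$. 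Also, your opening sentence announcing a pro-modular prime ``inside the fixed component $C$'' is a slight mis-statement — $\kq_2'$ need not lie in $C$ — but your actual third paragraph correctly passes from $\kq_2'$ to a component $C^\dagger$ containing it, then to $\kq$, and only then to $C$.
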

	
	\begin{proof}
		Let $\bar{\rho_{b'}}$ be the residue representation of $\rho(\kq_2')$, and we consider its (unframed) universal ordinary deformation $R_{b'}^{\ord}$. Assume the representation $\rho(\kq_2')$ defines a prime $\kq_{b'}$ of $R_{b'}^{\ord}$, and we have an isomorphism  \[\widehat{(R^{\ps,\ord}_{\Sigma^o})_{\kq_2'}}\cong \widehat{(R^{\ord}_{b'})_{\kq_{b'}}}.\] 
		By Proposition \ref{crucial}, we have $\dim \widehat{(R^{\ord}_{b'})_{\kq_{b'}}} \ge [F:\mathbb{Q}]$, and hence $\dim (R^{\ps,\ord}_{\Sigma^o})_{\kq_2'} \ge [F:\mathbb{Q}]$. Then by Theorem \ref{R=T} and Lemma \ref{find nice}, the prime $\kq_2' $ is pro-modular. Using Lemma \ref{promodular to nice}, each irreducible component of $\Spec R^\ps$ containing $\kq_2' $ is pro-modular. In particular, $\kq_1$ is pro-modular, and hence $\kq$ is pro-modular. Using Lemma \ref{promodular to nice} again,  we know the pro-modularity of $\kp$.
		
	\end{proof}
	
	\begin{rem}\label{ordinary R=T}
		Another possible way to prove this case is as follows. 
		
		If the degree $[F: \mathbb{Q}]$ is sufficiently large (in particular greater than $\dim H^1_{\Sigma^o}$), then by \cite[Corollary 5.5.2]{pan2022fontaine}, any one-dimensional irreducible prime is pro-modular. (Note that here we need to use an $R=\mathbb{T}$ theorem in the ordinary case. See \cite[Proposition 5.3.3]{pan2022fontaine}. In Pan's setting, there is an assumption $[F_v : \mathbb{Q}_p] \ge 2$ for any $v|p$, which is used to prove \cite[Lemma 5.3.4]{pan2022fontaine}. However, we may use Proposition \ref{framed loc ord} instead in our case. Then it may be possible to remove this assumption.) Then by Lemma \ref{promodular to nice}, we obtain the pro-modularity of $\kq$ and $\kp$.
		Thus, we just need to find an abelian base change $F_1/F$ with larger degree (than the dimension of the Selmer group, or equivalently the order of the $p$-ideal class group) and prescribed behavior at each place in $\Sigma$. 
		
		In the ordinary case, we can use \cite[Theorem]{Washington_1978} to find a suitable base change. However, in the non-ordinary case, we need the assumption that $p$ splits completely in $F$ (to use the $p$-adic Langlands correspondence), and it is difficult to use \cite[Theorem]{Washington_1978} in our case.
		
		In \cite[Theorem B]{Skinner_1999}, the authors give an assumption on the $p$-rank of the relative class group of $L/F$ instead of \cite{Washington_1978}, where $L$ is some CM extension of $F$. See \cite[Hypothesis H]{Skinner_1999}. Such hypothesis is studied for imaginary quadratic fields in  \cite{wiles2015}. If one can show analogous results for general abelian CM fields, then we may prove the irreducible case by the previous arguments.
		
		A similar strategy will work in the non-generic reducible case as we can bound the dimension of the cohomology group easily.
		\end{rem}

	\subsection{The non-generic reducible case}\label{sec non-generic} In this subsection, we cope with the case $\rho(\kq) \cong \psi_1 \oplus \psi_2$, $\psi_1/\psi_2=\varepsilon\theta$ for some finite order character $\theta$ of $G_{F}$. In this case, the proof is nearly the same as \cite[Section 7.4.5]{pan2022fontaine}. We briefly recall Pan's proof here.
	
	As $\theta$ is of finite order, we know that $\kq$ is of characteristic $0$. Otherwise, $\kq$ is the maximal ideal of $R^\ps$. Enlarging $\cO$ if necessary, we may assume $k(\kq) =K$. The following lemma is crucial.
	
	\begin{lem}\label{Selmer}
		The natural restriction map
		\[H^1(G_{F,\Sigma},K(\psi_2/\psi_1))\to \bigoplus_{v|p} H^1(G_{F_v},K(\psi_2/\psi_1))\]
		is an isomorphism. In particular, $\dim_K H^1(G_{F,\Sigma},K(\psi_2/\psi_1))=[F:\Q]$.
	\end{lem}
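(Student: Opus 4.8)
The plan is to compute both sides of the restriction map using the global and local Euler characteristic formulae, and to show the kernel and the cokernel vanish by a Poitou--Tate argument. Write $M = K(\psi_2/\psi_1)$. Since $\psi_1/\psi_2 = \varepsilon\theta$ with $\theta$ of finite order, we have $\psi_2/\psi_1 = \varepsilon^{-1}\theta^{-1}$, so $M$ has Hodge--Tate weight $1$ at each $v\mid p$ and $M(1) = K(\theta^{-1})$ is a finite-order (hence in particular not containing the trivial representation unless $\theta$ is trivial, which is excluded by irreducibility of $\rho(\kq)$ — here one must be slightly careful and use that $\psi_1 \ne \psi_2$ since $\rho(\kq)$ is a genuine sum of two \emph{distinct} characters, so $\theta \ne \mathbf 1$). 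This gives $H^0(G_{F,\Sigma}, M) = 0$ and $H^0(G_{F,\Sigma}, M^\ast(1)) = H^0(G_{F,\Sigma}, K(\theta)) = 0$, where $M^\ast$ denotes the Tate dual $K(\psi_1/\psi_2)$.

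First I would identify the global Selmer-type group $H^1(G_{F,\Sigma}, M)$ with the kernel of the very restriction map in the statement is \emph{not} automatic; rather, one applies the Poitou--Tate nine-term exact sequence to the dual Selmer condition. Concretely, let $H^1_{\mathcal L}(G_{F,\Sigma}, M^\ast(1))$ be the group of classes that are trivial at every $v\mid p$ and unrestricted elsewhere in $\Sigma$; its dual controls the cokernel of our restriction map at $p$. Using the global Euler characteristic formula (Theorem \ref{global char for}), $\dim_K H^1(G_{F,\Sigma}, M) - \dim_K H^2(G_{F,\Sigma}, M) = [F:\Q] + \sum_{v\mid\infty}\dim_K M^{G_{F_v}}$, and since $\psi_2/\psi_1$ has the same parity as $\varepsilon^{-1}$ (totally odd), $M^{G_{F_v}} = 0$ at each real place, giving $\dim_K H^1(G_{F,\Sigma}, M) - \dim_K H^2(G_{F,\Sigma}, M) = [F:\Q]$. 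The local computation at $v\mid p$: by the local Euler characteristic formula (Theorem \ref{local char for}), $\dim_K H^1(G_{F_v}, M) = [F_v:\Q_p] + \dim_K H^0(G_{F_v}, M) + \dim_K H^2(G_{F_v}, M) = 1 + 0 + \dim_K H^0(G_{F_v}, M^\ast(1))$, and $M^\ast(1)|_{G_{F_v}} = K(\theta)|_{G_{F_v}}$ is the trivial character at $v$ (as $\psi_1/\psi_2 = \varepsilon\theta$ is crystalline there — no, more simply $\theta$ unramified at $p$ need not hold) so in fact I would argue $H^2(G_{F_v}, M) = 0$ directly by local duality since $M^\ast(1)|_{G_{F_v}}$ has no $G_{F_v}$-invariants (the representation $\rho(\kq)|_{G_{F_v}}$ being a genuine extension/sum forces $\psi_1|_{G_{F_v}} \ne \psi_2|_{G_{F_v}}$). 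Hence $\dim_K H^1(G_{F_v}, M) = 1$ for each $v\mid p$, so $\dim_K \bigoplus_{v\mid p} H^1(G_{F_v}, M) = [F:\Q]$, matching the global dimension.

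The heart of the argument is then to show the map is injective, equivalently that the Selmer group $H^1_{\mathcal L}(G_{F,\Sigma}, M)$ (classes trivial at all $v\mid p$) vanishes; by the dimension count it is equivalent to show the dual Selmer group $H^1_{\mathcal L^\perp}(G_{F,\Sigma}, M^\ast(1))$ vanishes, where $\mathcal L^\perp$ allows arbitrary ramification at $v\mid p$ and is unramified outside. Since $M^\ast(1) = K(\theta)$ with $\theta$ of finite order, a class in this dual Selmer group cuts out an abelian pro-$p$ extension of the CM (or abelian) field $F(\theta)$ unramified outside $p$; because $F$ is abelian over $\Q$, $F(\theta)$ is an abelian number field and Leopoldt's conjecture is known for it, which bounds the relevant $\Z_p$-rank and forces the dual Selmer group to vanish once the ramification at $p$ is correctly accounted (this is exactly the place where the hypothesis that $F$ is abelian totally real is used, as in \cite[5.4.5]{pan2022fontaine}). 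I expect this last step — pinning down that the global-to-local map at $p$ is surjective, i.e. that there are no everywhere-unramified-outside-$p$ classes in $H^1(G_{F,\Sigma}, K(\theta))$ surviving the local conditions — to be the main obstacle, and I would handle it via Leopoldt for $F(\theta)$ exactly as Pan does. Once injectivity is established, surjectivity follows from the equality of dimensions just computed, completing the proof that the restriction map is an isomorphism and that $\dim_K H^1(G_{F,\Sigma}, M) = [F:\Q]$.
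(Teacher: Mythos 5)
The overall strategy (Euler characteristics at finite places, oddness at infinite places, a Poitou--Tate / Greenberg--Wiles dimension count, then Leopoldt for the injectivity) is plausibly the same as Pan's, but your execution has a genuine and load-bearing error in identifying the Poitou--Tate dual. For $M = K(\psi_2/\psi_1) = K(\varepsilon^{-1}\theta^{-1})$ the duality partner is $M^{D} := \Hom_K(M,K)(1) = K((\psi_1/\psi_2)\varepsilon) = K(\varepsilon^2\theta)$, not $K(\theta)$. You seem to have conflated $M(1) = K(\theta^{-1})$ (which is indeed finite order) with $M^{\ast}(1)$; the former does not control the cokernel of the restriction map. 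Consequently the culminating step, where you argue that a class in the dual Selmer group cuts out a pro-$p$ abelian extension of the finite field $F(\theta)$ and then appeal to Leopoldt for $F(\theta)$, does not apply as written: the correct dual module carries the infinite-order twist $\varepsilon^2$, and a cocycle valued in $K(\varepsilon^2\theta)$ does not define an abelian $p$-extension of $F(\theta)$ unramified outside $p$ in the naive sense. This is precisely the part you flag as "the main obstacle," and it needs a genuinely different mechanism (for instance, relating $H^1(G_{F,\Sigma}, K(\varepsilon^2\theta))$ to a Soul\'e-type computation, or reducing it back to a statement about $\Q_p(1)$-cohomology after untwisting), not a direct repeat of the usual Leopoldt statement.

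There are several secondary gaps as well. You never establish $H^2(G_{F,\Sigma},M)=0$, which you need to pass from $\dim H^1 = [F:\Q]+\dim H^2$ to $\dim H^1 = [F:\Q]$; the dimension count alone is not enough. You also do not control the local contributions at $v\in\Sigma\setminus\Sigma_p$: in the Greenberg--Wiles formula one picks up $\sum_{v\in\Sigma\setminus\Sigma_p}\dim H^0(G_{F_v},M^{D})$, and the proposal should either show these vanish or absorb them into the argument. Finally, the parenthetical claim that $\theta\ne\mathbf{1}$ "is excluded by irreducibility of $\rho(\kq)$" is incorrect: we are in the non-generic \emph{reducible} case, $\rho(\kq)\cong\psi_1\oplus\psi_2$ is reducible by hypothesis, and in fact $\theta=\mathbf{1}$ does no harm to the vanishing of $H^0(G_{F,\Sigma},M)$ since $\varepsilon^{-1}\theta^{-1}$ has the infinite-order factor $\varepsilon^{-1}$ regardless of $\theta$.
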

	
	\begin{proof}
		See \cite[Lemma 7.4.10]{pan2022fontaine}.
	\end{proof}
	
	  For any non-zero class $B\in \Ext^1_{K[G_{F,\Sigma}]}(\psi_1,\psi_2) \cong H^1(G_{F,\Sigma},K(\psi_2/\psi_1))$, it corresponds to a non-split extension $\rho_B: G_{F,\Sigma}\to\GL_2(K)$ of $\psi_1$ by $\psi_2$. We may assume $\rho_B$ is of the form
	  \[\begin{pmatrix} \psi_2 & b_B \\ 0 & \psi_1\end{pmatrix}.\] We denote the universal deformation ring of $\rho_B$ with fixed determinant $\chi$ by $R_B$. 
	  Let $\rho_B^o:G_{F,\Sigma}\to\GL_2(\cO)$ be a lattice of $\rho_B$ such that its mod $\varpi$ reduction $\bar\rho_b:G_{F,\Sigma}\to\GL_2(\F)$ is non-split. We may consider the universal deformation ring $R_{b}$ of $\bar\rho_b$ with fixed determinant $\chi$. Then $\rho_B^o$ naturally gives rise to a prime $\kp_B$ of $R_b$. 
	  
	  \begin{lem}\label{RB} Keep notations as above.
	  	\begin{enumerate}
	  		\item We have $\dim R_B/\kP\geq 2[F:\Q]$ for any minimal prime $\kP$ of $R_B$.
	  		\item The connectedness dimension $c(R_B)$ is at least $2[F:\Q]-1$.
	  		\item Let $\Spec R^{\mathrm{red}}_B$ be the reducible locus of $\Spec R_B$. Then we have $\dim \Spec R_B^{\mathrm{red}}\leq[F:\Q]+1$.
	  		\item The natural map $R_B\to \widehat{(R_b)_{\kp_B}}$ is an isomorphism.
	  		\item By evaluating the trace of the universal deformation, we get a natural map $R^{\ps}\to R_B$. Suppose $\kQ\in\Spec R_B\setminus \Spec R^{\mathrm{red}}_B$. Let $\kQ^{\ps}=\kQ\cap R^{\ps}$. Then
	  		\[\dim R^{\ps}/\kQ^{\ps}\geq 1+\dim R_B/\kQ.\]
	  		\item The natural map $\Spec R_B\to \Spec R^{\ps}$ maps minimal primes to minimal primes.
	  	\end{enumerate}
	  \end{lem}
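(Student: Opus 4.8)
The plan is to prove the six assertions together, treating part (4)---the comparison $R_B \cong \widehat{(R_b)_{\kp_B}}$---as the backbone: once (4) and its localized variant are available, parts (1)--(3) reduce to Galois-cohomology dimension counts and (5)--(6) to commutative-algebra bookkeeping relating $R_B$ to $R^{\ps}$ through the trace map. Throughout, the argument mirrors \cite[Lemma 7.4.11]{pan2022fontaine}.

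\emph{Part (4).} Since $\rho_B$ is a non-split extension of the distinct characters $\psi_1,\psi_2$, its $G_{F,\Sigma}$-endomorphism ring is $K$; hence $R_B$ exists (Schlessinger's criteria hold, with the determinant fixed) and one may also speak of the deformation functor of $\rho_B$ on Artinian local $K$-algebras. The lattice $\rho_B^o$ gives a map $R_b \to \cO$ with kernel $\kp_B$, and $\varpi \notin \kp_B$, so $\widehat{(R_b)_{\kp_B}}$ is a complete Noetherian local $K$-algebra. I would show the natural map $R_B \to \widehat{(R_b)_{\kp_B}}$ is an isomorphism by proving it is formally \'etale, following verbatim the proof of Proposition \ref{Kisin} (itself a reprise of \cite[Proposition 9.5]{kisin2003overconvergent}): given a square-zero extension $C \twoheadrightarrow C/I$ of Artinian local $K$-algebras, pick a finitely generated $\cO$-subalgebra $C^\circ \subset C$ surjecting onto $\cO$, form $C_n^\circ = \sum_{j>0}\varpi^{-nj}(\km^\circ)^j + C^\circ$, and use the compactness of the image of $\rho_B^o$ and the topological finite generation of $R_b$ to descend the lifting problem to the genuine $\cO$-algebras $C_n^\circ$, where the universal property of $R_b$ produces the required unique arrow; pass to the limit to conclude. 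The only change from Proposition \ref{Kisin} is that the relevant local conditions are now ``unrestricted at $p$'' rather than ordinary, which is irrelevant to the argument.

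\emph{Parts (1), (2), (3).} For (1): $R_B$ is a quotient of $K[[x_1,\dots,x_{h^1}]]$ by at most $h^2$ relations, with $h^i = \dim_K H^i(G_{F,\Sigma},\ad^0\rho_B)$, so every minimal prime $\kP$ satisfies $\dim R_B/\kP \ge h^1 - h^2$ by Krull's principal ideal theorem; the global Euler characteristic formula (Theorem \ref{global char for}) gives $h^0 - h^1 + h^2 = \sum_{v\mid\infty}\dim_K(\ad^0\rho_B)^{G_{F_v}} - 3[F:\Q]$, and since $\chi$ is totally odd $\rho_B(c_v)$ is conjugate to $\diag(1,-1)$ (here $p$ is odd), making each local term $1$, while $h^0 = 0$ because $\rho_B$ is indecomposable with distinct diagonal characters; hence $h^1 - h^2 = 2[F:\Q]$. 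For (2): I would transcribe the proof of Proposition \ref{connect dim}(2) / \cite[Lemma 7.4.6]{pan2022fontaine}, namely present the framed ring $R_B^\square \cong R_{\mathrm{loc},B}[[y_1,\dots,y_r]]/(f_1,\dots,f_t)$ with $R_{\mathrm{loc},B}$ a completed tensor product of local framed deformation rings---normal integral domains by Proposition \ref{dim of loc framed der ring}, hence with irreducible spectrum---invoke the connectedness lemma used there (cutting by $t$ elements lowers the connectedness dimension by at most $t$), combine with (1), and descend from $R_B^\square$ to $R_B$ along a power-series extension (valid since $\End_{G_{F,\Sigma}}(\rho_B) = K$). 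For (3): a reducible deformation of $\rho_B$ carries a unique stable line with character $\tilde\psi_2$ lifting $\psi_2$, quotient $\tilde\psi_1 = \chi\tilde\psi_2^{-1}$, and an extension class in $H^1(G_{F,\Sigma}, K(\tilde\psi_2\tilde\psi_1^{-1}))$ reducing to that of $\rho_B$, so $\dim\Spec R_B^{\mathrm{red}} \le \dim_K H^1(G_{F,\Sigma},K) + \dim_K H^1(G_{F,\Sigma},K(\psi_2/\psi_1))$; the first term is $1$ because $F$ is abelian totally real (Leopoldt holds, so the maximal abelian pro-$p$ extension unramified outside $\Sigma$ has $\Z_p$-rank $1$, the places in $\Sigma\setminus\Sigma_p$ contributing only finite pro-$p$ parts) and the second is $[F:\Q]$ by Lemma \ref{Selmer}, giving $\dim\Spec R_B^{\mathrm{red}} \le [F:\Q]+1$.

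\emph{Parts (5), (6), and the main obstacle.} If $\kQ \notin \Spec R_B^{\mathrm{red}}$ then $\rho(\kQ)$ is absolutely irreducible, so Proposition \ref{Dkp}(2) (for $R^{\ps}$) and the localization of the \'etale argument of (4) (for $R_B$) identify $\widehat{(R^{\ps})_{\kQ^{\ps}}}$ and $\widehat{(R_B)_\kQ}$ with fixed-determinant deformation rings of the same irreducible representation over $\cO$- resp.\ $K$-algebras, the latter obtained from the former by inverting $\varpi$; since $R_B$ is a $K$-algebra, $\varpi$ maps to a unit, so $\varpi \notin \kQ^{\ps}$, hence $R^{\ps}/\kQ^{\ps}$ is an $\cO$-flat domain with $\dim R^{\ps}/\kQ^{\ps} = 1 + \dim (R^{\ps}/\kQ^{\ps})[\tfrac1\varpi]$, and the two deformation-ring descriptions force $\dim (R^{\ps}/\kQ^{\ps})[\tfrac1\varpi] = \dim R_B/\kQ$, which is (5). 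For (6): a minimal prime $\kP$ of $R_B$ has $\dim R_B/\kP \ge 2[F:\Q] > [F:\Q]+1 \ge \dim\Spec R_B^{\mathrm{red}}$ by (1) and (3), so $\kP \notin \Spec R_B^{\mathrm{red}}$ and the trace map is flat (formally \'etale) near $\kP$ after inverting $\varpi$, hence satisfies going-down there; as $\varpi$ is a unit in $R_B$, any minimal prime $\kP_0^{\ps} \subseteq \kP^{\ps}$ of $R^{\ps}$ has $\varpi \notin \kP_0^{\ps}$, so going-down lifts it to a prime contained in $\kP$, forcing $\kP_0^{\ps} = \kP^{\ps}$ by minimality of $\kP$. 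The conceptual core is thus the formally-\'etale statement (4) and its localized form, which are essentially already carried out in the excerpt; the genuinely error-prone step is the dimension bookkeeping at the $\cO$-algebra/$K$-algebra interface in (5)--(6)---keeping the ``$+1$'' from the characteristic-$\varpi$ direction in exactly the right place, and making the identification of completed local rings with deformation rings literal enough to transfer Krull dimensions.
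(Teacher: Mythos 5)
The paper's ``proof'' of Lemma \ref{RB} is a pointer to Pan \cite[Lemma 7.4.11 \& 7.4.12, Corollary 7.4.13]{pan2022fontaine}, so there is no printed argument to compare with line-by-line; your architecture --- prove (4) first by the Kisin formally-\'etale argument, derive (1) and (3) from obstruction theory and the global Euler characteristic, prove (2) by a framed presentation plus a connectedness lemma, and then transfer to $R^\ps$ for (5) and (6) --- is the sensible route and matches the shape of Pan's proof. Parts (1), (3), and (4) as written are essentially sound.

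Two of the remaining steps, however, have genuine gaps. In (2) you invoke Proposition \ref{dim of loc framed der ring} for normality and irreducibility of the factors of $R_{\mathrm{loc},B}$, but that proposition concerns the framed deformation ring over Artinian \emph{$\cO$-algebras}; $R_B$ and hence $R_{\mathrm{loc},B}$ are built from framed deformation rings over Artinian \emph{$K$-algebras}. To use the cited proposition you must first identify the $K$-algebra local framed ring at $v$ with the completed localization of $R_v^\square$ at the prime cut out by $\rho_B^o|_{G_{F_v}}$ (a local version of the Kisin argument you run for (4)), and then appeal to excellence of $R_v^\square$ to carry normality through localization and completion, noting that a complete normal Noetherian local ring is automatically a domain. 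This is fixable but is not what your citation supplies. More seriously, in (5) the asserted equality $\dim (R^\ps/\kQ^\ps)[\tfrac1\varpi] = \dim R_B/\kQ$ is not justified. The isomorphism $\widehat{(R^\ps)_{\kQ^\ps}} \cong \widehat{(R_B)_\kQ}$ (up to a base change of residue fields) is a statement about the \emph{local} rings at $\kQ^\ps$ and $\kQ$; it controls heights, not co-heights, and by itself says nothing about how the Krull dimensions of the two quotient rings $R^\ps/\kQ^\ps$ and $R_B/\kQ$ compare globally. What one actually needs is, e.g., a chain argument: with $\kp \supset \kQ^\ps$ the prime of $R^\ps$ cutting out $\tr\rho_B$, one has $\dim R^\ps/\kQ^\ps \ge \dim R^\ps/\kp + \mathrm{ht}(\kp/\kQ^\ps) \ge 1 + \mathrm{ht}(\kp/\kQ^\ps)$, and it remains to show $\mathrm{ht}(\kp/\kQ^\ps) \ge \dim R_B/\kQ$, which is where the deformation-theoretic comparison between $\widehat{(R^\ps)_\kp}$ and $R_B$, together with faithful flatness of completion, must actually be deployed. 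Finally, a smaller point affecting both (5) and (6): Proposition \ref{Dkp} is stated only for one-dimensional primes, whereas $\kQ^\ps$ and $\kP^\ps$ are generally of higher co-height; you need the routine extension (Carayol/Nyssen--Rouquier) to arbitrary primes whose associated representation is absolutely irreducible.
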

	
	\begin{proof}
		See \cite[Lemma 7.4.11 \& 7.4.12, Corollary 7.4.13]{pan2022fontaine}.
	\end{proof}
	
	\begin{defn}
		For any $B\in \Ext^1_{K[G_{F,S}]}(\psi_1,\psi_2)$, we define $Z_B$ as the set of irreducible components of $R^{\ps}$ whose generic points lie in the image of $\Spec R_{B}\to \Spec R^{\ps}$. It follows from the last part of the previous lemma that $\bigcup_{C'\in Z_B} C'$ contains the image of $\Spec R_{B}\to \Spec R^{\ps}$.
	\end{defn}
	
	\begin{lem}\label{ZB}
	Suppose that Assumption \ref{assumption} holds.	For any non-zero $B$, if one of the irreducible component in $Z_B$ is pro-modular, then every irreducible component in $Z_B$ is pro-modular.
	\end{lem}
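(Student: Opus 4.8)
The plan is to mimic the proof of Lemma \ref{promodular to nice}, replacing the localization $(R^{\ps})_{\mathfrak{r}}$ used there by the ring $R_B$, whose geometry is controlled by Lemma \ref{RB}. Write $Z_B = Z_1 \amalg Z_2$, where $Z_1$ consists of the pro-modular components; by hypothesis $Z_1 \neq \emptyset$, and we must show $Z_2 = \emptyset$. Suppose not. By Lemma \ref{RB}(6) the map $\Spec R_B \to \Spec R^{\ps}$ carries minimal primes to minimal primes, so the generic point of each irreducible component $V$ of $\Spec R_B$ maps to the generic point of a well-defined component $C_V$ of $\Spec R^{\ps}$, which by the definition of $Z_B$ lies in $Z_B$. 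Partitioning the components of $\Spec R_B$ accordingly, write $\Spec R_B = W_1 \cup W_2$, where $W_i$ is the union of those components $V$ with $C_V \in Z_i$; both $W_1$ and $W_2$ are non-empty unions of irreducible components.

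Next I would use the bound $c(R_B) \geq 2[F:\Q]-1$ from Lemma \ref{RB}(2) to conclude $\dim(W_1 \cap W_2) \geq 2[F:\Q]-1$, so there is a prime $\kQ \in W_1 \cap W_2$ with $\dim R_B/\kQ \geq 2[F:\Q]-1$. Since $[F:\Q] > 7|\Sigma \setminus \Sigma_p| + 3 \geq 3$, Lemma \ref{RB}(3) gives $\dim \Spec R^{\mathrm{red}}_B \leq [F:\Q]+1 < 2[F:\Q]-1$, so $\kQ$ lies outside the reducible locus and the associated Galois representation is irreducible. Setting $\kQ^{\ps} = \kQ \cap R^{\ps}$, Lemma \ref{RB}(5) yields $\dim R^{\ps}/\kQ^{\ps} \geq 1 + \dim R_B/\kQ \geq 2[F:\Q]$. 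Moreover $\kQ \in W_1$ forces $\kQ^{\ps}$ to lie in a component belonging to $Z_1$, hence to be pro-modular, while $\kQ \in W_2$ forces $\kQ^{\ps}$ to lie in some $C'' \in Z_2$.

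From this point the argument is the same as the second half of the proof of Lemma \ref{promodular to nice}: starting from the pro-modular prime $\kQ^{\ps}$ of dimension $\geq 2[F:\Q]$ lying in $C''$, and working inside the quotient of $R^{\ps}$ by the minimal prime of $C''$, I would produce (arguing as in Lemma \ref{find nice}, via two applications of Proposition \ref{innovation'} together with Krull's principal ideal theorem) an intermediate prime $\mathfrak{r}'' \subseteq \kQ^{\ps}$ with $\varpi \in \mathfrak{r}''$, with $\dim R^{\ps}/\mathfrak{r}'' \geq [F:\Q]+2$, and such that every one-dimensional irreducible prime over $\mathfrak{r}''$ carries a Galois representation satisfying all of the conditions in (2) of Definition \ref{nice} except possibly the dihedral one. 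If $\bar{\chi} \neq \omega$, then $\bar{\chi}$ is not quadratic and Lemma \ref{nirred} supplies the remaining condition automatically; if $\bar{\chi} = \omega$, then $F(\bar{\chi}) = F(\mu_3)$ is an abelian CM field, so the Leopoldt conjecture there bounds the dihedral locus of $R^{\ps}/(\varpi)$ by $[F:\Q]+1 < \dim R^{\ps}/\mathfrak{r}''$, allowing us to choose a one-dimensional prime over $\mathfrak{r}''$ whose representation is not dihedral. In either case we obtain a nice prime lying in $C''$, and Theorem \ref{R=T} then forces $C''$ to be pro-modular, contradicting $C'' \in Z_2$. Hence $Z_2 = \emptyset$, which is the assertion.

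The main obstacle I anticipate is this last step: guaranteeing that the nice prime produced actually lies in the prescribed component $C''$ rather than merely in some pro-modular component, which requires carrying the component bookkeeping carefully through the pull-back to $R_B$ and the descent inside $C''$, and in particular controlling the dihedral locus in the quadratic case $\bar{\chi} = \omega$. This is precisely the point at which the argument departs from Pan's, where $\bar{\chi}$ could always be arranged to be non-quadratic; everything else is a faithful transcription of the reasoning already carried out in Lemma \ref{find nice} and Lemma \ref{promodular to nice}.
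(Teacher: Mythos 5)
Your proposal is correct and follows essentially the same route as the paper: pull the pro-modular/non-pro-modular decomposition of $Z_B$ back to $\Spec R_B$, use the connectedness-dimension bound from Lemma \ref{RB}(2) together with parts (3), (5), (6) of that lemma to produce a pro-modular prime $\kQ^{\ps}$ of dimension at least $2[F:\Q]$ sitting in a component of $Z_2$, and then run the argument of Lemma \ref{promodular to nice} to manufacture a nice prime there and contradict non-pro-modularity via Theorem \ref{R=T}. The paper phrases the first half as establishing that the connectedness dimension of $C_B$ is at least $2[F:\Q]$ and then simply invokes "arguing as the proof of Lemma \ref{promodular to nice}," whereas you unwind that reference explicitly, but the substance (including the treatment of the dihedral condition when $\bar\chi=\omega$) is identical.
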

	
	\begin{proof}
		Let $C_B=\bigcup_{C'\in Z_B} C'$, and write $C_B=Z_1\cup Z_2$, where $Z_i,i=1,2,$ are non-empty sets of irreducible components in $C_B$. Let $C_i$ be the set of irreducible components of $\Spec R_B$ whose images are contained in the set $Z_i$. Then $C_1,C_2$ form a covering of the set consisting of all irreducible components of $\Spec R_B$. Hence by (2) of Lemma \ref{RB}, $\dim C_1\cap C_2\geq 2[F:\Q]-1$. Choose a point $\kQ\in C_1\cap C_2$ with $\dim R_B/\kQ\geq 2[F:\Q]-1>[F:\Q]+1$. Then $\kQ\notin \Spec R^{\mathrm{red}}_B$ by (3) of Lemma \ref{RB}. It follows from the (5) of Lemma \ref{RB} that $R^{\ps}/(\kQ\cap R^{\ps})$ has dimension at least $2[F:\Q]$, and $\kQ\cap R^{\ps}\in Z_1\cap Z_2$. This shows that the connectedness dimension of $C_B$ is at least $2[F: \mathbb{Q}]$.
		
		Arguing as the proof of Lemma \ref{promodular to nice}, we obtain the result.
	\end{proof}
	
	Fix a place $v_0$ of $F$ above $p$. Let $B_0\in H^1(G_{F,\Sigma},K(\psi_2/\psi_1))$ be a non-zero class such that under the restriction map, its image in $H^1(G_{F_v},K(\psi_2/\psi_1)),v|p$ is zero unless $v=v_0$. The existence is guaranteed by Lemma \ref{Selmer}. Then $B_0$ defines a non-split reducible Galois representation $\rho_{B_0}: G_{F,\Sigma}\to\GL_2(K)$. Let $ \bar\rho_{b_0}:G_{F,\Sigma}\to\GL_2(\F)$ be its residue representation, and we can find that $\bar\rho_{b_0}|_{G_{F_{v}}}$ is completely reducible for $v\in\Sigma_p\setminus \{v_0\}$. Let $R_{b_0}$ be the universal deformation ring of $\bar\rho_{b_0}$ with determinant of $\chi$, and consider its quotient $R_{\{v_0\}}^{\ord}$ which parametrizes deformations $\rho_b$ such that for any $v|p$, $\rho_b|_{G_{F_v}}\cong \begin{pmatrix} \psi_{v,1} & *\\ 0 & *\end{pmatrix}$, where $\psi_{v,1}$ is a lifting of $\bar\psi_1$ (resp. $\bar\psi_2$) if $v\neq v_0$ (resp. $v=v_0$).

	 \begin{lem}\label{5.4.5}
	 	There exists an irreducible component in $Z_{B_0}$ which is pro-modular.
	 \end{lem}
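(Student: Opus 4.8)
The plan is to follow Pan's argument for \cite[Section 7.4.5]{pan2022fontaine}, but with the dimension count at the ordinary deformation ring replaced — exactly as in the residually irreducible subcase $\bar{\chi}=\omega^{-1}$ treated above — by a ``move to an irreducible prime'' step followed by an appeal to Corollary \ref{crucial}. Concretely, I would first note that $\rho_{B_0}=\begin{pmatrix}\psi_2 & b_{B_0}\\ 0 & \psi_1\end{pmatrix}$ satisfies the ordinary conditions defining $R^{\ord}_{\{v_0\}}$: since the class $B_0$ restricts to $0$ at each $v\in\Sigma_p\setminus\{v_0\}$, the restriction $\rho_{B_0}|_{G_{F_v}}$ is the split sum there (hence $\psi_1$-ordinary), and $\rho_{B_0}|_{G_{F_{v_0}}}$ is $\psi_2$-ordinary. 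Thus $\rho_{B_0}$ defines a characteristic-zero one-dimensional prime $\mathfrak{p}_{B_0}$ of $R^{\ord}_{\{v_0\}}$; let $\mathfrak{q}_0$ be its image in $R^{\ps,\ord}_{\Sigma^o}$, where $\Sigma^o=\Sigma_p\setminus\{v_0\}$ (so $R^{\ord}_{\{v_0\}}$ is the ring $R^{\ord}_b$ of Proposition \ref{dim for global deformation ring} for this $\Sigma^o$ and $b=b_0$). By Theorem \ref{ordinary case}, $R^{\ps,\ord}$, and hence its quotient $R^{\ps,\ord}_{\Sigma^o}$, is finite over $\Lambda_F\cong\cO[[X_1,\dots,X_d]]$, $d=[F:\mathbb{Q}]$.

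Next, choose a minimal prime $\mathfrak{s}$ of $R_{B_0}$ lying below $\mathfrak{p}_{B_0}$ and outside the reducible locus — possible by parts (1), (3) of Lemma \ref{RB}, since $\dim R_{B_0}/\kP\ge 2d$ while $\dim\Spec R_{B_0}^{\mathrm{red}}\le d+1<2d$ — so that, by part (6) of Lemma \ref{RB}, the image $\mathfrak{s}\cap R^{\ps}$ is the generic point of a component $C_{\mathfrak{s}}$ of $\Spec R^{\ps}$ belonging to $Z_{B_0}$, of dimension $\ge 2d+1$ by part (5). Since the component of $\Spec R^{\ord}_{\{v_0\}}$ through $\mathfrak{p}_{B_0}$ has dimension $\ge d$ (Proposition \ref{dim for global deformation ring}) while the reducible locus has dimension $\le 2<d$ (Leopoldt), there is a positive-dimensional family of globally irreducible ordinary deformations degenerating to $\rho_{B_0}$; inside it I would steer the variables $X_i$ attached to the places $v_i\ne v_0$ away from the finitely many values at which the two diagonal characters of the local restriction differ by $\varepsilon^{\pm1}$ — using the going-up property over $\Lambda_F$ together with the commutative-algebra Lemmas \ref{WPT}, \ref{com alg} and \ref{comalg1}, exactly as above — obtaining a one-dimensional prime $\mathfrak{q}'$ that is a specialization of $\mathfrak{s}$ (so $\mathfrak{q}'\cap R^{\ps}\in C_{\mathfrak{s}}$), with $\rho(\mathfrak{q}')$ absolutely irreducible, ordinary at all $v\mid p$ with the prescribed filtrations, trivial on $G_{F_v}$ for $v\in\Sigma\setminus\Sigma_p$, and with local diagonal ratios $\ne\varepsilon^{\pm1}$ at every $v\ne v_0$; being ordinary at the places above $p$ (which are inert in $F(\mu_3)$), $\rho(\mathfrak{q}')$ is not induced from $F(\mu_3)$, whence $H^0(G_{F,\Sigma},\ad^0\rho(\mathfrak{q}')(1))=0$. (If no such $\mathfrak{q}'$ is needed because $(\tr\rho(\mathfrak{q}))|_{G_{F_v}}$ already splits with ratio $\ne\varepsilon^{\pm1}$ at every $v\mid p$, one uses Remark \ref{generic red rem} instead.)

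Now Corollary \ref{crucial}, applied at $\mathfrak{q}'$ through the isomorphism $\widehat{(R^{\ps,\ord}_{\Sigma^o})_{\mathfrak{q}'}}\cong\widehat{(R^{\ord}_{\{v_0\}})_{\mathfrak{q}'}}$ obtained as in Proposition \ref{Dkp}, gives $\dim\widehat{(R^{\ps,\ord}_{\Sigma^o})_{\mathfrak{q}'}}\ge d$, so some component $C^o$ of $\Spec R^{\ps,\ord}_{\Sigma^o}$ through $\mathfrak{q}'$ has dimension $\ge d+1$, hence exactly $d+1$ by finiteness over $\Lambda_F$; thus $C^o$ is an irreducible component of $\Spec R^{\ps,\ord}$ of dimension $d+1$. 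By Lemma \ref{find nice}(1) (recall $p$ splits completely in $F$), every prime of $C^o$ is pro-modular, so $\mathfrak{q}'$ is a one-dimensional, absolutely irreducible, pro-modular prime of $R^{\ps}$. Since $\mathfrak{q}'\in C_{\mathfrak{s}}$, Lemma \ref{promodular to nice} — where the hypothesis $[F:\mathbb{Q}]>7|\Sigma\setminus\Sigma_p|+3$ enters, to manufacture nice primes in the ambient large-dimensional component — shows $C_{\mathfrak{s}}$ is pro-modular. As $C_{\mathfrak{s}}\in Z_{B_0}$, this proves the lemma, and then Lemma \ref{ZB} promotes it to pro-modularity of all of $Z_{B_0}$.

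I expect the main obstacle to be the construction in the second paragraph: one must arrange simultaneously that $\mathfrak{q}'$ is a specialization of the chosen minimal prime $\mathfrak{s}$ (so that its pro-modularity transfers to the component $C_{\mathfrak{s}}\in Z_{B_0}$ via Lemma \ref{promodular to nice}) and that $\rho(\mathfrak{q}')$ is globally irreducible with all local restrictions at $v\ne v_0$ off the $\varepsilon^{\pm1}$-ratio stratum. This is precisely where the non-generic reducibility $\psi_1/\psi_2=\varepsilon\theta$ places the special prime $\mathfrak{q}_0$ on the degenerate locus, and where Corollary \ref{crucial} (the generalized Greenberg–Wiles input) together with the commutative-algebra lemmas must substitute for Pan's cleaner dimension bound, which relied on $H^0(G_{F,\Sigma},\ad^0\bar\rho_{b_0}(1))=0$.
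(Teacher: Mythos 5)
Your overall strategy matches the paper's: produce a one-dimensional irreducible prime in $\Spec R^{\mathrm{ord}}_{\{v_0\}}$ containing a suitable minimal prime, run the irreducible-case machinery (the commutative-algebra lemmas over $\Lambda_F$, Corollary \ref{crucial}, Lemma \ref{find nice}(1), Lemma \ref{promodular to nice}), and then identify a component in $Z_{B_0}$ via Lemma \ref{RB}(4). But the central point the paper has to work for — and which you effectively assume — is that an irreducible ordinary deformation of $\rho_{B_0}$ \emph{exists}, i.e.\ that some minimal prime $\kP$ of $R^{\ord}_{\{v_0\}}$ contained in $\kp_0$ has $\rho(\kP)$ irreducible. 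You justify this by ``the reducible locus has dimension $\le 2$ (Leopoldt)''; but that bound is for the reducible locus of $R^{\ps}$, not of $R^{\ord}_{\{v_0\}}$ or $R_{B_0}$. Since the map from $\Spec R^{\ord}_{\{v_0\}}$ to $\Spec R^{\ps}$ contracts the fibres parameterizing extension classes, a $\ge d$-dimensional reducible component of $R^{\ord}_{\{v_0\}}$ could in principle land in a low-dimensional reducible locus of $R^{\ps}$ without contradiction, unless one controls the size of those fibres. This is exactly what the paper does: assuming $\rho(\kP)$ is reducible, it picks $\tau_0$ spanning $G_{F,\Sigma}^{\mathrm{ab}}(p)\otimes\Q_p$, cuts $R^{\ord}_{\{v_0\}}/\kP$ by $\psi_2(\tau_0)-\tilde\psi_2(\tau_0)$ to get $\kP'$ of height $\le 1$, identifies $\rho(\kP')$ with an extension class lying in the kernel of restriction to $v\ne v_0$, and then invokes Lemma \ref{Selmer} (that $H^1(G_{F,\Sigma},K(\psi_2/\psi_1))\to\bigoplus_{v|p}H^1(G_{F_v},\cdot)$ is an isomorphism) to force $\kP'=\kp_0$, hence $\dim R^{\ord}_{\{v_0\}}/\kP\le 2 < d$, a contradiction with Proposition \ref{dim for global deformation ring}. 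That use of Lemma \ref{Selmer} is the crux of the non-generic reducible case and is absent from your argument.

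There is a second, related gap in your bookkeeping. You choose $\mathfrak{s}$ to be any minimal prime of $R_{B_0}$ outside the reducible locus, and later assert that $\mathfrak{q}'$ (found inside the ordinary locus $\Spec R^{\ord}_{\{v_0\}}$) is a specialization of $\mathfrak{s}$. Nothing forces your $\mathfrak{s}$ to lie over a minimal prime of $R_{b_0}$ that contains the kernel of $R_{b_0}\twoheadrightarrow R^{\ord}_{\{v_0\}}$, so $V(\mathfrak{s})$ may not meet the ordinary locus at all, let alone contain the $\mathfrak{q}'$ you construct. The paper avoids this by reversing the order of quantifiers: first fix the (irreducible) minimal prime $\kP$ of $R^{\ord}_{\{v_0\}}$, produce $\kq''\supseteq\kP$, conclude that \emph{every} component of $R^{\ps}$ through $\kq'=\kq''\cap R^{\ps}$ is pro-modular, and only then pick a minimal prime $\kQ$ of $R_{b_0}$ inside $\kP\subseteq\kp_0$ and use Lemma \ref{RB}(4)(6) to see its image in $\Spec R^{\ps}$ is one of those components and lies in $Z_{B_0}$. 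Finally, your remark that ordinariness at inert-in-$F(\mu_3)$ primes rules out induction from $F(\mu_3)$ is at best unjustified; it is also unnecessary, since the $\psi_{v,1}/\psi_{v,2}\ne\varepsilon^{\pm1}$ condition you already impose suffices, as in the paper, to conclude $H^0(G_{F,\Sigma},\ad^0\rho(\mathfrak{q}')(1))=0$.
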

	 
	 \begin{proof}
	 	Let $\kp_0$ be the prime of $R_{\{v_0\}}^{\ord}$ corrseponding to the representation $\rho_{B_0} $.  Choose a minimal prime $\kP$ of $R_{\{v_0\}}^{\ord} $ contained in $\kp_0$. Let $\rho(\kP)$ be the representation defined by the natural surjection $R_{\{v_0\}}^{\ord} \twoheadrightarrow  R_{\{v_0\}}^{\ord}/\kP$, and we claim that it is irreducible. If not, we write \[\rho(\kP)\cong\begin{pmatrix} \tilde{\psi_2} & *\\ 0 & \tilde{\psi_1}\end{pmatrix}.\]
	 	As Leopoldt's conjecture holds for $F$, we can choose $\tau_0\in G_{F,\Sigma}$ whose image in $G_{F,\Sigma}^{\mathrm{ab}}(p)\otimes \Q_p$ forms a basis. Let $\kP'$ be a minimal prime of $R^{\ord}_{\{v_0\}}/\kP$ containing $\psi_{2}(\tau_0)- \tilde{\psi_2}(\tau_0)$. By Krull's principal ideal theorem, $\kP'$ is of height at most $1$. We have $\psi_i\equiv\tilde{\psi_i}\mod\kP',i=1,2$ as the determinant is $\chi$. Therefore, $\rho(\kP'){:=}\rho(\kP)\mod\kP'$ is an extension of $\psi_1$ by $\psi_2$, and satisfies $\rho(\kP')|_{G_{F_v}}$ is completely reducible for $v\in\Sigma_p\setminus\{v_0\}$ (by the definition of $R_{\{v_0\}}^{\ord}$). Combining Lemma \ref{Selmer}, we have $\kP' = \kp_0 $. This implies that $\kp_0 $ is of dimension $1$ and of height at most $1$, and thus $\dim R^{\Delta}_{\{v_0\}}/\kP\leq 2$. By Proposition \ref{dim for global deformation ring}, each irreducible component of $R_{\{v_0\}}^{\ord}$ is of dimension at least $[F: \mathbb{Q}]>3$. We get a contradiction.
	 	
	 	Choose a one-dimensional prime $\kq''$ of $R^{\ord}_{\{v_0\}}$ containing $\kP$ such that $\rho(\kq'')$ is irreducible, and let $\kq'=\kq''\cap R^{\ps}$. Now we can use the same proof as in the irreducible case (replacing $\kq$ by $\kq'$), and we know that any irreducible component of $R^\ps$ containing $\kq'$ contains a nice prime, and hence pro-modular. Let $\kQ$ be a minimal prime of $R_{b_0}$ contained in $\kP\subseteq \kp_0$. By (4) of Lemma \ref{RB}, $\kQ$ is in the image of $\Spec R_{B_0}\to \Spec R_{b_0}$. This proves the lemma.
	 	
	 \end{proof}
	
	 Recall that $C$ is an irreducible component of $R^\ps$ containing both $\kp$ and $\kq$. Enlarging $\cO$ if necessary, we can assume $C$ is geometrically irreducible.  Choose a prime ideal $\kq_0$ of $R^{\ps}$ (see the last paragraph of \cite[page 1153]{pan2022fontaine} for the existence) such that
	 \begin{enumerate}
	 	\item $\kq_0\subset \kq$ and $\dim R^{\ps}/\kq_0=2$.
	 	\item $C$ contains $\kq_0$ and is the only irreducible component having this property.
	 	\item $\rho(\kq_0)$ is irreducible.
	 \end{enumerate}
	 
	 Consider the $\kq$-adic completion $A_0$ of $(R^{\ps}/\kq_0)_{\kq}$. This is a one-dimensional CNL ring with residue field $K$. Choose a minimal prime $\kQ_{A_0}$ of $A_0$ and let $A_1=A_0/\kQ_{A_0}$. We denote the normal closure of $A_1$ by $A_2$. Then $A_2\cong\tilde{K}[[T]]$ for some finite extension $\tilde{K}$ of $K$. Enlarging $K$ if necessary, we assume $K= \tilde{K}$, and we have an injection $R^{\ps}/\kq_0\hookrightarrow A_2$. Then we can choose a lattice of $\rho(\kq_0)$: \[\rho(\kq_0)^o:G_{F,\Sigma}\to\GL_2(A_2)\]
	 such that its mod $T$ reduction is a non-split extension of $\psi_1$ by $\psi_2$. Denote this extension class by $B_1$. Then $\kq_0$ lies in the image of $\Spec R_{B_1}\to\Spec R^{\ps}$. Since $\kq_0$ only contains one minimal prime, $C\in Z_{B_1}$.
	 
	 \begin{lem}
	 	There exist irreducible components $C_{B_0}\in Z_{B_0}, C_{B_1}\in Z_{B_1}$ such that $\dim C_{B_0}\cap C_{B_1}\geq [F:\Q]+2$.
	 \end{lem}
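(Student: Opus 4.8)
The plan is to reduce the statement to a single deformation ring, as in the proof of Lemma~\ref{ZB}, using the fact that $\rho_{B_0}$ and $\rho_{B_1}$ have the common semisimplification $\psi_1\oplus\psi_2$ and hence define the \emph{same} point $\mathfrak{s}:=\psi_1+\psi_2\in\Spec R^{\ps}$; moreover each of them degenerates to $\psi_1\oplus\psi_2$ by rescaling its extension class, so every component in $Z_{B_0}$ and in $Z_{B_1}$ passing near $\rho_{B_i}$ passes through $\mathfrak{s}$. I would first choose $\cO$-lattices $\rho_{B_0}^{o}$ and an $\cO$-model of $\rho(\kq_0)^{o}$ with \emph{non-split} reductions, and arrange (using the freedom in the choice of $B_0$ inside the local $H^1$ at $v_0$ furnished by Lemma~\ref{Selmer}, and in the choice of $\kq_0\subset\kq$) that these reductions coincide with one residual representation $\bar\rho_b$. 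Then Lemma~\ref{RB}(4) identifies the characteristic-$0$ deformation rings $R_{B_0},R_{B_1}$ with the completions $\widehat{(R_b)_{\kp_{B_0}}}$, $\widehat{(R_b)_{\kp_{B_1}}}$ of a \emph{single} ring $R_b$, with $\kp_{B_0},\kp_{B_1}$ both lying on the reducible locus $\Spec R_b^{\mathrm{red}}$.

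The core of the argument is then that $\Spec R_b^{\mathrm{red}}$ is (essentially by construction: deform $\psi_2$ and the extension class separately) irreducible of dimension $\le[F:\Q]+1$ by Lemma~\ref{RB}(3), hence contained in a single irreducible component $C_*$ of $\Spec R_b$ --- and $\kp_{B_0},\kp_{B_1}\in C_*$. Since $\dim C_*\ge 2[F:\Q]$ by Lemma~\ref{RB}(1) while $\dim R_b^{\mathrm{red}}\le[F:\Q]+1<2[F:\Q]$, the generic point of $C_*$ is non-reducible, so Lemma~\ref{RB}(5) shows that its image in $\Spec R^{\ps}$ lies on a component $C_{B_0}=C_{B_1}$ of dimension $\ge 2[F:\Q]+1$; this component belongs both to $Z_{B_0}$ and to $Z_{B_1}$, and $\dim(C_{B_0}\cap C_{B_1})=\dim C_{B_0}\ge 2[F:\Q]+1\ge[F:\Q]+2$. (If $\Spec R_b^{\mathrm{red}}$ fails to lie on a single component of $\Spec R_b$, one chains through the components meeting it and uses the connectedness-dimension bound $c(R_b)\ge 2[F:\Q]-1$ of Lemma~\ref{RB}(2) together with Lemma~\ref{RB}(3),(5), exactly as in the proof of Lemma~\ref{ZB}, to extract a prime $\kQ\in\Spec R_b$ of dimension $\ge 2[F:\Q]-1$ lying on a component through $\kp_{B_0}$ and on one through $\kp_{B_1}$, whose image prime then witnesses $\dim(C_{B_0}\cap C_{B_1})\ge 2[F:\Q]\ge[F:\Q]+2$.)

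The main obstacle, and the only place $p=3$, $\bar\chi=\omega^{-1}$ genuinely interferes, is twofold: first, the integrality subtlety in reducing the classes $B_0,B_1$ modulo $\varpi$ --- a class with vanishing restriction at some $v\mid p$ over $K$ may acquire a nonzero (Bockstein-type) restriction after reduction, since $H^0(G_{F_v},\F(\bar\chi^{-1}))$ can be nonzero ---, which must be handled when arranging a common $\bar\rho_b$; and second, that $H^0(G_{F,\Sigma},\ad^0\bar\rho_b(1))$ need not vanish, so Proposition~\ref{dim for global deformation ring} only gives $\dim R_b^{\ord}$-components of size $[F:\Q]$ rather than $[F:\Q]+1$, and the naive dimension count is short by one. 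I expect to repair the latter exactly as in the residually irreducible case already treated: slide along a well-chosen height-one prime of the Iwasawa algebra $\Lambda_F$ to a generic irreducible point $\kq_2'$ at which $H^0(\ad^0\rho(\kq_2')(1))=0$, and recover the missing unit from the generalised Greenberg--Wiles formula (Proposition~\ref{Greenberg-Wiles}) and Corollary~\ref{crucial}. Finally, the reason the target is $[F:\Q]+2$ rather than $[F:\Q]+1$ is that $[F:\Q]+1$ is precisely the bound (Leopoldt for the abelian CM field $F(\mu_3)$) for the dihedral locus in $R^{\ps}/(\varpi)$; a common component of dimension $\ge[F:\Q]+2$ therefore contains an irreducible, non-dihedral, one-dimensional pro-modular prime, to which Theorem~\ref{R=T} and Lemma~\ref{promodular to nice} can then be applied to propagate pro-modularity across $Z_{B_0}\cup Z_{B_1}$.
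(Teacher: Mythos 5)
Your strategy is genuinely different from the paper's, and unfortunately it has a gap at its first and crucial step. You propose to identify $R_{B_0}$ and $R_{B_1}$ with completions $\widehat{(R_b)_{\kp_{B_0}}}$ and $\widehat{(R_b)_{\kp_{B_1}}}$ of a \emph{single} residual deformation ring $R_b$, which requires that the chosen lattices $\rho_{B_0}^o$ and $\rho(\kq_0)^o$ have isomorphic residual representations $\bar\rho_{b_0}\cong\bar\rho_{b_1}$. You claim there is enough ``freedom'' to arrange this, but there is essentially none: by Lemma~\ref{Selmer} the class $B_0$ is pinned down up to a scalar by the local condition at $v_0$, and the residual class $b_1$ of $\rho(\kq_0)^o$ is likewise determined up to scalar by $\kq_0$. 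In the relevant case $K\cdot B_0\ne K\cdot B_1$ (the other case is handled trivially via Lemma~\ref{ZB}) the reductions $b_0,b_1\in H^1(G_{F,\Sigma},\F(\bar\chi^{-1}))$ need not be proportional, and you give no mechanism to force proportionality; you flag this as an ``integrality subtlety,'' but flagging it is not resolving it. Since the two reductions can genuinely be distinct non-split extension classes, the rings $R_{b_0}$ and $R_{b_1}$ are a priori different, and the whole ``single $R_b$'' reduction collapses.

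Even granting a common $R_b$, the rest of the argument misapplies Lemma~\ref{RB}: parts~(1), (2), (3) of that lemma are statements about the $k(\kq)$-algebra $R_B\cong\widehat{(R_b)_{\kp_B}}$, i.e.\ the completion at a fixed characteristic-zero prime, not about the global $\cO$-algebra $R_b$ itself. You invoke Lemma~\ref{RB}(3) for $\dim\Spec R_b^{\mathrm{red}}\le[F:\Q]+1$ and Lemma~\ref{RB}(2) for $c(R_b)\ge 2[F:\Q]-1$, but neither bound is asserted anywhere for $R_b$. Likewise your claim that $\Spec R_b^{\mathrm{red}}$ is irreducible (``deform $\psi_2$ and the extension class separately'') is unproved and not a consequence of anything in the paper; $\Ext^1$ spaces vary in rank over the character base, and no irreducibility statement for this locus is available.

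What the paper (following Pan) actually does is sidestep the need for a common residual: it writes down the explicit one-parameter family $\rho_{B_0}+T\,\rho_{B_1}$ over $K[[T]]$, which is a deformation of $\rho_{B_0}$ and so a point $\kq_{01}\in\Spec R_{B_0}$; cutting $(R_{B_0})_{\kq_{01}}$ by the off-diagonal entries $b_0(\sigma_i)$ for $i\ge 2$ (using a dual basis $\sigma_i\in H$ supplied by the Hochschild--Serre identification $H^1(G_{F,\Sigma},K(\psi_2/\psi_1))\cong\Hom_{\Gal(\tilde F/F)}(H,K(\psi_2/\psi_1))$) produces via Krull an irreducible prime $\kQ_{01}$ with $\dim R_{B_0}/\kQ_{01}\ge[F:\Q]+2$, which is non-reducible by Lemma~\ref{RB}(3); one then pushes to $R^{\ps}$, imposes $y(\sigma_0,\cdot)=0$, and chooses a further prime $\kq_1$ that lies in the image of $\Spec R_{B_1}$ as well, so that both $C_{B_0}\in Z_{B_0}$ and $C_{B_1}\in Z_{B_1}$ contain the common specialization $\kP_{01}$ of dimension $\ge[F:\Q]+2$. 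This avoids any comparison of residual lattices, and no discussion of $p=3$, $\bar\chi=\omega^{-1}$, or $H^0(\ad^0\bar\rho_b(1))$ is needed for this particular lemma: the extra unit of dimension you worry about is supplied by the free variable $T$, not by a Greenberg--Wiles count. Your closing remark explaining why the target is $[F:\Q]+2$ (bounding the dihedral locus via Leopoldt for $F(\mu_3)$) is a correct observation about how the lemma is \emph{used} in the surrounding proof, but it is not part of the proof of the lemma itself.
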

	 
	 \begin{proof}
	 	The arguments are the same as those in \cite[page 1154 \& 1155]{pan2022fontaine}. We recall his proof here.
	 	
	 	Let $\tilde{F}$ be the splitting field of $\psi_2/\psi_1$. As $\psi_1/\psi_2=\varepsilon\theta$, $H^i(\Gal(\tilde{F}/F),K(\psi_2/\psi_1))=0$ for $ i\geq 1 $. Write $H:= \ker \{\psi_2/\psi_1: G_{F, \Sigma} \to K^\times\}$. By the Hochschild-Serre exact sequence (see \cite[Proposition B 2.5]{rubin2000euler}), we have an isomorphism:
	 	\[H^1(G_{F,\Sigma},K(\psi_2/\psi_1))=\Hom_{\Gal(\tilde{F}/F)}(H,K(\psi_2/\psi_1)).\]
	 	If $K\cdot B_1=K\cdot B_0$, then $Z_{B_1}=Z_{B_0}$ and the result follows from the previous lemma. From now on, we may assume $K\cdot B_1 \ne K\cdot B_0$.
	 	
	 	Extend $B_0,B_1$ to a basis $B_0,\cdots, B_{[F:\Q]-1}$ of $H^1(G_{F,\Sigma},K(\psi_2/\psi_1))$ such that we can find $\sigma_0,\cdots,\sigma_{[F:\Q]-1}\in H$ as a dual basis via the pairing 
	 	$H\times H^1(G_{F,\Sigma},K(\psi_2/\psi_1))\to K(\psi_2/\psi_1)$.
	 	
	 	Write \[\rho^{\operatorname{univ}}_{B_0}(\sigma)=\begin{pmatrix} a_0(\sigma) & b_0(\sigma) \\ c_0(\sigma) & d_0(\sigma) \end{pmatrix},\sigma\in G_{F,\Sigma},\]
	 	where $\rho^{\operatorname{univ}}_{B_0}:G_{F,\Sigma}\to\GL_2(R_{B_0})$ is the universal deformation satisfying $b_0(\sigma^*)=c_0(\sigma^*)=0$ for some fixed complex conjugation $ \sigma^*$. Consider the following deformation of $\rho_{B_0}$:
	 	\[\rho_{B_0}+T\rho_{B_1}:G_{F,\Sigma}\to\GL_2(K[[T]]),~\sigma\mapsto \begin{pmatrix} \psi_2(\sigma) & b_{B_0}(\sigma)+T b_{B_1}(\sigma) \\ 0 & \psi_1(\sigma)\end{pmatrix}.\]
	 	This gives rise to a prime $\kq_{01}$ of $R_{B_0}$. Clearly, we have $b_0(\sigma_i)\in\kq_{01},i=2,\cdots, [F:\Q]-1$. Let $\kQ_{01}$ be a minimal prime of $(R_{B_0})_{\kq_{01}}/(b_0(\sigma_2),\cdots,b_0(\sigma_{[F:\Q]-1}))$. Viewing $\kQ_{01}$ as a prime of $(R_{B_0})$, by (1) of Lemma \ref{RB} and Krull's principal ideal theorem, we have $\dim R_{B_0}/\kQ_{01}\geq [F:\Q]+2$. Hence $\kQ_{01}\notin \Spec R^{\mathrm{red}}_{B_0}$ by (3) of Lemma \ref{RB}. Let $\kQ'_{01}=\kQ_{01}\cap R^{\ps}$. Consider the ideal $I_0$ of $R^{\ps}/\kQ_{01}'$ generated by $y(\sigma_0,\tau),\tau\in G_{F,\Sigma}$. Then by \cite[Lemma 7.4.16]{pan2022fontaine}, we have  $\dim R^{\ps}/(\kQ'_{01},I_0)\geq 2+[F:\Q]$.
	 	
	 	Now choose a minimal prime of $R^\ps$ containing $ \kQ'_{01},I_0$. Choose a prime $\kq_1$ (see \cite[page 1155]{pan2022fontaine} for the existence) with the following properties:
	 	\begin{enumerate}
	 		\item $\kP_{01}\subseteq \kq_1 \subseteq \kq$.
	 		\item $\dim R^{\ps}/\kq_1=2$.
	 		\item $\rho(\kq_1)$ is irreducible.
	 		\item Any irreducible component of $R^{\ps}$ containing $\kq_1$ also contains $\kP_{01}$.
	 	\end{enumerate}
	 	From our construction, we can check that $\kq_1$ belongs to the image of $\Spec R_{B_1}\to \Spec R^{\ps}$. In particular, there exists an irreducible component $C_{B_1}\in Z_{B_1}$ that contains $\kq_1$, and we have $\kP_{01}\in C_{B_1}$.
	 	
	 	As $\kQ'_{01}$ is in the image of $\Spec R_{B_0}\to \Spec R^{\ps}$, we can find an irreducible component $C_{B_0}\in Z_{B_0}$ containing $\kQ'_{01}$, hence also containing $\kP_{01}$. Now we have $\dim C_{B_0}\cap C_{B_1}\geq \dim R^{\ps}/\kP_{01}\geq [F:\Q]+2$. This proves the lemma.
	 \end{proof}
	 
	 \begin{prop}
	 	Theorem \ref{mainthm} holds if we are in the non-generic reducible case.
	 \end{prop}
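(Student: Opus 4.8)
The plan is to show that the fixed component $C$ is pro-modular; by the reduction recorded at the start of this section, Theorem \ref{cor of R=T} (applied with the hypothesis that $\rho|_{G_{F_v}}$ is irreducible and de Rham of distinct Hodge-Tate weights for all $v\mid p$) then immediately finishes the proof of Theorem \ref{mainthm} in the non-generic reducible case. So everything comes down to propagating pro-modularity from the family $Z_{B_0}$ to the family $Z_{B_1}$, using the overlap of components furnished by the preceding lemma.

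First I would invoke Lemma \ref{5.4.5}: some irreducible component in $Z_{B_0}$ is pro-modular, hence by Lemma \ref{ZB} \emph{every} component in $Z_{B_0}$ is pro-modular; in particular $C_{B_0}$ is pro-modular, so every prime contained in $C_{B_0}$ is pro-modular. Next I would feed in the preceding lemma, which gives $\dim C_{B_0}\cap C_{B_1}\geq[F:\Q]+2$. Since $[F:\Q]>7|\Sigma\setminus\Sigma_p|+3$ and the reducible locus of $\Spec R^{\ps}$ has dimension at most $2$ (Leopoldt holds for the abelian totally real field $F$), the top-dimensional irreducible components of $C_{B_0}\cap C_{B_1}$ are not contained in the reducible locus. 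Arguing as in the proof of Lemma \ref{find nice} — cutting down by a suitable system of parameters and invoking \cite[Corollary 3.2.4]{Zhang2024} — I would produce a one-dimensional prime $\mathfrak{r}$ of $R^{\ps}$ lying in $C_{B_0}\cap C_{B_1}$ with $\rho(\mathfrak{r})$ irreducible.

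Because $\mathfrak{r}\in C_{B_0}$, it is a one-dimensional irreducible \emph{pro-modular} prime, so Lemma \ref{promodular to nice} shows that every irreducible component of $\Spec R^{\ps}$ through $\mathfrak{r}$ is pro-modular; in particular $C_{B_1}$ is. Since $C_{B_1}\in Z_{B_1}$, Lemma \ref{ZB} then makes every component of $Z_{B_1}$ pro-modular, and since $C\in Z_{B_1}$ by the construction of $B_1$, $C$ is pro-modular — which, as noted above, is all that is needed.

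The step I expect to be the real obstacle is producing the one-dimensional irreducible prime $\mathfrak{r}$ inside the overlap $C_{B_0}\cap C_{B_1}$: one must cut the $([F:\Q]+2)$-dimensional intersection down to dimension one while keeping the associated pseudo-representation irreducible, i.e. staying out of the at-most-$2$-dimensional reducible locus (and, for good measure, out of the dihedral locus so that Lemma \ref{promodular to nice} applies). This is precisely where the inequality $[F:\Q]+2>2$ and the largeness of $[F:\Q]$ are used; everything else is an assembly of Lemmas \ref{5.4.5}, \ref{ZB}, \ref{promodular to nice}, \ref{RB} and the two preceding lemmas, together with Theorem \ref{R=T} (which enters through Lemma \ref{promodular to nice}).
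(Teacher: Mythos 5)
Your proposal matches the paper's proof: invoke Lemma \ref{5.4.5} and Lemma \ref{ZB} to get pro-modularity of $C_{B_0}$, use the dimension bound on $C_{B_0}\cap C_{B_1}$ together with Leopoldt (reducible locus of dimension $\le 2$) to extract a one-dimensional irreducible pro-modular prime in the intersection, and then propagate via Lemma \ref{promodular to nice} and Lemma \ref{ZB} to conclude $C$ is pro-modular. The only small overstatement is the appeal to the full cutting-down machinery of Lemma \ref{find nice} and the concern about the dihedral locus: for the intermediate prime $\mathfrak{r}$ one needs only irreducibility (not niceness), and Lemma \ref{promodular to nice} as stated makes no dihedral hypothesis on $\mathfrak{r}$ — the dihedral issue is absorbed inside that lemma's own proof — so the simpler selection of a one-dimensional prime outside the reducible locus via \cite[Corollary 3.2.4]{Zhang2024} suffices.
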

	 
	 \begin{proof}
	 	By Lemma \ref{ZB} and Lemma \ref{5.4.5}, we know that $C_{B_0}$ is pro-modular. Using the previous lemma, we can find a one-dimensional irreducible pro-modular prime contained in $C_{B_0}\cap C_{B_1} $. By Lemma \ref{promodular to nice}, we know that $C_{B_1} $ is pro-modular. Using Lemma \ref{ZB} again, we know that $C$ is pro-modular. Hence, $\kp$ is pro-modular.
	 \end{proof}
	 
	 \begin{rem}\label{difficulty}
	 	In the case $\bar{\chi}=\omega^{-1}$ (hence quadratic), it seems to be difficult to find a nice prime in $C_{B_0}\cap C_{B_1} $ as Pan argues in \cite[Section 7.4]{pan2022fontaine} since we cannot use the first part of Lemma \ref{nirred}. In other words, Lemma \ref{promodular to nice} may be essential in this case.
	 \end{rem}
	 
	 \subsection{Potential pro-modularity}
	 
	 In this subsection, we conclude our potential pro-modularity argument.
	 
	 Let $F$ be an abelian totally real field in which $p$ splits completely. Write $\Sigma_p$ as the set of places of $F$ above $p$ and let $\Sigma$ be a finite set of finite places of $F$ containing $\Sigma_p$. Let $\bar{\chi}: G_{F, \Sigma} \to \F^\times$ be a continuous odd character and we suppose it can be extended to a character of $G_\Q$. Assume $\chi : G_{F, \Sigma} \to \cO^\times$ is a de Rham character as a lifting of $\bar{\chi}$.

	 \begin{thm} \label{potential R=T}
	 	Assume that for any place $v|p$, $\bar{\chi}|_{G_{F_v}} \ne \mathbf{1}$. Then there exists an abelian totally real extension $F_1/F$ of even degree such that $p$ splits completely in $F_1$ and for any irreducible component of the universal pseudo-deformation ring  $R_{F_1}^\ps$ (parametrizing all pseudo-deformations of $1+\bar{\chi}|_{G_{F_1}}$ unramified out places lying above $\Sigma$ with fixed determinant $\chi$) of dimension at least $1+ 2[F_1: \mathbb{Q}] $, it is pro-modular of dimension $1+ 2[F_1: \mathbb{Q}] $. 
	 \end{thm}
	 
	 \begin{proof}
	 	For the case $\bar{\chi}|_{G_{F_v}} \ne \mathbf{1}, \omega^{-1}$ for any $v|p$ and $p \ge 5$, the proof is a combination of \cite[Theorem 1.0.2]{Zhang2024} and Grunwald-Wang's theorem \cite[Theorem 5, page 80]{artin}.
	 	
	 	For the case $ \bar{\chi}|_{G_{F_v}} = \omega^{-1}$ and $p=3$, it is proved in this section. Note that in each case of this section, we conclude that the irreducible component $C$ is pro-modular. For the dimension of $C$, from \textbf{Step 4} of the proof of Theorem \ref{R=T}, we can find that if $C$ contains a nice prime, then it is of dimension $1+ 2[F_1: \mathbb{Q}] $, and our result follows from Lemma \ref{promodular to nice}.
	 	
	 	For the case $ \bar{\chi}|_{G_{F_v}} = \omega^{-1}$ and $p \ge 5$, the proof is almost the same as the case $p=3$. We only need to replace the results in Section \ref{2.3} by \cite[Lemma 7.4.4 \& Lemma 7.4.8]{pan2022fontaine}.
	 \end{proof}
	 
	 We fix an abelian extension $F_1/F$ satisfying the properties in Theorem \ref{potential R=T}. Similarly as in Section \ref{sec 4.1}, we can define the big Hecke algebra $\T_{\psi}$ (without the character $\xi$), and from our construction of $F_1$ and the proof of Theorem \ref{potential R=T}, we know that there exists a maximal ideal $\mathfrak{m}$ of $\T_{\psi}$ corresponding to the pseudo-representation $1+\bar{\chi}$. In other words, the ``pro-modularity" of a prime of $R_{F_1}^\ps$ here means that it comes from a prime of $\T_\km := (\T_{\psi})_\km$.
	 
	 \begin{prop}\label{equi of T}
	 	The big Hecke algebra $\T_\km$ is equidimensional of dimension $1+ 2[F_1: \mathbb{Q}] $.
	 \end{prop}
	 
	 \begin{proof}
	 	It follows from the previous theorem and Corollary \ref{dim of hecke}.
	 \end{proof}
	 
	\section{The proof in the residually irreducible case} In this section, we study the residually irreducible case.
	
	\subsection{The proof}
	
	Let $p$ be an odd prime. Throughout this section, we denote by $K$  a $p$-adic local field with its ring of integers $\cO$, a uniformizer $\varpi$ and its residue field $\F$. 
	
	We prove the following theorem.
	
	\begin{thm}\label{mainthm2}
		Let $F$ be a totally real extension of $\Q$ in which $p$ completely splits. Suppose 
		\[\rho:\Gal(\overbar{F}/F)\to \GL_2(\cO)\] 
		is a continuous irreducible representation with the following properties
		\begin{enumerate}
			\item $\rho$ ramifies at only finitely many places.
			\item $\bar\rho$ is absolutely irreducible, where $\bar{\rho}$ denotes the reduction of $\rho$ modulo $\varpi$. 
			\item For any $v|p$, $\rho|_{G_{F_v}}$ is absolutely irreducible and de Rham of distinct Hodge-Tate weights. 
			\item $\det \rho(c)=-1$ for any complex conjugation $c\in \Gal(\overbar{F}/F)$.
			\item $\bar\rho$ arises from a regular algebraic cuspidal automorphic representation $\pi_0$ of $\GL_2(\A_F)$. 
		\end{enumerate}
		Then $\rho$ arises from a regular algebraic cuspidal automorphic representation of $\GL_2(\A_F)$.
	\end{thm}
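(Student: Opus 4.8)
The plan is to reduce, via a sequence of soluble base changes, to the exact setting of Subsection~\ref{irr sec}, then apply the big $R=\mathbb{T}$ theorem (Proposition~\ref{big R=T in irr}) and the classicality result (Corollary~\ref{classicality}) to conclude; finally automorphy is pushed back down along the base change. After enlarging $K$ we may assume $\rho$ is valued in $\GL_2(\cO)$ as in the statement.

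First I would carry out the base change reduction. Using Langlands' cyclic base change and descent for $\GL_2$ together with the Grunwald--Wang theorem (in the style of \cite{skinnerwiles01}, \cite{Khare_2009a}), I would produce a soluble totally real extension $F'/F$ with the following properties: (i) $[F':\mathbb{Q}]$ is even and $p$ splits completely in $F'$, so that $F'_w=\mathbb{Q}_p=F_v$ for every $w\mid v\mid p$ and hence the de Rham, distinct-Hodge--Tate-weight and absolute irreducibility hypotheses at $p$ are literally unchanged; (ii) at every finite place $w\nmid p$ of $F'$ at which $\rho$ or $\bar\rho$ is ramified one has $\bar\rho|_{G_{F'_w}}$ trivial, $\rho|_{I_{F'_w}}$ unipotent, and $p\mid\operatorname{Nm}(w)-1$, so that $\bar\rho|_{G_{F'}}$ is unramified outside $p$ and $\rho|_{G_{F'}}$ is a deformation of the type parametrised by $R^{\{\mathbf{1}\}}$ with determinant $\chi:=\det\rho$ (a lift of $\bar\chi=\det\bar\rho$); (iii) $F'$ is linearly disjoint from the splitting field of $\bar\rho$, so $\bar\rho|_{G_{F'}}$ remains absolutely irreducible and odd (the latter since $\det\rho(c)=-1$ descends), hence modular, arising from the cuspidal base change of $\pi_0$; and (iv) $[F':\mathbb{Q}]\ge 4\,|\Sigma'\setminus\Sigma'_p|+2$, where $\Sigma'$ is the set of places of $F'$ above the bad set $\Sigma$. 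Condition (iv) can be arranged after (i)--(iii) by taking one further cyclic layer in which each place of $\Sigma'\setminus\Sigma'_p$ is \emph{inert}: this multiplies $[F':\mathbb{Q}]$ by the degree of the layer while leaving $|\Sigma'\setminus\Sigma'_p|$ unchanged, and it preserves (ii) since $\bar\rho$ is already trivial and $\rho$ already unipotent there and the norm condition is stable under taking powers. Since $F'/F$ is soluble and $\rho$ is irreducible --- and since if $\rho$ were dihedral it would already be automorphic by automorphic induction --- automorphy of $\rho|_{G_{F'}}$ descends to automorphy of $\rho$; so, renaming $F'$ as $F$, we are in the setting of Subsection~\ref{irr sec}.

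Now $\bar\rho$ is modular, so the maximal ideal $\mathfrak{m}$ of $\mathbb{T}=\mathbb{T}_{\psi,\xi}(U^p)$ exists, and by (iv) Proposition~\ref{big R=T in irr} applies: the surjection $R^{\{\mathbf{1}\}}\twoheadrightarrow\mathbb{T}_{\mathfrak{m}}$ has nilpotent kernel, hence induces a homeomorphism of spectra. By construction $\rho$ defines a characteristic-zero point $\mathfrak{p}\in\Spec R^{\{\mathbf{1}\}}[\tfrac1p]$; transporting it through this isomorphism gives a maximal ideal of $\mathbb{T}_{\mathfrak{m}}[\tfrac1p]$, i.e. $\rho$ is pro-modular. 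Since $\rho|_{G_{F_v}}$ is absolutely irreducible and de Rham with distinct Hodge--Tate weights for every $v\mid p$, Corollary~\ref{classicality} (in its version with the auxiliary character $\xi$) then shows that $\mathfrak{p}$ is the pull-back of a classical point of some $\mathbb{T}_{(\vec{k},\vec{w}),\psi}(U^pU_p)[\tfrac1p]$, so that $\rho$ arises from a regular algebraic cuspidal automorphic representation of $\GL_2(\mathbb{A}_F)$. Descending along the soluble extension $F'/F$ finishes the proof.

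The main obstacle I expect lies entirely in the base change step: one must simultaneously realise the trivialisation of $\bar\rho$ and the unipotence of $\rho$ at all bad places, the complete splitting of $p$, the evenness of the degree, the linear disjointness keeping $\bar\rho$ absolutely irreducible and modular, and --- most delicately --- the inequality $[F':\mathbb{Q}]\ge 4\,|\Sigma'\setminus\Sigma'_p|+2$, all within a soluble extension so that automorphy descends; that inequality is exactly what forces the ``inert at the bad places'' choice in the final cyclic layer. Everything downstream of the reduction is formal, being a direct application of the $R=\mathbb{T}$ theorem and of the local-global compatibility/classicality package of Section~\ref{section lgc}, which is precisely where the input of \cite{Pa_k_nas_2021} enters and removes the obstruction at $p=3$.
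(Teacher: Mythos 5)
Your proposal is correct and follows essentially the same route as the paper: reduce by a soluble base change (Grunwald--Wang) to the setting of Subsection~\ref{irr sec} where $\bar\rho$ is trivialised at the bad non-$p$ places, $\rho$ is unipotent there, $p$ splits completely, and $[F:\Q]\ge 4|\Sigma\setminus\Sigma_p|+2$, then apply Proposition~\ref{big R=T in irr} and Corollary~\ref{classicality}, and descend automorphy down the soluble tower. The paper states the base-change reduction tersely; your spelled-out mechanism (a final cyclic layer inert at the bad places to grow the degree while keeping $|\Sigma\setminus\Sigma_p|$ fixed, together with linear disjointness from the splitting field of $\bar\rho$) is exactly the implicit content of that reduction and is sound.
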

	
	Compared to \cite[Theorem 9.0.1]{pan2022fontaine}, our theorem just removes the assumption that $\bar{\rho}|_{G_{F_v}}$ is not of the form $\begin{pmatrix} \eta & * \\ 0 & \eta\omega\end{pmatrix}$ or $\begin{pmatrix} \eta\omega & * \\ 0 & \eta\end{pmatrix}$ for any $v|p$ if $p=3$.
	
	\begin{rem}
		In the ordinary case, the result is known by \cite{skinnerwiles01}, \cite{kisin2009fontaine} and \cite{hutan15}. See \cite[Remark 9.0.2]{pan2022fontaine}.
	\end{rem}
	
	Let $\Sigma$ be a finite set of places containing all the ramified finite places of $\rho$ and let $\Sigma_p$ be the set of primes above $p$. 
	By \cite[Fact 4.27]{Gee_2022} and soluble base change, we may assume the following conditions:
	\begin{itemize}
		\item $\rho|_{I_{F_v}}$ is unipotent for any $v\nmid p$. 
		\item If $\rho|_{G_{F_v}}$ is ramified and $v\nmid p$, then $N(v)\equiv1\mod p$ and $\bar\rho|_{G_{F_v}}$ is trivial.
		\item $[F:\Q]$ is even and $[F:\Q] \ge 4|\Sigma \setminus \Sigma_p|+2$.
		\item $\pi_0$ is unramified everywhere except places above $p$.
	\end{itemize}
	
	\begin{proof}[Proof of Theorem \ref{mainthm2}]
		We keep notations in Section \ref{irr sec}. Under our assumptions above, by Proposition \ref{big R=T in irr}, the surjection $R^{\{\mathbf{1}\}}\to\T_\km$ has nilpotent kernel. Note that $\rho$ defines a maximal ideal $\kp$ of $R^{\{\mathbf{1}\}}[\frac{1}{p}]$, and hence we can also view $\kp$ as a maximal ideal of $ \T_\km[\frac{1}{p}]$. Then the theorem follows from Corollary \ref{classicality}.
	\end{proof}

	\appendix
    \section{Pašk\=unas theory}
    For convenience of readers (especially not experts in $p$-adic langlands correspondence), we recall Pašk\=unas theory that we use to prove our local-global compatibility result in this appendix. Almost all of the contents come from \cite{Pa_k_nas_2013} and \cite{Pa_k_nas_2021}, and one can find more details there.
	
	\subsection{Banach space representations} \label{A.1}
	In this subsection, we recall some basic theory about Banach space representations, and one can find more details and proofs in \cite[Section 4]{Pa_k_nas_2013}. Throughout, we let $p$ be a prime and $G$ be a $p$-adic analytic group. Let $K$ be a $p$-adic local field with its ring of integers $\mathcal{O}$, a uniformizer $\varpi$ and the residue field $\mathbb{F}$. 
	
	A $K$-Banach space representation $\Pi$ of $G$ is an $K$-Banach space $\Pi$ together with a $G$-action by continuous linear automorphisms. A Banach space representation $\Pi$ is \textit{unitary}, if there exists a $G$-invariant norm defining the topology on $\Pi$. The existence of such norm is equivalent to the existence of 
	an open  bounded $G$-invariant $\mathcal{O}$-lattice $\Theta$ in $\Pi$. A unitary $K$-Banach space representation 
	is \textit{admissible} if the space of invariants  $(\Theta\otimes_{\mathcal{O}} \mathbb{F})^H$ is finite dimensional for every open subgroup $H$ of $G$. We say that an $K$-Banach space representation $\Pi$ is \textit{irreducible}, if it does not contain 
	a proper closed $G$-invariant subspace.
	
	For a unitary admissible absolutely irreducible $K$-Banach space representation $\Pi$ of $G$, we say that $\Pi$ is \textit{ordinary} if it is a subquotient of a parabolic induction of a unitary character, so that
	$\Pi$ is either a unitary character $\Pi\cong \eta\circ \det$, 
	a twist of the universal unitary completion of the smooth Steinberg representation by a unitary character 
	$\Pi\cong \widehat{\Sp}\otimes \eta\circ \det$, , or $\Pi$ is a unitary parabolic induction of  a unitary character. Here $\Sp$ is defined by the exact sequence
	\[
	0 \rightarrow \mathbf{1} \rightarrow (\Ind_B^G \mathbf{1})_{\sm} \rightarrow \Sp \rightarrow 0,
	\]
	and $\widehat{\Sp} $ is its universal unitary completion.
	
    \begin{lem}\label{4.1}
    	Let $\Pi$ be an absolutely irreducible and admissible unitary $K$-Banach space representation of $G$.
    	
    	1) Let $\phi\in \End_{K[G]}^{\operatorname{cont}}(\Pi)$. 
    	If the algebra $K[\phi]$ is finite dimensional over $K$, then 
    	$\phi\in K$. 
    	
    	2) If $\End^{\operatorname{cont}}_{K[G]}(\Pi)=K$, then $\Pi$ is absolutely irreducible.
    \end{lem}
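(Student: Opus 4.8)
The plan is to run a Schur's lemma argument, using the topological irreducibility of the base changes $\Pi\hat\otimes_K K'$ (which is precisely what absolute irreducibility supplies) to compensate for the failure of the naive Schur lemma for Banach representations. For part (1): since $K[\phi]$ is a finite–dimensional commutative $K$-algebra, $\phi$ is annihilated by its monic minimal polynomial $m(X)\in K[X]$, of degree $\ge 1$. Choose a finite extension $K'/K$ over which $m$ splits, say $m(X)=\prod_{i=1}^r(X-\lambda_i)^{e_i}$, and set $\Pi':=\Pi\hat\otimes_K K'$, a unitary admissible $K'$-Banach representation which is topologically irreducible by absolute irreducibility of $\Pi$. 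The endomorphism $\phi$ extends to $\phi\in\End^{\operatorname{cont}}_{K'[G]}(\Pi')$ with $\prod_i(\phi-\lambda_i)^{e_i}=0$; since a composite of injective continuous self-maps of the nonzero Banach space $\Pi'$ is injective, hence not the zero map, some factor $\phi-\lambda_i$ fails to be injective. Then $\ker(\phi-\lambda_i)$ is a nonzero closed $G$-invariant subspace, so it equals $\Pi'$ by irreducibility, and $\phi=\lambda_i\cdot\mathrm{id}$ on $\Pi'$. Writing $\phi=\phi_0\hat\otimes\mathrm{id}_{K'}$ with $\phi_0\in\End^{\operatorname{cont}}_{K[G]}(\Pi)$ and comparing components of $\Pi'=\Pi\hat\otimes_K K'$ with respect to a $K$-basis of $K'$ forces $\lambda_i\in K$ and $\phi_0=\lambda_i\cdot\mathrm{id}$; that is, $\phi\in K$.

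For part (2): under the standing hypothesis (absolute irreducibility) the statement is immediate, so the content of interest, and what is actually used, is the converse — that a topologically irreducible admissible unitary $\Pi$ with $\End^{\operatorname{cont}}_{K[G]}(\Pi)=K$ is absolutely irreducible. I would prove this by reducing to showing $\Pi':=\Pi\hat\otimes_K K'$ is irreducible for every finite Galois extension $K'/K$, with group $\Gamma$. First, for finite $K'/K$ one has the base–change identity $\End^{\operatorname{cont}}_{K'[G]}(\Pi')=\End^{\operatorname{cont}}_{K[G]}(\Pi)\otimes_K K'=K'$. Next, admissibility forces the descending chain condition on closed $G$-invariant subspaces (by Schneider–Teitelbaum duality the continuous dual is a Noetherian module over the relevant Iwasawa algebra), so $\operatorname{soc}(\Pi')\neq 0$; being canonical it is $\Gamma$-stable, and it is closed and $G$-invariant, so by Galois descent $\operatorname{soc}(\Pi')=V\hat\otimes_K K'$ for a nonzero closed $G$-invariant $V\subseteq\Pi$, whence $V=\Pi$ by irreducibility and $\Pi'$ is semisimple. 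An admissible semisimple Banach representation is a finite direct sum $\bigoplus_i S_i^{\oplus n_i}$ of pairwise non-isomorphic irreducibles (again because the dual is finitely generated over the Iwasawa algebra), with $\End^{\operatorname{cont}}_{K'[G]}(\Pi')\cong\prod_i M_{n_i}(D_i)$, $D_i$ a division algebra over $K'$; since this equals the field $K'$, there is a single summand, $n_i=1$ and $D_i=K'$, i.e. $\Pi'$ is irreducible.

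The recurring obstacle is exactly the gap between the algebraic Schur's lemma and the Banach setting: a $G$-equivariant continuous injection need not have closed image, so nonzero endomorphisms cannot be inverted out of hand, and closed subrepresentations cannot be split off. The device that closes the gap is admissibility, in the form of the Noetherian (and, for socle arguments, descending–chain) finiteness of the continuous dual over the Iwasawa algebra: this is what makes socles nonvanishing, makes base changes of irreducibles semisimple, and makes endomorphism rings of semisimple admissible representations into finite products of matrix algebras over division algebras. Combined with the base–change and Galois–descent formalism for $\Pi\hat\otimes_K K'$, these inputs suffice to push both parts through, essentially reducing everything to the finite–dimensional Schur lemma on suitable invariant subspaces.
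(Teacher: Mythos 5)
The paper offers no argument for this lemma; it simply cites \cite[Lemmas 4.1 and 4.2]{Pa_k_nas_2013}, so the comparison is with that reference rather than with anything in the text.

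Your proof of part (1) is correct and is the standard argument (and, as far as I can tell, the same as Paškūnas's): split the minimal polynomial $m$ of $\phi$ over a finite extension $K'$, observe that since $m(\phi)=0$ fails to be injective on the nonzero space $\Pi'=\Pi\hat\otimes_K K'$ some linear factor $\phi-\lambda_i$ has nontrivial closed kernel, invoke topological irreducibility of $\Pi'$ (which is exactly what absolute irreducibility of $\Pi$ buys you) to get $\phi=\lambda_i$ on $\Pi'$, and compare $K$-rational components to conclude $\lambda_i\in K$. No issues here.

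For part (2) you correctly identified that the hypothesis in the paper's restatement is stronger than intended: the actual content (and Paškūnas's Lemma 4.2) is that a merely topologically irreducible admissible $\Pi$ with $\End^{\cont}_{K[G]}(\Pi)=K$ is absolutely irreducible. Your Galois-descent plus endomorphism-count strategy is sound, and the base-change identity $\End^{\cont}_{K'[G]}(\Pi')=\End^{\cont}_{K[G]}(\Pi)\otimes_K K'$ for finite $K'/K$ is a legitimate input. The one place you should be more careful is the bare assertion that $\operatorname{soc}(\Pi')$ is closed: the socle is a priori a possibly infinite sum of closed irreducibles, and sums of closed subspaces of a Banach space need not be closed, so this is not automatic. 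The clean repair is the usual one: by the DCC you establish via the finitely generated dual, pick a single minimal nonzero closed $G$-invariant subspace $S\subseteq\Pi'$; the $\Gamma$-orbit sum $\sum_{\sigma\in\Gamma}\sigma(S)$ is a finite sum of closed subobjects, hence closed in the abelian category of admissible unitary Banach representations (it is the image of $\bigoplus_{\sigma}\sigma(S)\to\Pi'$), it is $\Gamma$-stable and $G$-invariant, and your Galois descent then gives $\sum_{\sigma}\sigma(S)=\Pi'$. Extracting a minimal subfamily yields $\Pi'=\bigoplus_{\sigma\in T}\sigma(S)$ with $|T|<\infty$; grouping the $\sigma(S)$ into isomorphism classes gives $\End^{\cont}_{K'[G]}(\Pi')\cong\prod_i M_{n_i}(D_i)$ with each $D_i$ a division $K'$-algebra, and equating this with $K'$ forces a single factor with $n=1$ and $D=K'$, so $|T|=1$ and $\Pi'$ is irreducible. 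This is precisely the conclusion you reached; the adjustment only closes a small topological gap and does not change the structure of the proof.
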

   
	\begin{proof}
		See \cite[Lemma 4.1 \& 4.2]{Pa_k_nas_2013}.
	\end{proof}
	
	 Let $\operatorname{Mod}^{\mathrm{sm}}_G(\mathcal{O})$ be the category of smooth 
	representations of $G$ on $\mathcal{O}$-torsion modules, and let $\operatorname{Mod}^{\mathrm{l\, fin}}_G(\mathcal{O})$ be the full subcategory 
	of $\operatorname{Mod}^{\mathrm{sm}}_G(\mathcal{O})$ consisting of locally finite representations, which means that for every $v\in \pi$, the smallest $\mathcal{O}[G]$-submodule of  $\pi$ containing  $v$ is of finite length. Let $H$ be a compact open subgroup of $G$ and  let 
	$\operatorname{Mod}^{\mathrm{pro \, aug}}_G(\mathcal{O})$ be the category of profinite $\mathcal{O}[[H]]$-modules with an action 
	of $\mathcal{O}[G]$ such that the two actions are the same when restricted to $\mathcal{O}[H]$. Note that it is actually independent of the choice of $H$. Taking Pontryagin duals (with the discrete topology on $K/\cO$):
	\[\pi \mapsto \pi^\vee:=\Hom^\mathrm{cont}_\cO(\pi,K/\cO)\mbox{ with the compact-open topology}\]
	induces an anti-equivalence of categories between $\mathrm{Mod}_G^{\mathrm{sm}}(\mathcal{O})$ and $\mathrm{Mod}_G^{\mathrm{pro\,aug}}(\mathcal{O})$. There is a natural isomorphism between $\pi^{\vee\vee}$ and $\pi$.
	
	Let $\Pi$ be a unitary $K$-Banach space representation of $G$ and $\Theta$ an open bounded $G$-invariant lattice in $\Pi$. We denote by $\Theta^d$ its Schikhof dual
	$ \Theta^d:=\Hom_{\cO}(\Theta, \cO)$
	equipped with the topology of pointwise convergence.
	
	\begin{lem}
		$\Theta^d$ is an object of $\operatorname{Mod}^{\mathrm{pro \, aug}}_G(\mathcal{O})$. 
	\end{lem}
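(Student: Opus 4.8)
The plan is to reduce the statement to finite coefficients, where it becomes an instance of the Pontryagin duality between smooth and pro-augmented representations recalled above, and then to pass to the inverse limit over $n$. The key observation is that although $\Theta$ is not an $\mathcal{O}$-torsion module, its reductions $\Theta/\varpi^n\Theta$ are smooth $G$-representations over $\mathcal{O}/\varpi^n$, and $\Theta^d$ is assembled from their duals.

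First I would record the basic topological facts about $\Theta$. As an open bounded $\mathcal{O}$-submodule of the $K$-Banach space $\Pi$, it is $\varpi$-torsion-free (it lies in the $K$-vector space $\Pi$); the $\varpi$-adic topology on $\Theta$ agrees with the subspace topology, so the $\varpi^n\Theta$ form a neighbourhood basis of $0$; and, being open, $\Theta$ is closed in the complete space $\Pi$, hence complete. Thus $\Theta = \varprojlim_n \Theta/\varpi^n\Theta$, and each $\Theta/\varpi^n\Theta$ carries the discrete topology. Moreover, since the $G$-action on $\Pi$ is continuous, for each $v \in \Theta$ the orbit map $g \mapsto gv$ is continuous and $\varpi^n\Theta$ is open, so the stabiliser of $v + \varpi^n\Theta$ is open in $G$; hence $\Theta/\varpi^n\Theta$ is a smooth representation of $G$ over $\mathcal{O}/\varpi^n$, and by Pontryagin duality its dual $M_n := (\Theta/\varpi^n\Theta)^\vee = \operatorname{Hom}_{\mathcal{O}/\varpi^n}(\Theta/\varpi^n\Theta, \mathcal{O}/\varpi^n)$ is a profinite $\mathcal{O}/\varpi^n[[H]]$-module carrying a compatible $\mathcal{O}/\varpi^n[G]$-action.

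It then remains to identify $\Theta^d$ with $\varprojlim_n M_n$, compatibly with all structures. Since $\mathcal{O} = \varprojlim_n \mathcal{O}/\varpi^n$, a continuous homomorphism $\Theta \to \mathcal{O}$ is the same as a compatible family of continuous homomorphisms $\Theta \to \mathcal{O}/\varpi^n$; any such homomorphism has open kernel and is annihilated by $\varpi^n$, hence factors through the discrete quotient $\Theta/\varpi^n\Theta$, so that $\operatorname{Hom}^{\mathrm{cont}}_{\mathcal{O}}(\Theta, \mathcal{O}/\varpi^n) = M_n$ and $\Theta^d \cong \varprojlim_n M_n$ as $\mathcal{O}[G]$-modules. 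Checking that this identification is a homeomorphism for the topology of pointwise convergence on the left and the inverse-limit topology on the right, one concludes that $\Theta^d$ is profinite (an inverse limit of profinite modules), that it is a module over $\varprojlim_n \mathcal{O}/\varpi^n[[H]] = \mathcal{O}[[H]]$ extending the $\mathcal{O}[H]$-action, and that it carries a compatible $\mathcal{O}[G]$-action; that is, $\Theta^d \in \operatorname{Mod}^{\mathrm{pro\,aug}}_G(\mathcal{O})$.

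The step I expect to require the most care is this last identification: matching the pointwise-convergence topology on $\operatorname{Hom}^{\mathrm{cont}}_{\mathcal{O}}(\Theta, \mathcal{O})$ with the profinite topology coming from the finite levels, and making the $\mathcal{O}[[H]]$-module structure on the limit precise; the first two steps are essentially formal once one uses that $\Pi$ is a genuine continuous Banach space representation. Alternatively, one may simply invoke the corresponding result in \cite{Pa_k_nas_2013}.
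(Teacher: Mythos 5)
Your argument is correct; it is the standard proof by reduction modulo $\varpi^n$, applying Pontryagin duality to each discrete smooth quotient $\Theta/\varpi^n\Theta$, and passing to the inverse limit. The paper itself gives no argument and simply cites \cite[Lemma 4.4]{Pa_k_nas_2013}, which proceeds along essentially the same lines as yours, so your proposal reconstructs the content behind that citation.
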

	
	\begin{proof}
		See \cite[Lemma 4.4]{Pa_k_nas_2013}.
	\end{proof}
	
	Let $\mathrm{Mod}^?_{G}(\cO)$ be  a full subcategory of $\mathrm{Mod}^{\mathrm{l\, fin}}_G(\cO)$ closed under subquotients and 
	arbitrary direct sums in $\mathrm{Mod}^{\mathrm{l\, fin}}_G(\cO)$.  Let $\kC(\cO)$ be the full subcategory of $\mathrm{Mod}_G^{\mathrm{pro\, aug}}(\cO)$ 
	anti-equivalent to $\mathrm{Mod}^?_{G}(\cO)$ via Pontryagin duality. Then $\mathrm{Mod}^?_G(\cO)$ has injective envelopes and 
	so $\kC(\cO)$ has projective envelopes.
	
	\begin{lem} For an  admissible unitary $K$-Banach space representation $\Pi$ of $G$ the following are equivalent:
		\begin{itemize}
			\item[(i)] there exists an open bounded $G$-invariant lattice $\Theta$ in $\Pi$ such that 
			$\Theta^d$ is an object of $\kC(\cO)$;
			\item[(ii)] $\Theta^d$ is an object of $\kC(\cO)$ for every open bounded $G$-invariant lattice $\Theta$ in $\Pi$.
		\end{itemize}
	\end{lem}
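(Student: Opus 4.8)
The plan is to prove the two implications separately, noting that only (i)$\Rightarrow$(ii) carries content. For (ii)$\Rightarrow$(i) it suffices to observe that an open bounded $G$-invariant lattice always exists: since $\Pi$ is unitary it admits a $G$-invariant norm defining its topology, and the unit ball of such a norm is such a lattice; hence (ii) is not vacuous and trivially forces (i). For the converse I would reduce everything to a single assertion: \emph{whether $\Theta^{d}$ is an object of $\kC(\cO)$ depends only on $\Pi$, not on the choice of the open bounded $G$-invariant lattice $\Theta$.} Once this is shown, (i) (which asserts the property for some $\Theta$) and (ii) (which asserts it for all $\Theta$) are equivalent.

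The heart of the argument is a duality bookkeeping step that I expect to be the only delicate point. For an open bounded $G$-invariant lattice $\Theta\subseteq\Pi$, the short exact sequence $0\to\Theta\to\Pi\to\Pi/\Theta\to 0$ together with $\Theta\otimes_{\cO}K=\Pi$ identifies $\Theta\otimes_{\cO}K/\cO$ with $\Pi/\Theta$ as $\cO[G]$-modules, and Schikhof reflexivity gives a canonical $\cO[G]$-isomorphism $(\Theta^{d})^{\vee}\cong\Theta\otimes_{\cO}K/\cO$: concretely $\Theta^{d}/\varpi^{n}\Theta^{d}\cong\Hom_{\cO}(\Theta/\varpi^{n}\Theta,\cO/\varpi^{n})$ (using that $\Hom_{\cO}(\Theta,\cO)\to\Hom_{\cO}(\Theta,\cO/\varpi^{n})$ is surjective because $\Theta$ is $\cO$-torsion free and $\varpi$-adically complete and separated), and passing to the limit on Pontryagin duals recovers $\varinjlim_{n}\Theta/\varpi^{n}\Theta$. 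By the preceding lemma $\Theta^{d}$ lies in $\mathrm{Mod}^{\mathrm{pro\,aug}}_{G}(\cO)$, and $\kC(\cO)$ is by definition the full subcategory anti-equivalent under Pontryagin duality to $\mathrm{Mod}^{?}_{G}(\cO)$; hence $\Theta^{d}$ is an object of $\kC(\cO)$ if and only if $\Pi/\Theta$ is an object of $\mathrm{Mod}^{?}_{G}(\cO)$. Thus the claim to prove becomes: membership of $\Pi/\Theta$ in $\mathrm{Mod}^{?}_{G}(\cO)$ is independent of $\Theta$.

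This last point is then purely formal. Any two open bounded $G$-invariant lattices $\Theta_{1},\Theta_{2}$ in $\Pi$ are commensurable—each contains and is contained in a $\varpi$-power multiple of a fixed such lattice—so there is an integer $N$ with $\varpi^{N}\Theta_{1}\subseteq\Theta_{2}$. Multiplication by $\varpi^{N}$ is a $G$-equivariant topological automorphism of $\Pi$ carrying $\Theta_{1}$ onto $\varpi^{N}\Theta_{1}$, hence induces a $G$-equivariant isomorphism $\Pi/\Theta_{1}\xrightarrow{\sim}\Pi/\varpi^{N}\Theta_{1}$, while the inclusion $\varpi^{N}\Theta_{1}\subseteq\Theta_{2}$ gives a $G$-equivariant surjection $\Pi/\varpi^{N}\Theta_{1}\twoheadrightarrow\Pi/\Theta_{2}$. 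Since $\mathrm{Mod}^{?}_{G}(\cO)$ is closed under subquotients, $\Pi/\Theta_{1}\in\mathrm{Mod}^{?}_{G}(\cO)$ implies $\Pi/\Theta_{2}\in\mathrm{Mod}^{?}_{G}(\cO)$, and by symmetry the converse. Combining this with the translation of the previous paragraph yields the equivalence of (i) and (ii). As indicated, the only step requiring genuine care is the reflexivity identification $(\Theta^{d})^{\vee}\cong\Pi/\Theta$; the rest is manipulation of commensurable lattices and the defining closure properties of $\mathrm{Mod}^{?}_{G}(\cO)$.
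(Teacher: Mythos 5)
Your proof is correct and follows essentially the same route as the cited Lemma 4.6 of Pa\v{s}k\=unas: commensurability of any two open bounded $G$-invariant lattices, combined with closure of $\mathrm{Mod}^?_G(\cO)$ under subquotients, makes the condition independent of the lattice, and the passage between $\Theta^d\in\kC(\cO)$ and $\Pi/\Theta\in\mathrm{Mod}^?_G(\cO)$ via the Schikhof--Pontryagin compatibility $(\Theta^d)^\vee\cong\Theta\otimes_\cO K/\cO\cong\Pi/\Theta$ is the standard device. One small remark on the step you flag as delicate: the surjectivity of $\Hom_\cO(\Theta,\cO)\to\Hom_\cO(\Theta,\cO/\varpi^n)$ does not follow merely from $\Theta$ being $\cO$-torsion free, $\varpi$-adically complete and separated (an arbitrary flat complete separated $\cO$-module can have $\Ext^1_\cO(\Theta,\cO)\ne 0$); what one actually uses is that the unit ball of a $K$-Banach space is isomorphic as a topological $\cO$-module to $c_0(I,\cO)$ for some index set $I$, i.e.\ admits an orthonormal basis, after which the lifting is immediate.
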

	
	\begin{proof}
		See \cite[Lemma 4.6]{Pa_k_nas_2013}.
	\end{proof}
	
	Let $\mathrm{Ban}_G^{\mathrm{adm}}(K)$ be  the category of admissible 
	unitary $K$-Banach space representations of $G$ with morphisms continuous $G$-equivariant $K$-linear homomorphisms. 
	Let $\mathrm{Ban}^{\mathrm{adm}}_{\kC(\cO)}$ be the full subcategory of $\mathrm{Ban}_G^{\mathrm{adm}}(K)$  with objects which are admissible 
	unitary $K$-Banach space representations of $G$ satisfying the conditions of the previous lemma.
	
	Now we assume the following setup. Let $S_1, \ldots, S_n$ be irreducible pairwise non-isomorphic objects of $\kC(\cO)$ such that $\End_{\kC(\cO)}(S_i)$ is finite dimensional over $\mathbb{F}$ 
	for $1\le i \le n$. Let $\widetilde{P}$ be a projective envelope of $S:=\oplus_{i=1}^n S_i$ and let $\widetilde{E}=\End_{\kC(\cO)}(\widetilde{P})$. 
	Then $\widetilde{E}$ is a compact ring (maybe not commutative in general) and $\widetilde{E}/\operatorname{rad} \widetilde{E} \cong \prod_{i=1}^n \End_{\kC(\cO)}(S_i)$, 
	where $\operatorname{rad} \widetilde{E}$ is the Jacobson radical of $\widetilde{E}$. Moreover, we have $\widetilde{P}\cong \oplus_{i=1}^n \widetilde{P}_i$, where $\widetilde{P}_i$ is a projective envelope of $S_i$ in $\kC(\cO)$.  For $1\le i \le n$ let $\pi_i:=S^{\vee}_i$, so that $\pi_i$ is a smooth irreducible $\mathbb{F}$-representation of $G$ and $\pi:=\oplus_{i=1}^n \pi_i \cong S^{\vee}$.
	
	\begin{prop}\label{4.17}
		Let $\Pi$ be in $\mathrm{Ban}^{\mathrm{adm}}_{\kC(\cO)}$ and let $\Theta$ be an open bounded $G$-invariant lattice in $\Pi$. 
		
		1) $\Hom_{\kC(\cO)}(\widetilde{P}, \Theta^d)$ is a finitely generated module over $\widetilde{E}$.
		
		2) Suppose that $\Pi$ is irreducible and $\Theta\otimes_{\cO} \mathbb{F}$ contains $\pi_i$ as a subquotient 
		for some $i$. Let $\phi\in \Hom_{\kC(\cO)}(\widetilde{P}, \Theta^d)$ be non-zero 
		and  let
		$\mathfrak a:=\{a\in \widetilde{E}: \phi\circ a=0\}$. There exists an open bounded $G$-invariant 
		$\cO$-lattice $\Xi$ in $\Pi$ such that $\phi(\widetilde{P})= \Xi^d$.
		Moreover, 
		\begin{itemize}
			\item[(i)] $\Hom_{\kC(\cO)}(\widetilde{P}, \Xi^d)\cong\widetilde{E}/\mathfrak a$ as a right $\widetilde{E}$-module;
			\item[(ii)] $\Hom_{\kC(\cO)}(\widetilde{P}, \Xi^d)\otimes_{\cO} K$ is an irreducible right $\widetilde{E}\otimes_{\cO} K$-module;
			\item[(iii)] the natural map $\Hom_{\kC(\cO)}(\widetilde{P}, \Xi^d)\widehat{\otimes}_{\widetilde{E}} \widetilde{P}\rightarrow \Xi^d$ is surjective.
		\end{itemize}
		Furthermore, we have natural isomorphisms of rings:
		$$ \End_{\kC(\cO)}(\Xi^d)\cong \End_{\widetilde{E}}(\mm)\cong \End_{\kC(\cO)}(\mm\widehat{\otimes}_{\widetilde{E}} \widetilde{P}),$$
		where $\mm:=\Hom_{\kC(\cO)}(\widetilde{P}, \Xi^d)$.
	\end{prop}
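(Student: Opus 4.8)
The plan is to reduce everything to module theory over the compact ring $\widetilde{E}$ through the exact functor $\Hom_{\kC(\cO)}(\widetilde{P},-)$ (exact because $\widetilde{P}$ is projective), the one genuinely analytic input being that admissibility of $\Pi$ forces the Schikhof dual lattices to be topologically finitely generated over the relevant Iwasawa algebra.

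For part (1), admissibility of $\Pi$ means $\Theta\otimes_\cO\F$ is an admissible smooth $\F$-representation, so by Pontryagin duality $\Theta^d/\varpi\Theta^d$, hence $\Theta^d$, is topologically finitely generated over $\cO[[H]]$ for a compact open $H\subseteq G$, a fortiori over $\cO[[G]]$; in particular the cosocle of $\Theta^d$ in $\kC(\cO)$ has finite length. Decomposing $\kC(\cO)$ into its block summands I may assume $\Theta^d$ is supported on the finitely many blocks of the $S_i$. A standard cosocle computation then embeds $\Hom_{\kC(\cO)}(\widetilde{P},\Theta^d)\otimes_{\widetilde{E}}\widetilde{E}/\operatorname{rad}\widetilde{E}$ into $\Hom_{\kC(\cO)}(\widetilde{P},\operatorname{cosoc}\Theta^d)$, which is finite-dimensional since $\widetilde{P}$ has finite-length cosocle $S=\bigoplus_iS_i$; as $\Hom_{\kC(\cO)}(\widetilde{P},\Theta^d)$ is a pseudocompact $\widetilde{E}$-module, topological Nakayama over the compact ring $\widetilde{E}$ shows it is finitely generated.

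For part (2), given a nonzero $\phi\colon\widetilde{P}\to\Theta^d$, the first point is that $\phi(\widetilde{P})$ is an open bounded $G$-invariant $\cO$-lattice in $\Theta^d$: it is a closed (compact) $G$-stable $\cO$-submodule, $\cO$-torsion free because $\Theta^d$ is, and the quotient $\Theta^d/\phi(\widetilde{P})$ is finitely generated over $\cO[[G]]$ and $\varpi$-torsion — otherwise $\phi(\widetilde{P})\otimes_\cO K$ would be a proper nonzero closed $G$-subspace of $\Theta^d\otimes_\cO K\cong\Pi^*$, against topological irreducibility of $\Pi$ (using that admissible Banach representations are reflexive) — hence killed by some $\varpi^N$, so $\varpi^N\Theta^d\subseteq\phi(\widetilde{P})$. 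Letting $\Xi=\{x\in\Pi:\langle\phi(\widetilde{P}),x\rangle\subseteq\cO\}$ be the polar lattice, the bipolar theorem gives $\Xi^d=\phi(\widetilde{P})$. Then (iii) is immediate, since the evaluation map $\Hom_{\kC(\cO)}(\widetilde{P},\Xi^d)\widehat{\otimes}_{\widetilde{E}}\widetilde{P}\to\Xi^d$ has image containing $\phi(\widetilde{P})=\Xi^d$; (i) follows because projectivity of $\widetilde{P}$ makes $\widetilde{E}\to\Hom_{\kC(\cO)}(\widetilde{P},\phi(\widetilde{P}))$, $a\mapsto\phi\circ a$, surjective with kernel exactly $\mathfrak{a}$; and for (ii) a proper nonzero $\widetilde{E}\otimes_\cO K$-submodule of $(\widetilde{E}/\mathfrak{a})\otimes_\cO K$ would, after intersecting with $\widetilde{E}/\mathfrak{a}$, applying $-\widehat{\otimes}_{\widetilde{E}}\widetilde{P}$ and dualizing, cut out a proper nonzero closed $G$-stable subspace of $\Pi$, again contradicting irreducibility. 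The final ring isomorphisms are then formal: $\Xi^d=\phi(\widetilde{P})$ and $\widetilde{E}/\mathfrak{a}$ lie in the full subcategories on which $\Hom_{\kC(\cO)}(\widetilde{P},-)$ and $-\widehat{\otimes}_{\widetilde{E}}\widetilde{P}$ are mutually inverse (both objects being generated by $\widetilde{P}$ and supported on its blocks), so $\End_{\kC(\cO)}(\Xi^d)\cong\End_{\widetilde{E}}(\mm)\cong\End_{\kC(\cO)}(\mm\widehat{\otimes}_{\widetilde{E}}\widetilde{P})$.

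The step I expect to be the real obstacle, everything else being bookkeeping with projective envelopes, polars and Nakayama, is making the two irreducibility reductions in (2) precise — identifying $\Theta^d\otimes_\cO K$ with $\Pi^*$ and transferring topological irreducibility of $\Pi$ to $\Pi^*$, then checking that a submodule of $(\widetilde{E}/\mathfrak{a})\otimes_\cO K$ genuinely produces a closed $G$-stable subspace of $\Pi$ rather than a mere subquotient — together with the finite generation in (1), the other place where a genuine topological argument is needed rather than formal nonsense.
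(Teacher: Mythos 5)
The paper does not give its own proof here: its \emph{proof} of Proposition~\ref{4.17} is the single sentence ``See \cite[Proposition 4.17, 4.18 \& 4.19]{Pa_k_nas_2013}.'' So the only thing to compare your proposal against is Pašk\=unas's original argument, and in outline you reconstruct it faithfully: admissibility forces $\Theta^d$ to be finitely generated over $\cO[[H]]$ (Pašk\=unas, Lemma~4.15); the description of $\Xi^d$ as a saturation/polar of $\phi(\widetilde{P})$, killed after inverting $p$ by irreducibility of $\Pi$; (i)--(iii) by projectivity of $\widetilde{P}$; and the closing ring isomorphisms from the Morita formalism. Two small slips there: the quotient $\Theta^d/\phi(\widetilde{P})$ is finitely generated over $\cO[[H]]$ (the Iwasawa algebra of a \emph{compact} open subgroup --- that is what is Noetherian), not $\cO[[G]]$; and ``$\widetilde{P}$ has finite-length cosocle'' is the reason $\Hom(\widetilde{P},T)$ is finite-dimensional for $T$ simple, not the reason $\Hom(\widetilde{P},\operatorname{cosoc}\Theta^d)$ is finite-dimensional --- the latter also requires that $\operatorname{cosoc}\Theta^d$ itself has finite length, which you do get from finite generation over $\cO[[H]]$ together with the running hypothesis that irreducibles of $\kC(\cO)$ are admissible.

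The genuine gap is in part (1), precisely at the step you describe as ``a standard cosocle computation.'' You claim that $\Hom_{\kC(\cO)}(\widetilde{P},\Theta^d)\otimes_{\widetilde{E}}\widetilde{E}/\operatorname{rad}\widetilde{E}$ \emph{embeds} into $\Hom_{\kC(\cO)}(\widetilde{P},\operatorname{cosoc}\Theta^d)$. The natural map to the right-hand side does factor through the tensor product (any $\phi$ carries $\operatorname{rad}\widetilde{P}$ into $\operatorname{rad}\Theta^d$, so $\phi\circ a$ dies in the cosocle for $a\in\operatorname{rad}\widetilde{E}$), and it is \emph{surjective} by projectivity; but surjectivity is useless for Nakayama, which needs the tensor product itself to be finite-dimensional. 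Injectivity is equivalent to the reverse inclusion $\Hom(\widetilde{P},\operatorname{rad}\Theta^d)\subseteq\Hom(\widetilde{P},\Theta^d)\cdot\operatorname{rad}\widetilde{E}$, and that is exactly the non-formal content --- it amounts to the Morita-theoretic compatibility of radicals, which you cannot invoke before knowing $\Hom(\widetilde{P},\Theta^d)$ is finitely generated, the very thing you are trying to prove. The clean route (and, as far as I can tell, Pašk\=unas's actual route) stays at the level of objects: once you know $\operatorname{cosoc}\Theta^d$ is a finite direct sum of irreducibles, only finitely many of which are $S_i$'s, the component of $\Theta^d$ that $\Hom(\widetilde{P},-)$ sees receives a surjection $\widetilde{P}^{\oplus m}\twoheadrightarrow\Theta^d$ (lift the projective covers of the $S_i$-summands of the cosocle and apply Nakayama in $\kC(\cO)$, not over $\widetilde{E}$); then exactness of $\Hom(\widetilde{P},-)$ gives a surjection $\widetilde{E}^{\oplus m}\twoheadrightarrow\Hom(\widetilde{P},\Theta^d)$, and finite generation over $\widetilde{E}$ is immediate, with no further Nakayama needed. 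You should replace the embedding claim by this argument or supply a genuine proof of the inclusion you assert.
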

	
	\begin{proof}
		See \cite[Proposition 4.17, 4.18 \& 4.19]{Pa_k_nas_2013}.
	\end{proof}
	
	\begin{prop} \label{finite dim}
		Let $\Pi \in \mathrm{Ban}^{\mathrm{adm}}_{\kC(\cO)}$ be irreducible and let $\Theta$ be an open bounded
		$G$-invariant lattice in $\Pi$. Suppose that $\Theta\otimes_{\cO} \mathbb{F}$ contains $\pi_i$ as a subquotient 
		for some $i$.  If the centre $\mathcal Z$ of $\widetilde{E}$ is noetherian and $\widetilde{E}$ is a finitely generated
		$\mathcal Z$-module, then $\Hom_{\kC(\cO)}(\widetilde{P}, \Theta^d)\otimes_{\cO} K$ is finite dimensional over $K$.
	\end{prop}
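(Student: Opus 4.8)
The plan is to deduce this from Proposition \ref{4.17} together with the hypothesis that $\widetilde{E}$ is module-finite over its Noetherian centre $\mathcal{Z}$; this is \cite[Proposition 4.19]{Pa_k_nas_2013}, and the argument has three stages: reduce to a \emph{simple} module, cut down its center-annihilator to a maximal ideal, and invoke the Jacobson property of $\mathcal{Z}[\tfrac1\varpi]$.

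First I would reduce to making $\Hom_{\kC(\cO)}(\widetilde{P},\Theta^d)\otimes_\cO K$ into a simple module. Since $\Pi$ is irreducible, any two open bounded $G$-invariant $\cO$-lattices are commensurable (their sum is again such a lattice and is bounded, hence contains a $\varpi$-power multiple of each), so passing to Schikhof duals the objects $\Theta^d$ become commensurable in $\kC(\cO)$ and the $K$-vector space $\Hom_{\kC(\cO)}(\widetilde{P},\Theta^d)\otimes_\cO K$ does not depend on the choice of $\Theta$. Next, the hypothesis that $\Theta\otimes_\cO\F$ contains $\pi_i$ as a subquotient dualizes — Pontryagin duality being exact and reversing inclusions and quotients — to the statement that $\Theta^d/\varpi\Theta^d$ contains $S_i=\pi_i^\vee$ as a subquotient; since $\widetilde{P}_i$ is a projective envelope of $S_i$, this yields a non-zero $\phi\in\Hom_{\kC(\cO)}(\widetilde{P},\Theta^d)$. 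I would then apply part 2) of Proposition \ref{4.17}: replacing $\Theta$ by the lattice $\Xi$ produced there, one gets $\mm:=\Hom_{\kC(\cO)}(\widetilde{P},\Xi^d)\cong\widetilde{E}/\mathfrak{a}$ with $\mm\otimes_\cO K$ an irreducible right $\widetilde{E}\otimes_\cO K$-module. By the commensurability remark it suffices to prove that $\mm\otimes_\cO K$ is finite-dimensional over $K$.

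For that, observe that $\widetilde{E}\otimes_\cO K$ is module-finite over the Noetherian commutative ring $\mathcal{Z}\otimes_\cO K$, so $\mm\otimes_\cO K$ is a finitely generated $\mathcal{Z}\otimes_\cO K$-module. I would show its annihilator $\mathfrak{p}$ in $\mathcal{Z}\otimes_\cO K$ is maximal: choosing $\mathfrak{q}\in\operatorname{Ass}_{\mathcal{Z}\otimes_\cO K}(\mm\otimes K)$ gives an element $m$ with $\operatorname{Ann}(m)=\mathfrak{q}$, and since $\mathcal{Z}\otimes_\cO K$ is central and $(\widetilde{E}\otimes_\cO K)m=\mm\otimes K$ by irreducibility, $\mathfrak{q}$ kills the whole module, whence $\mathfrak{p}=\mathfrak{q}$ is prime and $\mm\otimes K$ is faithful over the Noetherian domain $R:=(\mathcal{Z}\otimes_\cO K)/\mathfrak{p}$; if $x\in R$ were a non-zero non-unit, then $x\cdot(\mm\otimes K)$ would be an $\widetilde{E}\otimes_\cO K$-submodule, hence $0$ or everything — the first contradicting faithfulness, the second producing via the determinant (Cayley–Hamilton) trick an element $1+xs\in R$ annihilating $\mm\otimes K$, again contradicting faithfulness. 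So $R$ is a field. Finally, $\mathcal{Z}$ being a complete Noetherian local $\cO$-algebra with finite residue field, $\mathcal{Z}\otimes_\cO K=\mathcal{Z}[\tfrac1\varpi]$ is Jacobson with every maximal-ideal residue field finite over $K$; thus $R$ is finite over $K$ and $\mm\otimes_\cO K$, finitely generated over $R$, is finite-dimensional over $K$.

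The main obstacle — indeed the only genuinely non-formal input — is this last fact, that $\mathcal{Z}\otimes_\cO K$ has residue fields finite over $K$ at its maximal ideals (equivalently, that $\mathcal{Z}[\tfrac1\varpi]$ is Jacobson with this property). It is standard for rings of this type (it is what underlies the correspondence between closed points of deformation spaces and representations valued in finite extensions of $K$), and is exactly the ingredient used in \cite[Proposition 4.19]{Pa_k_nas_2013}, from which this proposition is quoted. The commensurability reduction and the simple-module manipulation are routine once Proposition \ref{4.17} is in hand.
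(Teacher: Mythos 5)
Your proposal is correct and essentially reconstructs Paškūnas' own proof of this statement, which the paper delegates wholesale to the cited source (it is \cite[Proposition 4.20]{Pa_k_nas_2013}, not 4.19 --- a small citation slip; 4.19 is the ingredient you invoke through Proposition \ref{4.17}). The only place where you deviate in style is in establishing that the $\mathcal{Z}[1/\varpi]$-annihilator of the simple module is prime: Paškūnas observes directly that the central action factors through the Schur division ring $\End_{\widetilde{E}\otimes K}(\mm\otimes K)$, so the image of $\mathcal{Z}[1/\varpi]$ is a commutative subring of a division ring and hence a domain, whereas you pass through an associated prime; both are fine. The determinant/Nakayama step to upgrade the domain to a field and the appeal to finiteness of residue fields at maximal ideals of $\mathcal{Z}[1/\varpi]$ over $K$ are exactly the inputs Paškūnas uses. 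One small imprecision: $\mathcal{Z}$ need not be local --- as a noetherian pseudo-compact commutative $\cO$-algebra with $\widetilde{E}/\operatorname{rad}\widetilde{E}$ a finite product of finite fields it is a finite product of complete noetherian local $\cO$-algebras with finite residue field --- but this does not affect the argument, as one simply works on the factor through which the annihilator passes.
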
 
	
	\begin{proof}
		See \cite[Proposition 4.20]{Pa_k_nas_2013}.
	\end{proof}
	
	Let $\Pi$ be an object in 
	$\mathrm{Ban}^{\mathrm{adm}}_{\kC(\cO)}$. Choose an open bounded $G$-invariant lattice $\Theta$ in $\Pi$ and 
	put $\mm(\Pi):=\Hom_{\kC(\cO)}(\widetilde{P}, \Theta^d) \otimes_{\cO} K$. Since any two open bounded lattices in $\Pi$ are commensurable (see \cite[Appendix A]{emerton11}), the definition of $\mm(\Pi)$ does not depend on the choice of $\Theta$. Then $\Pi \mapsto \mm(\Pi)$ defines 
	an exact functor from $\mathrm{Ban}^{\mathrm{adm}}_{\kC(\cO)}$ to the category of right $\widetilde{E}[1/p]$-modules (see \cite[Lemma 4.9]{Pa_k_nas_2013}).  Write $\operatorname{Mod}^{\mathrm{fg}}_{\widetilde{E}[1/p]}$ for the category 
	of finitely generated right $\widetilde{E}[1/p]$-modules. 
	
	Let $\mm$ be a compact right $\widetilde{E}$-module, free of finite rank  over $\cO$. Assume that $(\widetilde{E}/\operatorname{rad} \widetilde{E})\widehat{\otimes}_{\widetilde{E}} \widetilde{P}$ is of finite length
	in $\kC(\cO)$ and is a finitely generated $\cO[[H]]$-module, where $\operatorname{rad} \widetilde{E}$ is the Jacobson radical of $\widetilde{E}$. We associate an admissible unitary $K$-Banach space representation of $G$:
	$ \Pi(\mm):= \Hom^{\operatorname{cont}}_{\cO}((\mm\widehat{\otimes}_{\widetilde{E}}\widetilde{P})_{\mathrm{tf}}, K)$
	with the topology induced by the supremum norm. Here $M_\mathrm{tf}$ means the maximal $\cO$-torsion free quotient of the $\cO$-module $M$. From now on, we write $\mm_K := \mm \otimes_{\cO} K$.
	
	Let $\mathrm{Ban}^{\mathrm{adm. fl}}_{\kC(\cO)}$ be the full subcategory of $\mathrm{Ban}^{\mathrm{adm}}_{\kC(\cO)}$ consisting 
	of objects of finite length. Let $\Ker \mm$ be the full subcategory of $\mathrm{Ban}^{\mathrm{adm. fl}}_{\kC(\cO)}$
	consisting of those $\Pi$ such that $\mm(\Pi)=0$. Since $\mm$ is an exact functor, $\Ker \mm$ is a thick subcategory of 
	$\mathrm{Ban}^{\mathrm{adm. fl}}_{\kC(\cO)}$ and hence we can build a quotient category $\mathrm{Ban}^{\mathrm{adm. fl}}_{\kC(\cO)}/ \Ker \mm$.
	
	\begin{thm}  \label{4.34}
		Assume that 
		\begin{itemize} 
			\item[(i)] $(\widetilde{E}/\operatorname{rad} \widetilde{E})\widehat{\otimes}_{\widetilde{E}} \widetilde{P}$ is a finitely generated $\cO[[H]]$-module and is of finite length in $\kC(\cO)$;
			\item[(ii)] For every irreducible $\Pi$  in $\mathrm{Ban}^{\mathrm{adm}}_{\kC(\cO)}$, $\mm(\Pi)$ is finite dimensional.
		\end{itemize} 
		Then the functors $\mm_K \mapsto \Pi(\mm_K)$ and $\Pi\mapsto \mm(\Pi)$ induce an anti-equivalence of categories between 
		$\mathrm{Ban}^{\mathrm{adm. fl}}_{\kC(\cO)}/ \Ker \mm$ and the category of finite dimensional $K$-vector spaces with continuous right $\widetilde{E}$-action.
	\end{thm}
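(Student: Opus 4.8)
The plan is to mimic the classical dictionary between a deformation functor and the category of modules over its representing ring, in the setting of the "abstract Pašk\=unas machine" governed by the pair $(\widetilde P,\widetilde E)$. First I would set up the two functors explicitly: from an object $\Pi$ of $\mathrm{Ban}^{\mathrm{adm.fl}}_{\kC(\cO)}$ one forms $\mm(\Pi)=\Hom_{\kC(\cO)}(\widetilde P,\Theta^d)\otimes_\cO K$ for a choice of open bounded $G$-invariant lattice $\Theta\subset\Pi$; by the commensurability of lattices (\cite{emerton11}, Appendix A) this is independent of $\Theta$, and by \cite[Lemma 4.9]{Pa_k_nas_2013} it is an exact functor to right $\widetilde E[1/p]$-modules. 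Conversely, for a finite-dimensional $K$-vector space $\mm_K$ with continuous right $\widetilde E$-action I would choose an $\widetilde E$-stable $\cO$-lattice $\mm$ (free of finite rank over $\cO$) and form $\Pi(\mm_K):=\Hom^{\cont}_\cO((\mm\widehat\otimes_{\widetilde E}\widetilde P)_{\mathrm{tf}},K)$; hypothesis (i) is exactly what guarantees that $(\mm\widehat\otimes_{\widetilde E}\widetilde P)_{\mathrm{tf}}$ is a well-behaved object of $\kC(\cO)$ (finite length, finitely generated over $\cO[[H]]$), so that $\Pi(\mm_K)$ lands in $\mathrm{Ban}^{\mathrm{adm.fl}}_{\kC(\cO)}$. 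One checks that $\Pi(\mm_K)$ does not depend on the chosen lattice $\mm$.

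Next I would establish the two adjunction/unit-counit maps and show they become isomorphisms after passing to the quotient category $\mathrm{Ban}^{\mathrm{adm.fl}}_{\kC(\cO)}/\Ker\mm$. For the composite $\mm_K\mapsto\Pi(\mm_K)\mapsto\mm(\Pi(\mm_K))$ there is a natural map $\mm_K\to\mm(\Pi(\mm_K))$ coming from the projection $\mm\widehat\otimes_{\widetilde E}\widetilde P\twoheadrightarrow(\mm\widehat\otimes_{\widetilde E}\widetilde P)_{\mathrm{tf}}$; I would prove it is an isomorphism by reducing, via exactness of $\mm(-)$ and devissage on the length of $\mm_K$ as an $\widetilde E[1/p]$-module, to the case $\mm_K$ irreducible, and there invoke part 2 of Proposition \ref{4.17}(ii)—the irreducibility of $\Hom_{\kC(\cO)}(\widetilde P,\Xi^d)\otimes K$—together with hypothesis (ii) to pin down dimensions. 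For the other composite $\Pi\mapsto\mm(\Pi)\mapsto\Pi(\mm(\Pi))$ the counit is the evaluation-type map $\Pi(\mm(\Pi))\to\Pi$; again by exactness and devissage one reduces to $\Pi$ irreducible, where Proposition \ref{4.17}(2) says precisely that $\Hom_{\kC(\cO)}(\widetilde P,\Xi^d)\widehat\otimes_{\widetilde E}\widetilde P\to\Xi^d$ is surjective with $\Hom_{\kC(\cO)}(\widetilde P,\Xi^d)\cong\widetilde E/\mathfrak a$, and the identifications of endomorphism rings in that proposition show the kernel and cokernel of the counit are killed by $\mm(-)$, i.e. lie in $\Ker\mm$—hence the counit is an isomorphism in the quotient category. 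Finally, full faithfulness on $\Hom$-sets follows formally once the unit and counit are isomorphisms, using again exactness of $\mm$ and the definition of morphisms in a Serre quotient.

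The main obstacle I expect is the "other composite" step: controlling $\Pi(\mm(\Pi))\to\Pi$ for non-irreducible $\Pi$ and checking that its kernel/cokernel genuinely lie in the thick subcategory $\Ker\mm$, rather than merely having vanishing associated graded. This is where one must be careful that $(-)_{\mathrm{tf}}$ (the maximal $\cO$-torsion-free quotient) interacts correctly with $\widehat\otimes_{\widetilde E}$ and with extensions; the functor $(-)_{\mathrm{tf}}$ is only right exact, so a naive devissage can fail, and one needs the finiteness inputs (i) and (ii)—via Proposition \ref{4.17}(1) ($\Hom_{\kC(\cO)}(\widetilde P,\Theta^d)$ finitely generated over $\widetilde E$) and Proposition \ref{finite dim}—to ensure everything in sight is of finite length so that the Serre-quotient argument closes up. Once that is handled, the theorem is essentially a formal consequence of Proposition \ref{4.17}, and the remaining verifications (independence of auxiliary lattices, naturality of the unit and counit, compatibility with the ring action) are routine.
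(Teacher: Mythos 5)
The paper does not re-prove this result: its ``proof'' is the citation to \cite[Theorem 4.34]{Pa_k_nas_2013}, so the right comparison is to Pa\v sk\=unas' original argument.

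Your overall architecture (prove $\mm\circ\Pi\cong\mathrm{id}$, prove the other composite is an isomorphism in the Serre quotient, deduce the anti-equivalence formally) is the right shape, and you correctly identify the dangerous point. But the devissage you propose for the unit $\mm_K\to\mm(\Pi(\mm_K))$ --- reducing by induction on length to the irreducible case --- is precisely the step that fails for the reason you yourself flag. A short exact sequence $0\to\mm'_K\to\mm_K\to\mm''_K\to 0$ does not give rise to a short exact sequence after applying $\Pi(-)$: $\widehat\otimes_{\widetilde E}$ is only right exact, $(-)_{\mathrm{tf}}$ is only right exact, and there is no reason the resulting three Banach spaces fit into an exact sequence. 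So the induction step does not close. In Pa\v sk\=unas' proof (his Lemma 4.28 through Theorem 4.33) the isomorphism $\mm(\Pi(\mm_K))\cong\mm_K$ is established \emph{directly}, for all $\mm_K$ at once, without devissage: since $\widetilde P$ is a projective generator of $\kC(\cO)^{\kB}$, the functor $\Hom_{\kC(\cO)}(\widetilde P,-)$ is exact and an inverse to $-\widehat\otimes_{\widetilde E}\widetilde P$, hence $\Hom_{\kC(\cO)}(\widetilde P,\mm\widehat\otimes_{\widetilde E}\widetilde P)\cong\mm$, and passing to $(-)_{\mathrm{tf}}$ only kills $\cO$-torsion, which vanishes after $\otimes_\cO K$. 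Hypothesis (i) is what ensures $\mm\widehat\otimes_{\widetilde E}\widetilde P$ is $\cO[[H]]$-finite so that $\Pi(\mm_K)$ is an admissible Banach representation of finite length.

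Once $\mm\circ\Pi\cong\mathrm{id}$ is in hand, the second half --- controlling $\Pi(\mm(\Pi))\to\Pi$ --- does not need to be analysed directly at all, which is where you anticipate trouble. Since $\mm$ is exact, it factors through the Serre quotient $\mathrm{Ban}^{\mathrm{adm.\,fl}}_{\kC(\cO)}/\Ker\mm$, the induced functor is faithful (its kernel is trivial by construction), and $\mm\circ\Pi\cong\mathrm{id}$ makes it full and essentially surjective; hypothesis (ii) (via Proposition~\ref{finite dim}) guarantees the target is genuinely the category of finite-dimensional $\widetilde E[1/p]$-modules. This formal localization argument replaces the kernel/cokernel analysis of the counit that you correctly worry about. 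In short: right skeleton, but the devissage is the one move that would fail, and the honest gap is exactly the clause ``once that is handled'' --- that clause is the theorem.
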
 
	
	\begin{proof}
		See \cite[Theorem 4.34]{Pa_k_nas_2013}.
	\end{proof}
	
    Combining Proposition \ref{finite dim}, we also have the following theorem.
    
    \begin{thm} \label{decomposition}
    	Assume that 
    	\begin{itemize} 
    		\item[(i)] $(\widetilde{E}/\operatorname{rad} \widetilde{E})\widehat{\otimes}_{\widetilde{E}} \widetilde{P}$ is a finitely generated $\cO[[H]]$-module and is of finite length in $\kC(\cO)$;
    		\item[(ii)] the centre $\mathcal Z$ of $\widetilde{E}$ is noetherian and $\widetilde{E}$ is a finitely generated $\mathcal Z$-module.
    	\end{itemize} 
    	Then 
    	$$\mathrm{Ban}^{\mathrm{adm. fl}}_{\kC(\cO)}/\Ker \mm  \cong \bigoplus_{\mathfrak{n}\in \operatorname{MaxSpec} \mathcal Z[1/p]} (\mathrm{Ban}^{\mathrm{adm. fl}}_{\kC(\cO)}/\Ker \mm)_{\mathfrak{n}},$$
    	where the direct sum is taken over all the maximal ideals of $\mathcal Z[1/p]$, and  for  a maximal ideal $\mathfrak{n}$ of $\mathcal Z[1/p]$, $(\mathrm{Ban}^{\mathrm{adm. fl}}_{\kC(\cO)}/\Ker \mm)_{\mathfrak{n}}$  
    	is the full subcategory of $\mathrm{Ban}^{\mathrm{adm. fl}}_{\kC(\cO)}/\Ker \mm$, consisting of all Banach spaces which are killed by a power of $\mathfrak{n}$.
    	
    	Further, the functor $\mm \mapsto \Pi(\mm)$ induces an anti-equivalence of categories between the category of 
    	modules of finite length of the $\mathfrak{n}$-adic completion of $\widetilde{E}[1/p]$ and 
    	$(\mathrm{Ban}^{\mathrm{adm. fl}}_{\kC(\cO)}/\Ker \mm)_{\mathfrak{n}}$.
    \end{thm}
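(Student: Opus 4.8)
The plan is to deduce Theorem~\ref{decomposition} from Theorem~\ref{4.34} together with Proposition~\ref{finite dim} by a purely algebraic analysis of the target category. First I would check that hypothesis (ii) of Theorem~\ref{decomposition} implies hypothesis (ii) of Theorem~\ref{4.34}: given an irreducible $\Pi\in\mathrm{Ban}^{\mathrm{adm}}_{\kC(\cO)}$ and an open bounded $G$-invariant lattice $\Theta$, either $\mm(\Pi)=0$, in which case there is nothing to prove, or $\Hom_{\kC(\cO)}(\widetilde{P},\Theta^d)\neq 0$, which forces some $\pi_i$ to occur as a subquotient of $\Theta\otimes_\cO\mathbb{F}$; then Proposition~\ref{finite dim}, whose hypotheses are exactly hypothesis (ii) here, shows that $\mm(\Pi)=\Hom_{\kC(\cO)}(\widetilde{P},\Theta^d)\otimes_\cO K$ is finite dimensional over $K$. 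Hence Theorem~\ref{4.34} applies and gives an anti-equivalence between $\mathrm{Ban}^{\mathrm{adm. fl}}_{\kC(\cO)}/\Ker\mm$ and the category $\mathcal{M}$ of finite-dimensional $K$-vector spaces equipped with a continuous right $\widetilde{E}$-action, implemented by $\Pi\mapsto\mm(\Pi)$ and $\mm\mapsto\Pi(\mm)$.

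The next step is to identify $\mathcal{M}$ with the category of right $\widetilde{E}[1/p]$-modules of finite length, and then to decompose the latter. Continuity of the $\widetilde{E}$-action on a finite-dimensional $K$-space $M$ means the structure map $\widetilde{E}\to\End_K(M)$ is continuous; since $\widetilde{E}$ is compact its image is a compact, hence module-finite, $\cO$-subalgebra of the finite-dimensional $K$-algebra $\End_K(M)$, so the action extends to $\widetilde{E}[1/p]$ and makes $M$ a finitely generated, hence finite-length, $\widetilde{E}[1/p]$-module. Conversely, any finite-length $\widetilde{E}[1/p]$-module is finite dimensional over $K$: since $\widetilde{E}[1/p]$ is module-finite over the noetherian commutative ring $\mathcal{Z}[1/p]$, a simple $\widetilde{E}[1/p]$-module has $\mathcal{Z}[1/p]$ acting through a maximal ideal whose residue field is finite over $K$ (here one uses that $\mathcal{Z}$ is a quotient of a formal power series ring over $\cO$, so that its generic fibre satisfies the relevant Nullstellensatz). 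Thus $\mathcal{M}$ is equivalent to the category of finite-length right $\widetilde{E}[1/p]$-modules, and the functor $\mm\mapsto\Pi(\mm)$ makes sense on the latter.

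To conclude I would decompose this category over $\operatorname{MaxSpec}\mathcal{Z}[1/p]$. Since $\mathcal{Z}[1/p]$ is noetherian and $\widetilde{E}[1/p]$ is module-finite over it, any finite-length $\widetilde{E}[1/p]$-module $M$ has support, as a $\mathcal{Z}[1/p]$-module, a finite set $\{\mathfrak{n}_1,\dots,\mathfrak{n}_k\}$ of maximal ideals, and the canonical decomposition $M=\bigoplus_i M_{\mathfrak{n}_i}$ into localizations is a decomposition of $\widetilde{E}[1/p]$-modules; each summand $M_{\mathfrak{n}_i}$ is annihilated by a power of $\mathfrak{n}_i$, hence is naturally a module over the $\mathfrak{n}_i$-adic completion of $\widetilde{E}[1/p]$, and every finite-length module over that completion arises this way. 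This yields $\mathcal{M}\cong\bigoplus_{\mathfrak{n}\in\operatorname{MaxSpec}\mathcal{Z}[1/p]}\mathcal{M}_{\mathfrak{n}}$, with $\mathcal{M}_{\mathfrak{n}}$ the category of finite-length modules over the $\mathfrak{n}$-adic completion of $\widetilde{E}[1/p]$; transporting this decomposition through the anti-equivalence of the first step, and noting that the subcategory corresponding to $\mathcal{M}_{\mathfrak{n}}$ is exactly that of Banach spaces killed by a power of $\mathfrak{n}$, gives the statement, the final anti-equivalence $\mm\mapsto\Pi(\mm)$ restricting to each factor. The main obstacle is the bookkeeping around continuity and $p$-inversion in the passage $\mathcal{M}\simeq\{\text{finite-length }\widetilde{E}[1/p]\text{-modules}\}$: in particular, making sure that a ``continuous right $\widetilde{E}$-action on a finite-dimensional $K$-space'' is genuinely the same datum as a ``finite-length $\widetilde{E}[1/p]$-module'', and that the residue fields of maximal ideals of $\mathcal{Z}[1/p]$ are finite over $K$ so that the support decomposition is indexed by $\operatorname{MaxSpec}\mathcal{Z}[1/p]$ as claimed; the rest is formal.
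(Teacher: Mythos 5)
Your proof is correct and follows the route that the paper's one-line citation (Pašk\=unas, Theorem 4.36) invokes: reduce to Theorem \ref{4.34} by verifying its hypothesis (ii) via Proposition \ref{finite dim} (the case $\mm(\Pi)=0$ being trivial, and otherwise a nonzero map $\widetilde{P}\to\Theta^d$ survives modulo $\varpi$ since $\Theta^d$ is $\varpi$-adically separated and torsion-free, forcing some $\pi_i$ into $\Theta\otimes_{\cO}\mathbb{F}$), then re-interpret the target category as finite-length right $\widetilde{E}[1/p]$-modules, and decompose by support over $\operatorname{MaxSpec}\mathcal{Z}[1/p]$. The obstacle you flag — that residue fields of $\mathcal{Z}[1/p]$ at maximal ideals be finite over $K$, so that finite-length $\widetilde{E}[1/p]$-modules really are finite dimensional over $K$ — is genuine but follows from the standing setup rather than needing an extra hypothesis: $\mathcal{Z}$ is a closed, hence compact, commutative noetherian subring of $\widetilde{E}$, with $\mathcal{Z}/(\mathcal{Z}\cap\operatorname{rad}\widetilde{E})$ embedding into $\widetilde{E}/\operatorname{rad}\widetilde{E}\cong\prod_i\End_{\kC(\cO)}(S_i)$, which is finite over $\mathbb{F}$ by assumption; so $\mathcal{Z}$ is a finite product of complete local noetherian $\cO$-algebras with residue fields finite over $\mathbb{F}$, each a quotient of a power series ring over a finite extension of $\cO$, whence the needed Nullstellensatz for $\mathcal{Z}[1/p]$.
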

	
		\begin{proof}
			See \cite[Theorem 4.36]{Pa_k_nas_2013}.
		\end{proof}
		
		\begin{prop} \label{4.37}
			Keep the hypotheses of Theorem \ref{decomposition} and let $\mathfrak{n}$ be a maximal ideal of $\mathcal Z[1/p]$ and 
			$\mathfrak{n}_0:= \varphi^{-1}(\mathfrak{n})$, where $\varphi: \mathcal Z\rightarrow \mathcal Z[1/p]$. The irreducible objects of 
			$(\mathrm{Ban}^{\mathrm{adm. fl}}_{\kC(\cO)}/\Ker \mm)_{\mathfrak{n}}$ are precisely the irreducible Banach subrepresentations  
			of $\Hom_{\cO}^{\operatorname{cont}}((\widetilde{P}/\mathfrak{n}_0 \widetilde{P})_{\mathrm{tf}}, K)$.
		\end{prop}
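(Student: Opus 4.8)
The plan is to bootstrap everything off the anti-equivalence of Theorem \ref{decomposition}. Applied to the maximal ideal $\mathfrak n$, it identifies $(\mathrm{Ban}^{\mathrm{adm. fl}}_{\kC(\cO)}/\Ker\mm)_{\mathfrak n}$ (via $\mm$) with the category of finite length modules over the $\mathfrak n$-adic completion $\widehat{\widetilde E[1/p]}_{\mathfrak n}$, so an object $\Pi$ of the localised quotient category is irreducible precisely when $\mm(\Pi)$ is a simple module over that completion. Since $\widetilde E[1/p]$ is module-finite over its Noetherian centre $\mathcal Z[1/p]$, the maximal ideal $\mathfrak n$ of the complete local centre lies in the Jacobson radical of $\widehat{\widetilde E[1/p]}_{\mathfrak n}$; hence every simple module is killed by $\mathfrak n$ and is nothing but a simple right module over the finite dimensional $K$-algebra $\widetilde E[1/p]/\mathfrak n$. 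So I am reduced to matching, on the Banach side, the irreducible Banach subrepresentations of $\Pi_0:=\Hom^{\cont}_{\cO}((\widetilde P/\mathfrak n_0\widetilde P)_{\mathrm{tf}},K)$ with all simple right $\widetilde E[1/p]/\mathfrak n$-modules.

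First I would record the identification $\mm(\Pi_0)\cong \widetilde E[1/p]/\mathfrak n$ as a right module over itself. Indeed $(\widetilde P/\mathfrak n_0\widetilde P)_{\mathrm{tf}}$ is a quotient of $\widetilde P$, hence lies in $\kC(\cO)$, and --- using that $\widetilde E/\mathfrak n_0$ is a finitely generated $\widetilde E$-module together with hypotheses (i), (ii) of Theorem \ref{decomposition} --- $\widetilde P/\mathfrak n_0\widetilde P$ is finitely generated over $\cO[[H]]$, so $\Pi_0$ is an admissible unitary Banach representation with $(\widetilde P/\mathfrak n_0\widetilde P)_{\mathrm{tf}}=\Theta^d$ for an open bounded lattice $\Theta$. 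Projectivity of $\widetilde P$ gives $\Hom_{\kC(\cO)}(\widetilde P,\widetilde P/\mathfrak n_0\widetilde P)=\widetilde E/\mathfrak n_0$, and killing $\cO$-torsion and inverting $p$ (and using $(\mathcal Z/\mathfrak n_0)\otimes_{\cO}K\cong\mathcal Z[1/p]/\mathfrak n$) yields $\mm(\Pi_0)\cong \widetilde E[1/p]/\mathfrak n$.

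Next, since $\mm$ is exact and contravariant, any nonzero closed $G$-invariant subspace $\Pi\subseteq\Pi_0$ lands in $\mathrm{Ban}^{\mathrm{adm. fl}}_{\kC(\cO)}$ and is sent to a quotient of $\mm(\Pi_0)=\widetilde E[1/p]/\mathfrak n$; moreover $\mm(\Pi)\neq0$, because the lattice $\Theta^d\in\kC(\cO)$ forces all irreducible subquotients of $\Theta\otimes_{\cO}\F$ to be among the $\pi_i$, so Proposition \ref{4.17}(2) applies. Hence $\Pi$ is irreducible iff $\mm(\Pi)$ is a simple quotient of the regular module $\widetilde E[1/p]/\mathfrak n$. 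Conversely, a simple right $\widetilde E[1/p]/\mathfrak n$-module is cyclic, hence such a simple quotient; for the corresponding irreducible object $\Pi$ of the quotient category, Proposition \ref{4.17}(2) gives $\mm(\Pi)\cong(\widetilde E/\mathfrak a)\otimes_{\cO}K$ cyclic over $\widetilde E$, so --- noting that $\mathfrak n_0$ annihilates the $\cO$-torsion free module $\mm=\Hom_{\kC(\cO)}(\widetilde P,\Xi^d)$ --- the surjection $\mm\widehat\otimes_{\widetilde E}\widetilde P\twoheadrightarrow\Xi^d$ of Proposition \ref{4.17}(2)(iii) factors through $\widetilde P/\mathfrak n_0\widetilde P$ and exhibits $\Pi$ as an honest irreducible Banach subrepresentation of $\Pi_0$. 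The two directions together give the claim.

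The main obstacle is exactly this last bridge between the quotient category $\mathrm{Ban}^{\mathrm{adm. fl}}_{\kC(\cO)}/\Ker\mm$, which is all that Theorem \ref{decomposition} directly controls, and genuine closed subrepresentations inside the fixed Banach space $\Pi_0$: one needs the integral information of Proposition \ref{4.17} --- both the isomorphism $\mm(\Pi)\cong\widetilde E/\mathfrak a$ and the surjectivity onto $\Xi^d$ --- to upgrade ``simple in the quotient category'' to ``embeds as a closed subobject of $\Pi_0$'', and symmetrically to verify that no closed subrepresentation of $\Pi_0$ is killed by $\mm$. A secondary, essentially bookkeeping point --- where hypotheses (i) and (ii) of Theorem \ref{decomposition} enter --- is checking that $\Pi_0$ really is admissible with $(\widetilde P/\mathfrak n_0\widetilde P)_{\mathrm{tf}}\in\kC(\cO)$, i.e. the finite generation of $\widetilde P/\mathfrak n_0\widetilde P$ over $\cO[[H]]$.
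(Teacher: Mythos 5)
Your overall strategy — funnel everything through the anti-equivalence of Theorem \ref{decomposition}, compute $\mm(\Pi_0)\cong\widetilde E[1/p]/\mathfrak n$, then match simple right modules with irreducible Banach subrepresentations using the integral structure of Proposition \ref{4.17} — is exactly the route taken in \cite[Proposition 4.37]{Pa_k_nas_2013}, which is all the paper itself cites. The computation of $\mm(\Pi_0)$ and the embedding step (that $\mathfrak n_0$ kills the lattice in $\mm(\Pi)$, so the surjection of Proposition \ref{4.17}(2)(iii) factors through $\widetilde P/\mathfrak n_0\widetilde P$, hence $\Xi^d$ is a quotient of $\Theta_0^d=(\widetilde P/\mathfrak n_0\widetilde P)_{\mathrm{tf}}$ and dualizing gives $\Pi\hookrightarrow\Pi_0$) are carried out correctly.

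There are, however, two genuine logical gaps, both coming from applying Proposition \ref{4.17}(2) where its hypotheses are not yet available. First, to see that a nonzero closed subrepresentation $\Pi\subseteq\Pi_0$ has $\mm(\Pi)\neq 0$, you appeal to Proposition \ref{4.17}(2); but that proposition takes ``let $\phi\in\Hom_{\kC(\cO)}(\widetilde P,\Theta^d)$ be non-zero'' as a \emph{hypothesis}, i.e.\ it presupposes $\mm(\Pi)\neq 0$ rather than establishing it. The correct input here is that $\Theta^d$ is a nonzero quotient of a quotient of $\widetilde P$, hence has nonzero cosocle which (by the projective-envelope property of $\widetilde P$) is a direct sum of some $S_i$, so $\Hom_{\kC(\cO)}(\widetilde P,\Theta^d)\twoheadrightarrow\Hom_{\kC(\cO)}(\widetilde P,S_i)\neq 0$; this is a generator-type argument, not Proposition \ref{4.17}(2). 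Second, in the converse direction you write ``for the corresponding irreducible object $\Pi$ of the quotient category, Proposition \ref{4.17}(2) gives $\mm(\Pi)\cong(\widetilde E/\mathfrak a)\otimes_{\cO}K$''; but Proposition \ref{4.17}(2) requires $\Pi$ to be irreducible as a \emph{Banach} representation, not merely irreducible as an object of $\mathrm{Ban}^{\mathrm{adm.fl}}_{\kC(\cO)}/\Ker\mm$. You need an intermediate step: choose any Banach representative, take the unique irreducible Banach subquotient $\Pi''$ with $\mm(\Pi'')\neq 0$ (it exists by the cosocle argument just described, and is unique since $\mm$ is exact and $\mm_K$ is simple, so $\mm(\Pi'')=\mm_K$), and only then apply Proposition \ref{4.17}(2) to $\Pi''$. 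With that interpolated, your embedding argument goes through and produces an irreducible Banach subrepresentation of $\Pi_0$ realizing $\mm_K$.
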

		
		\begin{proof}
			See \cite[Proposition 4.37]{Pa_k_nas_2013}.
		\end{proof}
		
		Assume that $\operatorname{Mod}^?_{G}(\cO)$ has only finitely many irreducible objects $\pi_1, \ldots, \pi_n$, which are admissible. 
		Let $\widetilde{P}$ be a projective envelope of $\pi_1^{\vee}\oplus\ldots\oplus \pi_n^{\vee}$ in $\kC(\cO)$, and let $\widetilde{E}=\End_{\widetilde{\kC}(\cO)}(\widetilde{P})$. Then the functor $M\mapsto \Hom_{\kC(\cO)}(\widetilde{P}, M)$ induces an equivalence of categories between 
		$\kC(\cO)$ and the category of compact right $\widetilde{E}$-modules, with the inverse functor given by $\mm\mapsto \mm\widehat{\otimes}_{\widetilde{E}} \widetilde{P}$. This implies that 
		$(\widetilde{E}/\operatorname{rad} \widetilde{E}) \widehat{\otimes}_{\widetilde{E}} \widetilde{P} \cong \pi_1^{\vee}\oplus\ldots\oplus \pi_n^{\vee}$, which is a finitely generated $\cO[[H]]$-module.
		We further assume that  the centre $\mathcal Z$ of $\widetilde{E}$ is noetherian, and $\widetilde{E}$ is a finitely generated module over $\mathcal Z$. 
		
		\begin{prop} \label{faithfully flat}
			The functor $\mm: \operatorname{Ban}^{\mathrm{adm}}_{\kC(\cO)}\rightarrow \operatorname{Mod}^{\mathrm{fg}}_{\widetilde{E}[1/p]}$ is fully faithful.
		\end{prop}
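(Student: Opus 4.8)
The plan is to compute both sides of the full-faithfulness bijection in terms of the associated lattices and the integral modules $\Hom_{\kC(\cO)}(\widetilde{P},\Theta^d)$, and to identify them through a chain of four natural isomorphisms. Fix $\Pi_1,\Pi_2\in\mathrm{Ban}^{\mathrm{adm}}_{\kC(\cO)}$ and choose open bounded $G$-invariant $\cO$-lattices $\Theta_i\subset\Pi_i$, so that $\mm(\Pi_i)=\Hom_{\kC(\cO)}(\widetilde{P},\Theta_i^d)\otimes_\cO K$ with $\Theta_i^d\in\kC(\cO)$. First I would observe that a continuous $G$-equivariant $K$-linear map $\Pi_1\to\Pi_2$ carries the bounded set $\Theta_1$ into a bounded open $\cO$-submodule of $\Pi_2$, hence into $\varpi^{-n}\Theta_2$ for some $n\ge 0$; since any two open bounded $\cO$-lattices are commensurable (\cite[Appendix A]{emerton11}) and $\Theta_1$ generates $\Pi_1$ over $K$, this produces a natural identification $\Hom^{\cont}_{K[G]}(\Pi_1,\Pi_2)\cong\Hom_{\cO[G]}(\Theta_1,\Theta_2)\otimes_\cO K$.

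Next, Schikhof duality $\Theta\mapsto\Theta^d$ is an anti-equivalence on the full subcategory of $\kC(\cO)$ swept out by lattices in objects of $\mathrm{Ban}^{\mathrm{adm}}_{\kC(\cO)}$ — here I would invoke $\Theta^d\in\kC(\cO)$ and the biduality $\Theta^{dd}\cong\Theta$ recalled above — so that $\Hom_{\cO[G]}(\Theta_1,\Theta_2)\cong\Hom_{\kC(\cO)}(\Theta_2^d,\Theta_1^d)$. Then, under the running hypotheses (finitely many irreducibles in $\mathrm{Mod}^?_{G}(\cO)$, all admissible), the functor $M\mapsto\Hom_{\kC(\cO)}(\widetilde{P},M)$ is an equivalence between $\kC(\cO)$ and the category of compact right $\widetilde{E}$-modules, whence
\[\Hom_{\kC(\cO)}(\Theta_2^d,\Theta_1^d)\cong\Hom_{\widetilde{E}}\bigl(\Hom_{\kC(\cO)}(\widetilde{P},\Theta_2^d),\ \Hom_{\kC(\cO)}(\widetilde{P},\Theta_1^d)\bigr).\]
Finally, since $\mathcal Z$ is noetherian and $\widetilde{E}$ is module-finite over $\mathcal Z$, the ring $\widetilde{E}$ is noetherian and $\Hom_{\kC(\cO)}(\widetilde{P},\Theta_i^d)$ is a finitely generated $\widetilde{E}$-module by 1) of Proposition \ref{4.17}, so $\Hom$ commutes with inverting $p$. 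Chaining these isomorphisms and inverting $p$ throughout gives $\Hom^{\cont}_{K[G]}(\Pi_1,\Pi_2)\cong\Hom_{\widetilde{E}[1/p]}(\mm(\Pi_2),\mm(\Pi_1))$.

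To conclude I would check — routinely, by unwinding the definitions, since each of the four isomorphisms above is the one induced by the corresponding functor ($-\otimes_\cO K$, $(-)^d$, $\Hom_{\kC(\cO)}(\widetilde{P},-)$, and $-\otimes_\cO K$ again) — that this composite is precisely the map on $\Hom$-sets induced by the functor $\mm$. This gives full faithfulness of $\mm$.

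The hard part will be bookkeeping rather than substance: the delicate step is the first one, establishing $\Hom^{\cont}_{K[G]}(\Pi_1,\Pi_2)\cong\Hom_{\cO[G]}(\Theta_1,\Theta_2)\otimes_\cO K$ and, more importantly, its naturality — one must use admissibility and unitarity to see that every morphism is integral up to a power of $\varpi$ and that the resulting identification respects composition, so that the entire chain assembles into the natural transformation attached to $\mm$. A secondary, milder subtlety is confirming that Schikhof duality is a genuine anti-equivalence, not merely a faithful functor, on the relevant subcategory, which is exactly where the facts $\Theta^d\in\kC(\cO)$ and $\Theta^{dd}\cong\Theta$ enter.
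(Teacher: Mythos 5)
Your proof is correct and reconstructs the argument underlying the paper's citation to Pa\v sk\=unas' Lemma~4.45: commensurability of open bounded $G$-invariant lattices, Schikhof duality landing in $\kC(\cO)$ (with $\Theta^{dd}\cong\Theta$ supplying full faithfulness), the Morita-type equivalence $M\mapsto\Hom_{\kC(\cO)}(\widetilde P,M)$ between $\kC(\cO)$ and compact right $\widetilde E$-modules, and finally localization at $p$ using finite generation over the noetherian ring $\widetilde E$. The one point worth making explicit is that the third step a priori produces continuous $\widetilde E$-module homomorphisms, but since $\widetilde E$ is a noetherian pseudo-compact ring and the modules $\Hom_{\kC(\cO)}(\widetilde P,\Theta_i^d)$ are finitely generated (Proposition~\ref{4.17}), the topology on them is canonical and every abstract $\widetilde E$-linear map is automatically continuous, so $\Hom^{\cont}_{\widetilde E}=\Hom_{\widetilde E}$ and the final inversion of $p$ is legitimate.
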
 
		
		\begin{proof}
			See \cite[Lemma 4.45]{Pa_k_nas_2013}.
		\end{proof}
		
		\subsection{The case \texorpdfstring{ $G=\GL_2(\mathbb{Q}_p)$}{The case G}}\label{A.2} In this subsection, we recall the case $G=\GL_2(\mathbb{Q}_p)$.
		we keep $p, K, \cO, \varpi, \mathbb{F}$ as in the previous subsection.  Let $Z$ be the centre of $G$ and $B$ be the upper triangular Borel subgroup of $G$. 
		
		Let $\zeta : Z \to \cO^\times$ be a continuous character, and adding the subscript $\zeta$ in the category of $G$-representation means that the corresponding full subcategory of $G$-representations on which $Z$ acts by $\zeta$. Let $\kC(\cO)$ be the full subcategory of $\mathrm{Mod}_{G, \zeta}^{\mathrm{pro\, aug}}(\cO)$ anti-equivalent to $\operatorname{Mod}^{\mathrm{l\, fin}}_{G, \zeta}(\mathcal{O})$ via Pontryagin duality. 
		
		Let $\operatorname{Irr}_{G, \zeta}$ be the set of irreducible representations in $\operatorname{Mod}^{\operatorname{sm}}_{G, \zeta}(\F)$. We write $\pi \sim \pi'$ if there exists $\pi_1, \cdots, \pi_n \in \Irr_{G, \zeta}$ such that $\pi \cong \pi_1$, $\pi' \cong \pi_n$, and for $1 \leq i \leq n-1$, $\pi_i \cong \pi_{i+1}$ or $\Ext^1_{G}(\pi_i, \pi_{i+1}) \neq 0$ or $\Ext^1_{G}(\pi_{i+1}, \pi_i) \neq 0$. The relation $\sim$ is an equivalence relation on $\Irr_{G, \zeta}$. A block $\kB$ is an equivalence class of $\sim$.
		
		Under our setting, the blocks containing an absolutely irreducible representation have been determined (for example, see \cite[page 1303]{Pa_k_nas_2016}). We list them here:
		
		\begin{enumerate}
			\item $\kB = \{ \pi \}$ with $\pi$ supersingular;
			\item $\kB = \{ (\Ind_B^G\chi_1 \otimes \chi_2 \omega^{-1})_{\sm},  (\Ind_B^G\chi_2 \otimes \chi_1 \omega^{-1})_{\sm}\}$ with $\chi_2 \chi_1^{-1} \neq \mathbf{1}, \omega^{\pm 1}$;
			\item $p>2$ and $\kB = \{ (\Ind_B^G \chi \otimes \chi \omega^{-1})_{\sm} \}$;
			\item $p \geq 5$ and $\kB = \{ \mathbf{1}, \Sp, (\Ind^G_B \omega \otimes \omega^{-1})_{\sm} \} \otimes \chi \circ \det$;
			\item $p=3$ and $\kB = \{ \mathbf{1}, \Sp, \omega \circ \det, \Sp \otimes \omega \circ \det \} \otimes \chi \circ \det$;
			\item $p=2$ and $\kB = \{ \mathbf{1}, \Sp \} \otimes \chi \circ \det$;
		\end{enumerate}
		where $\chi, \chi_1, \chi_2: \mathbb{Q}_p^{\times} \rightarrow \F^{\times}$ are smooth characters and $\omega: \mathbb{Q}_p^{\times} \rightarrow \F^{\times}$ is the character $\omega(x) = x |x| \pmod{\varpi}$.

		For each case, we can attach a semi-simple $2$-dimensional representation $\bar{\rho}_{\kB}$ of $G_{\mathbb{Q}_p}$ over $\F$ to each block using normalized Colmez's functor $\mathbf{V}$ (see \cite[Section 5.7]{Pa_k_nas_2013}). This is given by the following list:
		\begin{itemize}
			\item $\bar{\rho}_{\kB}= \mathbf{V}(\pi)$ if $\pi$ is supersingular. In this case, $\bar{\rho}_{\kB}$ is absolutely irreducible.
			\item  $\bar{\rho}_{\kB}=\chi_1 \oplus \chi_2 $ if $\chi_2 \chi_1^{-1} \neq \mathbf{1}, \omega^{\pm 1}$.
			\item $\bar{\rho}_{\kB}= \chi \oplus \chi$ in case (3) and (6).
			\item $\bar{\rho}_{\kB}= \chi \oplus \chi\omega$ in case (4) and (5).
		\end{itemize}
		Under this correspondence, the determinant of $\bar{\rho}_{\kB}$ is $\zeta\varepsilon \mod \varpi$, where $\varepsilon$ is the $p$-adic cyclotomic character.
		
		For a block $\kB$, write $\pi_{\kB}=\bigoplus_{\pi\in\kB_i}\pi$, where $\kB_i$ is the set of isomorphism classes of elements of $\kB$. Let $\pi_\kB\hookrightarrow J_{\kB}$ be an injective envelope of $\pi_{\kB}$ in $\mathrm{Mod}^{\mathrm{l\, adm}}_{G,\zeta}(\cO)$. Its Pontryagin dual $P_{\kB}:=J_{\kB}^\vee$ is a projective envelope of $\pi^\vee\cong\bigoplus_{\pi\in\kB_i}\pi^\vee$ in $\kC_{G,\zeta}(\cO)$, and $P_{\kB}$ is usually called a \textit{projective generator} of the block $\kB$. Let
		$E_{\kB}:=\End_{\kC_{G,\zeta}(\cO)}(P_{\kB})\cong \End_G(J_{\kB}),$
		which is a pseudo-compact ring. Let $Z_{\kB}$ be the centre of $ E_{\kB}$.
		
		By \cite[Corollary 5.35]{Pa_k_nas_2013}, the category $\kC(\cO)$ decomposes into a direct product of subcategories
		\begin{equation*}
			\kC(\cO) \cong \prod_{\kB \in \Irr_{G, \zeta} / \sim} \kC(\cO)_{\kB},
		\end{equation*}
		where the objects of $\kC(\cO)_{\kB}$ are those $M$ in $\kC(\cO)$ such that for every irreducible subquotient $S$ of $M$, $S^{\vee}$ lies in $\kB$. Moreover, the category $\kC(\cO)_{\kB}$ is equivalent to the category of compact right $E_{\kB}$-modules and the centre of $\kC(\cO)_{\kB}$ is isomorphic to $Z_{\kB}$ (see \cite[Proposition 5.45]{Pa_k_nas_2013}).
		
		Let $R^{\ps, \zeta\varepsilon}_{\tr\bar{\rho}_{\kB}}$ be the universal pseudo-deformation ring parametrizing all $2$-dimensional pseudo-representations of $G_{\mathbb{Q}_p}$ lifting $ \tr\bar{\rho}_{\kB}$ with determinant $\zeta\varepsilon$.
		
		\begin{thm}\label{centre finite}
		$E_{\kB}$  and $Z_{\kB}$ are finite over $R^{\ps, \zeta\varepsilon}_{\tr\bar{\rho}_{\kB}}$ and hence noetherian. 
	   \end{thm}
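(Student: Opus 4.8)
The plan is to first produce the structural ring map $R^{\ps,\zeta\varepsilon}_{\tr\bar\rho_\kB}\to Z_\kB$ and then to prove it is module-finite, treating the blocks one at a time according to the classification recalled just above (after enlarging $\F$ if necessary, $\kB$ is one of the six listed types). For the map, recall that the projective generator $P_\kB$ carries a tautological right action of $E_\kB$; applying a suitable normalization of Colmez's Montr\'eal functor $\mathbf{V}$ to the compatible system $\widetilde P/\varpi^n\widetilde P$ (equivalently to the associated Banach representations $\Pi(\mm)$) produces a continuous two-dimensional pseudo-representation $G_{\Q_p}\to Z_\kB$ lifting $\tr\bar\rho_\kB$ with determinant $\zeta\varepsilon$. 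By the universal property of $R^{\ps,\zeta\varepsilon}_{\tr\bar\rho_\kB}$ this is classified by a local $\cO$-algebra homomorphism into the commutative centre $Z_\kB$. Its image $\Lambda_\kB$ is a quotient of $R^{\ps,\zeta\varepsilon}_{\tr\bar\rho_\kB}$, hence Noetherian, and sits inside $Z_\kB\subseteq E_\kB$; so it suffices to show $E_\kB$ is finitely generated as a $\Lambda_\kB$-module, after which finiteness of $Z_\kB$ and Noetherianity of both rings follow formally.

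For the finiteness I would run a topological Nakayama argument. Writing $\mathfrak a$ for the maximal ideal of $R^{\ps,\zeta\varepsilon}_{\tr\bar\rho_\kB}$ (note $\varpi\in\mathfrak a$), and using that $E_\kB$ is pseudo-compact and $\mathfrak a$-adically separated, one reduces to proving that $E_\kB/\mathfrak a E_\kB$ is finite-dimensional over $\F$. Dually this says that $P_\kB/\mathfrak a P_\kB$ has finite length in $\kC(\cO)$, equivalently that the smooth mod-$p$ representation of $\GL_2(\Q_p)$ it is dual to — essentially the part of an injective hull cut out by the residual pseudo-character $\tr\bar\rho_\kB$ — is admissible of finite length. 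This is precisely the point at which the structure of the individual block enters.

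Now the case analysis. When $\kB$ is supersingular (block (1)), $\bar\rho_\kB$ is absolutely irreducible and, after the above normalization, $\mathbf{V}$ upgrades to an anti-equivalence which identifies $E_\kB$ with $R^{\ps,\zeta\varepsilon}_{\tr\bar\rho_\kB}$ itself, so the statement is immediate. For the generic reducible block (2) and the singleton block (3), Pašk\=unas's explicit description of the block presents $P_\kB$ in terms of the universal deformation of a non-split extension $\bar\rho_b$, and the relevant ideal is generated by finitely many elements integral over $R^{\ps,\zeta\varepsilon}_{\tr\bar\rho_\kB}$, giving the finiteness. For the exceptional blocks, those with $(\bar\rho_\kB)^{\mathrm{ss}}\cong\chi\oplus\chi\omega$ — block (4) for $p\ge 5$ and blocks (6) ($p=2$) and (5) ($p=3$) — one uses the filtration of the block by parabolic inductions and Steinberg-type constituents and computes the pertinent $\Ext$-groups via the ordinary-part functor $\mathrm{Ord}_B$ and its derived functor, reducing to the torus $\GL_1(\Q_p)$ where everything is finite over the corresponding Iwasawa algebras and hence over $R^{\ps,\zeta\varepsilon}_{\tr\bar\rho_\kB}$. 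For $p\ge 5$ this is \cite{Pa_k_nas_2013}; for $p\in\{2,3\}$ it is \cite{Pa_k_nas_2021}.

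The main obstacle, and the case genuinely needed in this paper, is the $p=3$ exceptional block (5), which contains four isomorphism classes of irreducible objects instead of the three occurring for $p\ge 5$: since $\omega$ is quadratic when $p=3$, there are extra links and self-extensions among $\mathbf 1$, $\Sp$, $\omega\circ\det$, $\Sp\otimes\omega\circ\det$, so the injective hull $J_\kB$ and the ring $E_\kB=\End_G(J_\kB)$ have a more intricate shape. The hard technical input is therefore Pašk\=unas's computation in \cite{Pa_k_nas_2021} of all the relevant $\Ext^1$ and $\Ext^2$ groups in this block, of the structure of $J_\kB$, and of enough relations in $E_\kB$ to see that its generators over $\Lambda_\kB$ are integral. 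I would invoke that computation directly; re-deriving it would require re-running the ordinary-parts spectral sequence analysis of \cite{Pa_k_nas_2021} for this specific block.
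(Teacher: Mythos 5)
Your proposal is essentially the same approach as the paper, which simply proves the statement by citing \cite[Corollary 6.4]{Pa_k_nas_2021}; your elaboration (construct the ring map via Colmez's functor, reduce by topological Nakayama to finite length of $P_\kB/\mathfrak a P_\kB$, handle blocks case by case, with the $p=3$ block (5) being the genuinely new and hardest case) is a reasonable outline of what lies behind that citation, and you in any event invoke the same reference for that key technical input.
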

		
		\begin{proof}
			See \cite[Corollary 6.4]{Pa_k_nas_2021}.
		\end{proof}
		
		More precisely, the following result describes the relation between $R^{\ps, \zeta\varepsilon}_{\tr\bar{\rho}_{\kB}}$ and the centre $ Z_{\kB}$.
		
		\begin{thm}\label{uni=cen}
			We have $R^{\ps, \zeta\varepsilon}_{\tr\bar{\rho}_{\kB}} [1/p] \cong Z_{\kB}[1/p]$. If we are not in the case (6), then we have $ R^{\ps, \zeta\varepsilon}_{\tr\bar{\rho}_{\kB}}\cong Z_{\kB}$.
		\end{thm}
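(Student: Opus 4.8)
The plan is to exhibit a canonical finite ring homomorphism $\phi\colon R^{\ps,\zeta\varepsilon}_{\tr\bar\rho_\kB}\to Z_\kB$, prove it becomes an isomorphism after inverting $p$, and then upgrade this to an integral isomorphism in all cases except (6) by a reducedness/Nakayama argument.

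\textbf{Construction of $\phi$.} First I would recall that the block $\kB$, equivalently its projective generator $P_\kB$, carries a canonical $2$-dimensional pseudo-character of $G_{\Q_p}$ valued in $Z_\kB$: applying the normalized Colmez functor $\mathbf V$ to the deformation of $\kB$ recorded by $P_\kB$ produces a continuous $D^{\mathrm{univ}}\colon G_{\Q_p}\to Z_\kB$ reducing to $\tr\bar\rho_\kB$ with determinant $\zeta\varepsilon$; this is exactly what is needed even to state Theorem \ref{centre finite}. Universality of $R^{\ps,\zeta\varepsilon}_{\tr\bar\rho_\kB}$ then yields $\phi$, and Theorem \ref{centre finite} tells us $\phi$ is finite, so both rings are module-finite $\cO$-algebras, hence Noetherian, and Jacobson after inverting $p$.

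\textbf{The isomorphism after inverting $p$.} Since $\phi$ is finite and $R^{\ps,\zeta\varepsilon}_{\tr\bar\rho_\kB}[1/p]$ is Jacobson, it suffices to check (i) $\phi[1/p]$ induces a bijection on maximal spectra, and (ii) for each maximal ideal $\mathfrak n\subset Z_\kB[1/p]$ lying over $\mathfrak m$, the induced map $\widehat{(R^{\ps}[1/p])}_{\mathfrak m}\to\widehat{(Z_\kB[1/p])}_{\mathfrak n}$ is an isomorphism. For (i), maximal ideals of $R^{\ps}[1/p]$ parametrize semisimple $2$-dimensional $\overline{\Q_p}$-representations $\rho$ of $G_{\Q_p}$ with $\rho^{\mathrm{ss}}\bmod\varpi=\bar\rho_\kB^{\mathrm{ss}}$ and $\det\rho=\zeta\varepsilon$, while by Theorem \ref{decomposition} and Proposition \ref{4.37} the maximal ideals of $Z_\kB[1/p]$ correspond to the topologically irreducible admissible unitary Banach representations in $\kB$ with central character $\zeta$; these two sets are matched by the $p$-adic Langlands correspondence for $\GL_2(\Q_p)$ realised through $\mathbf V$, using \cite{Colmez_2014} (and density of locally algebraic vectors) to pin down the matching. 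For (ii), one identifies $\widehat{(Z_\kB[1/p])}_{\mathfrak n}$ with the deformation ring of the Banach representation $\Pi_{\mathfrak n}$, hence (via Theorem \ref{decomposition}) with the centre of the Azumaya algebra $\widehat{E}_{\kB,\mathfrak n}$, and shows via $\mathbf V$ that this deformation problem agrees with the Galois-side problem pro-represented by $\widehat{(R^{\ps}[1/p])}_{\mathfrak m}$ (framed deformations when $\rho$ is a sum of distinct characters, pseudo-deformations $=$ unframed deformations when $\rho$ is irreducible, treating the $\rho\cong\chi\oplus\chi$ and $\rho\cong\chi\oplus\chi\omega$ points separately). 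This gives $R^{\ps,\zeta\varepsilon}_{\tr\bar\rho_\kB}[1/p]\cong Z_\kB[1/p]$.

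\textbf{The integral statement and the main obstacle.} Away from case (6) I would argue $\phi$ is already an isomorphism: $Z_\kB\subset E_\kB$ acts faithfully on the $\cO$-torsion-free module $P_\kB$, hence is $\cO$-torsion-free, so the rational isomorphism forces $\phi$ to be injective; surjectivity then reduces by Nakayama to surjectivity of $Z_\kB/\varpi$ over the image of $R^{\ps}/\varpi$, which I would read off from the explicit description of the special fibre of $\kB$ (its $\Ext$-quiver mod $\varpi$ and the resulting presentation of $E_\kB$ over the image of $R^{\ps}$) --- this is precisely the feature distinguishing cases (1)--(5) from case (6), where the block mod $\varpi$ is too small, $R^{\ps,\zeta\varepsilon}_{\tr\bar\rho_\kB}$ is non-reduced, and $Z_\kB$ is merely its reduced quotient. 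I expect the real difficulty to be the case-by-case analysis at the non-generic blocks (cases (3), (4), (5), especially $p=3$), where $E_\kB$ is genuinely non-commutative: one must identify its centre exactly, verify that the completed localizations have trivial Brauer class (so the centre really is the Galois deformation ring), and establish mod-$\varpi$ surjectivity. Controlling $\kB$ for $p=3$ in case (5), the reducible-$\bar\rho_\kB$ block with the most constituents, is where essentially all the work lies, and it is exactly the input that \cite{Pa_k_nas_2021} provides beyond \cite{Pa_k_nas_2013}.
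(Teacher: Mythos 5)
Your proposal takes a genuinely different route from the paper. The paper's proof of Theorem \ref{uni=cen} is essentially a citation: the first sentence is \cite[Theorem 1.4]{Pa_k_nas_2021}; the second is assembled case-by-case from \cite[Theorem 1.5]{Pa_k_nas_2013}, \cite[Theorem 1.3]{Pa_k_nas_2016}, and---for case (5) with $p=3$---again \cite[Theorem 1.4]{Pa_k_nas_2021}, which only gives $Z_{\kB}\cong (R^{\ps,\zeta\varepsilon}_{\tr\bar{\rho}_{\kB}})_{\mathrm{tf}}$; the paper then invokes Proposition \ref{dim of ps} (i.e.\ \cite[Corollary 4.28]{B_ckle_2023}) to see that $R^{\ps,\zeta\varepsilon}_{\tr\bar{\rho}_{\kB}}$ is already $\cO$-torsion-free, so the two rings coincide. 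What you propose instead is to re-derive the Pa\v{s}k\=unas and Pa\v{s}k\=unas--Tung theorems themselves: construct $\phi\colon R^{\ps,\zeta\varepsilon}_{\tr\bar{\rho}_{\kB}}\to Z_{\kB}$, match maximal spectra and completed local rings via $\mathbf V$ and the $p$-adic Langlands correspondence to get $\phi[1/p]$ an isomorphism, then descend. This is a reasonable sketch of the architecture of those references, but it is the content of those references rather than a shortcut around them: the genuinely hard case-by-case analysis (especially the $p=3$, case-(5) block where $E_{\kB}$ is non-commutative and one must control its centre, its Brauer class at completed local rings, and its special fibre) is exactly where the work lies, and you defer all of it. So the two approaches are not comparable in length; the paper simply quotes theorems that you would have to reprove.

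There is also a concrete logical slip in your descent to the integral statement. You argue that $Z_{\kB}$ is $\cO$-torsion-free (it acts faithfully on the torsion-free $P_{\kB}$), and conclude that the rational isomorphism forces $\phi$ to be injective. But torsion-freeness of the \emph{target} does not control $\ker\phi$: from $\phi[1/p]$ being an isomorphism one only learns that $\ker\phi$ is killed by a power of $\varpi$, and to conclude $\ker\phi=0$ one needs the \emph{source} $R^{\ps,\zeta\varepsilon}_{\tr\bar{\rho}_{\kB}}$ to be $\cO$-torsion-free. That is precisely the nontrivial input from \cite{B_ckle_2023} the paper uses, and it is also what separates cases (1)--(5) from case (6): for $p=2$ the pseudo-deformation ring fails to be $\cO$-torsion-free and the integral statement genuinely fails. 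Likewise, your Nakayama step for surjectivity and the identification $Z_{\kB}\cong (R^{\ps,\zeta\varepsilon}_{\tr\bar{\rho}_{\kB}})_{\mathrm{tf}}$ require exactly the Pa\v{s}k\=unas--Tung special-fibre computations you have left unsupplied.
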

		
		\begin{proof}
			For the first part, see \cite[Theorem 1.4]{Pa_k_nas_2021}. For the second part, \cite[Theorem 1.5]{Pa_k_nas_2013} shows the result in the case $p \ge 5$ and the case (2) and (3) for $p \ge 3$. If $p=2$ or $p=3$ and we are in the case (1), it is proved in \cite[Theorem 1.3]{Pa_k_nas_2016}. If $p=3$ and we are in the case (5), \cite[Theorem 1.4]{Pa_k_nas_2021} shows the isomorphism $Z_{\kB} \cong (R^{\ps, \zeta\varepsilon}_{\tr\bar{\rho}_{\kB}})_{\mathrm{tf}} $. Since Proposition \ref{dim of ps} shows that the universal deformation ring $R^{\ps, \zeta\varepsilon}_{\tr\bar{\rho}_{\kB}} $ is $\cO$-torsion free, the result also holds in this case.
		\end{proof}
		
		Since $P_{\kB}$ is a projective generator for $\kC(\cO)_{\kB}$, the functor 
		$$N \mapsto \mm(N):= \Hom_{\kC(\cO)}(P_{\kB}, N)$$
		induces an equivalence of categories between $\kC(\cO)_{\kB}$ and the category of right pseudo-compact $E_{\kB}$-modules. Recall that its inverse functor is given by $\mm \mapsto \mm \widehat{\otimes}_{E_{\kB}} P_{\kB}$. 
		
		\begin{prop}\label{6.7}
			For $N$ in $\kC(\cO)_{\kB}$ the following assertions are equivalent: 
			\begin{enumerate}
				\item there is a surjection $P_\kB^{\oplus n} \twoheadrightarrow N$ for some $n\ge 1$;
				\item $\mm(N)$ is a finitely generated $E_{\kB}$-module;
				\item $\mm(N)$ is a finitely generated $R^{\ps, \zeta\varepsilon}_{\tr\bar{\rho}_{\kB}}$-module;
				\item $\F \widehat{\otimes}_{R^{\ps, \zeta\varepsilon}_{\tr\bar{\rho}_{\kB}}} N$ is of finite length in $\kC(\cO)$;
				\item the cosocle of $N$ in $\kC(\cO)$ 
				is of finite length.
			\end{enumerate} 
			The equivalent conditions  hold if $N$ is finitely generated over $\cO[[H]]$ for a compact open subgroup $H$ of $G$. 
		\end{prop}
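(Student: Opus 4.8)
The plan is to derive all five equivalences, together with the concluding assertion, purely formally from the equivalence of categories $N \mapsto \mm(N) := \Hom_{\kC(\cO)}(P_\kB, N)$ between $\kC(\cO)_\kB$ and the category of right pseudo-compact $E_\kB$-modules (with quasi-inverse $\mm \mapsto \mm\widehat{\otimes}_{E_\kB} P_\kB$), combined with the one genuinely substantive input, Theorem \ref{centre finite}: that $E_\kB$ is a module-finite algebra over $R := R^{\ps,\zeta\varepsilon}_{\tr\bar{\rho}_\kB}$ via $R \to Z_\kB \subseteq E_\kB$, where $R$ is a complete Noetherian local $\cO$-algebra with residue field $\F$. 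First I would record the dictionary supplied by $\mm$: it is exact (as $P_\kB$ is projective), it sends $P_\kB$ to $E_\kB$ as a right module over itself, it intertwines $\F\widehat{\otimes}_R(-)$ on $\kC(\cO)_\kB$ with $(-)/\mathfrak{m}_R(-)$ on right $E_\kB$-modules, it sends ``finite length in $\kC(\cO)$'' to ``finite length as an $E_\kB$-module'', and it sends $\mathrm{cosoc}_{\kC(\cO)}(N)$ to the maximal semisimple quotient $\mm(N)/\mathrm{rad}\,\mm(N)$.

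With this dictionary in hand, (1) $\Leftrightarrow$ (2) is immediate, since a morphism $P_\kB^{\oplus n}\to N$ is surjective iff the induced $E_\kB^{\oplus n}\to\mm(N)$ is surjective. For (2) $\Leftrightarrow$ (3) I would use that $R$ maps into $E_\kB$ and $E_\kB$ is module-finite over $R$, so ``finitely generated over $E_\kB$'' and ``finitely generated over $R$'' coincide for any $E_\kB$-module. The next observation is that every irreducible object of $\kC(\cO)_\kB$ is killed by $\mathfrak{m}_R$ (Schur's lemma: the centre acts through a finite division $\F$-algebra, hence through $\F$ after enlarging $\F$; consequently $\mathfrak{m}_R E_\kB\subseteq \mathrm{rad}\,E_\kB$), so the $\F$-algebra $\bar E := E_\kB/\mathfrak{m}_R E_\kB$ is finite-dimensional with nilpotent radical. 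Then: (3) $\Rightarrow$ (4), because $\mm(N)$ finitely generated over the Noetherian local ring $R$ forces $\mm(N)/\mathfrak{m}_R\mm(N)$ to be finite-dimensional over $\F$, hence of finite length as a $\bar E$-module, i.e. $\F\widehat{\otimes}_R N$ has finite length in $\kC(\cO)$; (4) $\Rightarrow$ (5), because a finite-length object has finite-length cosocle and $\mathrm{cosoc}_{\kC(\cO)}(N)\cong\mathrm{cosoc}_{\kC(\cO)}(\F\widehat{\otimes}_R N)$ (any surjection from $N$ onto an irreducible object factors through $N/\mathfrak{m}_R N$); and (5) $\Rightarrow$ (3), because a finite-length cosocle means the radical quotient of the $\bar E$-module $\mm(N)/\mathfrak{m}_R\mm(N)$ has finite length, so by Nakayama over the finite-dimensional $\bar E$ this module is finitely generated over $\bar E$, hence finite-dimensional over $\F$, and then the topological Nakayama lemma over the complete local ring $R$ yields that $\mm(N)$ is finitely generated over $R$.

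For the last sentence, the plan is: if $N$ is finitely generated over $\cO[[H]]$ for a compact open $H\le G$, then $\cO[[H]]$ is a complete Noetherian local ring, so $N/\mathfrak{m}_{\cO[[H]]}N$ is finite-dimensional over $\F$; writing $\mathrm{cosoc}_{\kC(\cO)}(N)\cong\bigoplus_i(\pi_i^\vee)^{\oplus m_i}$ with each $\pi_i$ an irreducible (hence admissible) smooth $\F$-representation of $G$, reduce the surjection $N\twoheadrightarrow\mathrm{cosoc}_{\kC(\cO)}(N)$ modulo $\mathfrak{m}_{\cO[[H]]}$ and use $\pi_i^\vee/\mathfrak{m}_{\cO[[H]]}\pi_i^\vee\neq 0$ to bound $\sum_i m_i\le\dim_\F N/\mathfrak{m}_{\cO[[H]]}N<\infty$, which gives condition (5), hence all of (1)--(5). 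I expect no serious obstacle here: the only points requiring care are checking that $\mm$ genuinely intertwines $\F\widehat{\otimes}_R(-)$ with $(-)/\mathfrak{m}_R(-)$ and preserves cosocles and finite length at the level of pseudo-compact modules, and the finite-dimensionality of $\bar E$ — but the latter is immediate from Theorem \ref{centre finite}, so the mathematical substance is entirely imported and what remains is bookkeeping with the anti-equivalence and Nakayama's lemma.
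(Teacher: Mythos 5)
The paper itself gives no proof here: it simply refers to \cite[Corollary 6.7]{Pa_k_nas_2021}, so there is nothing for your argument to match line by line. What you have written is a self-contained derivation of the statement, and it is essentially correct. The backbone is sound: the Morita-type equivalence $N\mapsto\mm(N)=\Hom_{\kC(\cO)}(P_\kB,N)$ between $\kC(\cO)_\kB$ and pseudo-compact right $E_\kB$-modules turns (1)$\Leftrightarrow$(2) into the tautology that surjections from $P_\kB^{\oplus n}$ correspond to surjections from $E_\kB^{\oplus n}$; module-finiteness of $E_\kB$ over $R$ (Theorem \ref{centre finite}) makes ``f.g.\ over $E_\kB$'' and ``f.g.\ over $R$'' coincide, giving (2)$\Leftrightarrow$(3); and the cycle (3)$\Rightarrow$(4)$\Rightarrow$(5)$\Rightarrow$(3) is closed by the two Nakayama steps (over the finite-dimensional $\bar E=E_\kB/\mathfrak{m}_RE_\kB$ and then topological Nakayama over the complete Noetherian local $R$). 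This is almost certainly the same mechanism as in Pa\v{s}k\={u}nas--Tung's Corollary 6.7, just spelled out rather than imported by reference.

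A few points you should tighten if this were to appear in print. First, the cosocle of a pseudo-compact object in $\kC(\cO)_\kB$ is naturally a \emph{product} of copies of the finitely many simple objects, not a direct sum; your final-paragraph argument still works because $\F\widehat{\otimes}_{\cO[[H]]}(-)$ commutes with products of pseudo-compact modules and each $\pi_i^\vee/\mathfrak{m}_{\cO[[H]]}\pi_i^\vee$ is nonzero and finite-dimensional, but the bookkeeping should be stated with products. Second, the identification $\mm(\F\widehat{\otimes}_R N)\cong\mm(N)/\mathfrak{m}_R\mm(N)$ is used in (3)$\Rightarrow$(4) and again in (5)$\Rightarrow$(3) and deserves an explicit sentence: it follows because $R$ acts through the centre $Z_\kB$, hence through natural transformations of the identity functor, so the equivalence $\mm$ is $R$-linear and exact, and $\F\widehat{\otimes}_R N=N/\overline{\mathfrak{m}_R N}$. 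Third, the inclusion $\mathfrak{m}_R E_\kB\subseteq\mathrm{rad}\,E_\kB$ should be justified exactly as you sketch (the image of the commutative local ring $R$ in each $\End_{\kC(\cO)}(S_i)$, a division $\F$-algebra, is a field, so $\mathfrak{m}_R$ dies), since it is what guarantees both that the cosocle of $N$ equals the cosocle of $N/\mathfrak{m}_R N$ and that $\bar E$ has nilpotent radical. With these small repairs the argument is complete and arguably more useful to the reader than the paper's bare citation.
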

		
		\begin{proof}
			See \cite[Corollary 6.7]{Pa_k_nas_2021}.
		\end{proof}
		
		Since every irreducible in $\kB$ is admissible, its Pontryagin dual is finitely generated over
		$\cO[[H]]$ for any compact open subgroup $H$ of $G$. Using the results listed above, we know that  the assumptions made in Theorem \ref{4.34} and Theorem \ref{decomposition} are 
		satisfied for the category $\kC(\cO)_{\kB}$.
		
		Using \cite[Proposition 5.36]{Pa_k_nas_2013}, we have a decomposition $$
			\Ban_{G, \zeta}^{\adm} (K)\cong \bigoplus_{\kB \in \Irr_{G, \zeta} / \sim} \Ban_{G, \zeta}^{\adm}(K)_{\kB},
		$$
		where the objects of $\Ban_{G, \zeta}^{\adm}(K)_{\kB}$ are those $\Pi$ in $\Ban_{G, \zeta}^{\adm}(K)$ such that for every open bounded $G$-invariant lattice $\Theta$ in $\Pi$ the irreducible subquotients of $\Theta \otimes_{\cO} \F$ lie in $\kB$. It is equivalent to  require $\Theta^d$ to be an object of $\kC(\cO)_{\kB}$.
		
		For a block $\kB$ consisting of absolutely irreducible representations, we write $\Mod^{\mathrm{fl}}_{E_\kB[1/p]}$ for the category of finitely generated right $E_\kB[1/p]$-modules of finite length. By Theorem
		\ref{4.34} and Proposition \ref{faithfully flat}, the functor $\mm$ (defined in the previous subsection) induces an anti-equivalence of categories $$
		\mm:\Ban^{\operatorname{adm. fl}}_{G, \zeta}(K)_{\kB} \overset{\cong}{\longrightarrow} \Mod^{\mathrm{fl}}_{E_\kB[1/p]}.$$
		
		If $\mathfrak{m}$ is a maximal ideal of $R^{\ps, \zeta\varepsilon}_{\tr\bar{\rho}_{\kB}}[1/p]$, then we let $\Ban^{\mathrm{adm. fl}}_{G, \zeta}(K)_{\kB, \mathfrak{m}}$ be the full subcategory of 
		$\Ban^{\adm}_{G, \zeta}(K)$ consisting of finite length Banach space representations, 
		which are killed by some power of $\mathfrak{m}$. By Theorem \ref{centre finite} and Theorem \ref{decomposition}, 
		we have an equivalence of categories 
	$$	\Ban^{\mathrm{adm. fl}}_{G, \zeta}(K)_{\kB} \cong \bigoplus_{\mathfrak{m} \in \mathrm{MaxSpec} R^{\ps, \zeta\varepsilon}_{\tr\bar{\rho}_{\kB}}[1/p]} \Ban^{\mathrm{adm. fl}}_{G, \zeta}(K)_{\kB, \mathfrak{m}}.
	$$
		
		Let $\Irr(\mathfrak{m}, K')$ be the set of isomorphism classes  of irreducible objects in $\Ban^{\mathrm{adm. fl}}_{G, \zeta}(K')_{\kB, \mathfrak{m}}$, where $K'$ is a finite extension of $K$.
		
		\begin{prop}\label{reducible_irr} 
			Let $K'$ be a finite extension of $K$ and let $x: R^{\ps, \zeta\varepsilon}_{\tr\bar{\rho}_{\kB}} \rightarrow K'$ be an $\cO$-algebra homomorphism, and it defines a maximal ideal $ \mathfrak{m}_x$ of $R^{\ps, \zeta\varepsilon}_{\tr\bar{\rho}_{\kB}}[1/p]$. If $T_x= \psi_1+ \psi_2$ for characters $\psi_1, \psi_2: G_{\Q_p} \rightarrow (K')^{\times}$ then one of the following holds: 
			\begin{itemize} 
				\item if $\psi_1\psi_2^{-1}=\mathbf{1}$, then $\Irr(\mathfrak{m}_x, K')= \{ (\Ind_B^G \mathbf{1} \otimes  \varepsilon^{-1})_{\cont}\otimes \psi_1\circ \det\}$.
				\item if $\psi_1 \psi_2^{-1}= \varepsilon^{\pm 1}$, then $\Irr(\mathfrak{m}_x, K')= \{ \mathbf{1}, \widehat{\Sp}, (\Ind_B^G \varepsilon\otimes \varepsilon^{-1})_{\cont}\} \otimes \psi\circ \det$, where $\widehat{\Sp} $ is the universal unitary completion of the Steinberg representation.
				\item if $\psi_1 \psi_2^{-1}\neq \varepsilon^{\pm 1}, \mathbf{1}$, then  
				$$\Irr(\mathfrak{m}_x, K')= \{ (\Ind_B^G \psi_1\otimes \psi_2 \varepsilon^{-1})_{\cont}, (\Ind_B^G \psi_2\otimes \psi_1 \varepsilon^{-1})_{\cont}\}.$$
			\end{itemize}
			Here we consider $\psi_1$ and $\psi_2$ as unitary characters of $\Q_p^{\times}$ via the class field theory and $\psi$ in (2) is either $\psi_1$ or $\psi_2$.
		\end{prop}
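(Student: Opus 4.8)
The plan is to reduce the statement to an explicit description of the ``universal'' Banach space attached to the point $x$, and then to read off its irreducible subrepresentations case by case.

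First I would use the block decomposition $\Ban^{\mathrm{adm.fl}}_{G,\zeta}(K')\cong\bigoplus_{\kB}\Ban^{\mathrm{adm.fl}}_{G,\zeta}(K')_{\kB}$ together with Theorem \ref{uni=cen}, which identifies $\mathrm{MaxSpec}\,R^{\ps,\zeta\varepsilon}_{\tr\bar\rho_\kB}[1/p]$ with $\mathrm{MaxSpec}$ of the centre of $E_\kB$ after inverting $p$; hence the subcategory $\Ban^{\mathrm{adm.fl}}_{G,\zeta}(K')_{\kB,\mathfrak m_x}$ is the one cut out by $\mathfrak m_x$ in the decomposition of Theorem \ref{decomposition}. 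Applying Proposition \ref{4.37} with $\widetilde P=P_\kB$, $\widetilde E=E_\kB$ and $\mathfrak n=\mathfrak m_x$, it follows that $\Irr(\mathfrak m_x,K')$ is precisely the set of isomorphism classes of topologically irreducible Banach subrepresentations of
$$\widehat\Pi_x\;:=\;\Hom^{\cont}_{\cO}\big((P_\kB/\km_{x,0}P_\kB)_{\mathrm{tf}},\,K'\big),$$
where $\km_{x,0}$ is the preimage of $\mathfrak m_x$ under $R^{\ps,\zeta\varepsilon}_{\tr\bar\rho_\kB}\to R^{\ps,\zeta\varepsilon}_{\tr\bar\rho_\kB}[1/p]$. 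So the task becomes: describe the irreducible subrepresentations of $\widehat\Pi_x$ when $T_x=\psi_1+\psi_2$ is reducible.

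Next I would feed in the explicit presentations of $P_\kB$ as an $E_\kB$-module and of $E_\kB$ over $R^{\ps,\zeta\varepsilon}_{\tr\bar\rho_\kB}$. For blocks with $\bar\rho_\kB$ reducible these are obtained case by case in Pašk\=unas' work --- \cite{Pa_k_nas_2013} for $p\ge5$, and \cite{Pa_k_nas_2016}, \cite{Pa_k_nas_2021} for $p\in\{2,3\}$ --- and they are arranged so that the normalized Colmez functor $\mathbf V$ intertwines the $R^{\ps,\zeta\varepsilon}_{\tr\bar\rho_\kB}$-module structures on the automorphic and Galois sides. Specializing at $\km_{x,0}$, the Banach space $\widehat\Pi_x$ is built, up to multiplicity, from the continuous parabolic inductions $(\Ind_B^G\psi_1\otimes\psi_2\varepsilon^{-1})_{\cont}$ and $(\Ind_B^G\psi_2\otimes\psi_1\varepsilon^{-1})_{\cont}$, which is the expected match of the reducible parameter with an induced representation. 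I would then read off the socle using the classification of unitary completions of smooth principal series (Emerton, \cite{2010Emerton}): each $(\Ind_B^G\chi_1\otimes\chi_2)_{\cont}$ is admissible unitary, is topologically irreducible outside the degenerate range, and in the degenerate range ($\psi_1\psi_2^{-1}\in\{\mathbf 1,\varepsilon^{\pm1}\}$) has a filtration of length two or three whose graded pieces are, after the twist by $\psi\circ\det$, the trivial character $\mathbf 1$, the universal unitary completion $\widehat{\Sp}$ of the Steinberg representation, and the remaining irreducible continuous induction. Translating $|\cdot|^{\pm1}$ into $\varepsilon^{\pm1}$ via local class field theory yields exactly the three lists in the statement: a single continuous principal series when $\psi_1=\psi_2$; the pair $\{(\Ind_B^G\psi_1\otimes\psi_2\varepsilon^{-1})_{\cont},(\Ind_B^G\psi_2\otimes\psi_1\varepsilon^{-1})_{\cont}\}$ in the generic case; and $\{\mathbf 1,\widehat{\Sp},(\Ind_B^G\varepsilon\otimes\varepsilon^{-1})_{\cont}\}\otimes\psi\circ\det$ when $\psi_1\psi_2^{-1}=\varepsilon^{\pm1}$.

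Finally I would check exhaustiveness: by Theorem \ref{decomposition} the irreducible objects of $\Ban^{\mathrm{adm.fl}}_{G,\zeta}(K')_{\kB,\mathfrak m_x}$ correspond to the simple modules of the $\mathfrak m_x$-adic completion of $E_\kB[1/p]$, and since $E_\kB$ is module-finite over its centre (Theorem \ref{centre finite}) this is a finite set, of cardinality $1$, $2$ or $3$ according to the three cases, matching the lists produced above; a $\mathbf V$-computation confirms that each candidate has associated pseudo-representation $T_x$ and hence lies in the correct fibre. The step I expect to be the genuine obstacle is the identification of $\widehat\Pi_x$ in the degenerate cases $\psi_1=\psi_2$ and $\psi_1\psi_2^{-1}=\varepsilon^{\pm1}$, where the parabolic induction is reducible and one must pin down which of its constituents actually embed into $P_\kB/\km_{x,0}P_\kB$ --- forcing one into the fine structure of $E_\kB$ --- together with, for $p=3$, importing the correct block $E_\kB$ from \cite{Pa_k_nas_2021} (case (5) of the list), where it and its centre behave differently from the $p\ge5$ situation, all while keeping careful track of the $\varepsilon^{-1}$-twist coming from the normalization of $\mathbf V$.
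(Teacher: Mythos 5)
The paper's own proof of this proposition consists solely of a citation to \cite[Corollary 6.10]{Pa_k_nas_2021}, so there is no written argument in the paper to compare against; your sketch is, in effect, a reconstruction of that corollary's proof. The framework you use --- Theorem \ref{uni=cen} to identify $R^{\ps,\zeta\varepsilon}_{\tr\bar\rho_\kB}[1/p]$ with the centre of $E_\kB[1/p]$, then Theorem \ref{decomposition} together with Proposition \ref{4.37} to realize $\Irr(\mathfrak m_x,K')$ as the topologically irreducible Banach subrepresentations of $\widehat\Pi_x:=\Hom^{\cont}_\cO((P_\kB/\km_{x,0}P_\kB)_{\operatorname{tf}},K')$, followed by explicit specialization of $P_\kB$ and $E_\kB$ --- is exactly the appendix's machinery and is the right route.

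One point worth sharpening in your write-up: Proposition \ref{4.37} produces irreducible \emph{subobjects}, not subquotients, so in the degenerate case $\psi_1\psi_2^{-1}=\varepsilon^{\pm1}$ the list $\{\mathbf 1,\widehat{\Sp},(\Ind_B^G\varepsilon\otimes\varepsilon^{-1})_\cont\}\otimes\psi\circ\det$ cannot be read off merely as the Jordan--H\"older constituents of a single unitary parabolic induction, whose socle is only one of these three. Rather, $P_\kB$ splits as a direct sum of projective envelopes of the four irreducibles of the $p=3$ block (5), and the three listed representations must each occur as the socle of a corresponding summand of $\widehat\Pi_x$; this is precisely the ``fine structure of $E_\kB$'' you correctly flag as the genuine obstacle, and it is where the $p=3$ input from \cite{Pa_k_nas_2021} is needed in place of \cite{Pa_k_nas_2013}.
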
 
		
		\begin{proof}
			See \cite[Corollary 6.10]{Pa_k_nas_2021}.
		\end{proof}

		\begin{prop}\label{irreducible_irr} 
				Let $K'$ be a finite extension of $K$ and let $x: R^{\ps, \zeta\varepsilon}_{\tr\bar{\rho}_{\kB}} \rightarrow K'$ be an $\cO$-algebra homomorphism, and it defines a maximal ideal $ \mathfrak{m}_x$ of $R^{\ps, \zeta\varepsilon}_{\tr\bar{\rho}_{\kB}}[1/p]$.
			If $T_x =\tr \rho$, where $\rho:G_{\Q_p} \rightarrow \GL_2(K')$ is absolutely irreducible, 
			then $\Irr(\mathfrak{m}_x, K')=\{\Pi\}$ with $\Pi$ absolutely irreducible non-ordinary and $\mathbf{V}(\Pi)\cong \rho$, where $\mathbf{V} $ is the normalized Colmez's functor.
		\end{prop}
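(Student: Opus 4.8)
The plan is to push the classification of $\Irr(\mathfrak{m}_x,K')$ through Paškūnas's anti-equivalence and then to pin down the unique surviving Banach representation by means of the normalized Colmez functor $\mathbf{V}$. First I would set up the block picture: by Theorem~\ref{centre finite} the ring $E_\kB$ is module-finite over its centre $Z_\kB$, and by Theorem~\ref{uni=cen} one has $Z_\kB[1/p]\cong R^{\ps,\zeta\varepsilon}_{\tr\bar{\rho}_{\kB}}[1/p]$, so $\mathfrak{m}_x$ is a maximal ideal of $Z_\kB[1/p]$ with residue field $K'$. Hence the hypotheses of Theorem~\ref{decomposition} hold for $\kC(\cO)_\kB$, and, combined with the anti-equivalence $\mm$ recalled in Appendix~\ref{A.2}, this identifies $\Irr(\mathfrak{m}_x,K')$ with the set of isomorphism classes of simple right modules of the finite-dimensional $K'$-algebra $\overline{E}_x:=E_\kB[1/p]\otimes_{Z_\kB[1/p]}K'$ obtained by base change along $x$; equivalently, by Proposition~\ref{4.37}, each member of $\Irr(\mathfrak{m}_x,K')$ occurs as an irreducible Banach subrepresentation of the single admissible representation $N_x:=\Hom^{\cont}_{\cO}\big((P_\kB/\mathfrak{m}_{x,0}P_\kB)_{\mathrm{tf}},K'\big)$, where $\mathfrak{m}_{x,0}$ is the preimage of $\mathfrak{m}_x$ in $Z_\kB$. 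Since $E_\kB$ is faithful over $Z_\kB$, we have $\widehat{E_\kB[1/p]}_{\mathfrak{m}_x}\neq0$, so $\Irr(\mathfrak{m}_x,K')\neq\varnothing$.

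Next I would feed in $\mathbf{V}$. The construction of $R^{\ps,\zeta\varepsilon}_{\tr\bar{\rho}_{\kB}}$ and its identification with $Z_\kB[1/p]$ are arranged so that, for any $\Pi\in\Ban^{\mathrm{adm.fl}}_{G,\zeta}(K')_{\kB}$ killed by a power of $\mathfrak{m}_x$, the semisimplification of $\mathbf{V}(\Pi)$ is a representation of $G_{\Q_p}$ with trace $T_x$. Because $T_x=\tr\rho$ with $\rho$ absolutely irreducible, $T_x$ is neither the trace of a character nor the trace of a sum of two characters of $G_{\Q_p}$; hence no member of $\Irr(\mathfrak{m}_x,K')$ is ordinary, since an ordinary irreducible unitary Banach representation has $\mathbf{V}$ equal to a unitary character or to a reducible two-dimensional representation --- the case $T_x=\psi_1+\psi_2$ being exactly the content of Proposition~\ref{reducible_irr}. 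Therefore every $\Pi\in\Irr(\mathfrak{m}_x,K')$ is non-ordinary, so $\dim_{K'}\mathbf{V}(\Pi)=2$, and, being two-dimensional with semisimple trace $\tr\rho$, it satisfies $\mathbf{V}(\Pi)\cong\rho$. This already gives non-ordinariness and the assertion $\mathbf{V}(\Pi)\cong\rho$.

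Finally I would deduce uniqueness and absolute irreducibility. The $p$-adic Langlands correspondence for $\GL_2(\Q_p)$ (Colmez, Paškūnas) says that $\mathbf{V}$ restricts to a bijection between isomorphism classes of absolutely irreducible non-ordinary unitary Banach representations and isomorphism classes of absolutely irreducible two-dimensional representations of $G_{\Q_p}$; in particular two irreducible non-ordinary members of the block with the same value of $\mathbf{V}$ are isomorphic. By the previous paragraph all members of $\Irr(\mathfrak{m}_x,K')$ have $\mathbf{V}\cong\rho$, so $\Irr(\mathfrak{m}_x,K')$ consists of a single class $\Pi$. That $\Pi$ is absolutely irreducible follows from Lemma~\ref{4.1}(2): the functor $\mathbf{V}$ embeds $\End^{\cont}_{K'[G]}(\Pi)$ into $\End_{G_{\Q_p}}(\mathbf{V}(\Pi))=\End_{G_{\Q_p}}(\rho)=K'$, whence $\End^{\cont}_{K'[G]}(\Pi)=K'$. (Alternatively one can argue structurally: $\widehat{E_\kB[1/p]}_{\mathfrak{m}_x}$ is module-finite over the regular complete local ring $\widehat{(R^{\ps,\zeta\varepsilon}_{\tr\bar{\rho}_{\kB}}[1/p])_{\mathfrak{m}_x}}$ --- the fixed-determinant deformation ring of the absolutely irreducible $\rho$, which is formally smooth --- and is Azumaya there, so $\overline{E}_x\cong M_d(K')$ has a single simple module with scalar endomorphisms, which yields both uniqueness and absolute irreducibility through Proposition~\ref{4.17} and Lemma~\ref{4.1}(1).)

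I expect the main obstacle to be the compatibility invoked at the start of the second paragraph --- namely that $\mathbf{V}(\Pi)^{\mathrm{ss}}$ carries the pseudo-character $T_x$, uniformly in $\Pi$ across the block --- and, in the structural variant, the Azumaya property of $E_\kB$ on the absolutely irreducible locus; these are exactly the structural results of Paškūnas that lie behind Theorems~\ref{centre finite} and~\ref{uni=cen}. Once they are granted, the remaining steps are Wedderburn theory for $\overline{E}_x$ (or Proposition~\ref{4.37}) together with the injectivity of Colmez's correspondence on absolutely irreducible objects, both of which are formal.
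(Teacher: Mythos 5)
The paper does not prove this proposition internally; it simply cites \cite[Proposition 6.11]{Pa_k_nas_2021}, so there is no in-paper argument to compare with. Your reconstruction follows the line one expects in Pa\v{s}k\=unas--Tung: pass through $\mm$ and Proposition~\ref{4.37} to reduce to a module-theoretic statement about $E_{\kB}$ localised at $\mathfrak{m}_x$, rule out ordinary constituents because the pseudo-character $T_x$ is irreducible, and use the $\mathrm{GL}_2(\Q_p)$ correspondence to force $\mathbf{V}(\Pi)\cong\rho$, uniqueness, and $\End^{\mathrm{cont}}_{K'[G]}(\Pi)=K'$. The argument is sound, and the two ``main obstacles'' you flag (the $\mathbf{V}$-compatibility of the pseudo-character with the $Z_{\kB}$-action, and the Azumaya property of $E_{\kB}$ over the absolutely irreducible locus) are exactly the structural inputs from Pa\v{s}k\=unas's theory that make the cited result go.

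Two refinements worth recording. First, the phrase ``the semisimplification of $\mathbf{V}(\Pi)$ carries the pseudo-character $T_x$'' should be read through the action of $Z_{\kB}$ on $\Pi$ rather than literally as $\tr\mathbf{V}(\Pi)$: if $\Pi$ were ordinary then $\mathbf{V}(\Pi)$ need not be two-dimensional, so the cleaner exclusion of the ordinary case is that an ordinary irreducible in the block sits over a maximal ideal whose pseudo-character is a sum of two characters (Proposition~\ref{reducible_irr}), incompatible with $T_x=\tr\rho$ irreducible. Second, the Azumaya variant does work, but the formal smoothness of $\widehat{(R^{\ps,\zeta\varepsilon}_{\tr\bar{\rho}_{\kB}}[1/p])_{\mathfrak{m}_x}}$ requires the extra observation that $H^2(G_{\Q_p},\ad^0\rho)=0$; this follows from $\rho\not\cong\rho(1)$, which holds because $\varepsilon^2\neq 1$ on $G_{\Q_p}$, and you should say so explicitly, since regularity of the local pseudo-deformation ring at non-generic points is precisely a delicate issue the paper has to confront elsewhere (cf.\ Proposition~\ref{singular locus}).
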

		
		\begin{proof}
			See \cite[Proposition 6.11]{Pa_k_nas_2021}.
		\end{proof}
		
		\newpage
	\bibliography{fontaine-mazur.bib}
	\bibliographystyle{alpha}~
	\end{document}